\newtheorem{newthm}{Theorem}
\newtheorem{theorem}{Theorem}[section]
\newtheorem{lemma}[theorem]{Lemma}
\newtheorem{proposition}[theorem]{Proposition}
\newtheorem{corollary}[theorem]{Corollary}
\newtheorem{definition}[theorem]{Definition}
\theoremstyle{remark}
\newtheorem{example}{\bf Example}[section]
\theoremstyle{plain}
\newtheorem{fact}[theorem]{\bf Fact}
\newtheorem{assumption}[theorem]{\bf Assumption}
\newtheorem{prop}[theorem]{\bf Proposition}
\newtheorem{coro}[theorem]{\bf Corollary}
\newtheorem{remark}[theorem]{Remark}
\numberwithin{equation}{section}
\newcommand{\bess}{\begin{eqnarray*}}
\newcommand{\eess}{\end{eqnarray*}}
\newcommand{\C}{\mathbb{C}}
\newcommand{\EC}{\widehat{\mathbb{C}}}
\newcommand{\wt}{\widetilde}
\newcommand{\mb}{\mathbb}
\newcommand{\mc}{\mathcal}
\newcommand{\tu}{\textup}
\newcommand{\ol}{\overline}
\newcommand{\es}{\emptyset}
\newcommand{\tb}{\textbf}
\newcommand{\wh}{\widehat}
\newcommand{\olC}{\widehat{\mathbb{C}}}
\newcommand{\olD}{\overline{\mathbb{D}}}
\def\ee{{\mathbf e}}
\def\c{{\mathbf c}}
\def\C{\mbox{$\mathbb C$}}
\def\lv{ \left(\begin{matrix} }
	\def\rv{\end{matrix}\right)}
\def\dw{{\dw}}
\newcommand{\mylabel}[1]{\label{#1}}
\newcommand{\REFEQN}[1] { \begin{equation}\mylabel{#1} }
\newcommand{\ENDEQN}{\end{equation}}
\newcommand{\REFTHM}[1] { \begin{theorem}\mylabel{#1} }
	\newcommand{\ENDTHM}{\end{theorem}}
\newcommand{\REFNTH}[1] { \begin{newthm}\mylabel{#1} }
	\newcommand{\ENDNTH}{\end{newthm}}
\newcommand{\REFPROP}[1]{\begin{proposition}\mylabel{#1} }
	\newcommand{\ENDPROP}{\end{proposition} }
\newcommand{\REFLEM}[1]{\begin{lemma}\mylabel{#1} }
	\newcommand{\ENDLEM}{\end{lemma} }
\newcommand{\REFCOR}[1]{\begin{corollary}\mylabel{#1} }
	\newcommand{\ENDCOR}{\end{corollary} }
\begin{document}

\title{Dynamics of Newton maps}


\begin{abstract}
 In this paper, we study the dynamics of Newton maps for arbitrary polynomials.   Let $p$ be an arbitrary polynomial
with at least three distinct roots,  and
   $f$ be its Newton map.  
   It is shown that the boundary $\partial B$ of any immediate root basin $B$ of $f$ is locally connected.
   Moreover, $\partial B$ is a Jordan curve if and only if ${\rm deg}(f|_B)=2$. 
   This implies that the boundaries of all components  of root basins, for all polynomials' Newton maps, from the viewpoint of topology, are tame.
	\end{abstract}
	
	\keywords{Newton maps, puzzles, local connectivity}

\author{Xiaoguang Wang}
\address{Xiaoguang Wang, School of Mathematical Sciences, Zhejiang University, Hangzhou, 310027, P.R. China}
\email{wxg688@163.com}
\author{Yongcheng Yin}
\address{Yongcheng Yin, School of Mathematical Sciences, Zhejiang University, Hangzhou, 310027, P.R. China}
\email{yin@zju.edu.cn}
\author{Jinsong Zeng}
\address{Jinsong Zeng, School of Mathematics and Information Science, Guangzhou University, 510006, P.R. China.}
\email{zeng.jinsong@foxmail.com}


\date{\today}

	\maketitle


\section{Introduction}

Newton's method, also known as the {\it Newton-Raphson method} named after Isaac Newton (1642-1727) and Joseph Raphson (1648-1715), is probably the oldest and most famous iterative process to be found in mathematics. 
 The method was
first proposed to find successively better approximations to the  roots (or zeros) of a real-valued function $p(z)$.
 Picking an initial point $z_0$ near a root of $p$,
Newton's method produces an $n$-th approximation of the root via the formula $z_{n+1}=f_p(z_n)$, where
 $$f_p(z)=z-\frac{p(z)}{p'(z)}$$
 is called the {\it Newton map} of $p$.
Replacing $z_n$ by $z_{n+1}$ generates a sequence of approximations $\{z_n\}$ which may or
may not converge to a root of  $p$.

A brief history of Newton's method, following \cite{A}, is as follows.
Versions of Newton's method had been in existence for centuries previous to Newton and Raphson. Anticipations of Newton's method are found in an
ancient Babylonian iterative method of approximating the square root of $a$,
 $$z_{n+1}=\frac{1}{2}\Big(z_n+\frac{a}{z_n}\Big),$$
which is equivalent to Newton's method for the function $f(z)=z^2-a$. The modern formulation of the method is also attributed to Thomas Simpson (1710-1761) and
Joseph Fourier (1768-1830).

By the mid-1800's, several mathematicians had already examined the
convergence of Newton's method towards the real roots of an equation $p(z)=0$, but
the investigations of Ernst Schr\"oder (1841-1902) and Arthur Cayley (1821-1895) are distinguished from their predecessors 
in their consideration of the convergence of Newton's method to the complex roots
of $p(z)=0$.
%
Schr\"oder and Cayley each studied the convergence of Newton's method for the
quadratic polynomials, and both showed that on either side of the perpendicular
bisector of the roots, Newton's method converges to the root on that particular
side. 
 However, in 1879, Cayley \cite{C} first  noticed the difficulties in generalizing Newton's method to
 cubic polynomials, or general polynomials
with at least three distinct roots. 
In \cite{C}, Cayley wrote


\vspace{4 pt}

\textit{``The solution is easy and elegant in the case of a  quadric equation, but (Newton's method for) the next succeeding case of the cubic equation  appears to present considerable difficulty."}

\vspace{4 pt}

 The study of Newton's method led to the theory of iterations of holomorphic functions, as initiated by Pierre Fatou and Gaston Julia around 1920s.  Since then, the study of Newton maps became 
one  of the  major themes  with general interest, both in discrete dynamical system (pure mathematics), and in root-finding algorithm (applied mathematics), see for example \cite{AR, BFJK1, BFJK2, Ba, Be, HSS, P, Ro07, Ro08, RWY17,  Sh09, Tan97}....



Let $p$ be a polynomial with at least two distinct roots\footnote{the discussion is trivial when $p$ has only one (possibly multiple) root}, and let $\zeta\in\C$ be a root of $p$. For its Newton map $f_p$, the {\it attracting basin} or {\it root basin} of $\zeta$, denoted by $B(\zeta)$,
consists of points $z$ on the Riemann sphere $\EC$ whose orbit $\{f_p^n(z);n\in\mathbb N\}$ (here $g^n$ means the 
$n$-th iterate of $g$) converges to $\zeta$:
$$B(\zeta)=\big\{z\in \EC; f^n_p(z)\rightarrow \zeta \text{ as } n\rightarrow +\infty\big\}.$$
It is well known that $B(\zeta)$ is an open set of $\EC$. 
In the case that $p$ has two distinct (possibly multiple) roots, by quasi-conformal surgery, one can show that $B(\zeta)$ is a quasi-disk and the Julia set
$J(f_p)$ is a quasi-circle.
So this case is easy. 


We  say that a polynomial $p$ is {\it non-trivial} (in the sense of Cayley) if $p$ has at least three distinct roots.  A non-trivial polynomial takes the form
$$p(z)=a(z-a_1)^{n_1}\cdots(z-a_d)^{n_d}$$
where $a\in \mathbb C-\{0\}$, $d\geq 3$, and $a_1,\cdots, a_d\in\mathbb C$ are  distinct roots of $p$, with multiplicities $n_1,\cdots,n_d\geq 1$, respectively. This is the general case, and the
attracting basin $B(\zeta)$  consists of countably many connected components. The one containing $\zeta$ is called the {\it immediate attracting basin} or {\it immediate root basin}, and is denoted by $B^0(\zeta)$.  Przytycki \cite{P} showed that $B^0(\zeta)$ is a topological disk when $p$ is a non-trivial cubic polynomial. 
By means of quasi-conformal surgery, Shishikura \cite{Sh09} proved that the Julia set of the Newton map for any non-trivial polynomial is connected. 
This result is further generalized to Newton's method for entire functions  by Baranski, Fagella, Jarque and Karpinska  \cite{BFJK1}\cite{BFJK2}. 
 This implies, in particular, each component of $B(\zeta)$ is a topological disk.

Although $B=B^0(\zeta)$ has a simple topology, its boundary $\partial B$ exhibits  rich topological structures.
The reason is that the Newton map $f_p$ can have unpredictable dynamics and complicated  bifurcations on  $\partial B$. Therefore for  Newton maps, understanding the topology of $\partial B$  makes a fundamental and challenging
problem from the view point of dynamical system.

Little progress has been made towards the problem, until the ground-breaking  work of Roesch.
In \cite{Ro08}, Roesch proved,  building on previous works of 
Head \cite{He87} and Tan Lei \cite{Tan97}, that $\partial B$  is always a Jordan curve, when 
 $p$ is a non-trivial cubic polynomial and  ${\rm deg}(f_p|_B)=2$. The proof is the  first successful application of the Brannar-Hubbard-Yoccoz puzzle theory to rational maps.  The puzzle theory has also been developed by Roesch, Wang, Yin  \cite{RWY17} to study the local connectivity and rigidity phenomenon in 
 parameter space.


%
%



The main result of the paper, is to give a complete characterization of $\partial B$ for all polynomials'  Newton maps:
%
%

\begin{theorem}\label{main} Let  $f_p$ be the Newton map for any non-trivial polynomial $p$.
Then the boundary $\partial B$ of any immediate root basin $B$ 
is locally connected.
  Moreover, $\partial B$ is a Jordan curve if and only if ${\rm deg}(f_p|_B)=2$.
\end{theorem}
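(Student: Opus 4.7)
The plan is to adapt the Branner--Hubbard--Yoccoz puzzle machinery to the Newton-map setting and then deduce the Jordan-curve dichotomy from $d_B:=\deg(f_p|_B)$. Fix an immediate root basin $B$ of a root $\zeta$ of $p$. Since every Fatou component of $f_p$ is a topological disk by Shishikura's theorem, a uniformization $\phi\colon\mathbb D\to B$ conjugates $f_p|_B$ to a Blaschke product of degree $d_B$, which furnishes canonical internal rays and equipotentials in $B$, and analogously in every other immediate basin.

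The first task is to build a global puzzle for $f_p$ whose cells accumulate on $\partial B$. I would take an $f_p$-invariant finite configuration of periodic internal rays, drawn from every immediate basin and landing together at a set of repelling periodic points, together with an equipotential near each root. Pulling back under $f_p$ then yields a nested family of puzzle pieces partitioning a neighbourhood of $J(f_p)$. The subtle combinatorial point, absent from the polynomial case, is the lack of a B\"ottcher coordinate at $\infty$ (which is a repelling fixed point of $f_p$); the unbounded complementary component of the graph must be closed off using preimages of the immediate-basin boundaries. Such a construction was carried out by Head, Tan Lei and Roesch in the cubic case, and one expects it to extend uniformly to any non-trivial polynomial.

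The central analytic step, and where I expect the main obstacle to lie, is to prove that for every $x\in\partial B$ the nested puzzle pieces $P_n(x)$ shrink to $x$ in the spherical metric. Orbits avoiding critical pieces succumb to a standard Koebe-distortion argument applied to the expanding return dynamics. The real difficulty is the possible presence of recurrent critical points of $f_p$ outside $B$ whose orbits accumulate on $\partial B$, together with renormalizable configurations inside $J(f_p)$. I would split the argument along the Yoccoz tableau dichotomy: non-recurrent tableaux yield shrinking via annulus-modulus telescoping in the spirit of Branner--Hubbard--Yoccoz, while recurrent tableaux are absorbed into a polynomial-like renormalization and handled by qc-surgery together with a priori modulus estimates. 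The possible presence of rotation or parabolic cycles accumulating on $\partial B$ must also be ruled out, but a Riemann--Hurwitz count inside $B$ tightly constrains the free critical points available to feed such cycles.

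Granted local connectivity, the Jordan-curve dichotomy is soft. Carath\'eodory's theorem extends $\phi$ continuously to $\overline{\mathbb D}\to\overline B$, and $\partial B$ is a Jordan curve iff $\phi|_{\partial\mathbb D}$ is injective. When $d_B=2$, the boundary combinatorics inherited from the degree-two Blaschke model admit no periodic identification of internal rays, because any such identification would be preserved by the doubling action on angles and would produce a pinch violating a direct degree count inside $B$; thus $\phi|_{\partial\mathbb D}$ is injective and $\partial B$ is a Jordan curve. When $d_B\geq 3$, the same Riemann--Hurwitz count in $B$ furnishes at least two free critical points, which the puzzle combinatorics convert into a pair of distinct internal rays landing at a common repelling periodic point of $\partial B$; this pinch point prevents $\partial B$ from being Jordan and completes the dichotomy.
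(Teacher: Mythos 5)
Your proposal captures the broad template (invariant graph, nested puzzles, modulus telescoping in the non-recurrent case, separate handling of the recurrent/renormalizable case) but contains three genuine gaps, one of which is a substantial missing idea and another of which is an incorrect argument.

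First, the invariant graph. You write ``take an $f_p$-invariant finite configuration of periodic internal rays\ldots landing together at a set of repelling periodic points,'' and acknowledge a ``subtle combinatorial point'' about closing off the unbounded component, but this is precisely where the hard work lies and you do not supply it. The only fixed point of $f_p$ on its Julia set is $\infty$, so the fixed internal rays land there; but they do not by themselves disconnect a neighbourhood of $J(f_p)$, because $\infty$ remains a cut point of the resulting channel graph. Producing an invariant graph in which $\infty$ (and all Julia vertices) are non-cut points is a non-trivial inductive pullback, which in this paper requires a pole-counting device (Propositions~\ref{prop:key_prop} and~\ref{lem:common_poles}) to guarantee that each generation of pulled-back arcs reaches a pole and eventually ties off a channel, together with a Shrinking-Lemma argument (Lemma~\ref{integer-existence}) to guarantee termination. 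Your sketch does not contain an idea that substitutes for this.

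Second, the renormalizable case. You propose to ``absorb'' a recurrent tableau ``into a polynomial-like renormalization and [handle it] by qc-surgery together with a priori modulus estimates.'' That does not prove what is needed: straightening the renormalization says nothing about local connectivity of $\partial B$, because the filled Julia set $K$ of the little polynomial-like map need not be locally connected and in any case lies in the Julia set of $f_p$, not in $\partial B$. The statement you actually need is that the end $\ee(z) = \bigcap_n \ol{P_n(z)}$, which may well equal $K$ and be a nontrivial continuum, meets $\partial B$ in only the single point $z$. The paper proves this (Proposition~\ref{prop:per}) not by surgery but by constructing an explicit Jordan curve $\gamma$, built from preimages of internal rays inside a uniformizing coordinate for $\olC\setminus\ee$, that separates $\ee$ from $\ol B$ except at the landing point $q$ of the distinguished invariant ray. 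That separating-curve idea is missing from your outline.

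Third, the Jordan dichotomy when $d_B\geq 3$. You argue that Riemann--Hurwitz inside $B$ ``furnishes at least two free critical points, which the puzzle combinatorics convert into a pair of distinct internal rays landing at a common repelling periodic point.'' This is not correct: the $d_B-1$ critical points of $f_p|_B$ (with multiplicity) sit inside $B$, not on $\partial B$, and after the standard qc surgery making $f_p$ post-critically finite in $B_f$ they all sit at the center of $B$; they do not produce boundary identifications, and critical points inside a basin do not generically force ray coincidences on its boundary. The correct and much simpler reason is that the $d_B-1\geq 2$ fixed internal rays of $B$ must all land at repelling fixed points on $J(f_p)$, and the unique such fixed point is $\infty$; hence at least two internal rays land at the same boundary point and $\partial B$ is pinched. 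The converse direction ($d_B = 2 \Rightarrow$ Jordan) requires a genuine argument that distinct rays landing together have distinct images (Lemma~\ref{num-lands} in the paper), which your ``doubling action'' remark gestures at but does not carry out.

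Finally, a smaller point: the dichotomy you invoke on the wandering side also needs the multicritical principal-nest machinery (Kahn--Lyubich, Kozlovski--Shen--van~Strien, Kozlovski--van~Strien / Qiu--Yin complex bounds) rather than the unicritical Yoccoz tableau argument, since Newton maps of higher degree have several free critical points and the classical quadratic-style telescoping does not close up.
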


The theorem implies that the boundary of each component of the root basins is locally connected. 
Therefore, {\it the boundaries of all components of root basins, for all polynomials' Newton maps, from the viewpoint of topology, are tame}.  Our argument also has a byproduct: the Julia set of a non-renormalizable 
Newton map is always locally connected, which generalizes Yoccoz's famous theorem  to Newton maps.

Our work  extends Roesch's Theorem \cite[Theorem 6]{Ro08} for cubic Newton maps 
to Newton maps of arbitrary polynomials.
It is distinguished from Roesch's work \cite{Ro08} in two folds. Firstly, 
the invariant graph is different from those in \cite{Ro08}. In our work, we construct only one graph adapted to the puzzle theory:
the one generated by the
{\it channel graph}, 
 while in \cite{Ro08}, countably many candidate graphs are provided, and each of them involves very technical construction.
 Secondly,  each cubic Newton map  has only one free critical point, so the puzzle theory in \cite{Ro08} is same as the quadratic case; 
however, the Newton maps for higher degree non-trivial polynomials can have  more free critical points, and the quadratic puzzle theory does not work here. To deal with this general case, we take advantage of recent developments  \cite{KL1,KL09, KSS07} in multi-critical polynomial dynamics. 

\begin{figure}
	\center
	\begin{tikzpicture}
	\node at (0,0) {\includegraphics[width=6cm]{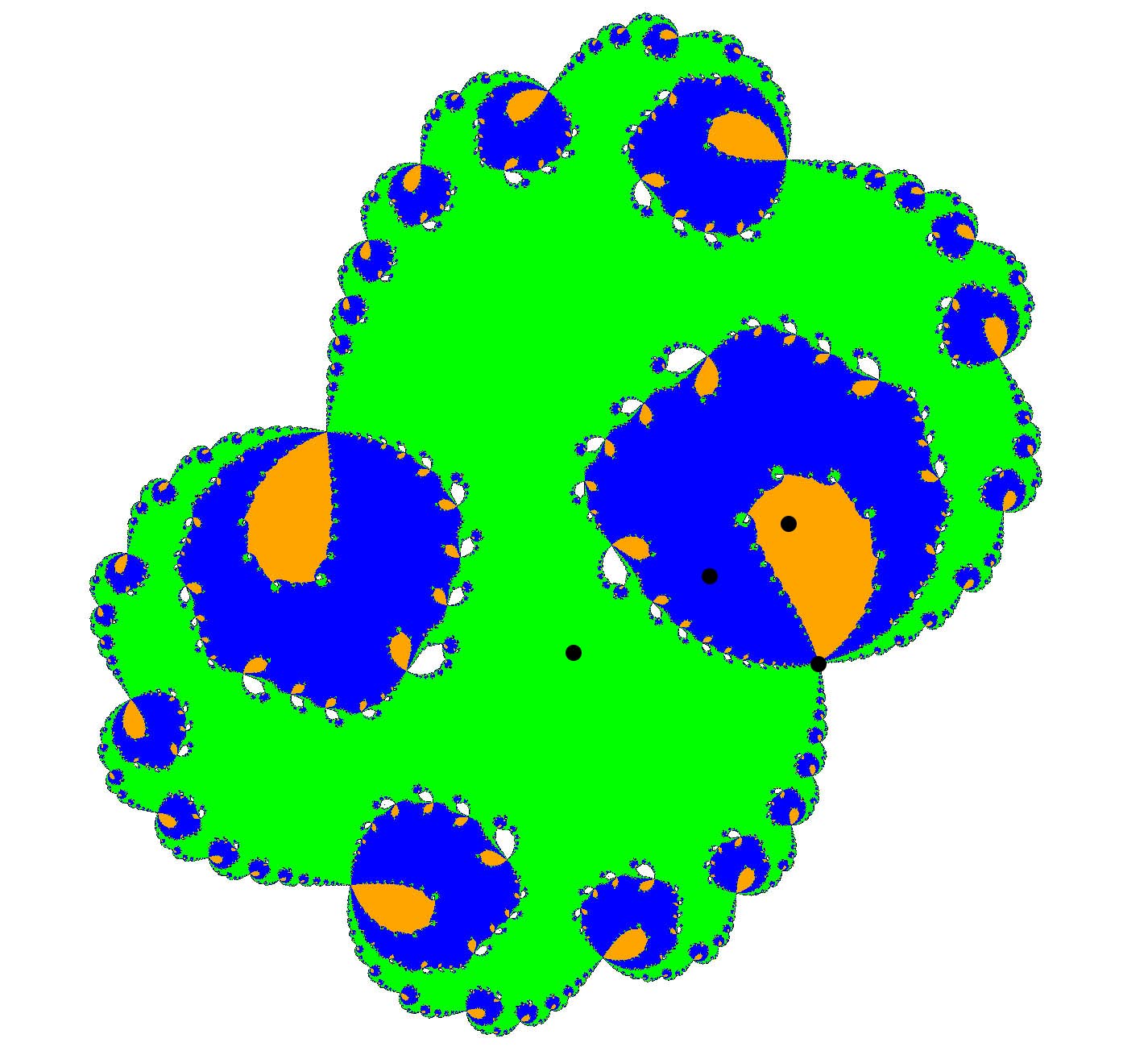}};
	\node at (0.9,-0.55){$a$};
	\node at (1.55,-0.25){$b$};
	\node at (0.05,-1.20){$-1$};
	\node at (2.05,-1.05){$\infty$};
	\end{tikzpicture}
	\caption{The image of  the Julia set $J(f)$ under the action of  the M\"{o}bius map $h(z)=\frac{z}{z-1}$, where $f$ is the Newton map for the polynomial $p(z)=(z^2-1)(z-a)(z-b)$
	with $a=-1.142-2.0477\,i$ and $b=0.1667-3.15485\,i$.%
%
	}
	\label{fig:newton}
\end{figure}

\vspace{6 pt} 

 \noindent \textbf{Organization of the paper.}  The paper is organized as follows: 

In Section \ref{sec_pre}, we present some basic facts for Newton maps.

In Section \ref{sec:fixe_point_thm},  we develop a method to count the number of poles (counting a suitable multiplicity) for Newton maps in certain domains arising from dynamics.
This allows us to construct an invariant graph for Newton maps  by an inductive procedure (in Section \ref{sec_graphs}).



In Section \ref{sec_graphs}, we will construct an invariant graph for Newton maps.
This graph is used to develop the puzzle theory.



In Section \ref{sec:puzzle}, we introduce the Branner-Hubbard-Yoccoz puzzle theory and sketch the idea of the proof, whose 
details are carried out in the forthcoming sections.
The strategy is deeply inspired by the work of Roesch-Yin \cite{RY08}.

%
%
 
 To prove the local connectivity of $\partial B$, for each
 $z\in \partial B$, we define its {\it end} $\ee(z)$ as
 the intersection of infinitely many nested puzzle pieces containing $z$. 
  The main point is to show that 
 $\ee(z)\cap \partial B=\{z\}$. 
 For this purpose, we need treat two cases: the wandering case and renormalizable case.


In Section \ref{wandering}, we will show that each wandering end is a singleton. This is based on the dichotomy: a wandering end $\ee$ either satisfies the {\it bounded degree} property, or 
		its combinatorial limit set  $\omega(\ee)$ contains a {\it persistently recurrent} critical end.
		The treatments for these two cases are different: the former needs to control the number of critical points in long orbits of puzzle pieces, while the latter makes essential use of recent developments   in multi-critical polynomial dynamics, especially the {\it principle nest } construction and its properties \cite{KL1,KL09, KSS07}.

%
%
%


In Section \ref{pre-periodic}, we handle the {\it renormalizable case}.
We will show that if $\ee(z)$ is periodic and non-trivial, then  $\ee(z)\cap \partial B=\{z\}$.
 The main idea 
  is to construct an invariant curve which separates the end $\ee(z)$ from $B$. The construction is natural and  less technical (compare \cite{Ro08}). 
The idea is new and can be applied to study other rational maps.
%
%
%

In Section \ref{proof-main}, we complete the proof of the main theorem. 
 
%

\vspace{6 pt} 

 \noindent \textbf{Notations} 
Throughout the paper, we will use the following notations:

\begin{itemize}
    \item  $\olC$, ${\mb{C}}$, $\mb{D}$ are the Riemann sphere, the complex plane, the unit disk, respectively.
    The boundary of $\mb{D}$ is denoted by $\mb{S}$.
	\item Let $A$ be a set in $\olC$. The closure and the boundary of $A$ are denoted by $\ol{A}$, $\partial A$, respectively. We denote by $\tu{Comp}(A)$  the collection of all connected components of $A$. The cardinality of $A$ is $\#A$.
	\item Two sets in $\olC$ satisfying $A\Subset B$ means that $\ol{A}$ is contained in the interior of $B$.
    \item The Julia set and Fatou set of a rational map $f$ are denoted by $ J(f)$ and $ F(f)$, respectively.
     \end{itemize}

%
%
%
%
%
  
  \vspace{4 pt}
  \noindent \textbf{Acknowledgement} 
The research is supported by NSFC.

\section{Preliminaries}\label{sec_pre}

This section collects some basic facts and introduces some notations for Newton maps.  

Let $p$ be a complex polynomial, factored as
$$p(z)=a(z-a_1)^{n_1}\cdots(z-a_d)^{n_d}$$
where $a\neq 0$ and $a_1,\cdots, a_d\in \mb{C}$ are  distinct roots of $p$, with multiplicities $n_1,\cdots,n_d\geq 1$, respectively. 
In our discussion, we may assume $d\geq 2$.

Its Newton map $f_p$ fixes each root $a_k$ with multiplier 
$$f_p'(a_k)=\frac{p(z)p''(z)}{p'(z)^2}\Big|_{z=a_k}=\frac{n_k-1}{n_k}.$$ 
Therefore, each root $a_k$ of $p$ corresponds to  an   attracting  fixed point of $f_p$ with multiplier $1-1/n_k$.
It follows from the equation 
$$\frac{1}{f_p(z)-z}=-\sum_{k=1}^d\frac{n_k}{z-a_k}$$
that the degree of $f_p$ equals $d$, the number of  distinct roots of $p$.
One may also verifies that $\infty$ is a repelling fixed point of $f_p$ with multiplier 
$$\lambda_\infty=\frac{\sum_{k=1}^d n_k}{\sum_{k=1}^d n_k-1}=\frac{\deg(p)}{\deg(p)-1}.$$

From above discussion, we see that a degree-$d$ Newton map has $d+1$ distinct fixed points with specific multipliers.
On the other hand, a well-known theorem of Head states that the fixed points together with the specific multipliers can determine a
unique Newton map:
\begin{theorem}[Head \cite{He87}] \label{char} A rational map $f$ of degree $d\geq2$
is the Newton map of a polynomial $p$ if and only if
$f$ has $d+1$ distinct fixed points
$$a_1, a_2, \cdots, a_d, \infty,$$
such that for each fixed point $a_k$, 
 the multiplier takes the form 
$$f'(a_k)=1-1/n_k \text{ with } n_k\in \mathbb N,  \ 1\leq k\leq d.$$
 In this case, the polynomial $p$ has the form $a(z-a_1)^{n_1}\cdots(z-a_d)^{n_d}, \ a\neq0$.
\end{theorem}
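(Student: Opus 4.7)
The forward implication is already verified in the computations preceding the theorem: if $f = f_p$ with $p(z) = a(z-a_1)^{n_1}\cdots(z-a_d)^{n_d}$, then $f_p$ has exactly the fixed points $a_1,\ldots,a_d,\infty$, with finite multipliers $1-1/n_k$, and $\deg f_p = d$. For the converse, given $f$ satisfying the hypotheses, the plan is to set $p(z) := (z-a_1)^{n_1}\cdots(z-a_d)^{n_d}$ (any nonzero scalar works) and verify $f = f_p$ directly, which is equivalent to the identity of rational functions
\[
h(z) := \frac{1}{z - f(z)} = \frac{p'(z)}{p(z)}
\]
on $\widehat{\mathbb{C}}$.

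I would compare the two sides pole-by-pole. At each finite fixed point $a_k$, the hypothesis $f'(a_k) = 1 - 1/n_k \ne 1$ makes $a_k$ a simple fixed point and yields $z - f(z) = (z-a_k)/n_k + O((z-a_k)^2)$, so $\mathrm{Res}_{a_k} h = n_k$, matching the residue of $p'/p = \sum_k n_k/(z-a_k)$ at $a_k$. Since $a_1,\ldots,a_d$ exhaust the finite fixed points of $f$, the difference $\phi(z) := h(z) - p'(z)/p(z)$ is entire. Both $h$ and $p'/p$ vanish at $\infty$, so $\phi(\infty) = 0$; to conclude $\phi \equiv 0$ it remains to check that the coefficients of $1/z$ in the Laurent expansions of $h$ and $p'/p$ at $\infty$ agree.

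The coefficient of $1/z$ in $p'/p$ is clearly $n := n_1 + \cdots + n_d$. For $h$, writing $f(z) = z/\lambda_\infty + O(1)$ with $\lambda_\infty := f'(\infty)$ gives $h(z) = \tfrac{\lambda_\infty}{\lambda_\infty-1}\cdot\tfrac{1}{z} + O(1/z^2)$, so the matching reduces to
\[
\frac{\lambda_\infty}{\lambda_\infty-1} = n, \qquad \text{i.e.,} \qquad \lambda_\infty = \frac{n}{n-1}.
\]
This is the main obstacle, since $\lambda_\infty$ is not part of the hypothesis and must be extracted from it. I plan to obtain it from the residue theorem applied to $h$ on $\widehat{\mathbb{C}}$: the only poles of $h$ are the $a_k$ with residues $n_k$, while $\mathrm{Res}_\infty h = -\lambda_\infty/(\lambda_\infty-1)$ from the Laurent expansion above; summing all residues to zero gives $\sum_{k=1}^d n_k - \lambda_\infty/(\lambda_\infty-1) = 0$, which is exactly the required identity. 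Once this is in place, $\phi$ is entire and $O(1/z^2)$ at $\infty$, hence identically zero, yielding $f = f_p$ and completing the proof.
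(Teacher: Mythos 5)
Your argument is sound, but note first that the paper itself offers no proof of this statement: it is quoted as Head's theorem with a citation to his thesis, and the computations before it only establish the forward direction, exactly as you observe. So there is no in-paper proof to match; your converse via the identity $1/(z-f(z))=p'(z)/p(z)$ is a clean, self-contained route and it works. Two remarks. First, your final step is more elaborate than necessary and is where the only real imprecision lies: once you know that $h-p'/p$ is a rational function with no poles in $\mathbb{C}$ (your residue computation at the $a_k$, which is correct since $f'(a_k)\neq 1$ makes each $a_k$ a simple zero of $z-f(z)$ with derivative $1/n_k$) and that both $h$ and $p'/p$ tend to $0$ at $\infty$, the difference is a polynomial vanishing at $\infty$, hence identically zero; no matching of the $1/z$ coefficients, and no determination of $\lambda_\infty$, is needed. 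In particular the multiplier relation $\lambda_\infty=\deg(p)/(\deg(p)-1)$ comes out as a consequence, not as an ingredient. Second, in the redundant portion you write $f(z)=z/\lambda_\infty+O(1)$, which silently assumes $\lambda_\infty\neq 0$ (i.e.\ that $\infty$ is not superattracting, equivalently that $f$ grows only linearly at $\infty$); this case is not excluded by the hypotheses a priori, though it is ruled out a posteriori by your own residue count, and it is irrelevant once you use the shorter Liouville-type conclusion. The one hypothesis you should invoke explicitly is that the $d+1$ listed fixed points are distinct, hence each is simple, so no multiplier equals $1$; this is what guarantees both that the $a_k$ are simple poles of $h$ and that $z-f(z)\to\infty$ at $\infty$, i.e.\ $h(\infty)=0$. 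With these small repairs your proof is complete and arguably more direct than an appeal to the literature.
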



Now, for the Newton map $f=f_p$ of $p$, let $B(a_k)$ be the root basin of $a_k$, and
$B_k$ be the immediate root basin of $a_k$. Recall that
$$B(a_k)=\big\{z\in \EC; f^n(z)\rightarrow a_k \text{ as } n\rightarrow +\infty\big\}.$$
The attracting basin for all roots is 
$${B}_f=B(a_1)\cup\cdots\cup B(a_d).$$
We say that $f$ is \emph{post-critically finite} in ${B}_f$, if there are only finitely many post-critical points in $B_f$,
or equivalently, each critical point in ${B}_f$ will eventually be iterated to one of $a_k$'s. 

 
 According to Shishikura \cite{Sh09}, the Julia set of a Newton map $f$ is always connected, or equivalently, all Fatou components of $f$ are simply connected (see Figure \ref{fig:newton}).
  By means of quasi-conformal surgery, one can show that $f$ is quasi-conformally conjugate, in a neighborhood of
  of $\mathbb{\widehat{C}}-B_f$,
   to a Newton map $g$ which is post-critically finite in its root basin $B_g$. 
%
%
Since the topology of the Julia set $J(f)$ does not change under this conjugacy,  throughout the paper,
we pose the following

 \begin{assumption}\label{assum} The Newton map $f$ is post-critically finite in $B_f$.
 \end{assumption}
 
Under Assumption \ref{assum}, if the degree $d$ of $f$ is two, then $f$ is affinely conjugate to $z^2$.
  In this case,  the collection ${\rm Comp}(B_f)$ of all components of $B_f$ consists of only two elements. In other situations, ${\rm Comp}(B_f)$ consists of infinitely many elements. 
   
A virtue of Assumption \ref{assum} is that one can give a natural dynamical parameterization of root basins (see \cite{Mi06}):

%

%
%

\begin{lemma}
\label{lem:coordinates} Assume $f$ is post-critically finite in $B_f$, then 
	there exist, so-called B\"ottcher maps, \ $\{\Phi_B\}_{B\in\tu{Comp}({B}_f)}$, such that for each $B\in \tu{Comp}({B}_f)$,
	\begin{itemize}
		\item[(1)] $\Phi_B:B\to \mb{D}$ is a conformal map;
		\item[(2)] $\Phi_{f(B)}\circ f\circ\Phi_B^{-1}(z)=z^{d_{B}},  \forall z\in\mb{D}$, where $d_B={\rm deg}(f|_{B})$. 
	\end{itemize}
\end{lemma}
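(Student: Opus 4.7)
The plan is to construct the family $\{\Phi_B\}_{B\in\tu{Comp}(B_f)}$ in two stages: first on the immediate root basins using B\"ottcher theory at the super-attracting fixed points, and then on all remaining components of $B_f$ by inductive pullback through $f$.

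\textbf{Stage 1.} For the immediate basin $B_k$ containing the root $a_k$, Assumption \ref{assum} (combined with the quasi-conformal surgery described just before) lets us treat $a_k$ as a super-attracting fixed point. The classical B\"ottcher theorem provides a local conformal conjugacy near $a_k$ between $f$ and the power map $z\mapsto z^{n_k}$, where $n_k$ is the local degree of $f$ at $a_k$. Since every critical point in $B_k$ eventually lands on $a_k$ by the PCF hypothesis, Milnor's standard extension argument \cite{Mi06} shows that the local coordinate extends to a global conformal map $\Phi_{B_k}:B_k\to\mb{D}$ with $\Phi_{B_k}(a_k)=0$, conjugating $f|_{B_k}$ to $w\mapsto w^{d_{B_k}}$, where $d_{B_k}=\deg(f|_{B_k})$.

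\textbf{Stage 2.} Every other component $B\in\tu{Comp}(B_f)$ eventually maps onto an immediate basin under iteration of $f$. Define the generation of $B$ as the minimal $n\geq 1$ with $f^n(B)$ an immediate basin, and construct $\Phi_B$ by induction on generation. Given $\Phi_{f(B)}$, I would let $\Phi_B:B\to\mb{D}$ be the unique holomorphic lift of $\Phi_{f(B)}\circ f|_B$ through the branched cover $w\mapsto w^{d_B}:\mb{D}\to\mb{D}$. The lift exists because $B$ is simply connected by Shishikura's connectedness theorem, and it is a \emph{conformal} bijection provided the proper holomorphic map $f|_B:B\to f(B)$ has the same ramification profile as $w\mapsto w^{d_B}$, namely a unique critical point of local degree $d_B$ lying over the distinguished point $\Phi_{f(B)}^{-1}(0)$. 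Property (2) then holds by construction, and (1) follows from the degree-and-ramification match.

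\textbf{Main obstacle.} The core technical point is checking at each step that the ramification profile of $f|_B$ matches that of $w\mapsto w^{d_B}$, so that the pullback in Stage 2 yields a genuine conformal bijection rather than a more general branched cover. This is where the PCF assumption is essential: it confines all critical activity inside $B_f$ to the grand orbits of the centers $\Phi_B^{-1}(0)$, and an inductive application of the Riemann--Hurwitz formula shows that each preimage of a center under $f|_B$ is either itself the unique critical point of local degree $d_B$ (sitting over the center of $f(B)$) or is a regular point. The single-critical-point property thus propagates from the immediate basins outward to every preimage component, validating the inductive construction.
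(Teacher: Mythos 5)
Your Stage 1 already contains the decisive gap. You invoke ``Milnor's standard extension argument'' on the grounds that every critical point in $B_k$ eventually lands on $a_k$; but the extension of the local B\"ottcher coordinate to a conformal isomorphism $B_k\to\mathbb{D}$ requires that $a_k$ be the \emph{only} critical point of $f$ in the immediate basin, not merely that all critical points in $B_k$ have orbits terminating at $a_k$. Indeed, if $B_k$ contained a second critical point $c\neq a_k$ with $f^m(c)=a_k$, the conclusion of Lemma \ref{lem:coordinates} itself would fail for $B_k$: a conformal conjugacy of $f|_{B_k}$ to $z\mapsto z^{d_{B_k}}$ cannot tolerate two distinct critical points. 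The same issue is exactly your ``main obstacle'' in Stage 2, and the resolution you offer does not close it: what must be shown is that \emph{every} component $B$ of $B_f$ contains at most one critical point of $f$ and that its critical value is precisely the center of $f(B)$. The Riemann--Hurwitz formula only gives the total branching number $d_B-1$ of the proper map $f|_B$; it does not localize the branching at a single point lying over the center. Likewise, post-critical finiteness in $B_f$ only says that critical orbits eventually reach the roots; it does not say that the \emph{first} image of a critical point is a center, nor that two critical points cannot share a component. This is not a removable soft point: there are proper holomorphic self-maps of a disk (e.g.\ degree-three Blaschke products fixing $0$ with local degree $2$ at $0$ and a second critical point whose orbit terminates at $0$) that are ``post-critically finite in the basin'' in this sense and are not conjugate to $z^3$. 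So the ramification matching cannot be deduced from the finiteness assumption by the arguments you cite; it needs either a Newton-specific argument or an appeal to how the post-critically finite representative is produced. (A smaller instance of the same looseness: if $a_k$ is a multiple root it is attracting but not super-attracting, so it is the surgery, not Assumption \ref{assum} by itself, that lets you ``treat $a_k$ as super-attracting''.)

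For comparison, the paper offers no proof of this lemma at all: it is quoted as standard with a reference to Milnor's book, and the intended justification lives in the quasi-conformal surgery performed just before Assumption \ref{assum}. That surgery replaces the dynamics on $B_f$ by model dynamics, so that by construction each component of the basin carries at most one critical point, which is mapped onto the center of its image component; with that structural input your two-stage scheme (B\"ottcher coordinate on fixed immediate basins, then pullback through $w\mapsto w^{d_B}$ by induction on the generation of $B$, using simple connectivity from Shishikura's theorem) is a perfectly reasonable way to write down the maps $\Phi_B$ and verify (1) and (2). As written, however, your proposal treats the crucial single-critical-point/critical-value-at-the-center property as a consequence of post-critical finiteness plus Riemann--Hurwitz, and that implication is false in general; supplying it is the actual content of the lemma.
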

In general, for each $B\in {\rm Comp}({B}_f)$, the B\"ottcher map $\Phi_B$ is not unique. There are $d_B-1$ choices of $\Phi_B$ when $f(B)=B$, and 
$d_B$ choices of $\Phi_B$ when $f(B)\neq B$ and $\Phi_{f(B)}$ is determined.
Once we fix a choice of B\"ottcher maps $\{\Phi_B\}_{B\in\tu{Comp}({B}_f)}$,  we may define the {\it internal rays}, as follows

For each $B\in {\rm Comp}({B}_f)$,  the point $\Phi_B^{-1}(0)$ is called the \emph{center} of $B$, and the Jordan arc 
$$R_B(\theta):=\Phi_B^{-1}(\{re^{2\pi i\theta}:0<r<1\})$$
 is called the \emph{internal ray} of angle $\theta$ in $B$. According to a well-known landing theorem \cite[Theorem 18.10]{Mi06}, when $\theta$ is rational, the internal ray $R_B(\theta)$ always lands (i.e. the limit
 $\lim_{r\rightarrow 1^-} \Phi_B^{-1}(re^{2\pi i\theta})$ exists). A number $r\in(0,1)$ and two rational angles $\theta_1,\theta_2$
 induce a \emph{sector}:
	$$S_B(\theta_1,\theta_2;r):=\Phi_B^{-1}\big(\{t\,e^{2\pi i\theta}:r<t<1,\theta_1<\theta<\theta_2\}\big),$$
	here $\theta_1<\theta<\theta_2$ means that the angles $\theta_1, \theta, \theta_2$ sit in the circle in the counter clock-wise order.

\section{Counting number of poles}\label{sec:fixe_point_thm}

In this section, we develop a method to count the number of poles (counting suitable multiplicity) for Newton maps $f$ in certain domains (which arise from dynamics).
We will show that in the domains we consider, the number of poles is strictly less than the number of Jordan curves which bound the domain.
This fact allows us to construct an invariant graph for Newton maps  by an inductive procedure (see next section).

\subsection{Counting number of fixed points}

By a \emph{graph} we mean  a connected and compact subset of $\olC$, written as the disjoint union of
finitely many points (called {\it vertices}) and
 finitely many open Jordan arcs (called {\it edges}),   any two of which touch only at vertices. A graph can contain a loop.
 
 Let $G$ be a graph.
For any $z\in G$, let $\nu(G, z)$ be the number of components of $G\setminus\{z\}$.
We call $z$ a \emph{cut point} of $G$ if $\nu(G, z)\geq 2$ ($\Longleftrightarrow G\setminus\{z\}$ is disconnected), a \emph{non-cut point}  of $G$ if $\nu(G, z)=1$ ($\Longleftrightarrow G\setminus\{z\}$ is connected).
Observe that all components in $\olC\setminus G$ are Jordan disks if and only if all 
 $z\in G$ are non-cut points.

In our discussion, by a \emph{Jordan domain} or \emph{Jordan disk}, we means an open subset of $\olC$ whose boundary is a 
Jordan curve.  A \emph{pre-Jordan domain} $W$ means a connected component of $g^{-1}(D)$, where $D$ is a Jordan disk and $g$ is a rational map. Here the boundary $\partial D$ may or may not contain critical values of $g$. 
If $\partial D$ contains no critical value of $g$, then each boundary component of $W$ is a Jordan curve; if 
 $\partial D$ contains at least one critical value of $g$,  then each component of $\partial W$ can be written as a 
  union of finitely many Jordan curves, touching  at critical points.
In either case, for any component $\gamma$ of $\partial W$, the map $g|_{\gamma}: \gamma\rightarrow 
\partial D$ has a well-defined degree, denoted by ${\rm deg}(g|_\gamma)$.

One may observe that {\it for any pre-Jordan domain $W$, any  component $V$ of  $h^{-1}(W)$ 
is again a pre-Jordan domain, here $h$ is a rational map}. To see this, note that $W$ is a component of $g^{-1}(D)$ ($D$ is a Jordan disk), and 
$V$ is a component of $(g\circ h)^{-1}(D)$, where $g\circ h$ is a rational map.

  \begin{figure}[h]
\begin{center}
\includegraphics[height=5.5cm]{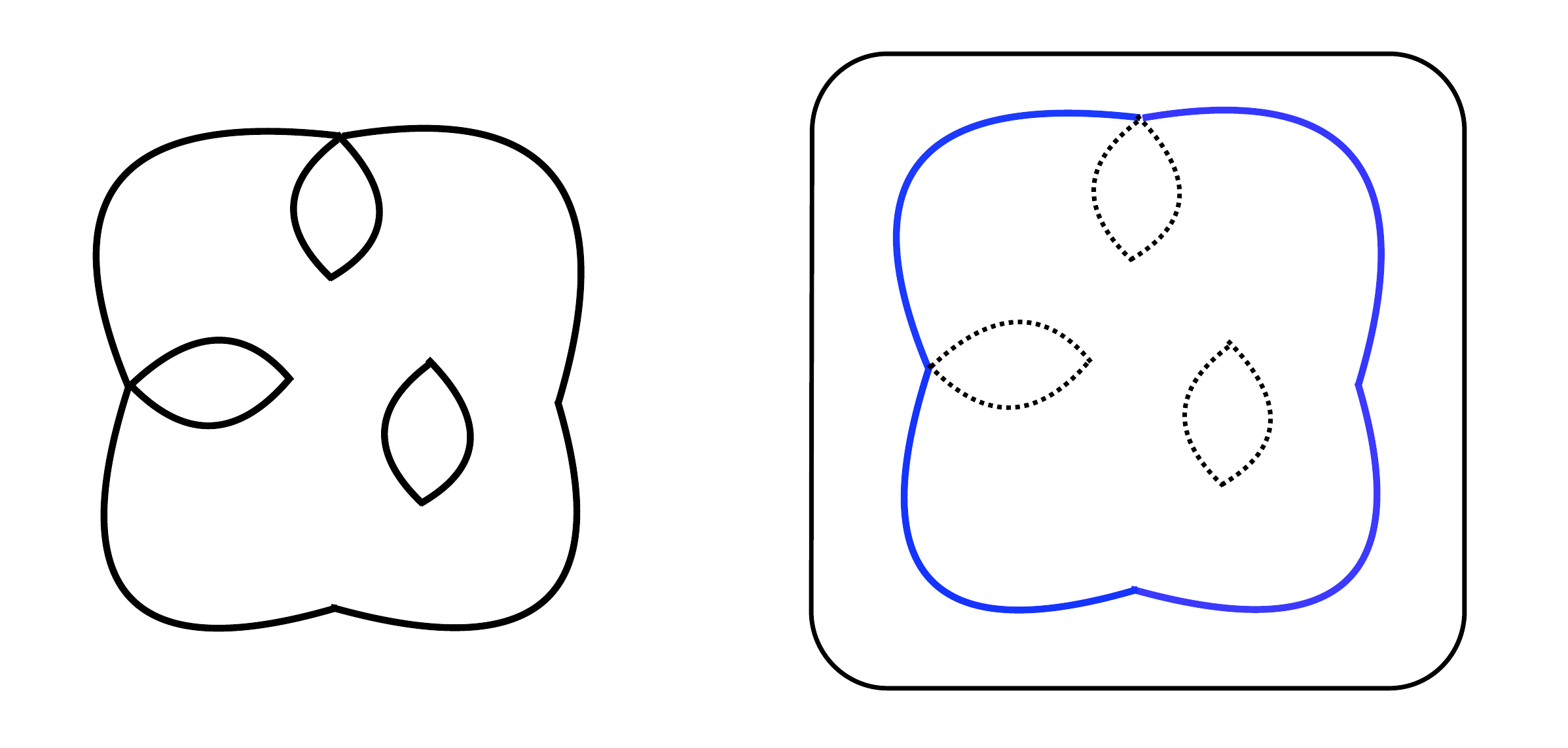} 
\put(-267,55){$U$} \put(-107,55){$\wh{U}_D$} \put(-34,22){$D$}
 \caption{An example of pre-Jordan domain $U$ (left), and its filled closure $\wh{U}_D$ with respect to $D$ (right).
Clearly $\wh{U}_D$ is a Jordan disk bounded by a blue curve.
 }
 \label{filled-closure}
\end{center}
\end{figure}


	 Let $U$ be a pre-Jordan domain, and $D$ be a Jordan disk  in $\olC$ such that $U\subseteq D$.
	  The \emph{filled closure} of $U$ with respect to $D$, denoted by $\wh{U}_D$, is 
	$$\wh{U}_D=\ol{U}\cup\cup_V\ol{V},$$
	where $V$ ranges over all components of $\olC\setminus\ol{U}$ with $V\subseteq D$. See Figure \ref{filled-closure}.

It's easy to verify the following facts:
\begin{itemize}
    \item    $\ol{U}\subseteq\wh{U}_D\subseteq \ol{D}$.
    \item   The filled closure $\wh{U}_D$ is always a Jordan disk\footnote{To see this, if  $\wh{U}_D$ is not a Jordan disk, then $\partial\wh{U}_D$
	     is not a Jordan curve, and can be written as 
	     a union of finitely many Jordan curves, say $\alpha_1, \cdots, \alpha_k$ with $k\geq2$, such that the intersection of any two curves is a  finite set. These curves enclose mutually disjoint Jordan disks, $D_1, \cdots, D_k$, in $\olC-\overline{U}$. 
	     Note that $\partial D\subset \overline{D}_j$ for some $j$, however  this will contradict the definition of
	     $\wh{U}_D$. 
}.
	\item   $\wh{U}_D=\ol{U}$ if and only if $U$ is a Jordan disk.
	     \end{itemize}
	     
%
%
%
%


The following fixed point theorem appears in \cite[Theorem 4.8]{RS}. 


\begin{lemma}
\label{lem:fixed_points}
	Let $D\subseteq \olC$ be a Jordan disk, and $g$ be a rational map of degree at least two.	
Suppose that $g^{-1}(D)$ has a component $U\subseteq D$. If $\partial D\cap\partial U$ contains a fixed point $q$, we further require that $q$
 is repelling and 
$g(N_q\cap\partial U)\supseteq N_{q}\cap\partial U$ in a neighborhood $N_{q}$ of $q$. Then
	$$\#\tu{Fix}(g|_{\ol{U}})=\tu{deg}(g|_{\partial U}).$$
\end{lemma}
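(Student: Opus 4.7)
The plan is to reduce the statement to a question about a proper holomorphic map on the unit disk, and then apply the argument principle together with a continuity-of-zeros argument under a suitable holomorphic homotopy.

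First, I would use Carath\'eodory's theorem (applicable since $\partial D$ is a Jordan curve) to extend a Riemann map $\phi\colon \mathbb{D}\to D$ to a homeomorphism $\phi\colon \overline{\mathbb{D}}\to\overline{D}$ that is conformal on the interior. Setting $V := \phi^{-1}(U)\subseteq\mathbb{D}$ and $G := \phi^{-1}\circ g\circ\phi$, I get a proper holomorphic map $G\colon V\to\mathbb{D}$ of degree $n:=\deg(g|_{\partial U})$, with fixed points of $G$ in $\overline{V}$ in bijective, multiplicity-preserving correspondence with fixed points of $g$ in $\overline{U}$. By properness, $G$ extends continuously to $\overline{V}\to\overline{\mathbb{D}}$ with $G(\partial V)\subseteq\partial\mathbb{D}$, so $|G(z)|=1$ and $|z|\leq 1$ on $\partial V$, with $|z|=1$ only at the boundary fixed points. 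After a harmless preliminary move I may also assume $0\in\mathbb{D}$ is not a fixed point and not a critical value of $G$, so that the $n$ preimages of $0$ under $G$ are distinct and all lie in $V$.

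Next I would introduce the holomorphic homotopy $h_t(z) := G(z)-tz$, $t\in[0,1]$, and track its zeros in $V$. At $t=0$, $h_0=G$ has exactly $n$ zeros in $V$ (the preimages of $0$); at $t=1$, $h_1=G-\mathrm{id}$ has zeros precisely at the fixed points of $G$ in $V$. For any $z\in\partial V$ with $z$ not a fixed point, $|G(z)|=1$ and $|z|\leq 1$ with equality only at other boundary fixed points, so $|h_t(z)|\geq 1-t|z|>0$ for every $t\in[0,1]$; hence no zero of $h_t$ can escape across $\partial V$ except at a boundary fixed point. At the fixed point $q\in\partial V\cap\partial\mathbb{D}$: since $G'(q)=\lambda$ with $|\lambda|>1$, in particular $\lambda\neq 1$, the implicit function theorem gives a unique smooth branch $z(t)$ of zeros of $h_t$ with $z(1)=q$, and the linearization yields
\[
  z(t) \;=\; q \;-\; \tfrac{1-t}{\lambda-1}\,q \;+\; O\bigl((1-t)^2\bigr).
\]
A local computation in linearizing coordinates at $q$ shows that $z(t)$ moves into $V$ (rather than into $\mathbb{D}\setminus\overline V$) as $t$ decreases from $1$; here the hypothesis $g(N_q\cap\partial U)\supseteq N_q\cap\partial U$ is essential, for it identifies $\partial V$ near $q$ as an invariant curve under $\zeta\mapsto\lambda\zeta$ in the linearizing coordinate, which, combined with $V\subseteq\mathbb{D}$, pins down the side of $\partial V$ into which $z(t)$ retreats. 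Consequently exactly one of the $n$ zeros of $h_t$ escapes from $V$ to $\partial V$ (landing at $q$) as $t\to 1$, while all remaining zeros stay inside $V$. Therefore
\[
  \#\mathrm{Fix}(G|_V)\;=\;n-\#\bigl(\partial V\cap\mathrm{Fix}(G)\bigr),
\]
and adding the boundary fixed points (each contributing $1$ because it is a simple zero of $G-\mathrm{id}$) yields $\#\mathrm{Fix}(G|_{\overline V})=n$, which by the reduction proves the lemma.

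The main obstacle is the local analysis at the boundary fixed point $q$: one must use the invariance hypothesis to determine the geometry of $\partial V$ near $q$ in the linearizing coordinate, and then verify that the implicit-function branch $z(t)$ enters $V$ (and not its complement) as $t$ drops below $1$. Once that local picture is under control, the global count follows from the standard continuity of zeros of a holomorphic family, combined with the elementary Rouch\'e bound $|h_t(z)|\geq 1-t|z|$ on $\partial V$ away from $q$.
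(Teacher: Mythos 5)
Your overall strategy---transfer to the disk via Carath\'eodory, track the zeros of the holomorphic family $h_t=G-t\,\mathrm{id}$ by the argument principle, and handle the boundary fixed point with the implicit function theorem---is sound and does prove the lemma. Note that the paper itself does not prove this statement; it invokes it from R\"uckert--Schleicher \cite{RS} (Theorem~4.8), so there is no in-paper proof to compare against. Your argument is a legitimate self-contained one; a few minor points: the ``harmless preliminary move'' is unnecessary (since $G(\partial V)\subseteq\partial\mathbb D$ and $0\notin\partial\mathbb D$, the set $G^{-1}(0)$ automatically lies in $V$ and has $n$ points with multiplicity), and the Rouch\'e-type bound should be quoted as $|h_t(z)|\geq 1-t\geq 1-t|z|>0$ for $z\in\partial V$, which works uniformly for fixed $t<1$ even when $\partial V$ meets $\partial\mathbb D$ at non-fixed points.

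The one step you should not leave as ``a local computation\ldots shows'' is the assertion that the IFT branch $z(t)$ enters $V$ as $t$ drops below $1$, because this is exactly where the hypothesis bites, and your description of the mechanism is slightly off. The hypothesis is actually considerably more rigid than ``$\partial V$ is an invariant curve in the linearizing coordinate.'' Since $\partial U\subseteq g^{-1}(\partial D)$, we have $g(N_q\cap\partial U)\subseteq\partial D$, and the hypothesis $N_q\cap\partial U\subseteq g(N_q\cap\partial U)$ then forces $N_q\cap\partial U\subseteq\partial D$; combined with $U\subseteq D$ and connectivity of $D\cap N_q$, this gives $N_q\cap\partial U=N_q\cap\partial D$. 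In the disk model, $\partial V$ coincides with $\partial\mathbb D$ in a neighborhood of $q$. Consequently $G$ (extended by Schwarz reflection) preserves an arc of $\partial\mathbb D$ through $q$ in an orientation-preserving way, which forces $\lambda=G'(q)$ to be a \emph{real} number greater than $1$. Only with this in hand does your expansion $z(t)=q-\frac{1-t}{\lambda-1}q+O\big((1-t)^2\big)$ visibly move in the radial inward direction $-q$ (transverse to $\partial\mathbb D$), hence into $V$, which is what pins the lost zero to the inside. Had $\lambda$ been allowed to be complex, the invariant curves in linearizing coordinates would be logarithmic spirals and the direction of $z(t)$ relative to $V$ would not be determined without further argument; the rigidity above is precisely why that case cannot occur. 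With this observation inserted, your proof is complete.
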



Here the number of fixed points is counted with multiplicity. Recall that the
 \emph{multiplicity} of a fixed point $z_0\in\mathbb C$ 
is defined to be the unique integer $m\geq 1$ such that, near $z_0$, 
$$g(z)-z=a_m(z-{z}_0)^m+a_{m+1}(z-{z}_0)^{m+1}+\cdots$$ with $a_m\neq 0$. 
The number $m$ is independent of the choice of coordinates.

 As a consequence of Lemma \ref{lem:fixed_points}, one has

\begin{coro}\label{cor:counting_number}
Let $D$ be a Jordan disk in $\olC$, and $g$ be a rational map of degree at least two.
Suppose that $g^{-1}(D)$ has a component $U\subseteq D$. If $\partial D\cap\partial U$ contains a fixed point $q$, we further require that $q$
 is repelling and 
$g(N_q\cap\partial U)\supseteq N_{q}\cap\partial U$ in a neighborhood $N_{q}$ of $q$.	
 Then

	$$\#\tu{Fix}(g|_{\wh{U}_D})=\sum_{V}\tu{deg}(g|_{\partial V}),$$
	where $V$ runs over all components of $g^{-1}(D)$ such that $V\subseteq \wh{U}_D$. 
	
	In particular, if $\wh{U}_D$ contains only one fixed point (counting multiplicity), then $U$ is a Jordan disk ($\Longleftrightarrow\ol{U}=\wh{U}_D$), and $g:U\to D$ is a homeomorphism.
\end{coro}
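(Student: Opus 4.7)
The plan is to decompose $\wh{U}_D$ into the closures of the components of $g^{-1}(D)$ it contains and apply Lemma \ref{lem:fixed_points} to each. Let $V_1=U,V_2,\ldots,V_n$ denote those components (finitely many since $g$ has finite degree). I would first verify two basic containments: each $V_i$ lies in $D$ (because $\wh{U}_D\subseteq\ol D$, and an open subset of $\ol D$ in $\olC$ lies in the interior, which is $D$), and $\wh{U}_D\cap\partial D=\partial U\cap\partial D$ (every component $W$ of $\olC\setminus\ol U$ with $W\subseteq D$ has $\partial W\subseteq\partial U$, so $\ol W$ can meet $\partial D$ only through $\partial U$). Moreover, each $V_i$ is disjoint from $\partial U\cup\partial D$, since $\partial U\subseteq g^{-1}(\partial D)$ while $V_i\subseteq g^{-1}(D)$.

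Next I would locate all fixed points of $g$ inside $\wh{U}_D$. Any such $z$ satisfies $z\in\ol D$. If $z\in D$, then $z$ lies in some component $V$ of $g^{-1}(D)$; the disjointness just noted, together with connectedness of $V$, forces $V\subseteq\wh{U}_D$, so $V$ is one of the $V_i$. If $z\in\partial D$, then by the boundary equality above $z\in\partial U=\partial V_1$, hence $z\in\ol{V_1}$. Thus $\tu{Fix}(g|_{\wh{U}_D})\subseteq\bigcup_i\ol{V_i}$, with the only possible overlap occurring at the boundary fixed point $q$ (if present).

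The crux is to apply Lemma \ref{lem:fixed_points} to each pair $(V_i,D)$. For $V_1=U$ the hypothesis on $q$ is exactly what is given. For $i\geq 2$, the only candidate fixed point on $\partial V_i\cap\partial D\subseteq\partial U\cap\partial D$ is $q$, and at such a point the repelling/local-surjectivity condition transfers from $\partial U$ to $\partial V_i$ via a local topological argument near the repelling fixed point $q$. Summing the resulting equalities $\#\tu{Fix}(g|_{\ol{V_i}})=\tu{deg}(g|_{\partial V_i})$ over $i$, and taking care that the shared fixed point $q$ contributes its correct multiplicity just once in the total, yields the claimed identity.

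For the ``in particular'' assertion, if $\#\tu{Fix}(g|_{\wh{U}_D})=1$ then the sum forces $n=1$ and $\tu{deg}(g|_{\partial U})=1$, so $\partial U$ is a single Jordan curve and $g\colon U\to D$ is a degree-one branched cover, hence a homeomorphism; since the complementary component of $\ol U$ other than $U$ itself then contains points outside $\ol D$, no component of $\olC\setminus\ol U$ lies inside $D$, so $\wh{U}_D=\ol U$ and $U$ is a Jordan disk. The main obstacle I foresee is the bookkeeping around $q$ in the third step: transferring the repelling/local-surjectivity hypothesis from $\partial U$ to each $\partial V_i$ abutting $q$, and correctly combining the individual multiplicities of $q$ across the pieces so that the final count is exact.
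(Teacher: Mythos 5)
Your decomposition, the localization of fixed points inside $\bigcup_i\ol{V_i}$, and the plan to apply Lemma \ref{lem:fixed_points} piecewise all match the paper's argument. But the step you yourself flag as the main obstacle is a genuine gap, and the way you propose to close it would not work. If a fixed point $q\in\partial U\cap\partial D$ also lay on $\partial V_i$ for some $i\geq 2$, then applying Lemma \ref{lem:fixed_points} to each piece and summing would count $q$ once in $\#\tu{Fix}(g|_{\ol{V_1}})$ and again in $\#\tu{Fix}(g|_{\ol{V_i}})$, so the sum $\sum_k\#\tu{Fix}(g|_{\ol{V_k}})$ would strictly exceed $\#\tu{Fix}(g|_{\wh{U}_D})$; there is no way to make $q$ ``contribute its correct multiplicity just once'' merely by bookkeeping, and the proposed ``transfer'' of the local surjectivity hypothesis to $\partial V_i$ is asserted, not proved.

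The paper's resolution is to show this configuration simply cannot occur, which removes both the transfer problem and the double counting. For $i\neq j$ the intersection $\ol{V_i}\cap\ol{V_j}$ is contained in the critical set of $g$ (two distinct components of $g^{-1}(D)$ can only touch at critical points), so a fixed point lying on such an intersection would be a critical fixed point, hence superattracting. On the other hand, as you correctly showed, any fixed point of $g|_{\wh{U}_D}$ on $\partial D$ lies in $\partial U\cap\partial D$, where by hypothesis it is repelling --- a contradiction (a repelling point is in particular non-critical). Consequently no fixed point lies on any $\ol{V_i}\cap\ol{V_j}$, and in particular no fixed point lies on $\partial V_i\cap\partial D$ for $i\geq 2$, so Lemma \ref{lem:fixed_points} applies to every piece with no additional boundary hypothesis, and the sum counts each fixed point of $g|_{\wh{U}_D}$ exactly once. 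This is the missing idea in your write-up; the rest (including your treatment of the preimages of $\olC\setminus\ol{D}$ inside $\wh{U}_D$, handled implicitly by your localization of fixed points, and the ``in particular'' statement, where the paper is equally brief) is essentially the paper's proof.
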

\begin{proof}
	Let $V_1=U, V_2,\cdots,V_n$ (resp. $V'_1,\cdots,V_m'$) be all the components of $g^{-1}(D)$ (resp. $g^{-1}(\olC\setminus \overline{D})$) in the filled closure $\wh{U}_D$. Then by definition,
	$$\wh{U}_D=\ol{V}_1\cup\cdots\cup\ol{V}_n\cup V_1'\cdots\cup V_m'.$$ These $V_k'$'s are clearly disjoint from fixed points. For distinct $\ol{V}_i,\ol{V}_j$, 
	the intersection $\ol{V}_i\cap \ol{V}_j$ is a finite set because it is contained in the critical set of $g$.
	Further, if $\ol{V}_i\cap \ol{V}_j$ contains a fixed point, say $q$, of $g$, then $q$ is 
	a critical point, hence a superattracting fixed point.
	Moreover, we have $q\in \partial V_i\cap\partial D$, and this implies that $q$ is also 
	on $\partial U\cap\partial D$. However, this contradicts our assumption.
	Therefore, there is no fixed point on $\partial V_i\cap \partial V_j$.
	
	  It follows from Lemma \ref{lem:fixed_points} that
	$$\#\tu{Fix}(g|_{\wh{U}_D})=\sum_{1\leq k\leq n}\#\tu{Fix}(g|_{\ol{V}_k})=\sum_{1\leq k\leq n}\tu{deg}(g|_{\partial V_k}).$$
	
This equality implies that, if $\#\tu{Fix}(g|_{\wh{U}_D})=1$, then 
	$U$ is the unique component of $g^{-1}(D)$ in $D$ and ${\rm deg}(g|_U)=1$, proving the statement.
\end{proof}

\begin{remark}[What is multiplicity?]\label{d-multiplicity}
 In Corollary \ref{cor:counting_number}, the sum 
 $$\sum_{1\leq k\leq n}\tu{deg}(g|_{\partial V_k})$$
is the cardinality $\#(g^{-1}(q)\cap \wh{U}_D)$, for (any) $q\in \partial{D}$, counting multiplicity.

The multiplicity $m(p, \wh{U}_D)$ of $p\in g^{-1}(q)\cap \wh{U}_D$ is an integer between $1$ and the local degree $\tu{deg}(g,p)$.
A natural definition is  as follows (see Figure \ref{multiplicity}):

Let $\mathcal{F}(p, \wh{U}_D)$ consist of those $V\in \{V_1,\cdots, V_n\}$ so that $p\in \partial V$. For each $V\in \mathcal{F}(p, \wh{U}_D)$, let $ T(p,\partial V)\geq 1$ be the number of meeting times when one moves along the boundary 
$\partial V$ once. The multiplicity $m(p, \wh{U}_D)$ is defined by
$$m(p, \wh{U}_D)=\sum_{V\in \mathcal{F}(p, \wh{U}_D)} T(p,\partial V).$$

  \begin{figure}[h]
\begin{center}
\includegraphics[height=5.5cm]{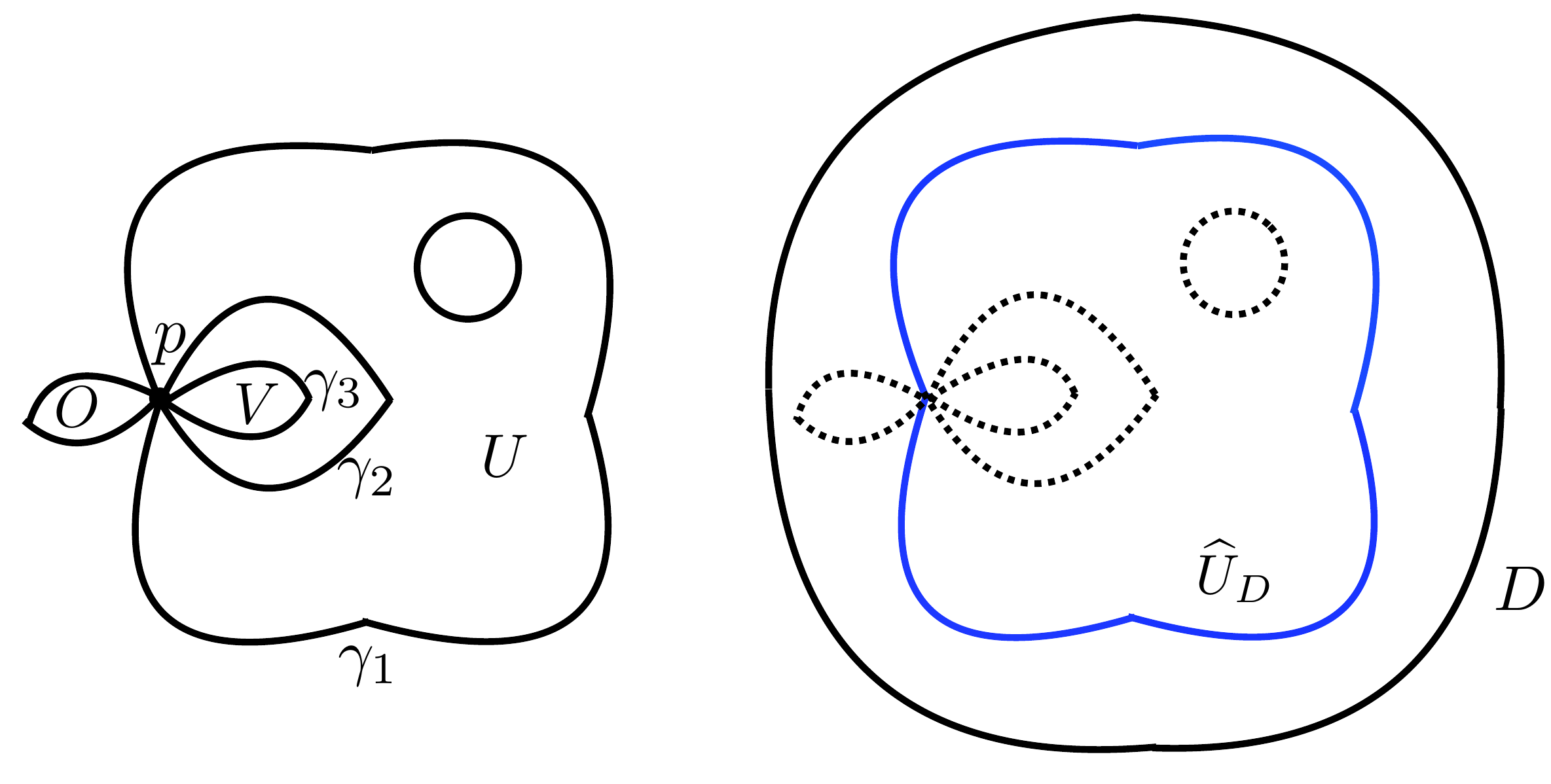} 
 \caption{In this example, there are three components of $g^{-1}(D)$ contained in $D$. They are  $U, V$ and $O$. Their boundaries touch at $p$.  The filled closure $\wh{U}_D$ contains $U,V$.
 Clearly, $\partial U=\gamma_1\cup \gamma_2$, $\partial V=\gamma_3$.  Moreover, $\partial U$ meets $p$ twice and $\partial V$ meets $p$ once, hence $m(p, \wh{U}_D)=2+1=3$. Note also $\alpha(p)=\gamma_1\cup\gamma_2\cup \gamma_3$ and $\nu(\alpha(p), p)=3=m(p, \wh{U}_D)<\tu{deg}(g, p)=4$.
 }
 \label{multiplicity}
\end{center}
\end{figure}

Let $\alpha(p)$ be the component of $g^{-1}(\partial D)\cap \wh{U}_D$ 
containing $p$.
 One may show
 $$m(p, \wh{U}_D)=\nu(\alpha(p), p).$$

Hence,  we have the identity 
$$\sum_{p\in g^{-1}(q)\cap \wh{U}_D}\nu(\alpha(p), p)=\sum_{p\in g^{-1}(q)\cap \wh{U}_D}m(p, \wh{U}_D)
 =\sum_{1\leq k\leq n}\tu{deg}(g|_{\partial V_k}).$$
%
%
\end{remark}


\subsection{The inverse image of a Jordan curve} Let $\gamma$ be a Jordan curve in 
$\olC$. Its complement $\olC-\gamma$ has two components, one is called the {\it interior part} of $\gamma$, denoted by $\tu{Int}(\gamma)$, while the other is called the {\it exterior part} of $\gamma$, denoted by $\tu{Ext}(\gamma)$. The designation of interior or exterior  part is arbitrary at this moment.

Let $g$ be a rational map.
Suppose there is a component $U$ of $g^{-1}(\tu{Ext}(\gamma))$ contained in $\tu{Ext}(\gamma)$.
Let $\wh{U}$ be the filled closure of $U$ with respect to $\tu{Ext}(\gamma)$. The {\it inverse image}  $\gamma^{-1}$ of $\gamma$ with respect to $g$,  is the Jordan curve
$$\gamma^{-1}=\partial \wh{U}.$$ 
One may verify that $g(\gamma^{-1})=\gamma$ and $\gamma^{-1}$ is  contained in (possibly equal to) a 
    connected component, say $\alpha$, of $g^{-1}(\gamma)$. 
    Moreover,
    the  degrees of $g|_{\gamma^{-1}}$ and $g|_{\alpha}$ are well-defined, and satisfy
    $$\tu{deg}(g|_{\gamma^{-1}})\leq \tu{deg}(g|_{\alpha})\leq \tu{deg}(g).$$
 
 The equality $\tu{deg}(g|_{\gamma^{-1}})=\tu{deg}(g|_{\alpha})$ holds if and only if $\gamma^{-1}=\alpha$.
 Applying the same operation to the new curve $\gamma^{-1}$, one get $\gamma^{-2}=(\gamma^{-1})^{-1}$. 
 Precisely, 
    let $V$ be a  component $g^{-1}(\tu{Ext}(\gamma^{-1}))$ contained in $\tu{Ext}(\gamma^{-1})$, and $\wh{V}$ the filled closure of $V$ with respect to $\tu{Ext}(\gamma^{-1})$,
    we set $\gamma^{-2}=(\gamma^{-1})^{-1}=\partial \wh{V}$.
     Similarly, for any integer $n\geq 1$, the curve $\gamma^{-n}$  can be defined inductively:  
     $$\gamma^{-n}=(\gamma^{-n+1})^{-1},$$
with the  property $\tu{Ext}(\gamma^{-n})\subseteq \tu{Ext}(\gamma^{-n+1})\subseteq \cdots\subseteq
\tu{Ext}(\gamma^{-1})$.
    
    We remark that the only ambiguity in the definition of $\gamma^{-1}$ occurs when we are choosing the component  $U$ of $g^{-1}(\tu{Ext}(\gamma))$. There might be several components of $g^{-1}(\tu{Ext}(\gamma))$ contained in $\tu{Ext}(\gamma)$, and $U$ is not unique.
   Even though, the readers needn't worry about that, because in the following discussion, we actually choose some 
    specific component  $U$ of $g^{-1}(\tu{Ext}(\gamma))$, and there will be no ambiguity then.


\subsection{Counting number of poles}

We say that the Jordan curves $\gamma_1,\cdots,\gamma_n$ with $n\geq 2$ in $\olC$ are {\it independent}, if
\begin{itemize}
    \item    $\gamma_i\cap \gamma_j$ is a finite set (possibly empty), for $i\neq j$.
    \item  For any $k$,  there is a component $\tu{Int}(\gamma_k)$ of $\olC\setminus\gamma_k$, designated as  the \emph{interior part} of $\gamma_k$,  such that the Jordan disks $\tu{Int}(\gamma_1),\cdots,\tu{Int}(\gamma_n)$ are mutually disjoint (see Figure \ref{f3-b}). 
	     \end{itemize}

  \begin{figure}[h]
\begin{center}{
  \includegraphics[height=5cm]{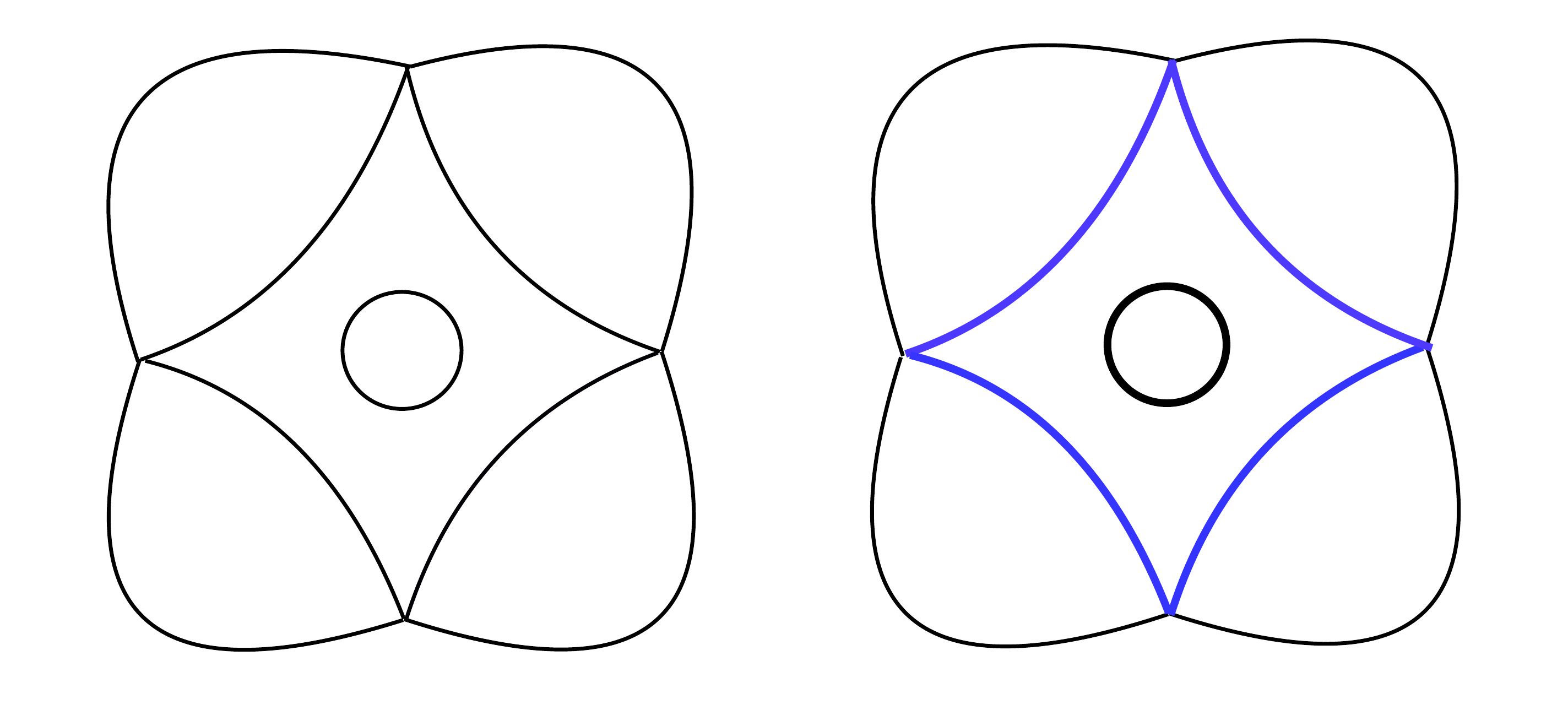} 
\put(-305,35){$\gamma_2$}\put(-285,35){$\tu{Int}(\gamma_2)$}
 \put(-305,98){$\gamma_1$} \put(-285,105){$\tu{Int}(\gamma_1)$} 
 \put(-172,98){$\gamma_4$} \put(-215,105){$\tu{Int}(\gamma_4)$} 
 \put(-172,35){$\gamma_3$} \put(-215,35){$\tu{Int}(\gamma_3)$} 
 \put(-240,69){$\gamma_5$} 
  \put(-170,67){$W_2$} \put(-240,46){$W_1$} 
 \put(-87,69){$\gamma_5$}\put(-87,45){$W_1$} \put(-60,44){$\eta$}}
 \caption{An example of independent Jordan curves $\gamma_1,\cdots, \gamma_5$ (left), 
 here $A(\gamma_1,\cdots,\gamma_5)=W_1\cup W_2$, and 
 $W_1$ can be written as $A(\gamma_5, \eta)$, where $\gamma_5,\eta$ are independent (right).
 }
 \label{f3-b}
\end{center}\end{figure}

	   Note that when we are saying that the curves $\gamma_1,\cdots,\gamma_n$ are independent, their interiors $\tu{Int}(\gamma_k)$'s are determined. The other component of $\olC\setminus\gamma_k$ is the \emph{exterior part} of $\gamma_k$, denoted by $\tu{Ext}(\gamma_k)$. Let 
	   $$A(\gamma_1,\cdots,\gamma_n)=\bigcap_{1\leq k\leq n}\tu{Ext}(\gamma_k)=\olC-\bigcup_{1\leq k\leq n}\ol{\tu{Int}(\gamma_k)}.$$	

Clearly $A(\gamma_1,\cdots,\gamma_n)$ is an open set and has finitely many connected components. It is worth observing that for any component $W$ of $A(\gamma_1,\cdots,\gamma_n)$, which is not a Jordan disk, 
there are independent Jordan curves $\eta_1,\cdots,\eta_m$ for some $m\geq 2$ such that $W=A(\eta_1,\cdots,\eta_m)$, see Figure \ref{f3-b}.

\begin{prop}\label{prop:key_prop}
	Let $g$ be a rational map with $\infty$ a repelling fixed point. Let $\gamma_1, \cdots,\gamma_n$ be independent Jordan curves in $\olC$ satisfying  that
	
(a).	$\gamma_i\cap \gamma_j=\{\infty\}$ for any $i\neq j$;

(b).   All fixed points of $g$ in $\mathbb{C}$ are contained in $\tu{Int}(\gamma_1)\cup\cdots\cup \tu{Int}(\gamma_n)$;
		
(c).  In a neighborhood $N(\infty)$ of $\infty$, one has 
		$$N(\infty)\cap \gamma_k\subseteq g(N(\infty)\cap\gamma_k), \ \forall \ 1\leq k\leq n;$$ 
	
(d). The unbounded component of $g^{-1}(\gamma_k)$ is contained in $\ol{\tu{Ext}}(\gamma_k)$.

	 Then  the unbounded component  $U_k$ of $g^{-1}(\tu{Ext}(\gamma_k))$ satisfies 
	$$U_k\subseteq \tu{Ext}(\gamma_k).$$
	
	Further, let $\wh{U}_k$ be the filled closure of $U_k$ with respect to $\tu{Ext}(\gamma_k)$ and let 
	$\gamma_k^{-1}=\partial\wh{U}_k$. Then
	
	1. $\gamma_1^{-1},\cdots,\gamma_n^{-1}$ are independent Jordan curves with $\ol{\tu{Ext}}(\gamma_k^{-1})=\wh{U}_k$;
	
	2. In each $\wh{U}_k$, 
		the number of poles (counting multiplicity, see Remark \ref{d-multiplicity}) equals that of fixed points;
	
	3. $g^{-1}(\tu{Ext}(\gamma_k))$ is disjoint from $\tu{Ext}(\gamma_k)\setminus \tu{Ext}(\gamma_k^{-1})$;
	
	4. The unbounded component of $g^{-1}(\ol{A(\gamma_1,\cdots,\gamma_n)})$ is contained in 
		$$\ol{A(\gamma_1^{-1},\cdots,\gamma_n^{-1})}.$$
\end{prop}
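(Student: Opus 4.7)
The overall strategy is to reduce everything to Corollary \ref{cor:counting_number}, applied on each exterior disk $D=\tu{Ext}(\gamma_k)$ with distinguished boundary fixed point $q=\infty$. The main preparatory step is the containment $U_k\subseteq\tu{Ext}(\gamma_k)$; once this holds, conclusions 1--4 follow from a careful analysis of the filled closure $\widehat{U}_k$ together with the pole count from the corollary, intersected over $k$.

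\textbf{Step 1: localizing at $\infty$ and proving $U_k\subseteq\tu{Ext}(\gamma_k)$.} Since $\infty$ is a repelling fixed point, $g$ is locally biholomorphic there; together with (c) this forces $g^{-1}(\gamma_k)\cap N(\infty)=\gamma_k\cap N(\infty)$ on a sufficiently small neighborhood. Hence the unbounded component $\alpha_k$ of $g^{-1}(\gamma_k)$ contains $\gamma_k\cap N(\infty)$, and hypothesis (d) pins down the local orientation: $\alpha_k\subseteq\ol{\tu{Ext}}(\gamma_k)$ forces $g$ to send the $\tu{Ext}(\gamma_k)$-side of $\gamma_k$ near $\infty$ into $\tu{Ext}(\gamma_k)$, which gives $g^{-1}(\tu{Ext}(\gamma_k))\cap N(\infty)\subseteq\tu{Ext}(\gamma_k)$ and places $U_k$ inside $\tu{Ext}(\gamma_k)$ locally. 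Globally, $U_k$ is connected, unbounded, disjoint from $g^{-1}(\gamma_k)$, and borders $\alpha_k$ from the $\tu{Ext}(\gamma_k)$-side at $\infty$; a topological argument leveraging (a), (b), and the independence of $\gamma_1,\ldots,\gamma_n$ then rules out any excursion of $U_k$ across $\gamma_k$ at points of $\gamma_k\sm\alpha_k$.

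\textbf{Conclusions 1 and 2.} For conclusion 1, set $\gamma_k^{-1}=\partial\widehat{U}_k$ and designate $\ol{\tu{Ext}}(\gamma_k^{-1})=\widehat{U}_k$; each $\widehat{U}_k$ is a Jordan disk by the basic properties of the filled closure, so $\gamma_k^{-1}$ is a Jordan curve through $\infty$. The new interiors $\tu{Int}(\gamma_k^{-1})=\olC\sm\widehat{U}_k$ contain the original $\tu{Int}(\gamma_k)$; pairwise disjointness follows from observing that the $U_k$'s jointly cover the $n$ sectors of $A(\gamma_1,\ldots,\gamma_n)$ at $\infty$, and finiteness of $\gamma_k^{-1}\cap\gamma_j^{-1}$ from local injectivity of $g$ at $\infty$. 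For conclusion 2, apply Corollary \ref{cor:counting_number} with $D=\tu{Ext}(\gamma_k)$, $U=U_k$, $q=\infty$: hypothesis (c) combined with $\gamma_k\cap N(\infty)\subseteq\partial U_k$ (from Step 1) provides $g(N(\infty)\cap\partial U_k)\supseteq N(\infty)\cap\partial U_k$, and by Remark \ref{d-multiplicity} the resulting sum of boundary degrees is exactly the number of poles in $\widehat{U}_k$ counted with multiplicity.

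\textbf{Conclusions 3, 4, and main obstacle.} Conclusion 3 is a filled-closure tautology: if a component $V\neq U_k$ of $g^{-1}(\tu{Ext}(\gamma_k))$ met $\tu{Ext}(\gamma_k)\sm\widehat{U}_k$, then $V$ would avoid $\ol{U}_k$ (since $V\cap g^{-1}(\gamma_k)=\emptyset$) and sit in some component $W$ of $\olC\sm\ol{U}_k$ whose $\tu{Ext}(\gamma_k)$-portion, by definition of $\widehat{U}_k$, would already be absorbed into $\widehat{U}_k$---a contradiction. Conclusion 4 uses $\ol{A(\gamma_1,\ldots,\gamma_n)}=\bca_k\ol{\tu{Ext}}(\gamma_k)$ (a consequence of the independence of the $\gamma_k$'s): the unbounded component of $g^{-1}(\ol{A})$ contains $\infty$, sits in each $\widehat{U}_k=\ol{\tu{Ext}}(\gamma_k^{-1})$ by Step 1 together with conclusion 3, and hence inside $\bca_k\ol{\tu{Ext}}(\gamma_k^{-1})=\ol{A(\gamma_1^{-1},\ldots,\gamma_n^{-1})}$. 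The main obstacle is the global extension in Step 1: ruling out that $U_k$ crosses $\gamma_k$ at points of $\gamma_k\sm\alpha_k$, since $\gamma_k\not\subseteq g^{-1}(\gamma_k)$ away from $\infty$. Closing off this escape uses the full strength of (a)--(d), especially the no-fixed-point condition (b), and is the technical heart of the argument.
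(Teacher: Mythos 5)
Your proposal correctly identifies the ambient tools (Corollary \ref{cor:counting_number} applied with $D=\tu{Ext}(\gamma_k)$ and $q=\infty$, plus intersecting over $k$ for conclusion 4), but it leaves the two load-bearing steps unproven and, more importantly, locates the real difficulty in the wrong place. For the containment $U_k\subseteq\tu{Ext}(\gamma_k)$ you invoke ``a topological argument leveraging (a), (b) and independence'' and then concede at the end that this is the main obstacle; so the first assertion of the proposition is not actually established. In the paper this step is short and uses neither (b) nor the other curves: letting $\alpha_k$ be the unbounded component of $g^{-1}(\gamma_k)$, the piece $U_k$ lies in the component $C_{k,\infty}$ of $\olC\setminus\alpha_k$ determined by the $\tu{Ext}$-side germ at $\infty$; since (d) gives $\alpha_k\cap\tu{Int}(\gamma_k)=\emptyset$, the connected set $\tu{Int}(\gamma_k)$ is either contained in $C_{k,\infty}$ or disjoint from it, the first alternative is excluded by the local picture at $\infty$ coming from (c) together with orientation preservation, and the second forces $C_{k,\infty}\subseteq\tu{Ext}(\gamma_k)$.

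The more serious issue is that conclusion 3 is not a ``filled-closure tautology.'' The filled closure $\wh{U}_k$ absorbs only those components of $\olC\setminus\ol{U}_k$ that lie entirely in $\tu{Ext}(\gamma_k)$; the component containing $\tu{Int}(\gamma_k)$, which is exactly $\tu{Int}(\gamma_k^{-1})$, is not absorbed, and nothing formal prevents a component $V\neq U_k$ of $g^{-1}(\tu{Ext}(\gamma_k))$ from sitting in the $\tu{Ext}(\gamma_k)$-part of that unabsorbed region. Ruling this out is precisely where hypothesis (b) and Corollary \ref{cor:counting_number} must be used: the paper proves the Claim that every bounded ``$\tu{Ext}$-side'' component $C_{k,1},\cdots,C_{k,l}$ of $\olC\setminus\alpha_k$ is contained in $\wh{U}_k$, by applying Corollary \ref{cor:counting_number} to a preimage component inside a stray $C_{k,i}$ and producing a fixed point of $g$ outside $\tu{Int}(\gamma_1)\cup\cdots\cup\tu{Int}(\gamma_n)$, contradicting (b). That Claim is also what gives $\alpha_i\subseteq\wh{U}_k$ for $i\neq k$, hence the disjointness of the new interiors in conclusion 1 (your ``sectors at $\infty$'' remark does not substitute for it), and it is what conclusion 4 rests on. As written, your argument establishes essentially only conclusion 2; the counting/no-fixed-point argument you reserve for Step 1, where it is not needed, has to be redeployed to prove this Claim, and with it conclusions 1, 3 and 4.
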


  \begin{figure}[h]
\begin{center}
\includegraphics[height=6cm]{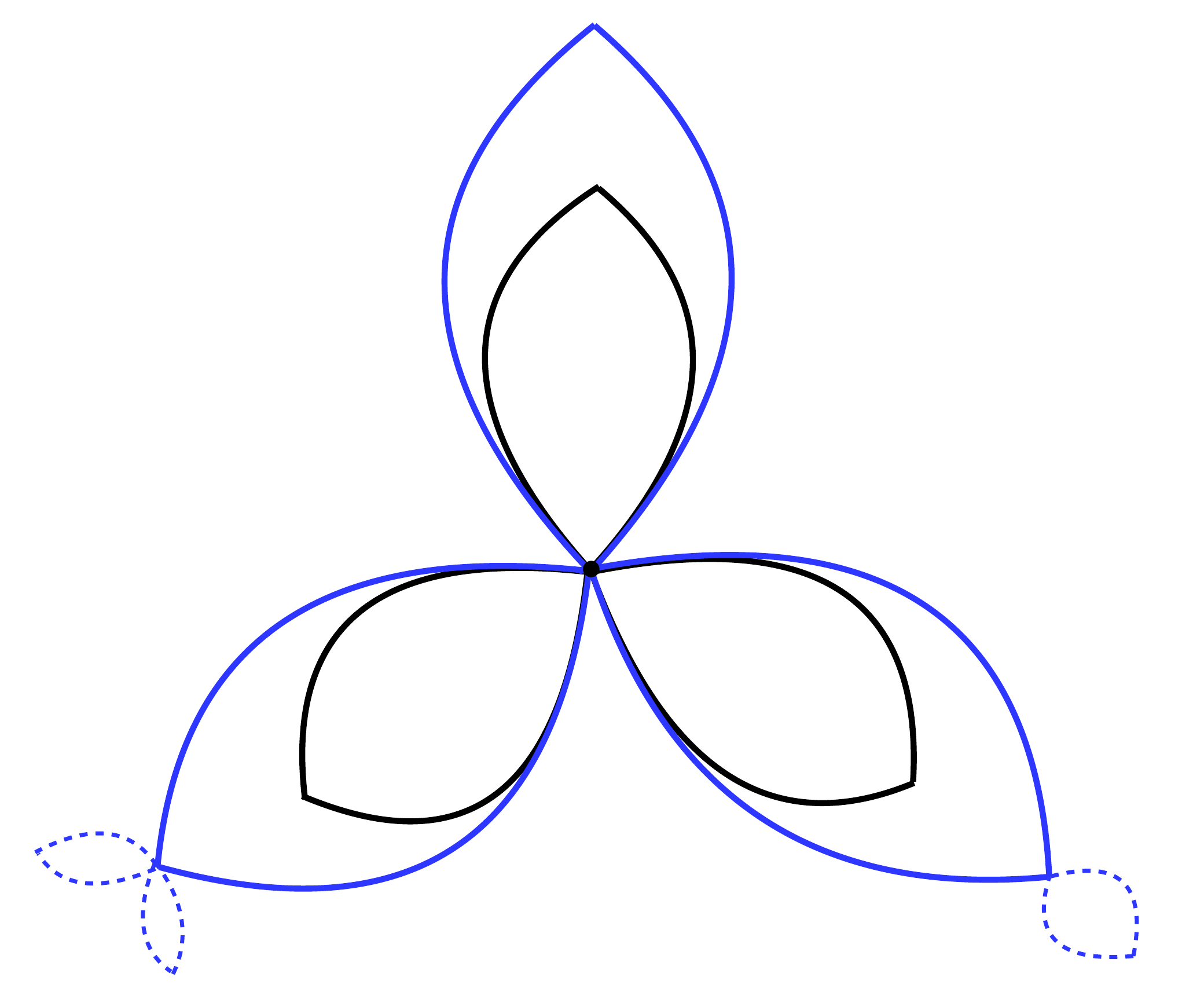} 
\put(-118,66){$\infty$}
 \put(-109,126){$\gamma_1$} \put(-120,105){$\tu{Int}(\gamma_1)$} \put(-76,125){$\gamma_1^{-1}$} 
  \put(-185,60){$\gamma_2^{-1}$}  \put(-144,60){$\gamma_2$} \put(-147,45){$\tu{Int}(\gamma_2)$} 
  \put(-87,66){$\gamma_3$} \put(-40,66){$\gamma_3^{-1}$}\put(-85,45){$\tu{Int}(\gamma_3)$}
 \caption{Inverse images of independent curves.}
\end{center}\label{f5}
\end{figure}

\begin{proof}
	Let $\alpha_k$ be the unbounded component of $g^{-1}(\gamma_k)$.
	The set $\alpha_k$ is a union of finitely many Jordan curves, touching at finitely many points.
	Clearly, $\olC\setminus\alpha_k$ has finitely many components, and 
	each component of $g^{-1}(\tu{Ext}(\gamma_k))$ (resp. $g^{-1}(\tu{Int}(\gamma_k))$) is contained in one of them. Write 
	$$\tu{Comp}(\olC\setminus\alpha_k)=\big\{C_{k,\infty},C_{k,1},\cdots,C_{k,l},C'_{k,\infty},C'_{k,1},\cdots,C_{k,l'}'\big\},$$
	where the notations are labeled so that
	\begin{itemize}
		\item $C_{k,\infty},C_{k,\infty'}$ are the only two unbounded components;
		\item Each $C\in\{C_{k,\infty}, C_{k,1},\cdots, C_{k,l}\}$ (resp. $\{C'_{k,\infty}, C'_{k,1},\cdots, C'_{k,l'}\}$) contains a component $V$ of $g^{-1}(\tu{Ext}(\gamma_k))$ (resp. $g^{-1}(\tu{Int}(\gamma_k))$) such that $\partial C\subseteq \partial V$.
	\end{itemize}
	
	The unbounded component $U_k$ of $g^{-1}(\tu{Ext}(\gamma_k))$ is contained in $C_{k,\infty}$. By condition (d), either $C_{k,\infty}\subseteq \tu{Ext}(\gamma_k)$ or $\tu{Int}(\gamma_k)
\subseteq C_{k,\infty}$.  The latter cannot happen,
because locally near $\infty$, $g$ behaves like
$N(\infty)\cap \gamma_k\subseteq g(N(\infty)\cap\gamma_k)$,
  and globally, $g$ is orientation preserving. Thus $U_k\subseteq C_{k,\infty}\subseteq \tu{Ext}(\gamma_k)$.
	
	We will prove the properties (1)-(4), based on the following
	$$\textit{Claim }: \ C_{k,1}\cup\cdots\cup C_{k,l}\subseteq\wh{U}_k.$$
	
	In fact, if the claim is not true, we have $C_{k,i}\subseteq\olC\setminus \wh{U}_k$ for some $i$. By condition (d), the filled closure $\wh{C}_{k,i}$ of $C_{k,i}$ with respect to $\olC\setminus\wh{U}_k$ is disjoint from $\tu{Int}(\gamma_k)$, and $\tu{Int}(\gamma_k)\subseteq C_{k,\infty}'$. 
  Let $D=\tu{Ext}(\gamma_k)$ and $U$ be a component of $g^{-1}(\tu{Ext}(\gamma_k))$ contained in $C_{k,i}$. 
  Clearly $U\subseteq D$.
  Applying Corollary \ref{cor:counting_number} to the pair $(D, U)$,  we have that $\wh{C}_{k,i}$ contains at least one fixed point of $g$. This contradicts condition (b), completing the proof of the claim.
 

1. The following observation 
	$$\gamma_i^{-1}\cap\gamma_j^{-1}\subseteq g^{-1}(\gamma_i)\cap g^{-1}(\gamma_j)\subseteq g^{-1}(\gamma_i\cap\gamma_j)=g^{-1}(\infty),  \ i\neq j$$
	implies that $\gamma_i^{-1}\cap\gamma_j^{-1}$ is a finite set, as it consists of finitely many poles.
	
	Note that $\gamma_i\subseteq \ol{\tu{Ext}}(\gamma_k)$ for $i\neq k$, the unbounded component $\alpha_i$ of $g^{-1}(\gamma_i)$ is contained in the unbounded component of $g^{-1}(\ol{\tu{Ext}}(\gamma_k))$.
Therefore,  $\gamma_i^{-1}\subseteq \alpha_i\subseteq \wh{U}_k$, and there are mutually disjoint interior parts ${\tu{Int}}(\gamma_k^{-1})$'s of the curves $\gamma_k^{-1}$'s.
This verifies that the curves $\gamma_k^{-1}$'s are independent.


2. It is an immediate consequence of Corollary \ref{cor:counting_number}.
	
3. It follows from the  claim above.


4. Since $$\ol{A(\gamma_1,\cdots,\gamma_n)}\subseteq \ol{\tu{Ext}}(\gamma_i)\tu{\,\, for\,\, }1\leq i\leq n,$$ the unbounded component of $g^{-1}(\ol{A(\gamma_1,\cdots,\gamma_n)})$, denoted by $E$, is contained in that of $g^{-1}(\ol{\tu{Ext}}(\gamma_i))$, and therefore  $E\subseteq\wh{U}_i$ by the claim above. Thus $E\subseteq \wh{U}_1\cap\cdots\cap \wh{U}_n=\ol{A(\gamma_1^{-1},\cdots,\gamma_n^{-1})}$. 
\end{proof}


\begin{prop}
\label{lem:common_poles}
Let $g$ be a rational map with $\infty$ a repelling fixed point. Let $\gamma_1, \cdots,\gamma_n$ be 
independent Jordan curves in $\olC$ such that

1. $\infty\in \gamma_1\cap\cdots\cap \gamma_n$; 

2.  All fixed points of $g$ in $\mathbb{C}$ are contained in $\tu{Int}(\gamma_1)\cup\cdots\cup \tu{Int}(\gamma_n)$;

3.  In each $\ol{\tu{Ext}}(\gamma_k)\setminus\{\infty\}$, the number of poles equals that of fixed points.
	
Then the number of poles in $\ol{A(\gamma_1,\cdots,\gamma_n)}$  is $n-1$, strictly less than $n$.
\end{prop}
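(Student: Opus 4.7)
My plan is a direct global counting argument that balances the total number of fixed points against the total number of preimages of $\infty$ (poles) on the sphere. Since $\deg g = d$ and the assumption that $\infty$ is a repelling fixed point forces its local degree to be $1$, the map $g$ has $d+1$ fixed points (with multiplicity), one of which is $\infty$, and $d$ preimages of $\infty$ (with local degree), one of which is $\infty$ with multiplicity $1$. Consequently there are exactly $d$ finite fixed points, all lying in the disjoint union $\bigsqcup_{k=1}^{n}\tu{Int}(\gamma_k)$ by hypothesis $(2)$, and exactly $d-1$ finite poles to be located on the sphere.

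Letting $\phi_k$ denote the number of fixed points in $\tu{Int}(\gamma_k)$ and $\pi_k^{\circ}$ the number of poles in that same open disk, I would next apply hypothesis $(3)$ one curve at a time. Because $\olC\setminus\{\infty\}$ decomposes as the disjoint union $\tu{Int}(\gamma_k)\sqcup(\ol{\tu{Ext}}(\gamma_k)\setminus\{\infty\})$, hypothesis $(3)$ immediately becomes
\[
(d-1)-\pi_k^{\circ}\;=\;d-\phi_k,
\]
which rearranges to the key identity $\pi_k^{\circ}=\phi_k-1$ for each $k=1,\dots,n$.

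Summing this over $k$ and using $\sum_k\phi_k=d$ gives $\sum_k\pi_k^{\circ}=d-n$. Since $\olC\setminus\{\infty\}$ is also the disjoint union $\bigsqcup_k\tu{Int}(\gamma_k)\sqcup(\ol{A(\gamma_1,\dots,\gamma_n)}\setminus\{\infty\})$, the number of finite poles inside $\ol{A(\gamma_1,\dots,\gamma_n)}$ is exactly $(d-1)-(d-n)=n-1$, which is strictly less than $n$, as claimed.

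The main bookkeeping subtlety I would double-check is the treatment of poles that happen to sit on some $\gamma_k\setminus\{\infty\}$ or on an intersection $\gamma_i\cap\gamma_j$. Since $\pi_k^{\circ}$ counts only poles strictly inside the open interior, any such boundary poles are automatically absorbed into the residual count for $\ol{A(\gamma_1,\dots,\gamma_n)}$; one only needs the easy observation that the disjoint decomposition of $\olC\setminus\{\infty\}$ used above is valid under the independence hypothesis. Conceptually, the $-1$ in the conclusion is nothing more than the mismatch between $d+1$ fixed points and $d$ preimages of $\infty$, concentrated at the common boundary point $\infty\in\gamma_1\cap\cdots\cap\gamma_n$.
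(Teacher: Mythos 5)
Your proposal is correct and in essence coincides with the paper's own proof: both are global counting arguments over the sphere that combine the total counts of finite fixed points ($d$) and finite poles ($d-1$, both coming from $\deg g = d$ and the simplicity of the repelling fixed point $\infty$) with the balance from hypothesis $(3)$, via the same two partitions of $\olC\setminus\{\infty\}$, namely $\tu{Int}(\gamma_k)\sqcup(\ol{\tu{Ext}}(\gamma_k)\setminus\{\infty\})$ for each $k$ and $\bigsqcup_k \tu{Int}(\gamma_k)\sqcup(\ol{A(\gamma_1,\dots,\gamma_n)}\setminus\{\infty\})$. The paper organizes the bookkeeping through exterior quantities $a_k,b_k$ and equates $\sum a_k=\sum b_k$, while you extract the per-curve identity $\pi_k^{\circ}=\phi_k-1$ and sum; these are the same computation in a slightly more streamlined form.
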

Here, the number of poles is counted with multiplicity (see Remark \ref{d-multiplicity}). 

\begin{proof}
	Let $a_k$ (resp. $b_k$) be the number of poles (resp.  fixed points) in  $\ol{\tu{Ext}}(\gamma_k)\setminus\{\infty\}$. Let $\wt{a}_k$ (resp. $\wt{b}_k$) be the number of poles (resp.  fixed points) in $\tu{Int}(\gamma_k)=\olC-\ol{\tu{Ext}}(\gamma_k)$. Let $a$ be the number of poles in $\ol{H}\setminus\{\infty\}$, where $H=A(\gamma_1,\cdots,\gamma_n)$.  All these numbers are counted with multiplicities.
	
	The independent curves $\gamma_1,\cdots,\gamma_n$ decompose  $\mb C$ into several parts.
	These parts satisfy the following relations: 
	\begin{itemize}
		\item $\ol{\tu{Ext}}(\gamma_k)\setminus\{\infty\} =(\ol{H}\setminus\{\infty\})\cup\cup_{i\neq k}\tu{Int}(\gamma_i)$;
		\item $\mb{C}=(\ol{H}\setminus\{\infty\})\cup\cup_{1\leq i\leq n}\tu{Int}(\gamma_1)$,
	\end{itemize}
	By counting the number of poles, we have the following identity,
	\bess \sum_{1\leq k\leq n} a_k&=&\sum_{1\leq k\leq n}\Big(a+\sum_{i\neq k}\wt{a}_i\Big)=an+(n-1)\sum_{1\leq i\leq n}\wt{a}_i\\
	&=&a+(n-1)\Big(a+\sum_{1\leq i\leq n}\wt{a}_i\Big)=a+(n-1)(d-1),
\eess	
	where $d$ is the degree of $g$.		
	Note that $\ol{H}\setminus\{\infty\}$ is disjoint from the fixed points of $g$. 
	By counting the number of fixed points in $\mathbb C$, we have
	$$\sum_{1\leq k\leq n} b_k=\sum_{1\leq k\leq n}\Big(0+\sum_{i\neq k}\wt{b}_i\Big)=(n-1)\sum_{1\leq i\leq n}\wt{b}_i=(n-1)d.$$
%

By assumption,  one has $a_k=b_k$ for all $k$, implying that $\sum a_k=\sum b_k$. Therefore we have $a=n-1$. 
The proof is completed.
\end{proof}

\section{Invariant graph}\label{sec_graphs}

Let $f$ be a Newton map of degree $d\geq 3$, post-critically finite on $ B_f$.
The aim of this section is to prove the existence of invariant graph for $f$.
Here, a graph $G$ is said {\it invariant} for $f$ if it satisfies 
$$f(G)\subseteq G, \text{ and }  f^{-1}(G) \text{ is connected}.$$

In fact, the existence of invariant graph is first proven by Drach, Mikulich,  R\"{u}ckert  and  Schleicher \cite{MRS15, DMRS}.
Our work is distinguished from theirs in two aspects.
First, our idea of proof is essentially different from theirs: our proof is constructive while the proof in \cite{DMRS} is more conceptual (they use a proof by contradiction). Secondly, our graph is different from theirs: our graph has very good properties
(each point in the graph except some strictly pre-periodic Fatou centers are non-cut points)
 and is well adapted to construct puzzles, while the one in \cite{DMRS} is abstract and does not have such properties.
That is the reason why we develop a different proof and construct a different graph.

\subsection{Channel graph} \label{c-d}
For any immediate root basin $B$ of $f$, there are
exactly $d_B-1$ fixed internal rays in $B$:
 $$R_{B}({j}/{(d_B-1)}), 0\leq j\leq d_B-2, \text{ where } d_B=\tu{deg}(f|_{B}).$$
 Each of these fixed internal rays must land at a fixed point on $\partial B$.
 Since $\infty$ is the unique fixed point of $f$ on its Julia set, all fixed internal rays land at the
 common point $\infty$.

 The \emph{channel graph} of $f$, denoted by $\Delta_0$, is defined by 
		$$\Delta_0=\bigcup_{B}\bigcup_{j=1}^{d_B-1}\overline{ R_{B}\big({j}/(d_B-1)\big)},$$
		where $B$ ranges over all immediate root basins in $\{B_1,\cdots, B_d\}$. 		
	 Clearly $f(\Delta_0)=\Delta_0$.
	Figure \ref{channel-graph} 
illustrates all possible channel graphs when $d=4$. Some graphs which look like channel graphs but in fact  are fake ones  are given in Figure \ref{fake-channel}.

  \begin{figure}[h]
\begin{center}
\includegraphics[height=4cm]{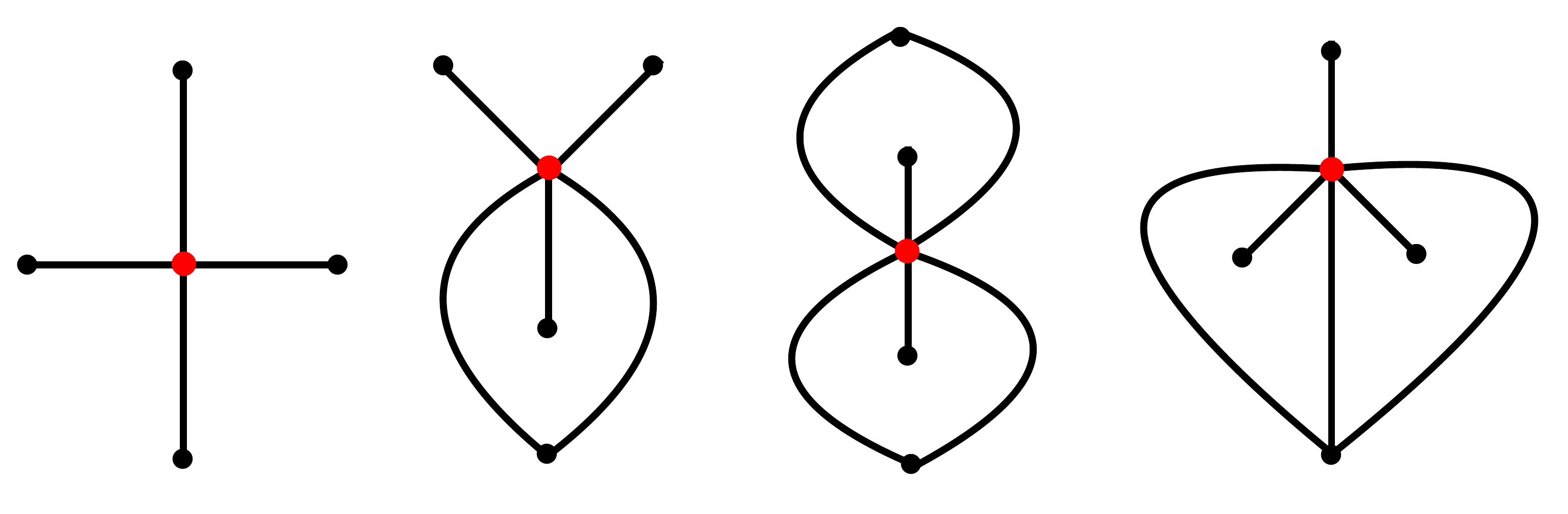} 
 \caption{All possible channel graphs when $d=4$.
 The red dot is $\infty$ and the black dots are the centers of immediate root basins.}
 \label{channel-graph}
\end{center}
\end{figure}

 \begin{figure}[h]
\begin{center}
\includegraphics[height=4cm]{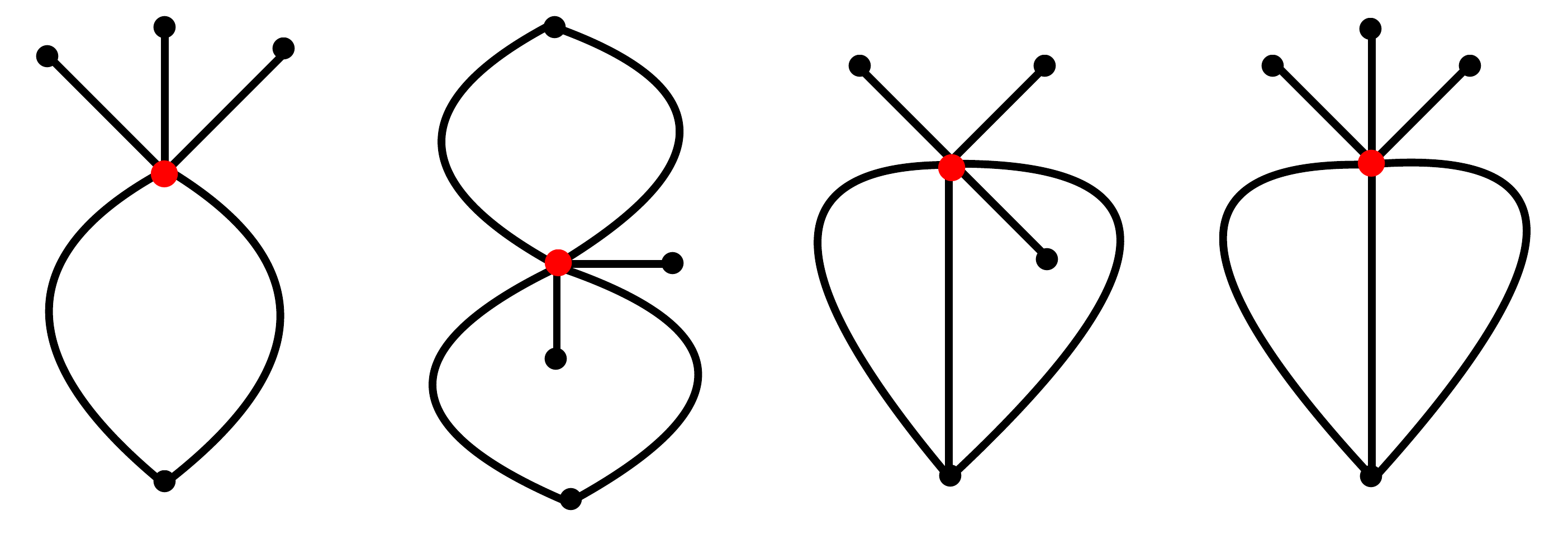} 
 \caption{Caution: these graphs are not channel graphs for $d=4$, they are fake ones.
 Want to know why? See Fact \ref{trivial-unbounded}. }
 \label{fake-channel}
\end{center}
\end{figure}

%
%
%

\subsection{Invariant graph}
The main result in this section is the following 

\begin{theorem}\label{in-graph}
	Let $f$ be a Newton map which is post-critically finite on $ B_f$. Then there exists an invariant graph $G$ such that
	
	1. $ f^N(G)=\Delta_0$ for some integer $N\geq 1$.
    
    2. $\infty$ is a non-cut point with respect to $G$.
	 
%
%
\end{theorem}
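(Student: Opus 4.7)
The plan is to construct $G$ by an inductive pullback procedure starting from the channel graph $\Delta_0$, using the pole-counting results of Propositions \ref{prop:key_prop} and \ref{lem:common_poles} to force termination after finitely many steps.

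First I would encode the combinatorics of $\Delta_0$ at $\infty$ by a family of independent Jordan curves $\gamma_1,\dots,\gamma_n$ (in the sense of Section 3.3), meeting pairwise only at $\infty$. Each $\gamma_k$ is built by closing up a pair of adjacent fixed internal rays of $\Delta_0$ with an arc chosen in a Fatou component, so that $\tu{Int}(\gamma_k)$ is precisely one ``face'' of $\Delta_0$ at $\infty$ together with the immediate basins bordering it; this arrangement guarantees that all finite fixed points lie in $\bigcup_k\tu{Int}(\gamma_k)$. Because $\infty$ is a repelling fixed point with real multiplier $d/(d-1)>1$ and each fixed internal ray is forward-invariant, hypotheses (a)--(d) of Proposition \ref{prop:key_prop} are verified using the local linearization of $f$ at $\infty$ and the orientation-preserving property of $f$.

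Next I would iterate Proposition \ref{prop:key_prop} to produce nested independent Jordan curves $\gamma_k^{-m}$, $m\geq 0$, with $\ol{\tu{Ext}}(\gamma_k^{-(m+1)})\subseteq\ol{\tu{Ext}}(\gamma_k^{-m})$. By part (2) of that proposition, the number of poles (with multiplicity) inside $\ol{\tu{Ext}}(\gamma_k^{-m})$ equals the number of fixed points there and is therefore independent of $m$; Proposition \ref{lem:common_poles} then forces the pole count in the core $\ol{A(\gamma_1^{-m},\dots,\gamma_n^{-m})}$ to equal $n-1$ for every $m$. The main technical step is to show that whenever some $\gamma_k^{-m}$ strictly enlarges under pullback, at least one new pole is absorbed from the core into the updated region, giving a monotone integer invariant bounded by $d-1$ that must stabilize at some finite level $N$, so that $\gamma_k^{-(N+1)}=\gamma_k^{-N}$ for every $k$.

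With stabilization in hand, I would set $G$ to be the connected component of $f^{-N}(\Delta_0)$ containing $\Delta_0$. The inclusions $\Delta_0\subseteq G\subseteq f^{-N}(\Delta_0)$ give $f^N(G)=\Delta_0$ and $f(G)\subseteq G$ directly; connectedness of $f^{-1}(G)$ follows from the stabilization, since no new component can attach at level $N+1$ when no new pole is absorbed. The non-cut property at $\infty$ follows because $\Delta_0\setminus\{\infty\}$ is connected inside each immediate basin (all fixed rays emanate from basin centers), and every pullback arc added to form $G$ lies in a strictly pre-immediate Fatou component which attaches to $\Delta_0$ at a finite point rather than at $\infty$. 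The hardest part of the argument will be the termination step: one must track carefully how poles of $f$ are redistributed between the regions $\wh{U}_k$ and the core $\ol{A(\gamma_1^{-m},\dots,\gamma_n^{-m})}$ under each pullback, and extract from Propositions \ref{prop:key_prop}(2) and \ref{lem:common_poles} a strictly decreasing integer invariant.
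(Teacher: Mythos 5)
Your overall philosophy (pull back curves generated by the channel graph and use Propositions \ref{prop:key_prop} and \ref{lem:common_poles} to force termination) matches the paper, but the termination mechanism you propose does not work, and it is exactly where the real difficulty lies. First, the stabilization $\gamma_k^{-(N+1)}=\gamma_k^{-N}$ you aim for never occurs: already after one pullback each curve $\gamma_k^{-1}$ contains a pole of $f$ in $\mb{C}$ while $\gamma_k$ does not, so the curves are strictly nested and keep moving toward the Julia set; no finite level at which the family of curves becomes $f$-invariant exists. Second, your ``monotone integer invariant'' --- that each strict enlargement absorbs a new pole --- is false: the poles appearing on a pulled-back curve may be exactly the poles already carried by the curve one is pulling back (this is the situation of Remark \ref{common-preimage-pole} in the paper), so no pole count attached to the individual regions $\ol{\tu{Ext}}(\gamma_k^{-m})$ decreases. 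The quantity that actually terminates the induction is different: one enlarges a \emph{graph} $G_k$ by adding the pulled-back curves, and the number $\nu(G_k,\infty)$ of branches of $G_k$ at $\infty$ strictly decreases, because Proposition \ref{lem:common_poles} (there are only $n-1$ poles available in the core, while each of the $n$ pulled-back curves must carry one) forces at least two of the new curves to share a common pole, which glues two branches together away from $\infty$. Moreover, your scheme silently assumes that every curve produced at a given stage interacts with poles after a single pullback; in general a ``new'' curve $\eta$ assembled from pieces of earlier ones need not contain a pole, and one must prove (as in Lemma \ref{integer-existence}, via the Shrinking Lemma \ref{lem:shinking}) that some finite iterated pullback $\eta^{-N_\eta}$ does. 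Without this lemma the induction cannot even be set up.

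There are also gaps at the end of your argument. Taking $G$ to be the component of $f^{-N}(\Delta_0)$ containing $\Delta_0$ does give $f(G)\subseteq G$ and $f^N(G)=\Delta_0$, but the connectivity of $f^{-1}(G)$ does not ``follow from the stabilization'': in the paper this is a separate and nontrivial step, requiring that every complementary face of $G$ be a Jordan disk (the non-cut-point properties of Proposition \ref{cor:non-cutpoints_graph}, obtained only after the final modification replacing the added arcs by internal rays) together with the mapping property of puzzle pieces (Proposition \ref{map-puzzle}). Finally, your justification of the non-cut property at $\infty$ is not correct as stated: the added pullback curves are Jordan curves passing through $\infty$ (they traverse two immediate basins whose fixed rays land at $\infty$), so they do not ``attach to $\Delta_0$ at a finite point rather than at $\infty$''; the non-cut property is achieved precisely because pairs of these curves meet at common finite poles, reducing $\nu(\cdot,\infty)$ step by step until it equals $1$.
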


\noindent\textbf{The idea of the proof.}	
	Let's sketch the idea, so that the readers can have a rough picture of the proof.
	For each $k\geq 1$, let $\mc C_k=f^{-k}(\Delta_0)\setminus f^{-k+1}(\Delta_0)$.
From  Theorem \ref{in-graph}(1), one may easily imagine that $G$ is actually a union of  some suitable iterated pre-images of $\Delta_0$. 

These iterated pre-images are chosen in an inductive fashion.
First, we extend the graph $\Delta_0$ to a larger one $\Delta_1$, by adding a suitable subset of $\mc C_1$. 
Inductively, at step $k$, we will get an extension of the graph $\Delta_k$ from $\Delta_{k-1}$ by 
adding a subset of $\mc C_1\cup\cdots\cup \mc C_k$. 
%

The choice of the subset of $\mc C_1\cup\cdots\cup \mc C_k$ is delicate, we actually choose a subtable subset such that either its endpoint is a pole, or some iterated pre-image's endpoint is a pole.
This dichotomy is guaranteed by the Shrinking Lemma (see Lemma \ref{lem:shinking}). The heart part of the proof is to show  that any  subset of this kind can touch another one at some pole.
This will be based on the counting number of poles (Propositions \ref{prop:key_prop} and \ref{lem:common_poles}) in the preceding section. 
However one can't apply these results directly. 

To compensate the situation, we need make a modification $G_k$ of the graph $\Delta_k$ in each step.
For these $G_k$'s, we can apply
 Propositions \ref{prop:key_prop} and \ref{lem:common_poles} 
successfully.
For this technical reason, in our discussion, we actually focus on the construction of $G_k$ (whose modification yields $\Delta_k$), and the graphs $\Delta_k$'s don't appear directly in the proof. 

%
%
%
%
Then  Theorem \ref{in-graph}(2) can guide each step of the proof.
	 In order to construct a graph $G$ so that $\infty$ is  a non-cut point, we construct a  sequence of modified graphs
	  $G_k$'s so that
$$\infty\in G_0\subseteq G_1\subseteq G_2\subseteq\cdots,$$
here, the graph $G_0$  is a modified version of the channel graph $\Delta_0$, and $G_{k+1}$ is  constructed inductively so that the difference set $G_{k+1}\setminus G_k$ is the union of  finitely many Jordan arcs, and that  
$$\nu(G_{k+1}, \infty) < \nu(G_k, \infty) \ \text{ if } \  \nu(G_k, \infty)\geq 2.$$
The property $d=\nu(G_0, \infty)>\nu(G_1, \infty)>\nu(G_2, \infty)> \cdots$ implies that after finitely many steps, the procedure will terminate at a graph $G_{\ell}$ with $\nu(G_{\ell}, \infty)=1$, which is equivalent to say that  $\infty$ is a non-cut point for $G_{\ell}$.  Finally, a suitable modification of 
$G_{\ell}$ yields the required graph $G$.

\

\noindent\textbf{The proof of Theorem \ref{in-graph}}	 The proof proceeds in six steps, as follows:




	\begin{proof}[Step 1: from $\Delta_0$ to $G_0$]
	
	The aim of this step is to modify $\Delta_0$ to a new graph $G_0$, such 
	that $G_0$ is disjoint from the $d$ attracting fixed points.

	Consider an immediate root basin $B$ of $f$. Recall that $\Phi_B:B\to\mb{D}$ is a B\"{o}ttcher map, satisfying that $\Phi_B(z)^{d_B}=\Phi_B(f(z))$. Fix a number  $r\in (0,1)$.

	 \begin{figure}[h] 
	 \begin{center}
\includegraphics[height=4.5cm]{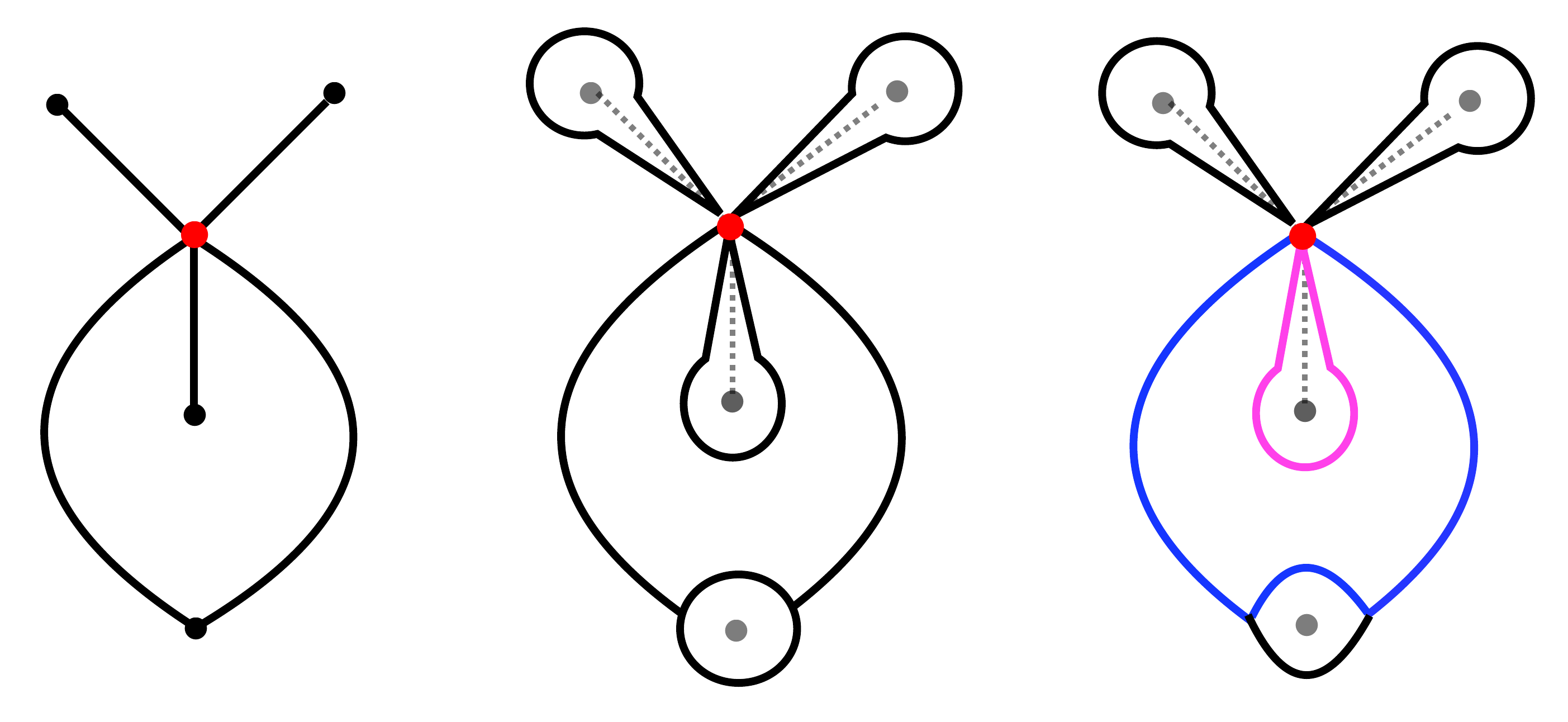} 
 \put(-50,32){$U$} \put(-15,44){$\gamma_1$} \put(-38,57){$\gamma_2$}
 \caption{The channel graph $\Delta_0$ (left), the modified graph $G_0$ (middle), and a non-trivial component
$U$ of $\olC\setminus G_0$ (right). This $U$ can be written as 
$A(\gamma_1, \gamma_2)$, where $\gamma_1$ is the blue curve and $\gamma_2$ is the purple one.}
\label{modified-channel}
\end{center}
\end{figure}

	If $d_B\geq3$,  let 
	$$\Delta_B=\{\infty\}\cup \Phi_B^{-1}\Big(\big\{[r,1) e^{2\pi i k/(d_B-1)}; 0\leq k\leq d_B-2\big\}\Big)\cup\Phi_B^{-1}(r\mathbb S) $$


%
%
	If $d_B=2$, take a small angle $\theta_0\in(0,1/2)$ and define two arcs $\alpha_{\pm}$ in $B$ by
	 $$\alpha_{\pm}=\Phi_B^{-1}(\{e^{s(\log r\pm 2\pi i\theta_0}); 0<s<1\}).$$
	Clearly, $\alpha_{\pm}$ connect $\infty$ to $\Phi_B^{-1}(re^{\pm 2\pi i \theta_0})$, and $\alpha_{\pm}\subseteq f(\alpha_{\pm})$.
%
We set
	$$\Delta_B=\{\infty\}\cup\alpha_+\cup\alpha_-\cup \Phi_B^{-1}(\{r\,e^{2\pi i t};  \theta_0\leq t\leq -\theta_0\}).$$
	
	Finally, let
	$$G_0=\bigcup_{B}\Delta_B,$$
	where the union is taken over all immediate root basins $B$ of $f$. Clearly $G_0$ avoids all centers of the immediate root basins. See Figure \ref{modified-channel}.
	\end{proof}

	\begin{proof}[Step 2: from $G_0$ to $G_1$] 
	For a finite graph $\Gamma\subseteq\olC$ with $\infty \in \Gamma$, its complement $\olC\setminus \Gamma$ has finitely many components.
	There are two kinds of unbounded ones. An unbounded component $U$ of  $\olC\setminus \Gamma$ is called {\it trivial} if
	$\nu(\partial U, \infty)=1$ (equivalently, $\infty$ is a non-cut point of $\partial U$); {\it non-trivial} if 
	$\nu(\partial U, \infty)\geq 2$ (i.e, $\infty$ is a cut point of $\partial U$).
	
	For the graph $G_0$ given by Step 1, the following fact is non-trivial.

	\begin{fact} \label{trivial-unbounded} An unbounded component $U$ of $\olC\setminus G_0$ is trivial if and only if  
	$\partial U=\Delta_B$ with $d_B=2$.
	\end{fact}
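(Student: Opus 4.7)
The plan is to treat the two directions separately: a direct verification for the ``if'' direction, and for the ``only if'' direction a reduction that localises $\partial U$ to a single $\Delta_B$ followed by a case analysis.

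For the ``if'' direction, observe that when $d_B=2$ the graph $\Delta_B$ built in Step 1 is the Jordan curve in $\olC$ obtained by joining the two spiral arcs $\alpha_\pm$ (each landing at $\infty$) by the long equipotential arc on the far side. Hence $\Delta_B\setminus\{\infty\}$ is a single Jordan arc and so $\nu(\partial U,\infty)=1$, while $\infty\in\partial U$ makes $U$ unbounded.

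For the ``only if'' direction, suppose $U$ is trivial and unbounded. The first observation is that $\Delta_B\setminus\{\infty\}\subseteq B$, so by disjointness of distinct immediate basins, $G_0\setminus\{\infty\}=\bigsqcup_B(\Delta_B\setminus\{\infty\})$. Triviality then forces the connected set $\partial U\setminus\{\infty\}$ into one piece, so $\partial U\subseteq\Delta_B$ for a unique $B$. I then split on whether $U\subseteq B$. If $U\subseteq B$, then $U$ is a component of $B\setminus\Delta_B$: for $d_B\geq 3$ the possibilities are the inner disk (boundary the equipotential, omitting $\infty$, so $U$ is bounded) and the outer sectors of $B$ (whose closures meet $\partial B\setminus\{\infty\}\not\subseteq G_0$, so they are not components of $\olC\setminus G_0$); for $d_B=2$ they are the sector containing the centre of $B$, which has $\partial=\Delta_B$ and gives the conclusion, or the complementary sector (again meeting $\partial B\setminus\{\infty\}$, and hence not a component).

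If instead $U\not\subseteq B$, then $U$ enters $\olC\setminus\overline B$, so it sits on the ``outer'' side of $\Delta_B$. When $d_B=2$, the exterior Jordan disk of $\Delta_B$ contains every $\Delta_{B'}$ with $B'\neq B$; since $d\geq 3$ some such $\Delta_{B'}$ exists, and it cuts the exterior, forcing $\partial U$ to include a piece of $\Delta_{B'}$ and contradicting $\partial U\subseteq\Delta_B$. When $d_B\geq 3$, $U$ lies in one of the $d_B-1$ outer faces of the wheel $\Delta_B$ in $\olC$, each bounded by a triangle of two spokes and one equipotential arc. The main obstacle, and the source of the Fact's non-triviality, is ruling out such an outer face being itself a component of $\olC\setminus G_0$: that would give $\partial U$ a triangle $\subsetneq\Delta_B$ through $\infty$, a trivial unbounded component with $\partial U\neq\Delta_B$, contradicting the Fact. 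I would establish the obstruction---that every outer face of $\Delta_B$ for $d_B\geq 3$ must contain at least one $\Delta_{B'}$---by combining the connectedness of $J(f)$ (Shishikura) with the counting tools of Section~\ref{sec:fixe_point_thm} applied to the Jordan disk bounded by the triangle: with $\infty$ a repelling boundary fixed point satisfying the local expansion hypothesis of Corollary~\ref{cor:counting_number}, an ``empty'' outer face would be a pole- and fixed-point-free Jordan disk, contradicting the equality of fixed-point and pole counts the corollary then yields. This is precisely the combinatorial constraint on channel diagrams that excludes the ``fake'' configurations of Figure~\ref{fake-channel}.
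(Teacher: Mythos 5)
Your case decomposition for the ``only if'' direction (localize $\partial U\subseteq\Delta_B$ for a single $B$, then split on $U\subseteq B$ vs.\ $U\not\subseteq B$ and on $d_B=2$ vs.\ $d_B\geq 3$) is a correct and more explicit setup than the paper gives; the paper simply asserts that a trivial unbounded $U$ must be an ``outer face'' of some $\Delta_B$ with $d_B\geq 3$. Up to that point your argument is fine.

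The gap is in the decisive step, where you rule out such an outer face being a component of $\olC\setminus G_0$. Two things go wrong. First, you assert that an ``empty'' outer face $Q$ would be a pole-free Jordan disk; there is no reason for that. Poles of $f$ are Julia points and can lie anywhere in $\olC\setminus G_0$; the fact that no $\Delta_{B'}$ enters $Q$ says nothing about poles. Second, you appeal to ``the equality of fixed-point and pole counts the corollary then yields,'' but Corollary~\ref{cor:counting_number} does not give an equality between fixed points and poles; it gives $\#\tu{Fix}(f|_{\wh U_D})=\sum_V\tu{deg}(f|_{\partial V})$. The pole--fixed-point equality is a different statement (Proposition~\ref{prop:key_prop}(2) / Proposition~\ref{lem:common_poles}), proved under different hypotheses, and it is not the tool used here. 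You have conflated the two counting principles.

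What actually closes this step, and what the paper does, is a pure fixed-point count. Suppose $Q$ is an outer face of $\Delta_B$ ($d_B\geq 3$) that is a component of $\olC\setminus G_0$, so $\partial Q$ is the ``triangle'' through $\infty$ made of two fixed internal rays and an equipotential arc. Since $Q\subseteq f(Q)$ and $f|_B$ is conjugate to $z\mapsto z^{d_B}$, one can exhibit a component $V$ of $f^{-1}(Q)$ with $V\subseteq Q$ whose boundary still contains pieces of \emph{both} fixed rays; this forces $\tu{deg}(f|_{\partial V})\geq 2$. On the other hand, $\ol V\subseteq\ol Q$ contains a single fixed point, namely $\infty$. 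Lemma~\ref{lem:fixed_points} gives $\#\tu{Fix}(f|_{\ol V})=\tu{deg}(f|_{\partial V})$, so $1\geq 2$, a contradiction. Your proposal needs this construction of $V$ and the degree bound $\geq 2$; the pole language should be dropped entirely.
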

	\begin{proof} The  `$\Longleftarrow$' part is obvious.  We need show the `$\Longrightarrow$'
	part.  If it is not true, then a trivial  unbounded component $U$ of $\olC\setminus G_0$ is necessarily a component of $\olC-\Delta_B$ with $d_B\geq 3$ (see Figure \ref{impossible-graph}).

	 \begin{figure}[h] 
	 \begin{center}
\includegraphics[height=5cm]{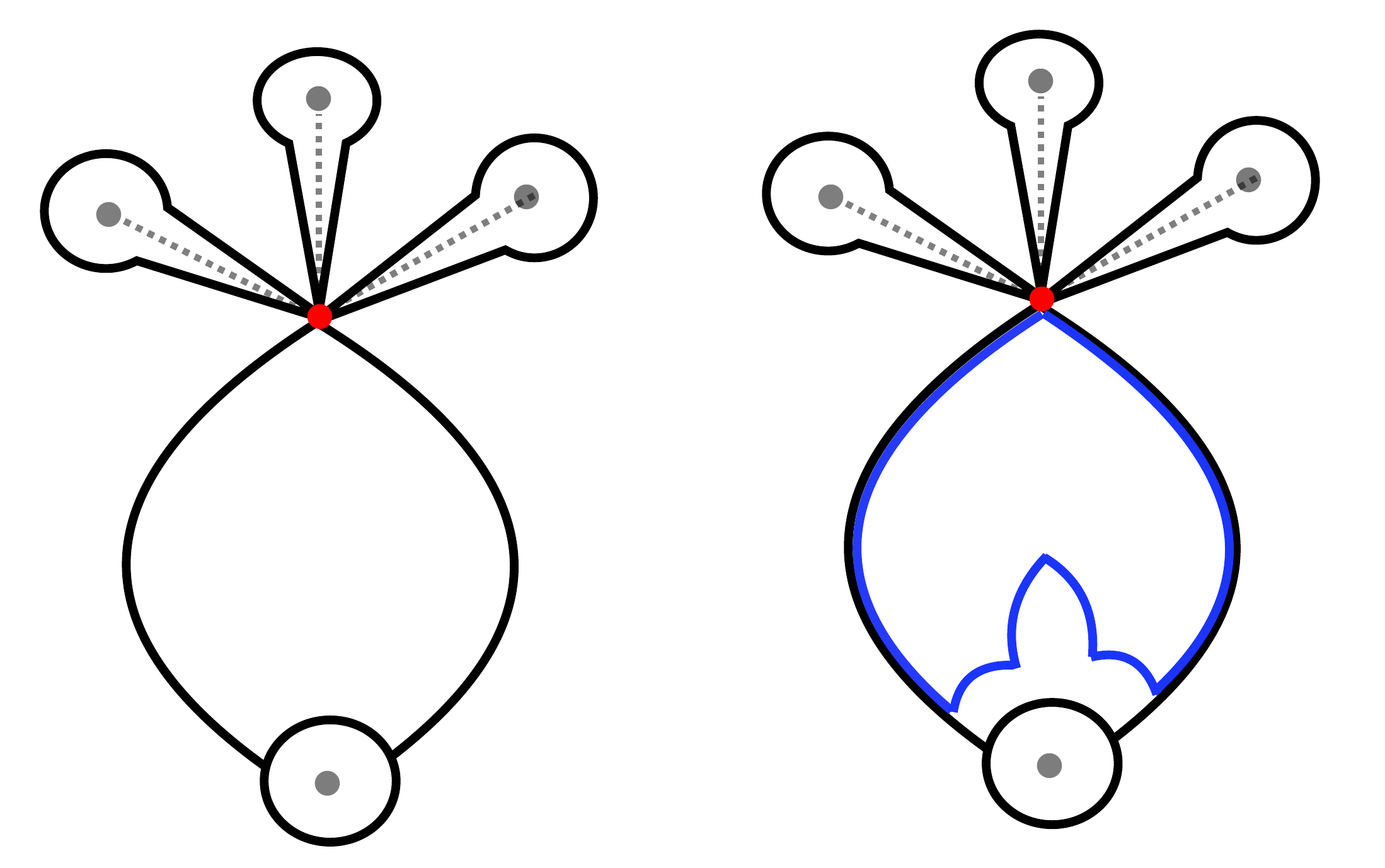} 
 \put(-59,60){$V$} \put(-179, 50){$U$} 
 \caption{This figure shows why the first graph in  Figure \ref{fake-channel} is not a channel graph, and it is used in the proof of 
 Fact \ref{trivial-unbounded}. Here $V$ is a  connected component of $f^{-1}(U)$ (in general, $V$ is not necessarily a topological disk).}
\label{impossible-graph}
\end{center}
\end{figure}

	Note that $U\subseteq f(U)$ and the image $f(U\cap B)$ covers $U\cap B$  twice.  One may also observe that there is a component $V$ of $f^{-1}(U)$, contained in $U$, such that
	$\partial V$ contains two sections of fixed internal rays. Note that $\ol{V}$ contains only one fixed point, namely $\infty$. On the other hand, by Lemma \ref{lem:fixed_points}, one has
	$$\#\tu{Fix}(f|_{\ol{V}})=\tu{deg}(f|_{\partial V}).$$
	This gives a contradiction, because $\#\tu{Fix}(f|_{\ol{V}})=1$ and $\tu{deg}(f|_{\partial V})\geq 2$.
\end{proof}

Fact \ref{trivial-unbounded} has the following interesting corollary:

\begin{fact} \label{structure-cor} Under the assumption $d={\rm deg}(f)\geq 3$, there are 
at least two immediate root basins $B$ with $d_B=2$.
As a consequence,  trivial and non-trivial components both exist in  ${\rm Comp}(\olC\setminus G_0)$. 
	\end{fact}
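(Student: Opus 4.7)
My approach has two stages: a critical-point count to force the existence of at least two basins with $d_B = 2$, and a topological argument using Fact \ref{trivial-unbounded} to derive the consequence on $\tu{Comp}(\olC\setminus G_0)$.

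For the first assertion, I would apply Riemann-Hurwitz to each restriction $f|_{B_k}\colon B_k\to B_k$, a proper self-map of a topological disk of degree $d_{B_k}$. This yields exactly $d_{B_k}-1$ critical points of $f$ (counted with multiplicity) inside each $B_k$. Since $d_{B_k}\ge 2$ for every $k$ (the immediate basin of any attracting fixed point contains a critical point) and $f$ has $2d-2$ critical points altogether,
$$\sum_{k=1}^d (d_{B_k}-1)\le 2d-2, \qquad\text{i.e.,}\qquad \sum_{k=1}^d d_{B_k}\le 3d-2.$$
If at most one basin had $d_B=2$, then at least $d-1$ basins would satisfy $d_{B_k}\ge 3$, so $\sum_k d_{B_k}\ge 2+3(d-1)=3d-1$, contradicting the previous inequality. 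Hence at least two immediate basins $B$ satisfy $d_B=2$.

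For the consequence, the trivial direction is immediate: for each such $B$ with $d_B=2$, the Jordan curve $\Delta_B$ splits $\olC$ into two disks, and the one lying inside $B$ (containing the center of $B$) avoids every $\Delta_{B'}$ with $B'\ne B$, because $\Delta_{B'}\setminus\{\infty\}\subset B'$ and $B\cap B'=\emptyset$. This disk is therefore a component of $\olC\setminus G_0$ whose boundary is exactly the Jordan curve $\Delta_B$; by Fact \ref{trivial-unbounded} it is trivial. The first stage then produces at least two trivial unbounded components.

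For a non-trivial component, I would take any component $U$ of $\olC\setminus G_0$ meeting $\olC\setminus\bigcup_k\overline{B_k}$ (non-empty since $J(f)\setminus\{\infty\}$ lies there). Because $d\ge 3$ and $G_0$ is the wedge of the $d$ subgraphs $\Delta_{B_k}$ based at $\infty$, such a $U$ cannot be enclosed by a single $\Delta_{B_k}$; its boundary must contain arcs from at least two distinct $\Delta_{B_k}$'s. Since the sets $\Delta_{B_k}\setminus\{\infty\}$ are pairwise disjoint, $\partial U\setminus\{\infty\}$ is disconnected, giving $\nu(\partial U,\infty)\ge 2$, so $U$ is non-trivial. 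The only delicate point is this last paragraph: one must confirm that such a component $U$ really exists and that its boundary genuinely collects arcs from two different $\Delta_{B_k}$'s near $\infty$; both follow from the wedge structure of $G_0$ at $\infty$ together with the presence of at least three basins, which guarantees at least one "outside" local sector at $\infty$ bounded by branches of two distinct $\Delta_{B_k}$'s.
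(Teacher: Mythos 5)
You take a genuinely different route to the first assertion. Where the paper derives it \emph{from} Fact~\ref{trivial-unbounded} (a descent argument on sectors: if some unbounded sector of $\Delta_B$ with $d_B\geq3$ contained no basin $B'$ with $d_{B'}=2$, one would eventually produce a trivial unbounded component of $\olC\setminus G_0$ whose boundary is not a $\Delta_{B'}$ with $d_{B'}=2$), you instead use Riemann--Hurwitz on each $f|_{B_k}$ plus the fact that $d_{B_k}\geq 2$, so $\sum_k (d_{B_k}-1)\leq 2d-2$ and the pigeonhole forces $p:=\#\{k:d_{B_k}=2\}\geq 2$. This count is correct and self-contained; it buys you independence from Fact~\ref{trivial-unbounded} for this step, and it incidentally gives the sharp threshold. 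Your handling of the trivial components is also fine: for each $B$ with $d_B=2$ the Jordan curve $\Delta_B$ encloses a disk inside $B$ that avoids all other $\Delta_{B'}$, and the forward direction of Fact~\ref{trivial-unbounded} then makes it trivial.

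Two points in the non-trivial part need attention. First, a small factual error: $J(f)\setminus\{\infty\}$ does \emph{not} lie in $\olC\setminus\bigcup_k\overline{B_k}$, since $\partial B_k\setminus\{\infty\}\subset J(f)$ sits in $\overline{B_k}$; the non-emptiness of $\olC\setminus\bigcup_k\overline{B_k}$ holds because for $d\geq 3$ there are strictly pre-periodic Fatou components (by $\#\tu{Comp}(B_f)=\infty$), and these are disjoint from every $\overline{B_k}$. Second, the blanket claim that \emph{any} component $U$ meeting $\olC\setminus\bigcup_k\overline{B_k}$ has boundary arcs from at least two $\Delta_{B_k}$'s does not follow from the wedge picture alone: a priori an unbounded sector of some $\Delta_{B_0}$ with $d_{B_0}\geq3$ could itself be a whole component of $\olC\setminus G_0$ and would meet $\olC\setminus\bigcup\overline{B_k}$ while being bounded by a single $\Delta_{B_0}$; excluding that is exactly what Fact~\ref{trivial-unbounded} does. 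However, your closing sketch is the right repair and sidesteps this: since the branches of the $d\geq3$ graphs $\Delta_{B_k}$, all meeting at $\infty$, are arranged cyclically and come from at least two distinct $\Delta_{B_k}$'s, there is at $\infty$ a local sector wedged between branches of two different $\Delta_{B_k}$'s; the component of $\olC\setminus G_0$ containing it then has both branches (nontrivially) in its boundary, so $\nu(\partial U,\infty)\geq2$. If you use that specific $U$ instead of an arbitrary one, the argument is clean and purely topological.
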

\begin{proof}  This fact is obvious if  $d_B=2$ for all immediate root basins $B$. 
So we may assume $d_B\geq3$ for some $B$. In this case,
$\olC-\Delta_B$ has at $d_B-1\geq 2$  unbounded components. To prove the fact, we will show
each unbounded component of $\olC-\Delta_B$ contains an immediate root basin $B''$ with $d_{B''}=2$.

 In fact, if some unbounded component, say $U$, contains no immediate root basin $B'$ with
$d_{B'}=2$, then there is
an immediate root basin 
$B''\subseteq U$ such that $d_{{B}''}\geq3$ and some unbounded component $V$ of $\olC-\Delta_{B''}$ is also a component $\olC\setminus G_0$. This $V$ is trivial. 
  However,
   this contradicts Fact \ref{trivial-unbounded}.
%
%
\end{proof}
	
%


The idea of this step is to take pullbacks of the boundaries of non-trivial unbounded  components of $\olC\setminus G_0$.

	
	
	Let $Q_0$ be a non-trivial unbounded component of $\olC\setminus G_0$.
	Such a component can be written as $Q_0=A(\gamma_1,\cdots,\gamma_n)$ with $n\geq 2$  and
	 $\gamma_1,\cdots,\gamma_n$ are independent Jordan curves. One may verify that the curves $\gamma_1,\cdots,\gamma_n$ satisfy the conditions (a)-(d) in Proposition \ref{prop:key_prop}. Indeed, by the construction in Step 1, the conditions (a)(b)(c) are satisfied,  
	   we only need to check that the unbounded component $\alpha_k$ of $f^{-1}(\gamma_k)$ is contained in $\ol{\tu{Ext}}(\gamma_k)$. To see this,  note that  either $\alpha_k\subseteq\ol{\tu{Int}}(\gamma_k)$ or $\alpha_k\subseteq\ol{\tu{Ext}}(\gamma_k)$;
	   the former cannot happen because the behavior of $f$ on each immediate root basin $B$
	  is conjugate to the power map $z\mapsto z^{d_B}$ on $\mb{D}$. 
	
	 We then apply Proposition \ref{prop:key_prop} to the  independent curves $\gamma_1,\cdots,\gamma_n$, and 
	obtain the new independent Jordan curves $\gamma_1^{-1},\cdots,\gamma_n^{-1}$. 
	For these new curves, observe that
	\begin{itemize}
    \item    For each $k$, the curve $\gamma_k^{-1}$ contains at least one pole  of $f$ in $\mathbb{C}$.
    \item     One has 
    $\gamma_i^{-1}\cap\gamma_j^{-1}\subseteq f^{-1}(\infty)$, for $i\neq j$. 
	     \end{itemize}

	In the following, we will show that at least two curves of $\gamma_1^{-1},\cdots,\gamma_n^{-1}$ have a common pole.
	In fact, if this is not true, then the set $\ol{A(\gamma_1^{-1},\cdots,\gamma_n^{-1})}$ contains at least $n$ distinct poles.
On the other hand, by Proposition \ref{prop:key_prop}, the curves $\gamma_1^{-1},\cdots,\gamma_n^{-1}$ satisfy the assumptions
in  Proposition \ref{lem:common_poles}. Then by Proposition \ref{lem:common_poles}, the number of poles in $\ol{A(\gamma_1^{-1},\cdots,\gamma_n^{-1})}$ is exactly $n-1$ (counting multiplicity).  This is a contradiction.
	
	
	Finally, let's define three curve families $\Gamma_0,  \Gamma^*_1, \Gamma_1$ and a new graph $G_1$ by
	$$\Gamma_0=\bigcup_{Q_0}\big\{\gamma_1,\cdots,\gamma_n\big\},\  \Gamma^*_1=\Gamma_1=\bigcup_{Q_0}\big\{\gamma_1^{-1},\cdots,\gamma_n^{-1}\big\},\  G_1=\bigcup_{\gamma\in \Gamma^*_1}\gamma.$$	
	 where $Q_0$ ranges over all non-trivial unbounded components of $\olC\setminus G_0$. The existence of common poles for the curves $\gamma_k^{-1}$'s,  implies that 
	$$\nu(G_1,\infty)<\nu(G_0,\infty)=d.$$
	Note that we have the inclusion 
	$$f(\Gamma_1):=\{f(\gamma);\gamma\in\Gamma_1\}\subseteq \Gamma_0, \ \ f(G_1)\subseteq G_0.$$
	\end{proof}
	
	\begin{proof}[Step 3: from $G_1$ to $G_2$]
	The idea of the proof is similar to that of Step 2: taking pullbacks of the boundaries of non-trivial unbounded components
	 of $\olC\setminus G_1$, until some pullback hits a pole. We remark that this step actually reveals
	  the general case of the pullback procedure. The Shrinking Lemma is involved to deal with the difficulty arising here.
	 
	 Note that if $\nu(G_1,\infty)=1$, then $\infty$ is a non-cut point of $G_1$, hence there is nothing to do in this step.
	 So we may assume that  $\nu(G_1,\infty)\geq 2$, and this case happens if and only if 
	 there exists a non-trivial unbounded component, say $Q_1$, of $\olC\setminus G_1$. 
	 
%
%
	
	Note that $Q_1$ is contained in some  $Q_0=A(\gamma_1,\cdots,\gamma_n)$ in Step 2, and that $Q_0$ is decomposed by the curves $\gamma^{-1}_1,\cdots,\gamma_n^{-1}$ into several parts. Since $Q_1$ is non-trivial, it can be written as
	$$Q_1=A(\alpha_1,\cdots,\alpha_m),$$
	 where $\alpha_1,\cdots,\alpha_m$ are independent Jordan curves. 
	 The set $\{\alpha_1,\cdots,\alpha_m\}$ can be decomposed into two disjoint subsets 
	 $\Gamma_1(Q_1)$ and $\Xi(Q_1)$, such that
	 
%
%
	 \begin{itemize}
    \item    Each curve $\lambda\in \Gamma_1(Q_1)$ comes from $\Gamma_1$, namely $\Gamma_1(Q_1)\subseteq 
    \Gamma_1$;
    \item    Each curve $\eta\in \Xi(Q_1)$ is {\it new}, i.e.,  composed of several sections, each section is a part of a curve in $\Gamma_1$.  
	     \end{itemize}

Note that each curve $\lambda\in \Gamma_1(Q_1)\subseteq 
    \Gamma_1$ must contain a pole in $\mb{C}$.  Each curve $\eta \in \Xi(Q_1)$ must also contain a pole
    in $\mb{C}$, because if two curves in $\Gamma_1$ intersect at a point other than $\infty$, then this point is a pole.

\begin{fact} \label{new-curve} $\Xi(Q_1)\neq \emptyset$. In other words, at least one curve among $\alpha_k$'s is new.
	\end{fact}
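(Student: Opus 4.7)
The argument will be by contradiction. Suppose $\Xi(Q_1)=\emptyset$; then every boundary Jordan curve $\alpha_i$ of $Q_1$ is an entire curve $\gamma_{k_i}^{-1}\in\Gamma_1$, with distinct indices $k_1,\ldots,k_m$. My strategy is to apply Proposition~\ref{lem:common_poles} to the independent curves $\alpha_1,\ldots,\alpha_m$, combine it with the fact that each $\gamma_k^{-1}$ carries a pole of $f$ in $\mathbb{C}$, and then extract a topological contradiction from the forced shared pole.

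First I would verify the hypotheses of Proposition~\ref{lem:common_poles} for the family $\alpha_1,\ldots,\alpha_m$. Every $\alpha_i$ passes through $\infty$ by the construction of $\Gamma_1$ in Step 2. Each root $a_k$ of $p$ lies in $\tu{Int}(\gamma_k)\subseteq\tu{Int}(\gamma_k^{-1})$, and the assumption $\Xi(Q_1)=\emptyset$ forces every root basin to be enclosed by some $\alpha_i$: otherwise a missing $\gamma_l^{-1}$ would, by the containment $\gamma_l^{-1}\subseteq\wh{U}_{k_i}$ established in the proof of Proposition~\ref{prop:key_prop}, sit in $\ol{Q_1}$ and contribute a new boundary piece to $\partial Q_1$ lying in $\Xi(Q_1)$. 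This forces $\{k_1,\ldots,k_m\}=\{1,\ldots,n\}$. The pole/fixed-point balance on each $\ol{\tu{Ext}}(\alpha_i)\setminus\{\infty\}$ is given by Proposition~\ref{prop:key_prop}(2) together with Corollary~\ref{cor:counting_number}. Proposition~\ref{lem:common_poles} then yields that $\ol{Q_1}$ contains exactly $m-1$ poles counted with multiplicity in the sense of Remark~\ref{d-multiplicity}. However, each $\alpha_i=\gamma_{k_i}^{-1}$ individually carries at least one pole in $\mathbb{C}$; since $m$ curves share only $m-1$ poles, a counting argument forces some pair $\alpha_i,\alpha_j$ to share a common pole $p\neq\infty$.

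This shared pole produces the desired topological contradiction. The pair $\alpha_i,\alpha_j$ consists of independent Jordan curves with disjoint interiors meeting at exactly the two points $\infty$ and $p$. An Euler-characteristic count on $\olC$ applied to the graph $\alpha_i\cup\alpha_j$ (with $V=2$, $E=4$, hence $F=4$) shows that $\tu{Ext}(\alpha_i)\cap\tu{Ext}(\alpha_j)$ has exactly two connected components; the other two faces are the disjoint disks $\tu{Int}(\alpha_i)$ and $\tu{Int}(\alpha_j)$. Since $Q_1\subseteq\tu{Ext}(\alpha_i)\cap\tu{Ext}(\alpha_j)$ is connected, $Q_1$ lies in only one of these two sectors. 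Consequently, near $p$ the boundary $\partial Q_1$ contains only the arc of $\alpha_i$ on the $Q_1$-side together with the corresponding arc of $\alpha_j$, not the entire curves. These two arcs join at $\infty$ and at $p$ to form a Jordan curve in $\partial Q_1$ which is composed of sections of curves in $\Gamma_1$ but is not itself a full curve of $\Gamma_1$; it is therefore an element of $\Xi(Q_1)$, contradicting $\Xi(Q_1)=\emptyset$.

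The main obstacle will be the bookkeeping needed to convert the multiplicity-weighted conclusion of Proposition~\ref{lem:common_poles} into the plain count "$m-1$ simple poles are shared by $m$ curves each carrying a pole", and to rule out pathological configurations at $p$ (for example, higher-order tangencies or additional crossings) which could alter the Euler computation. Both difficulties are addressed by inspecting the local structure of $f$ at the pole $p$ together with the multiplicity convention formalised in Remark~\ref{d-multiplicity}.
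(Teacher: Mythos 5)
Your proposal is correct and follows essentially the same route as the paper: both hinge on applying Proposition \ref{prop:key_prop} to the image curves and then Proposition \ref{lem:common_poles} to get exactly $m-1$ poles in $\ol{Q}_1$, played against the fact that each of the $m$ boundary curves, being a full curve of $\Gamma_1$, carries a pole in $\mb{C}$. The only difference is bookkeeping order: the paper notes at once that the no-new-curve hypothesis forces $\alpha_i\cap\alpha_j=\{\infty\}$, hence at least $m$ distinct poles versus $m-1$, while you first extract a shared pole by pigeonhole and then convert it into the same topological contradiction.
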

	\begin{proof} If not, then $\Gamma_1(Q_1)=\{\alpha_1,\cdots,\alpha_m\}\subseteq\Gamma_1$, and 
	$\alpha_i\cap \alpha_j=\{\infty\}$ for $i\neq j$. Therefore the number of poles 
	in $\ol{Q}_1=\ol{A(\alpha_1,\cdots,\alpha_m)}$ is at least $m$.
	
	On the other hand, applying Proposition \ref{prop:key_prop} to the curves 
	$f(\alpha_1), \cdots, f(\alpha_m)$, we see that the curves $\alpha_1,\cdots,\alpha_m$ satisfy the assumptions
in  Proposition \ref{lem:common_poles}. Then by Proposition \ref{lem:common_poles}, the number of poles in $\ol{A(\alpha_1,\cdots,\alpha_m)}$ is exactly $m-1$ (counting multiplicity).  This is a contradiction.
%
%
\end{proof}
	
	We may write 
	$$\Gamma_1(Q_1)=\{\lambda_1,\cdots,\lambda_r\}, \ \Xi(Q_1)=\{\eta_1,\cdots, \eta_s\}.$$
	Consider the Jordan curves
	$$f(\lambda_1),\cdots,f(\lambda_r), \eta_1,\cdots,\eta_s.$$ 
	One may verify that these curves are independent, and satisfy the conditions of Proposition \ref{prop:key_prop}.
	Applying Proposition \ref{prop:key_prop} to these curves, for each $\eta_j$, one get $\eta_j^{-1}$. Moreover, the curves 
	 $\lambda_1,\cdots,\lambda_r,\eta^{-1}_1,\cdots, \eta^{-1}_s$ are independent. 
	

	If one of the resulting curves $\eta^{-1}_j$'s, say $\eta_k^{-1}$, is disjoint from poles in $\mb{C}$, then it is exactly the unbounded component of $f^{-1}(\eta_k)$, and
	
	 \begin{itemize}
    \item   $\eta_k^{-1}$ intersects each of $\lambda_1,\cdots,\lambda_r,\eta^{-1}_j, j\neq k,$ only at $\infty$.
    \item   $f:\eta_k^{-1}\to \eta_k$ is one-to-one. 
	     \end{itemize}

For any integer $l\geq 1$, 
one may define $\eta_k^{-l-1}$ inductively by
$$\eta_k^{-l-1}=(\eta_k^{-l})^{-1}$$
as long as the curves
$$f(\lambda_1),\cdots,f(\lambda_r),\eta_1,\cdots, \eta_{k-1}, \eta_k^{-l}, \eta_{k+1}, \cdots,\eta_s,$$
are independent, and $\eta_k^{-1},\cdots, \eta_k^{-l}$ are disjoint from  poles in $\mb{C}$.
In this case, the curves $\lambda_1,\cdots,\lambda_r,\eta_1^{-1},\cdots, \eta_{k-1}^{-1},
		\eta_k^{-l-1},\eta_{k+1}^{-1},\cdots,\eta_s^{-1}$ are independent.


To continue our discussion, we need the following crucial fact:

\begin{lemma}\label{integer-existence}
	For each curve $\eta\in\Xi(Q_1)=\{\eta_1,\cdots, \eta_s\}$, there is a minimal integer $N=N_{\eta}\geq 1$, such that
	$\eta^{-N}$ contains a pole of $f$ in $\mb{C}$.
\end{lemma}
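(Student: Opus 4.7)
I plan to argue by contradiction, using the Shrinking Lemma and the attracting dynamics of $f^{-1}$ at $\infty$.

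Suppose that $\eta^{-l}$ is disjoint from poles of $f$ in $\mathbb{C}$ for every $l\geq 1$. Then by the discussion preceding the lemma, each $\eta^{-l}$ is a well-defined Jordan curve through $\infty$ and $f:\eta^{-l-1}\to\eta^{-l}$ is a degree-one homeomorphism. Setting $\overline{D_l}=\overline{\mathrm{Ext}(\eta^{-l})}$, this produces a strictly decreasing nested sequence of Jordan disks $\overline{D_0}\supsetneq\overline{D_1}\supsetneq\cdots$, all sharing $\infty$ as a boundary point, on which $f^l:\overline{D_l}\to\overline{D_0}$ restricts to a homeomorphism.

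The inverse branches $g_l=(f^l|_{\overline{D_l}})^{-1}$ are univalent holomorphic embeddings of $D_0$ onto $D_l$, continuous up to the closure with $g_l(\infty)=\infty$. Since $\infty$ is repelling for $f$ with multiplier $d/(d-1)$, it is attracting for $g=g_1$ with boundary multiplier $(d-1)/d<1$. Moreover, $g$ has no interior fixed point in $D_0$: any such point would be a fixed point of $f$ in $D_0\subseteq Q_0\subseteq\bigcap_k\mathrm{Ext}(\gamma_k)$, contradicting condition (b) of Proposition~\ref{prop:key_prop}. By the Denjoy--Wolff theorem $g_l\to\infty$ locally uniformly on $D_0$; combined with the Shrinking Lemma applied to these univalent (hence bounded-degree) iterated pullbacks, the spherical diameter of $\overline{D_l}$ tends to zero, so $\overline{D_l}\to\{\infty\}$ in the Hausdorff topology.

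The main obstacle is to extract the contradiction from this shrinking, and it is here that the specific structure of $\eta$ as a new curve enters. Because $\eta\in\Xi(Q_1)$, it is concatenated from sections of at least two distinct curves $\lambda_{i_1},\lambda_{i_2},\ldots\in\Gamma_1$, glued at their common intersection points; as noted just before Fact~\ref{new-curve}, every such gluing point is a pole of $f$, so $\eta$ contains a finite pole $p^*\neq\infty$ as a vertex where two sections coming from distinct $\lambda_{i_j}$'s meet. Under the homeomorphism $f^l:\eta^{-l}\to\eta$ this vertex corresponds to a point $z_l\in\eta^{-l}$ with $f^l(z_l)=p^*$, where two sections of $\eta^{-l}$ meeting at $z_l$ lie on distinct components of $f^{-l}(\lambda_{i_1})$ and $f^{-l}(\lambda_{i_2})$. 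The shrinking forces $z_l\to\infty$; then a pole-counting argument via Proposition~\ref{lem:common_poles}, applied at level $l$ to the perturbed independent family $\{f(\lambda_1),\ldots,f(\lambda_r),\eta_1,\ldots,\eta_k^{-l},\ldots,\eta_s\}$, shows that two distinct curves of this family cannot meet at $z_l\neq\infty$ without $z_l$ being a pole of $f$ on $\eta^{-l}$, contradicting the standing assumption. The delicate point will be making precise, via the Shrinking Lemma, the ``two distinct pullbacks must meet at a pole'' step at sufficiently large $l$.
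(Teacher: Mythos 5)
Your plan does not work, and the failure is at the central claimed step. You assert that the Shrinking Lemma together with Denjoy--Wolff forces $\overline{D_l}=\overline{\tu{Ext}(\eta^{-l})}$ to collapse to $\{\infty\}$ in Hausdorff distance, but this is false: the proof of Lemma \ref{integer-existence} in the paper records explicitly that $\tu{Int}(\lambda_1)\cup\cdots\cup\tu{Int}(\lambda_r)\subseteq\tu{Ext}(\eta_k^{-j})$ for every $j$, so $\bigcap_l \overline{D_l}$ contains these fixed sets and is far from being the single point $\infty$. Denjoy--Wolff only gives locally uniform pointwise convergence $g_l(z)\to\infty$, not collapse of the image domains $g_l(D_0)=D_l$ to a point; and Lemma \ref{lem:shinking} requires the sets $U_n$ to be pairwise disjoint, which the nested $D_l$ are not, so it cannot be applied where you invoke it. Your justification that $g$ has no interior fixed point is also wrong: $D_0=\tu{Ext}(\eta_k)$ is not contained in $Q_0$; it properly contains the $\tu{Int}(\lambda_i)$'s, which house attracting fixed points of $f$, and by Proposition \ref{prop:key_prop}(3) those fixed points even lie in $D_1=\tu{Ext}(\eta_k^{-1})$. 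Since a conformal embedding $g\colon D_0\to D_1\subsetneq D_0$ cannot fix a point where $|g'|>1$, the very existence of the claimed conformal homeomorphisms $f\colon D_{l+1}\to D_l$ is inconsistent with your own premises, so the argument collapses at the setup stage. The last ``pole-counting'' step is left to the reader (you acknowledge this), and would anyway be redundant had the earlier collapse held.

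The paper's proof is, in effect, the dual of your attempt: it does not collapse the domains $\tu{Ext}(\eta_k^{-j})$ (which cannot collapse, for the reason just given), but rather the curves $\eta_k^{-j}$ themselves, and the contradiction is that $\tu{diam}(\eta_k^{-j})$ is bounded below precisely because each $\eta_k^{-j}$ separates $\tu{Int}(\eta_k^{-1})$ from $\tu{Int}(\lambda_1)\cup\cdots\cup\tu{Int}(\lambda_r)$. To get the curves to shrink, the paper cuts each $\eta_k^{-j}$ into two end-arcs $\beta_j',\beta_j''$ lying in fixed internal rays of the two immediate basins the curve traverses (these collapse to $\infty$ trivially) and a middle arc $\beta_j$. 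One then chooses $n_0$ large enough that $\beta_0\cap\beta_{n_0}=\emptyset$, so that $\beta_{jn_0}\cap\beta_{(j+1)n_0}=\emptyset$ for all $j$, wraps them in disjoint open pull-back neighborhoods supplied by Proposition \ref{prop:key_prop}(3), and applies the Shrinking Lemma along this subsequence. This decomposition of the curve into a collapsible middle arc and end-arcs in fixed rays, together with the disjointness needed to invoke the Shrinking Lemma, is precisely the idea your plan is missing.
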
	

	\begin{proof}
		If it is not true for $\eta=\eta_k$, then for any $j\geq 1$, the curves $\lambda_1,\cdots,\lambda_r$, $\eta_1^{-1}$, $\cdots, \eta_{k-1}^{-1},
		\eta_k^{-j},\eta_{k+1}^{-1},\cdots,\eta_s^{-1}$ are independent, and
		the domains
		$$H_j=A(\lambda_1,\cdots,\lambda_r,\eta_1^{-1},\cdots, \eta_{k-1}^{-1},
		\eta_k^{-j},\eta_{k+1}^{-1},\cdots,\eta_s^{-1})$$
		satisfy
		$$H_1\supseteq H_2\supseteq \cdots \supseteq H_j\supseteq \cdots.$$
		In particular, we have
		$$\tu{Int}(\eta_k^{-1})\subseteq \tu{Int}(\eta_k^{-j}) \ \text{ and } \ \tu{Int}(\lambda_1)\cup\cdots\cup 
		\tu{Int}(\lambda_r) \subseteq \tu{Ext}(\eta^{-j}_k).$$ 
		This implies that the spherical diameters $\tu{diam}(\eta_k^{-j})$ with $j\geq 1$ are uniformly bounded from below and above.
		
		To get a contradiction, we will show $\tu{diam}(\eta_k^{-j})\to 0$ as $j\to\infty$. 	
		Note that all Jordan curves $\eta_k^{-j}$'s traverse two distinct immediate root basins, say 
		$B', B''$.
		We may decompose $\eta_k^{-j}$ into three segments $\beta_j,\beta'_j, \beta''_j$:
		 \begin{itemize}
    \item   $\beta_j'$ (resp. $\beta_j''$) is the intersection of $\eta_k^{-j}$ with the closure of some fixed internal ray of $B'$
    (resp. $B''$);
    \item   $\beta_j=\eta_k^{-j}\setminus(\beta_j'\cup\beta_j'')$. 
	     \end{itemize}
%
%
The observation $\bigcap_{j} \beta'_j=\bigcap_{j}\beta_j''=\{\infty\}$ implies that $\tu{diam}(\beta_j')\to 0$ and $\tu{diam}(\beta_j'')\to 0$ as $j\to\infty$.
		   It remains to prove 
		$$\tu{diam}(\beta_j)\to 0\tu{ as }j\to\infty.$$

	Note that	$f:\beta_{j+1}\rightarrow\beta_j$ is a homeomorphism. By the construction of $\beta_j$, 
		 there is a large integer $n_0>0$ such that $\beta_{0}\cap\beta_{n_0}=\emptyset$.
		 It follows that $\beta_{jn_0}\cap\beta_{(j+1)n_0}=\emptyset$ for all $j\geq 0$. By Proposition \ref{prop:key_prop} (3), there exists a sequence of open sets $\{U_j\}$ with $\beta_{jn_0}\Subset U_j$ such that $f^{n_0}(U_{j+1})=U_j$ and $U_{j+1}\cap U_0=\emptyset$ for $j\geq 0$. By the Shrinking Lemma (see Lemma \ref{lem:shinking}), 
	one has	$\tu{diam}(\beta_{jn_0})\to 0$ as $j\to\infty$. 
		The shrinking property $H_j\supseteq H_{j+1}$ implies that $\tu{diam}(\beta_j)\to 0$ as $j\to\infty$.  This gives a contradiction. 
	\end{proof}

	 \begin{figure}[h] 
	 \begin{center}
\includegraphics[height=6cm]{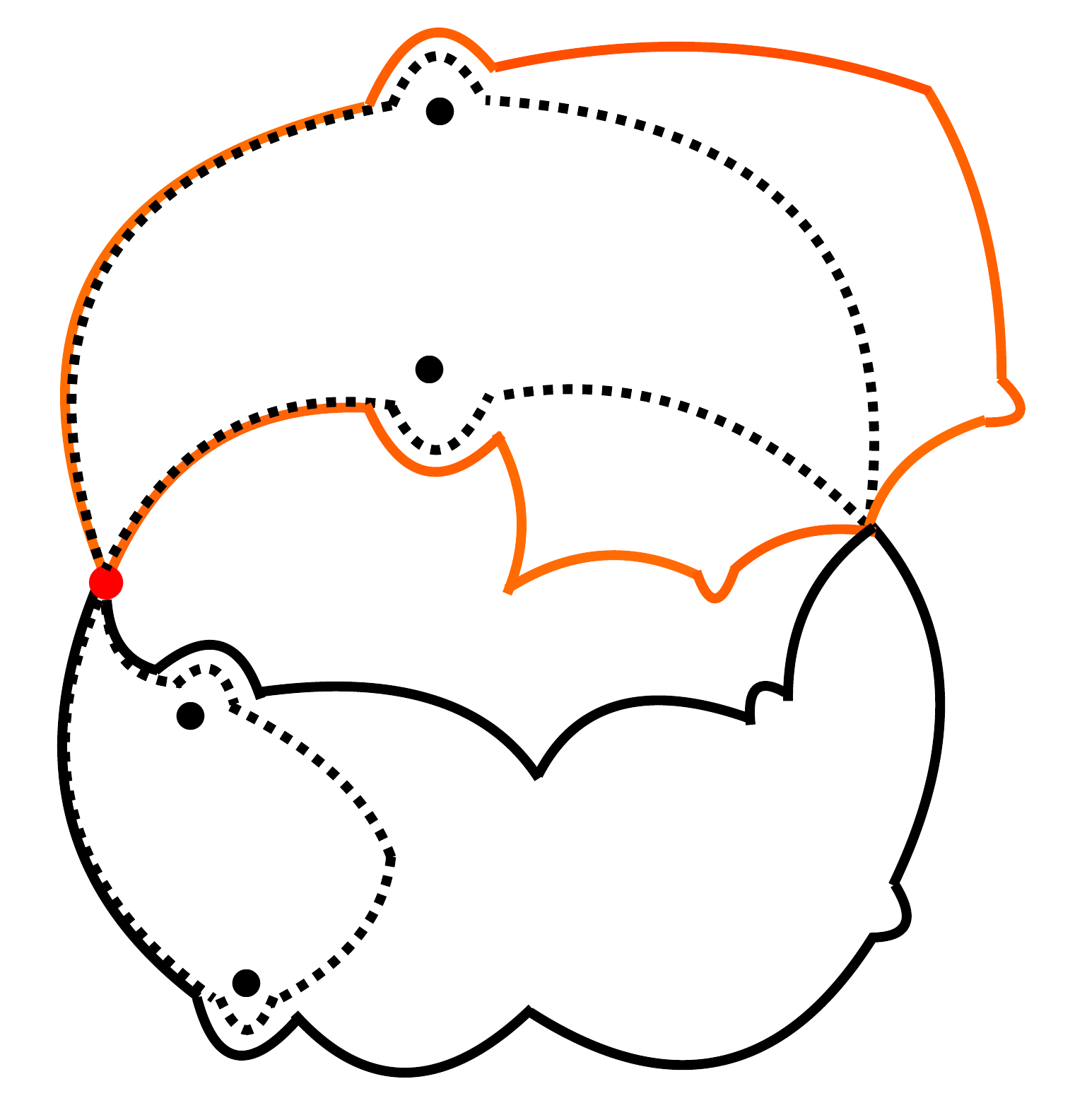} 
 \put(-42,50){$\eta_1^{-1}$} \put(-125, 48){$\eta_1$} \put(-110, 38){$\bullet \ p_1$} \put(-36,87){$\bullet\ p_2$}
 \put(-50,122){$\eta_2$}  \put(-10,130){$\eta_2^{-1}$}
 \caption{In this example, the black dashed curves $\eta_1, \eta_2$ contain the poles $p_1, p_2$, respectively. $\eta_1^{-1}$ is the black curve containing the new pole $p_2$, and
  $\eta_2^{-1}$ is the orange curve containing the same pole $p_2$ as $\eta_2$ does.
  The pole $p_2$ is also a critical point.
 }
\label{pre-pole}
\end{center}
\end{figure}

	\begin{remark}\label{common-preimage-pole} 
	In Lemma \ref{integer-existence}, it may happen that 
	$$\eta^{-N}\cap f^{-1}(\infty)=\eta\cap  f^{-1}(\infty).$$ 
	In other words, the poles in $\eta^{-N}$ are already contained in $\eta$, hence not new. Figure \ref{pre-pole} gives such an example.
	\end{remark}

	By Lemma \ref{integer-existence}, for each $\eta_k$, there exists a minimal integer $N_k\geq 1$
	 such that $\eta_k^{-N_k}$ contains a pole.  One may verify further that 
	 the Jordan curves $$f(\lambda_1), \cdots, f(\lambda_r),\eta_1^{-l_1},\cdots,\eta_s^{-l_s}$$ with $0\leq l_1< N_1,\cdots,0\leq l_s< N_s$ are independent and
	  satisfy the conditions (a)-(d) in Proposition \ref{prop:key_prop}. 
	Applying Proposition \ref{prop:key_prop} to  the curves $$f(\lambda_1),\cdots,f(\lambda_r),\eta_1^{-N_1+1},\cdots,\eta_s^{-N_s+1},$$
	we get the following independent curves
	 $$\lambda_1,\cdots,\lambda_r,\eta_1^{-N_1},\cdots,\eta_s^{-N_s},$$
	 each of which contains a pole in $\mb{C}$. Again Proposition \ref{lem:common_poles} implies that at least two of these curves contains a common pole in $\mb{C}$. We remark that each $\eta_k$ passes through exactly two immediate root basins $B',B''$, and so do the curves $\eta_k^{-j}, 1\leq j\leq N_k$; these Jordan curves overlap on an invariant subarc in $\{\infty\}\cup B'\cup B''$.

	Let's define two families of Jordan curves
	$$\Gamma^*_2=\bigcup_{Q_1}\big\{\eta_1^{-1},\cdots, \eta_1^{-N_1},\cdots,\eta_s^{-1},\cdots, \eta_s^{-N_s}\big\},\Gamma_2=\bigcup_{Q_1}\big\{\eta_1^{-N_1},\cdots,\eta_s^{-N_s}\big\}$$
	where $Q_1$ ranges over all non-trivial unbounded components of $\olC\setminus G_1$. 
	Now we get a new graph $G_2$, which is an extension of $G_1$:
	$$G_2=G_1\bigcup \bigcup_{\gamma\in \Gamma^*_2}\gamma.$$	
%
	 
Observe that $f(G_2)\subseteq G_0\cup G_2$.	 The construction and the existence of common poles for the curves 
	  $\lambda_1,\cdots,\lambda_r$, $\eta_1^{-N_1},\cdots,\eta_s^{-N_s}$ 
	   imply that
	 $$ \nu(G_2,\infty)<\nu(G_1,\infty).$$
	\end{proof}
	
	\begin{proof}[Step 4: from $G_k$ to $G_{k+1}$, an induction procedure]
%
%
Suppose  for some $k\geq 2$, we have constructed the graphs $G_1, \cdots, G_k$ and the curve families
$\Gamma^*_1, \Gamma_1$
$\cdots$, $\Gamma^*_k, \Gamma_k$, inductively in the following way
$$\Gamma^*_l=\bigcup_{Q_{l-1}}\big\{\eta_1^{-1},\cdots, \eta_1^{-N_1},\cdots,\eta_s^{-1},\cdots, \eta_s^{-N_s}\big\},$$
$$\Gamma_l=\bigcup_{Q_{l-1}}\big\{\eta_1^{-N_1},\cdots,\eta_s^{-N_s}\big\}, \ G_{l}=G_{l-1}\bigcup \bigcup_{\gamma\in \Gamma^*_l}\gamma,$$
where $Q_{l-1}$ is taken over all non-trivial unbounded components of $\olC\setminus G_{l-1}$, and that
 $f(G_l)\subseteq G_0\cup G_l$ and $\nu(G_l,\infty)<\nu(G_{l-1},\infty)$, for $2\leq l\leq k$.

If $\nu(G_k,\infty)=1$, then the step is done. If $\nu(G_k,\infty)\geq 2$,
	  we consider each non-trivial unbounded component $Q_{k}$ of  $\olC\setminus G_k$.
	  Write $Q_k$ as $A(\delta_1,\cdots, \delta_t)$ and
	   compare the curves $\delta\in\{\delta_1,\cdots, \delta_t\}$ with the curves in $\Gamma_1\cup\cdots\cup\Gamma_k$,
	   there are two possibilities: either 	 
	 \begin{itemize}
    \item    $\delta\in\Gamma_1\cup\cdots \cup\Gamma_k$, or
    \item    $\delta$ is {\it new}, i.e. $\delta\notin\Gamma_1\cup\cdots \cup\Gamma_k$. In this case, $\delta$ is composed of several sections, each section
    is a part of a curve in $\Gamma_1\cup\cdots \Gamma_k$.  
	     \end{itemize}
   Let $\Xi(Q_k)$ be the collection of new curves. For each $\eta\in \Xi(Q_k)$,
   by the same argument as Lemma \ref{integer-existence}, there is a minimal integer  $N_\eta\geq1$  such that $\eta^{-N_\eta}$ meets a pole in $\mb{C}$. 	   
	   By Proposition \ref{lem:common_poles}, at least two curves of $\{\eta^{-N_\eta}; \eta\in \Xi(Q_k)\}$ share a common pole. 
	  Similarly as above, we get a new graph $G_{k+1}$ and two curve families $\Gamma_{k+1}\subseteq \Gamma_{k+1}^*$:
	  $$\Gamma^*_{k+1}=\bigcup_{Q_{k}} \bigcup_{\eta\in \Xi(Q_k)}\big\{\eta^{-1},\cdots,\eta^{-N_\eta}\big\}, 
	  \ \Gamma_{k+1}=\bigcup_{Q_{k}} \bigcup_{\eta\in \Xi(Q_k)}\big\{\eta^{-N_\eta}\big\},$$
	  $$G_{k+1}=G_{k}\bigcup \bigcup_{\gamma\in \Gamma^*_{k+1}}\gamma,$$
where $Q_{k}$ is taken over all the non-trivial unbounded component $Q_{k}$ of  $\olC\setminus G_k$.	  
	  
	   The resulting graph $G_{k+1}$
	   satisfies 
	   $$\nu(G_{k+1},\infty)<\nu(G_k,\infty), \  f(G_{k+1})\subseteq G_0\cup G_{k+1}.$$
	   
	    After finitely many steps, we have $\nu(G_\ell,\infty)=1$ for some minimal integer $\ell\geq 1$. Then $\infty$ is a non-cut point for the graph $G_\ell$, and $f(G_\ell)\subseteq  G_0\cup G_\ell$.
	 \end{proof}
	
	\begin{proof}[Step 5: from $G_\ell$ to $G$, a natural modification]
	
	By construction, 
	  all points in $G_\ell\cap  J(f)$ are iterated pre-images of $\infty$, and
	$$f(G_\ell\cap  J(f))\subseteq G_\ell\cap  J(f), \ f^N(G_\ell\cap  J(f))=\{\infty\}.$$
	
	Note that for any $0\leq k\leq \ell$, the graph $G_k$ is a union of some curves in 
	$$\Gamma=\Gamma_0\cup\Gamma^*_1\cup \Gamma^*_2\cup \Gamma^*_3\cup  \cdots\cup \Gamma^*_\ell.$$

	To give a natural modification of $G_{\ell}$, 
	it suffices to define the modification 
	of each curve $\delta\in \Gamma$. This goes in the following way.
	
	 \begin{figure}[h] 
	 \begin{center}
\includegraphics[height=4.5cm]{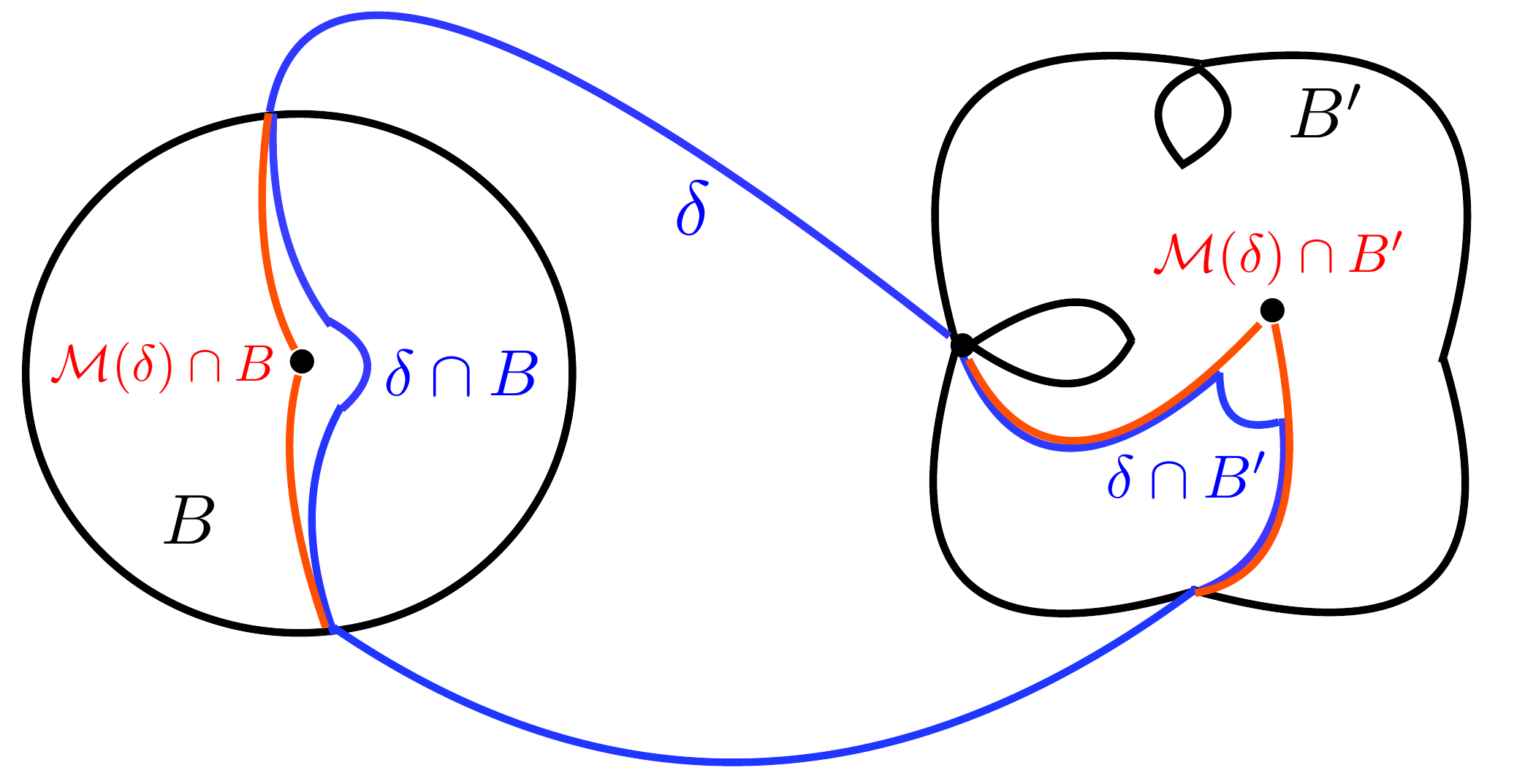} 
 \caption{This figure shows how to modify a curve arc by arc.
 $B\cap \delta$ is tangent to two internal rays near $\partial B$, while $B'\cap \delta$  is equal to two
 internal rays near $\partial B'$. 
 }
\label{moodification}
\end{center}
\end{figure}

Let $B\in\tu{Comp}(B_f)$ with $B\cap \delta\neq\emptyset$, then $B\cap \delta$ consists of finitely many
 components. Suppose $B$ is eventually iterated to the immediate root basin $B_0$.
 Note that each component $\sigma$ of $B\cap \delta$  is an open arc, and near the boundary  $\partial B$, $\sigma$ is either  tangent to (if $d_{B_0}=2$) or equal to (if $d_{B_0}\geq 3$) two internal rays (see Figure \ref{moodification}), say
 $R_B(\alpha), R_B(\beta)$. We define the modification $\mathcal{M}(\sigma)$ of $\sigma$ by
 $$\mathcal{M}(\sigma)=R_B(\alpha)\cup R_B(\beta)\cup\{c_B\},$$
  where $c_B$ is the center of $B$ (it is possible that $\alpha=\beta$).
 We then set
 $$\mathcal{M}(\delta)=\ol{\bigcup_B\bigcup_\sigma  \mathcal{M}(\sigma)},$$
 where $B$ ranges over all components $B\in\tu{Comp}(B_f)$ with $B\cap \delta\neq\emptyset$ 
 and $\sigma$ is taken over all components of $B\cap \delta$.
 
  \begin{figure}[h] 
	 \begin{center}
\includegraphics[height=5.5cm]{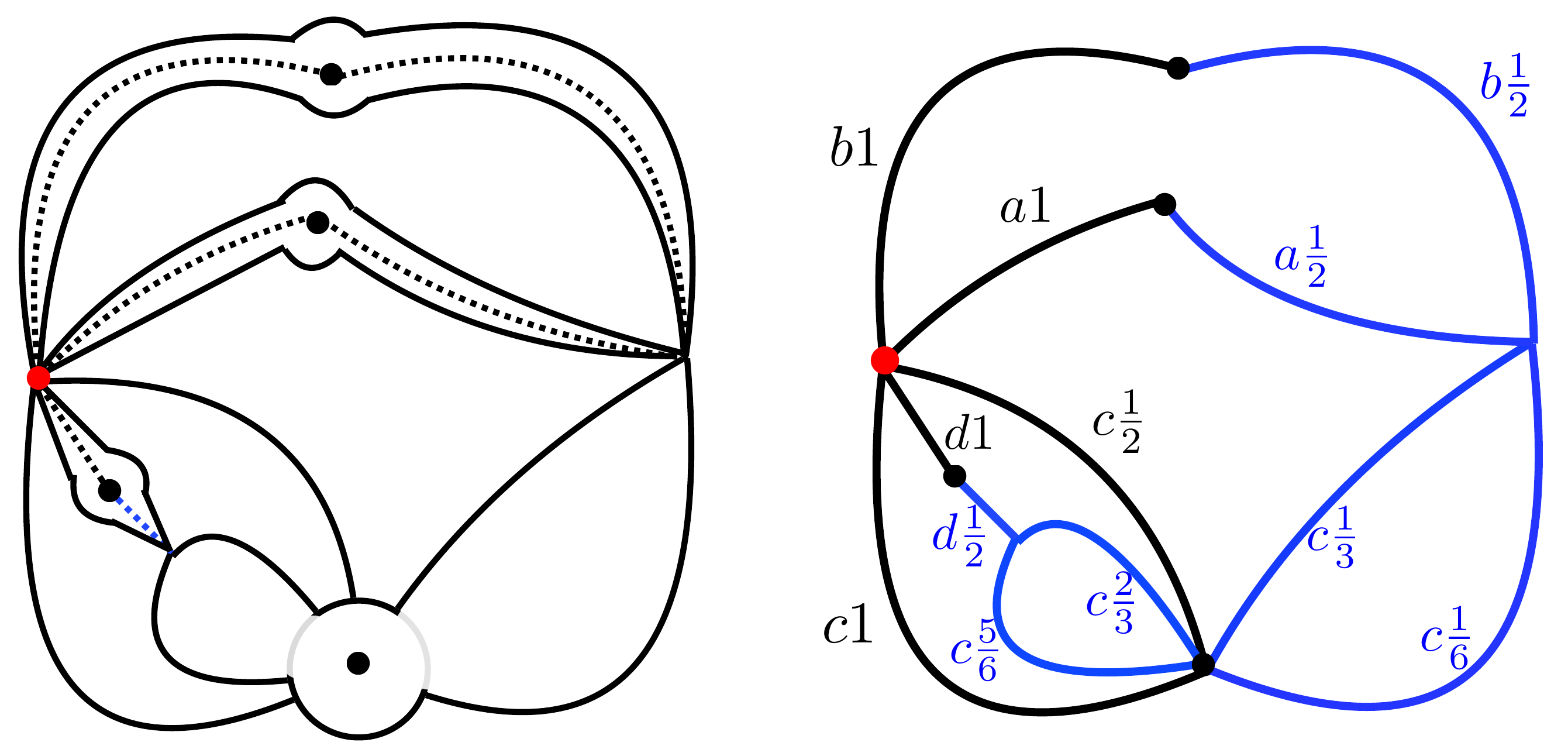} 
 \caption{This figure shows how to get the graph $G$ (right) by a natural modification of $G_\ell$ (left, $\ell=2$) in Step 5.
 }
\label{graph-final}
\end{center}
\end{figure}

%
By the law $\mathcal M(\delta_1\cup \delta_2)=\mathcal M(\delta_1)\cup\mathcal M(\delta_2)$, we obtain  
the modification of the graphs $G_k$'s, which satisfy
 $$\Delta_0=\mathcal{M}(G_0)\subseteq \mathcal{M}(G_1) \subseteq \cdots\subseteq \mathcal{M}(G_\ell)\subseteq f^{-N}(\Delta_0).$$
 
 Let $G=\mathcal{M}(G_\ell)$. Clearly one has $f^{N}(G)=\Delta_0$. Moreover,
 $$f(G)=\mathcal{M}(f(G_\ell))\subseteq \mathcal{M}(G_0\cup G_\ell)=\mathcal{M}(G_\ell)=G.$$
 $$ \nu(G,\infty)=\nu(G_\ell,\infty)=1.$$

	\end{proof}

	\begin{proof}[Step 6: $f^{-1}(G)$ is connected]  To prove the connectivity of $f^{-1}(G)$, we need investigate some properties of $G$ and $G_\ell$ (given in Step 4) first.

	\begin{fact} \label{Gl-property} Each component of $\wh{\mb{C}}\setminus G_\ell$
is a Jordan disk.	
		\end{fact}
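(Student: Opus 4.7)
The plan is to deduce Fact \ref{Gl-property} by establishing that $G_\ell$, regarded as a finite plane graph in $\olC$, is $2$-connected, i.e., has no cut vertex. Once this is in hand, a classical fact from topological graph theory yields that every face of such a graph is an open Jordan disk, which is exactly the desired conclusion.

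First I would record that $G_\ell$ is connected: $G_0=\bigcup_B\Delta_B$ is connected via the shared vertex $\infty$, and every Jordan curve added in Steps 2--4 to form $G_1,\ldots,G_\ell$ passes through $\infty$ (this property is built into the pullback operation and preserved through the inductive construction). The vertex $\infty$ is not a cut vertex because Step 4 terminates precisely when $\nu(G_\ell,\infty)=1$.

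The substantive step is to show that no vertex $v\in G_\ell\cap\mb{C}$ is a cut vertex. The structural key is that every edge of $G_\ell$ belongs to one of two types of $2$-connected subgraph: (i) a Jordan curve $\gamma\in\Gamma_1^*\cup\cdots\cup\Gamma_\ell^*$ passing through $\infty$, or (ii) some $\Delta_B$. In case (i), $\gamma\setminus\{v\}$ is an open arc that still contains $\infty$, hence is connected. In case (ii), I would verify that each $\Delta_B$ is itself $2$-connected (for $d_B\geq 3$ it is a wheel-type graph with hub $\infty$, $d_B-1$ spokes, and $d_B-1$ rim arcs; for $d_B=2$ it is a triangle), so $\Delta_B\setminus\{v\}$ is connected and contains $\infty$. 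Since every edge of $G_\ell$ incident to $v$ lies in one of these two types of piece, each remains connected to $\infty$ after removing $v$, and hence $G_\ell\setminus\{v\}$ is connected.

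The principal obstacle is to handle the combinatorial structure at intersection vertices correctly---in particular at poles in $\mb{C}$ where several pullback curves meet, and at possible coincidences of pullback curves with $G_0$. The relevant tools are the identity $\gamma_i^{-1}\cap\gamma_j^{-1}\subseteq f^{-1}(\infty)$ from Step 2 (and its analogues in later steps), together with the invariant overlap property of iterated pullback curves recorded in Step 3. With $2$-connectedness of $G_\ell$ established, Fact \ref{Gl-property} then follows from the standard fact that every face of a $2$-connected finite plane graph on $\olC$ is bounded by a simple cycle.
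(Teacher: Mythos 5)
Your plan is correct and is essentially the paper's own argument: the paper proves $\nu(G_\ell,z)=1$ for every $z\in G_\ell$ by writing $G_\ell=\bigcup_{\delta\in\Gamma\setminus\Gamma_0}\delta$, observing that each $\delta$ is a Jordan curve through $\infty$ (so $G_\ell\setminus\{z\}$ is a union of connected sets all containing $\infty$), using the termination condition $\nu(G_\ell,\infty)=1$ of Step 4 for the point $\infty$, and then invoking the same equivalence you cite between absence of cut points and all complementary components being Jordan disks. The only discrepancy is your case (ii): by the definitions in Steps 2--4, $G_\ell$ is the union of the pullback curves in $\Gamma_1^*\cup\cdots\cup\Gamma_\ell^*$ alone and need not contain the graphs $\Delta_B$ (despite the inclusion $G_0\subseteq G_1\subseteq\cdots$ suggested in the proof sketch), so that case is superfluous, though it does not affect the validity of the argument.
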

	\begin{proof}	It is equivalent to show that $\nu(G_\ell, z)=1$ for all $z\in G_{\ell}$. Clearly this is true for $z=\infty$ by the construction of $G_\ell$.
	For $z\in G_{\ell}-\{\infty\}$,
	note that $G_\ell=\bigcup_{\delta\in \Gamma\setminus\Gamma_0}\delta$ (here $\Gamma,\Gamma_0$ are defined in Step 5) and  $\infty\in \bigcap_{\delta\in \Gamma\setminus\Gamma_0}\delta$.	
	The observation 
	$G_{\ell}\setminus\{z\}=\bigcup_{\delta\in \Gamma\setminus\Gamma_0}(\delta\setminus\{z\})$ and
	$\infty\in \bigcap_{\delta\in \Gamma\setminus\Gamma_0}(\delta\setminus\{z\})$ imply that 
	 $G_{\ell}\setminus\{z\}$ is connected, hence $z$ is not a cut point of $G_\ell$.
	\end{proof}
	
	\begin{prop}\label{cor:non-cutpoints_graph} The graph $G$ satisfies  

1. Any point in $G\cap J(f)$ is not a cut point of $G$.

2. The center of any immediate root basin is not a cut point of $G$.

3. For any immediate root basin $B$, the intersection $G\cap \overline{B}$ is connected.
 In other words, any Julia point in $\overline{B}\cap J(f)$ is linked to the center of $B$ by an
internal ray in $G$.

%
%
\end{prop}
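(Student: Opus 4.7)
The approach is to extract a clean structural description of $G$ from Step~5 and then reduce the three assertions to connectivity statements that follow from the non-cut-point property of $G_\ell$ supplied by Fact~\ref{Gl-property}.

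First I would prove a \emph{structural lemma}: for every Fatou component $B$ with $G \cap B \neq \emptyset$, the center $c_B$ lies in $G$, and $G \cap \overline{B}$ is a ``star'' centered at $c_B$, i.e.\ a finite union of closed internal rays $\overline{R_B(\theta)}$ meeting pairwise only at $c_B$ and landing at points of $L_B := G \cap \partial B \subseteq J(f)$. This is immediate from the recipe $\mathcal{M}(\sigma) = R_B(\alpha) \cup R_B(\beta) \cup \{c_B\}$ together with $G = \overline{\bigcup_{\delta,B,\sigma}\mathcal{M}(\sigma)}$. With this structural lemma in hand, part~(3) is immediate: a star is connected and each $\ell \in L_B$ is joined to $c_B$ by the closed ray landing at $\ell$; in particular every Julia point of $\overline{B} \cap J(f) \cap G$ is linked to $c_B$ by an internal ray inside $G$.

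For part~(2), removing $c_B$ from the star leaves a disjoint union of half-open arcs $R_B(\theta) \cup \{\ell(\theta)\}$, each anchored at a different point of $L_B$; hence $G \setminus \{c_B\}$ is connected if and only if all points of $L_B$ lie in a common component of $G \setminus B$. Since $B$ is immediate, $\Delta_0 \subseteq G$ contributes $d_B - 1$ fixed internal rays to $G \cap \overline{B}$, all landing at the common point $\infty \in L_B$; so it suffices to show that every $\ell \in L_B$ is connected to $\infty$ inside $G \setminus B$. For part~(1), with $z \in G \cap J(f)$, I would observe that $G \cap J(f) = G_\ell \cap J(f)$ (since $\mathcal{M}$ preserves Julia landing points), invoke Fact~\ref{Gl-property} to get $G_\ell \setminus \{z\}$ connected, and transfer this connectivity to $G \setminus \{z\}$ by exploiting the fact that $\mathcal{M}$ only \emph{coarsens} Fatou-wise Julia connectivity: inside each $\overline{B}$ the modification replaces a matching of $L_B$ (pairs of endpoints of arcs) by a single star through $c_B$, collapsing all of $L_B$ into one class.

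The main obstacle is making these ``transfer'' arguments rigorous. The crucial technical statement is the following lemma: for every Fatou component $B$ with $G \cap B \neq \emptyset$, the set $G \setminus B$ is connected. To prove it I would contract the star $G \cap \overline{B}$ to a point and compare with the analogous contraction of $G_\ell \cap \overline{B}$; the obstruction to disconnection lives entirely in a combinatorial ``Fatou--Julia incidence graph'' with vertex set $\{c_B : B \in \tu{Comp}(B_f),\ G \cap B \neq \emptyset\} \cup (G \cap J(f))$ and an edge $c_B \ell$ whenever $\ell \in L_B$. Fact~\ref{Gl-property}, translated to this combinatorial picture, forces the incidence graph to remain connected after the removal of any single vertex $c_B$. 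The same combinatorial argument, applied to a vertex $z \in G \cap J(f)$ in place of $c_B$, supplies the connectivity required in part~(1).
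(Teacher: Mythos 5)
Your part~(3) is fine, and your transfer argument for part~(1) can be made rigorous: the modification $\mathcal{M}$ induces a continuous surjection $h\colon G_\ell\to G$ that is the identity on $G_\ell\cap J(f)=G\cap J(f)$; for a Julia point $z\neq\infty$ one has $h^{-1}(z)=\{z\}$, hence $G\setminus\{z\}=h\big(G_\ell\setminus\{z\}\big)$, a continuous image of a connected set by Fact~\ref{Gl-property}. (A minor correction: your structural lemma asserts the closed rays of $G\cap\overline{B}$ meet pairwise only at $c_B$, but in an immediate basin with $d_B\geq 3$ all $d_B-1$ fixed rays also share the landing point $\infty$, and over strictly pre-periodic components several rays can share a landing point in $\Omega_f$; this does not damage connectivity of the ``star'' but the statement as written is inaccurate.)

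The genuine gap is in part~(2). When you remove $c_B$ from $G$, the corresponding set $h^{-1}(c_B)\subseteq G_\ell$ is not a single point but a finite set, one interior Fatou point for each arc of $G_\ell$ through $B$. Fact~\ref{Gl-property} only guarantees that removing a \emph{single} point does not disconnect $G_\ell$; it says nothing about removing several at once, and your claim that the ``incidence graph stays connected after removing $c_B$'' is precisely the assertion $G\setminus B$ connected, which you list as a key lemma but never prove and which does not reduce to a one-point removal from $G_\ell$. The paper sidesteps this entirely with a different decomposition: it writes $G=\bigcup_{\delta\in\Gamma\setminus\Gamma_0}\mathcal{M}(\delta)$, each $\mathcal{M}(\delta)$ being a quotient of a Jordan curve through $\infty$ whose only possible pinch points are Fatou centers, and checks that each summand, after deleting a Julia point or the center of an immediate basin, remains connected and still contains $\infty$ (for immediate-basin centers this uses that curves in $\Gamma^*_2\cup\cdots\cup\Gamma^*_\ell$ meet each immediate basin at most once, while $\Gamma^*_1$-curves are linked to a second immediate basin by a companion curve). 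That local, curve-by-curve argument avoids the global connectivity lemma your route hinges on.
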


\begin{proof}
%

 It is worth observing that
  $$G=\mathcal{M}(G_\ell)=\bigcup_{\delta\in \Gamma\setminus\Gamma_0}\mathcal{M}(\delta).$$
 
1. For any $z\in  G\cap J(f)$ and $z\neq\infty$, the facts 
$$G\setminus\{z\}=\bigcup_{\delta\in \Gamma\setminus\Gamma_0}(\mathcal{M}(\delta)\setminus\{z\}), \ \infty\in \bigcap_{\delta\in \Gamma\setminus\Gamma_0}(\mathcal{M}(\delta)\setminus\{z\})$$ imply that 
$G\setminus\{z\}$ is  connected, hence $z$ is not a cut point of $G$.
%
%
	
2.   Recall that each curve
$\delta\in\Gamma^*_2\cup \Gamma^*_3\cup  \cdots\cup \Gamma^*_\ell$
starts at an immediate root basin $B'$ and
terminates at a different one $B''$. Each curve $\delta\in \Gamma^*_1$ will be connected to another immediate root basin by another curve $\beta\in \Gamma_1\cup\Gamma_2\cup\cdots\cup \Gamma_\ell$.
It follows that after the modification, 
 the centers of immediate root basins are not cut points with respect to $G$. 

%
	
3.  Note that each curve
$\delta\in\Gamma^*_2\cup \Gamma^*_3\cup  \cdots\cup \Gamma^*_\ell$
meets exactly two different immediate root basins. By construction, if $\delta\cap {B}\neq\emptyset$ for some
 immediate root basin $B$, then  $\mc{M}(\delta)\cap \ol{B}$ is the closure of the union of two  internal rays.  (This implies, in particular, that $\mc{M}(\delta)\cap \ol{B}$ has no isolated point)

Note also for $\delta\in\Gamma_1$, the intersection $\mc{M}(\delta)\cap \ol{B}$ is the closure of the union of   two (if $d_{B}=2$) or  four (if $d_{B}>2$, see Figure \ref{impossible-graph})  internal rays.
Therefore,
$$G\cap \ol{B}=
 \bigcup_{\delta\in \Gamma\setminus\Gamma_0} (\mc{M}(\delta)\cap \ol{B})$$
is the closure of the union of finitely many internal rays, hence connected.
%
%
\end{proof}

%
%

	  \begin{figure}[h] 
	 \begin{center}
\includegraphics[height=6cm]{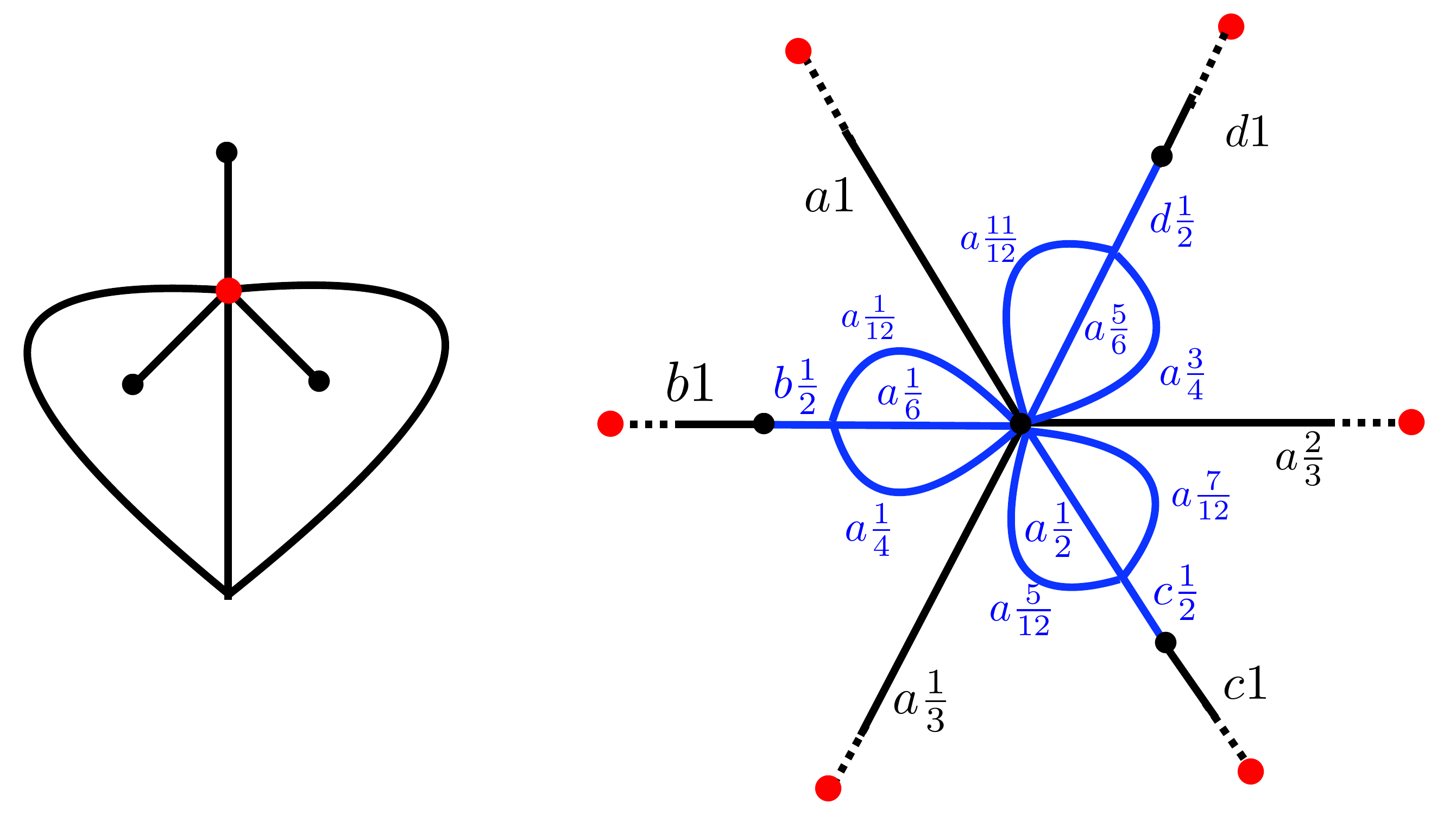} 
 \caption{A type of channel graph $\Delta_0$ (left) and its resulting invariant graph $G$ (right). 
 Here the red dots represent the same point $\infty$. The notation `$w t$' means the internal ray of angle 
 $t$ starting from the center $w\in\{a,b,c,d\}$. 
 }
\label{graph-final1}
\end{center}
\end{figure}


%
%
%

	  \begin{figure}[h] 
	 \begin{center}
\includegraphics[height=5.4cm]{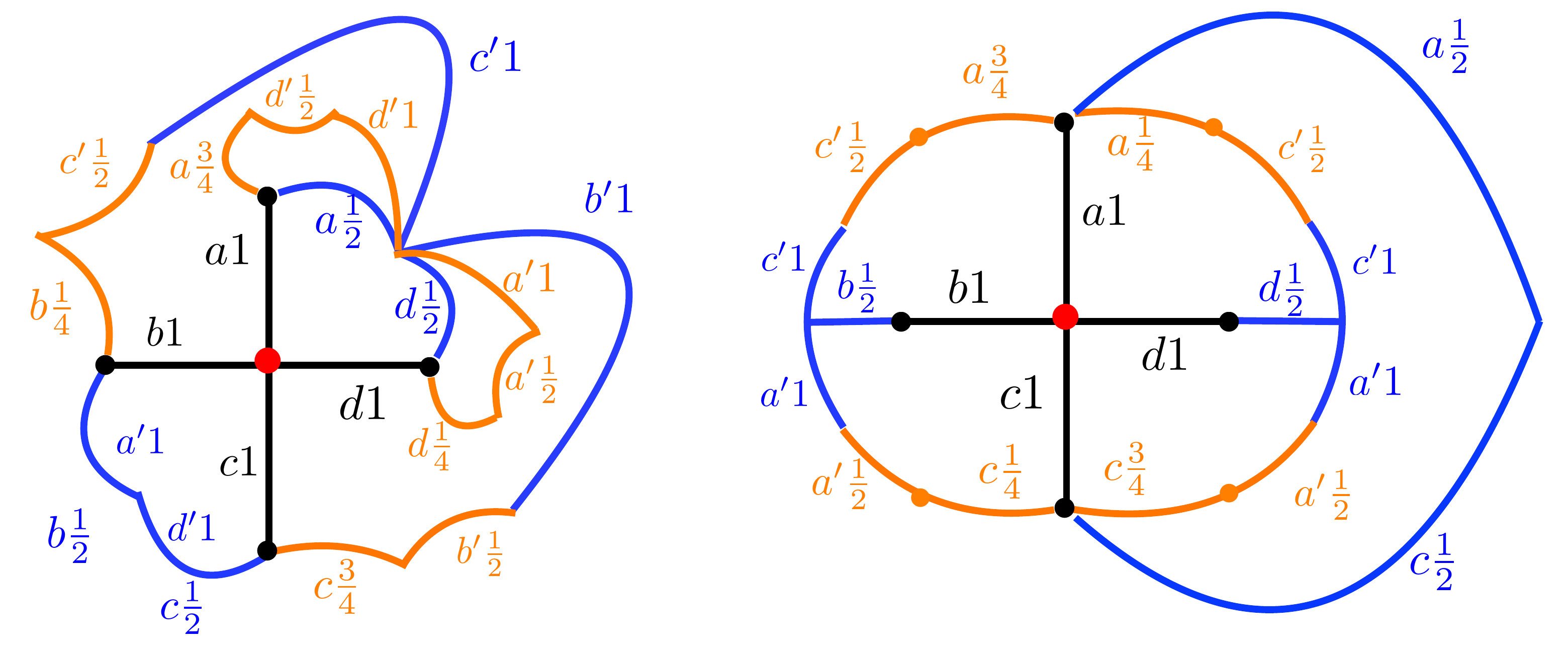} 
 \caption{They are two invariant graphs, for two different maps.
 Their channel graphs are same, but the combinations are different.
  The notation `$w t$' means the internal ray of angle 
 $t$ starting from the center $w\in\{a,b,c,d\}\cup \{a',b',c',d'\}$, here 
 $x'$ is a pre-image of $x$.
 The blue/orange parts are the subsets of  first/second pre-images of the channel graph.
 }
\label{2graphs}
\end{center}
\end{figure}


\begin{remark}\label{rmk:fixed}
Proposition \ref{cor:non-cutpoints_graph} implies that 
the only possible cut points in $G$ are the centers of 
strictly pre-periodic components $B\in {\rm Comp}(B_f)$.
%
\end{remark}

To prove the connectivity of $f^{-1}(G)$, it is equivalent to show that each component of 
$f^{-1}(\wh{\mb{C}}\setminus G)$ is simply connected. 

To this end, let  $X=\olC\setminus\bigcup_{B}\overline{\Phi_{B}^{-1}(\mb{D}_{1/2})}$,
where the union is taken over all  $B\in {\rm Comp}(B_f)$ such that $B\cap G\neq \emptyset$, and
$\Phi_B:B\to \mb{D}$
 is the B\"ottcher map of $B$.
 By Proposition \ref{cor:non-cutpoints_graph}, each component of $X\setminus G$ is a Jordan disk. 
To show  that each component of 
$f^{-1}(\wh{\mb{C}}\setminus G)$ is simply connected, it is equivalent to show that  each component of 
$f^{-1}(X\setminus G)$ is simply connected. This is the main task of Proposition \ref{map-puzzle},
which is stated as an independent important property for puzzle pieces (here we deal with $f^{-1}(X\setminus G)$ instead of $f^{-1}(\wh{\mb{C}}\setminus G)$ for the technical reason that we want to apply 
Corollary \ref{cor:counting_number}).

This completes proof of Step 6, hence the whole proof of Theorem \ref{in-graph}. 
 \end{proof}

\begin{remark}\label{graph-N} One may assume the number $N$ in  Theorem \ref{in-graph} is minimal, in the sense that 
$G\subseteq f^{-N}(\Delta_0)$ and $G \nsubseteq  f^{-N+1}(\Delta_0)$.
This minimal $N$ can not be controlled by the degree of $f$, even in the cubic case.

In fact, we can show: For any integer $n\geq 1$, there is a post-critically finite cubic Newton map $f$, for which the invariant graph $G$ constructed in Theorem \ref{in-graph} 
satisfies that 
	$$G\subseteq f^{-n}(\Delta_0), \ G \nsubseteq  f^{-n+1}(\Delta_0).$$
The proof is based on the deeper understanding of the parameter space \cite{RWY17}.
Since we will not use this fact in the paper, we skip its proof.
\end{remark}

\subsection{Appendix: Shrinking Lemma revisited}

At the end of this section, we prove a version of Shrinking Lemma (see \cite{LM} for its original form), which plays an important role
 in the proof of  Lemma \ref{integer-existence}.

\begin{lemma}\label{lem:shinking}
	Let $f$ be a rational map. Let $\{(E_n,U_n)\}_{n\geq 0}$ be a sequence of subsets in $\olC$, such that for all $n\geq 0$,
	\begin{enumerate}
		\item $E_n\Subset U_n$ with $E_n$'s full  \footnote{ A set is said \emph{full} if its complement is connected.
} continua and $U_n$'s open sets;	\item $f(E_{n+1})=E_n, f(U_{n+1})=U_n$;
		\item $U_{n+1}\cap U_0=\es$.
	\end{enumerate}
	Then the spherical diameter of $E_n$ converges to zero as $n\to\infty$.
\end{lemma}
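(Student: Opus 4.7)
My strategy is to argue by contradiction using Montel's theorem inside a well-chosen hyperbolic domain, exploiting the non-capture condition (3).

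\textbf{Setup and reduction.} First I would reduce to the case that $E_0$ is non-degenerate: if $E_0$ is a single point, then $E_n\subseteq f^{-n}(E_0)$ is a finite set, and since $E_n$ is connected it must itself be a singleton, so the claim is immediate. Assuming $E_0$ has more than one point, it is an uncountable continuum and $\overline{E_0}$ contains at least three points of $\olC$, so $V:=\olC\setminus\overline{E_0}$ is a hyperbolic Riemann surface carrying its hyperbolic metric $\rho_V$. Because $\overline{E_0}\subseteq U_0$, the compact set $K:=\olC\setminus U_0$ is compactly contained in $V$, and on $K$ the metric $\rho_V$ is uniformly comparable to the spherical metric. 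Condition (3) forces $E_n\subseteq U_n\subseteq K$ for every $n\geq 1$, so the assertion reduces to showing $\mathrm{diam}_{\rho_V}(E_n)\to 0$.

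\textbf{Contradiction via Montel.} Suppose the conclusion fails. Extract a subsequence $n_k\to\infty$ with $\mathrm{diam}_{\rho_V}(E_{n_k})\geq\varepsilon>0$, and by compactness of the Hausdorff topology on $K$ pass to a further subsequence so that $E_{n_k}\to E^{\ast}$ (Hausdorff), where $E^{\ast}\subseteq K$ is a continuum of $\rho_V$-diameter at least $\varepsilon$. Fix a small open neighborhood $N\Subset V$ of $E^{\ast}$. The heart of the argument is to show that for infinitely many $k$ the set $N$ is contained in $U_{n_k}$, so that $f^{n_k}(N)\subseteq U_0$ and the family $\{f^{n_k}|_N\}$ omits the non-trivial open set $\olC\setminus\overline{U_0}$ (in particular at least three points). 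Montel's theorem then yields, after further extraction, a holomorphic limit $g:N\to\overline{U_0}$ with $f^{n_k}\to g$ uniformly on a neighborhood of $E^{\ast}$. Hausdorff continuity of images under uniform convergence gives $g(E^{\ast})=E_0$, so $g$ is non-constant on $E^{\ast}$. But normality of the iterates $\{f^{n_k}\}$ on $N$ places $E^{\ast}$ in the Fatou set $F(f)$, and on each Fatou component of $f$ subsequential limits of $\{f^{n_k}\}$ are either constant (attracting, super-attracting, and parabolic basins) or irrational rotations of a rotation domain (Siegel disks and Herman rings); in every case the limit restricted to a single Fatou component cannot have image a non-trivial continuum like $E_0$. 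Since $E^{\ast}$ is connected it lies in a single component, giving the desired contradiction.

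\textbf{Main obstacle.} The most delicate point is establishing Montel normality on a \emph{fixed} neighborhood $N$ of $E^{\ast}$. Equivalently, one must control the thickening of the open sets $U_{n_k}$ around $E_{n_k}$, showing that they do not pinch down to the continua $E_{n_k}$ themselves. This will rely essentially on (1) (the $E_n$ are full continua compactly contained in $U_n$) combined with a Schwarz-Pick estimate for the holomorphic maps $f^{n_k}:U_{n_k}\to U_0$: the fullness of $E_n$ together with the non-capture condition (3) forces, via comparison of hyperbolic metrics on $U_0$ and $U_{n_k}$, a definite $\rho_V$-neighborhood of $E_{n_k}$ to remain inside $U_{n_k}$ for large $k$, which is precisely what is needed to apply Montel on a uniform $N$.
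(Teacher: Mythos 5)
Your argument has a genuine gap at exactly the point you flag as the ``main obstacle'', and it is not a technicality that can be patched by the tools you name. To run Montel on a \emph{fixed} neighborhood $N$ of the Hausdorff limit $E^{\ast}$ you need $N\subseteq U_{n_k}$ for infinitely many $k$, i.e.\ a uniform lower bound on how far $E_{n_k}$ sits inside $U_{n_k}$ in the spherical metric. Nothing in hypotheses (1)--(3) provides this: they are compatible with $U_n$ pinching onto $E_n$ arbitrarily tightly as $n\to\infty$ (there is no uniform modulus for $U_n\setminus E_n$). The Schwarz--Pick estimate you invoke goes the wrong way: since $f^{n}:U_n\to U_0$ is holomorphic, it contracts hyperbolic metrics, so one only gets $\mathrm{diam}_{U_n}(E_n)\geq \mathrm{diam}_{U_0}(E_0)$, a \emph{lower} bound on the hyperbolic size of $E_n$ inside $U_n$; because the hyperbolic metric blows up at $\partial U_n$, this is perfectly consistent with $U_n$ hugging $E_n$ and gives no definite spherical collar. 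A second, independent flaw is the final contradiction: even granting normality, on a rotation domain (Siegel disk or Herman ring) the subsequential limits of $f^{n_k}$ are univalent, not constant, so the image of a non-degenerate continuum $E^{\ast}$ is again a non-degenerate continuum; your assertion that ``in every case the limit cannot have image a non-trivial continuum like $E_0$'' is false there, and ruling this case out would require a further argument (e.g.\ using the pairwise disjointness of the $U_n$ against recurrence in the rotation domain).

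The paper avoids both problems by iterating \emph{backwards} rather than forwards. From (2) and (3) the $U_n$ are pairwise disjoint, so after discarding finitely many terms $\bigcup_n U_n$ misses the finitely many critical values of $f$; since $E_0$ is full and $E_0\Subset U_0$, one can fix a topological disk $D_0$ with $E_0\Subset D_0\Subset U_0$ and pull it back univalently to disks $D_n\supseteq E_n$, obtaining inverse branches $g_n:D_0\to D_n$ on the \emph{fixed} domain $D_0$. Normality of $\{g_n\}$ is then automatic (the images are disjoint, hence omit plenty of points), every limit is constant precisely because the disjoint images $D_{n}$ cannot overlap, and $\mathrm{diam}(E_n)=\mathrm{diam}(g_n(E_0))\to 0$. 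Note that no lower bound on the geometry of $U_n$ around $E_n$ is ever needed, and the argument works even when the $E_n$ lie in the Julia set, where forward normality is hopeless. If you want to salvage your write-up, the fix is to replace the forward-iteration/Montel scheme by this inverse-branch argument; the fullness of $E_0$ is what lets you interpose the disk $D_0$, and the disjointness of the $U_n$ is what kills non-constant limits.
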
	

\begin{proof}
	First observe that the sets $U_n$'s are pair-wisely disjoint. If not, assume $U_{n_1}\cap U_{n_2}\neq\es$ for some $0\leq n_1<n_2$. Then we have $\es\neq f^{n_1}(U_{n_1}\cap U_{n_2})\subseteq U_{0}\cap U_{n_2-n_1}$, which contradicts (3). 
	Thus by ignoring finitely many pairs $(E_n,U_n)$, one may assume that
	$\bigcup_{n\geq 0}U_n$ does not contain the critical values of $f$. Since $E_0$ is full and $E_0\Subset U_0$, we can choose a topological disk $D_0$ such that $E_0\Subset D_0\Subset U_0$. Then 
	for each $n$, the unique component $D_n$ of $(f^{n}|_{U_n})^{-1}(D_0)$, which contains $E_n$, is a topological disk. Moreover, the map $f^n:D_n\to D_0$ is conformal, whose inverse is denoted by $g_n$. Then $\{g_n\}$ forms a normal family.
    
 We  claim that the limit map $g_\infty$ of any convergent subsequence $\{g_{n_k}\}$ is a constant map. If not, then  $g_{\infty}(D_0)$ is an open subset of $\olC$. Therefore, for any sufficiently large integers $k\neq k'$, the images $g_{n_{k}}(D_0)(=D_{n_k})$ and $g_{n_{k'}}(D_0)(=D_{n_{k'}})$ will overlap, which is impossible. 
   
	Finally, if $\lim_n\tu{diam}(E_n)\rightarrow 0$ is not true, then 
	there is a constant $\epsilon>0$ and a subsequence  $\{E_{l_k}\}$with  $\tu{diam}(E_{l_k})\geq \epsilon$. This is impossible,
  because by passing to a further subsequence, the maps $g_{l_k}$'s  converge uniformly on $E_0$ to a constant.
	\end{proof}

\section{ Brannar-Hubbard-Yoccoz puzzle}\label{sec:puzzle}

In this section, we develop the Brannar-Hubbard-Yoccoz puzzle theory for Newton maps,  using the invariant graph given by the preceding section. 

\subsection{Puzzles and ends}
Let $G$ be the graph given by Theorem \ref{in-graph}. 
Recall that $\Phi_B:B\to \mb{D}$
 is the B\"ottcher map of $B\in {\rm Comp}(B_f)$. Let
$$X=\olC\setminus\bigcup_{B}\overline{\Phi_{B}^{-1}(\mb{D}_{1/2})}$$
where the union is taken over all  $B\in {\rm Comp}(B_f)$ such that $B\cap G\neq \emptyset$. 
Clearly $f^{-1}(X)\subseteq X$. 
For any  integer $n\geq 0$, let $\mc{P}_n$ be the collection of all connected components of $f^{-n}(X\setminus G)$.
An element  $P\in \mc{P}_n$ is called a {\it puzzle piece} of depth (or level) $n\geq 0$.
Note that two distinct puzzle pieces $P,Q$ are either disjoint (i.e. $P\cap Q=\emptyset$) or nested (i.e. $P\subseteq Q$ or $Q\subseteq P$).

An important fact about  puzzle pieces is as follows:
\begin{prop}\label{map-puzzle} Let $P,Q$ be two  puzzle pieces with 
$Q=f(P)$.
	 Then we have the following two implications:
	 
	 1. $Q$ is a Jordan disk $\Longrightarrow P$ is a Jordan disk.
	 
	 2. $P\subseteq Q \Longrightarrow  \infty\in \partial P\cap \partial Q$ and 
	   $f: P\rightarrow Q$ is conformal.
%
\end{prop}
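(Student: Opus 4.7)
The plan is an induction on the depth $n$ of $Q$, proving both parts simultaneously. The base case $n=0$ is immediate from the preceding discussion: Remark~\ref{rmk:fixed} identifies the possible cut points of $G$ as centers of strictly pre-periodic basin components, and the definition of $X$ excises a disk about every basin center whose component touches $G$, so $G\cap X$ has no cut points in $X$ and each component of $X\setminus G$ is a Jordan disk.

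For the inductive step, assume every depth-$\leq n$ piece is a Jordan disk and take $P$ of depth $n+1$ with $Q=f(P)$ of depth $n$. I first dispose of \textbf{Part~2}, assuming $P\subseteq Q$. The decisive input is that $\overline X$ contains no fixed point of $f$ other than $\infty$: the finite fixed points $a_1,\dots,a_d$ are centers of the immediate basins, each of which meets $G$ through the fixed internal rays of the channel graph and is therefore excised from $X$. I apply Corollary~\ref{cor:counting_number} with $(D,U)=(Q,P)$. Membership $\infty\in\partial P$ is forced by contradiction: otherwise $\overline P$ would contain no fixed point of $f$, and Lemma~\ref{lem:fixed_points} would give $\deg(f|_{\partial P})=0$, contradicting properness of $f|_P$. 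Since $f(\infty)=\infty\in\overline{f(P)}=\overline Q$ and $\infty\notin Q\subseteq X\setminus G$, also $\infty\in\partial Q$. The local expansion hypothesis at $\infty$ is standard from its repelling nature (multiplier $\lambda_\infty=d/(d-1)>1$) together with the fact that $\partial P$ near $\infty$ lies in iterated pre-images of the fixed internal rays, which are mapped by $f$ onto arcs containing themselves. Corollary~\ref{cor:counting_number} then yields $\#\mathrm{Fix}(f|_{\wh P_Q})=1$; since each summand $\deg(f|_{\partial V_k})\geq 1$, there is exactly one component, namely $P$, with $\deg(f|_{\partial P})=1$, making $P$ a Jordan disk and $f:P\to Q$ a conformal homeomorphism.

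For \textbf{Part~1} without assuming $P\subseteq Q$, I use nesting: there is a unique depth-$n$ piece $\widetilde Q$ containing $P$ (coming from $f^{-(n+1)}(X\setminus G)\subseteq f^{-n}(X\setminus G)$), and both $\widetilde Q$ and $R:=f(\widetilde Q)$ are Jordan by the inductive hypothesis. The restriction $f|_{\widetilde Q}:\widetilde Q\to R$ is a proper holomorphic map of degree $d_{\widetilde Q}$ between Jordan disks in $\olC$, hence conformally conjugate via Riemann maps to a finite Blaschke product $B:\mb{D}\to\mb{D}$. Under this conjugacy $Q\subseteq R$ becomes a Jordan sub-disk $W\subseteq\mb{D}$ and $P$ corresponds to a component $V'$ of $B^{-1}(W)$. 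A Riemann--Hurwitz count---using that the total ramification of $B$ on $\mb{D}$ equals $d_{\widetilde Q}-1$---forces $\chi(V')=1$ for every such component, so each is a Jordan disk. Therefore $P$ is Jordan.

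The main obstacle will be making the Blaschke Riemann--Hurwitz step clean: it requires verifying the exact balance between the number of components of $B^{-1}(W)$ and the total ramification accumulated inside $W$. An alternative route, closer to the paper's preferred tool, is to iterate Corollary~\ref{cor:counting_number} inside $\widetilde Q$, exploiting that the only fixed point in $\overline{\widetilde Q}\subseteq\overline X$ is $\infty$ and running the same fixed-point bookkeeping as in Part~2. Either way, the absence of fixed points other than $\infty$ in $\overline X$ is the engine driving both assertions.
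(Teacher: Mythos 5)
Your Part~2 is essentially the paper's argument. Both apply Corollary~\ref{cor:counting_number} to $(D,U)=(Q,P)$ and use that the only fixed point of $f$ in $\overline X$ is $\infty$ to force
\[
1\ \geq\ \#\mathrm{Fix}(f|_{\wh P_Q})\ =\ \sum_{V}\deg(f|_{\partial V})\ \geq\ \deg(f|_{\partial P})\ \geq\ 1,
\]
whence $\wh P_Q=\overline P$, $\infty\in\partial P\cap\partial Q$, and $f:P\to Q$ is conformal. The only cosmetic difference is the order of deduction: you argue $\infty\in\partial P$ by contradiction via Lemma~\ref{lem:fixed_points} before invoking the corollary, whereas the paper reads it off from the displayed chain. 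Both are fine.

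Part~1 is where the gap lies, and it is a genuine one. After conjugating $f|_{\widetilde Q}:\widetilde Q\to R$ to a Blaschke product $B:\mb D\to\mb D$ of degree $d_{\widetilde Q}$ and letting $W\subset\mb D$ correspond to $Q$, the assertion that ``a Riemann--Hurwitz count forces $\chi(V')=1$'' is not correct. Riemann--Hurwitz gives, summing over the components $V_1,\dots,V_k$ of $B^{-1}(W)$,
\[
\sum_{i}\chi(V_i)\ =\ \Big(\sum_i\deg(B|_{V_i})\Big)\cdot\chi(W)\ -\ \sum_i r_i\ =\ d_{\widetilde Q}-\sum_i r_i\ \geq\ 1,
\]
since the total ramification inside $\mb D$ is $d_{\widetilde Q}-1$. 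This bounds the \emph{sum} of Euler characteristics from below by $1$, but it does not preclude $\chi(V_{i_0})\leq 0$ for some $i_0$, compensated by $\chi=1$ components elsewhere. The underlying fact you want is true, but it comes not from Riemann--Hurwitz; it comes from the argument principle / maximum principle: if $\gamma\subset V'$ bounds a disk $D_\gamma$ not contained in $V'$, then for any $p\in D_\gamma$ the winding number of $B(\gamma)\subset W$ around $B(p)$ counts solutions of $B=B(p)$ in $D_\gamma$, which is $\geq 1$, forcing $B(p)\in\overline W$ because $W$ is a Jordan disk in $\olC$; one then rules out $D_\gamma\setminus V'$ point by point. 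There is also a separate wrinkle you did not address: $\overline Q$ may touch $\partial R$ (e.g.\ at $\infty$), so $W$ need not be compactly contained in $\mb D$, which complicates the Blaschke extension step. You flag the RH issue yourself and mention an alternative via Corollary~\ref{cor:counting_number}, but you do not carry it out; as written, Part~1 is not proved.

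For comparison, the paper's argument for Part~1 in the case $Q\cap S=\emptyset$ (where $S$ is the depth-$(l-1)$ piece containing $P$) does not pass to Riemann maps at all. It observes that if $P$ is not a Jordan disk then $\wh P_S\setminus\overline P$ contains a component $V$ of $f^{-1}(\olC\setminus\overline Q)$, applies Corollary~\ref{cor:counting_number} to the pair $(\olC\setminus\overline Q,\,V)$ to locate a fixed point of $f$ in $\wh V_{\olC\setminus\overline Q}\subseteq S\subseteq\overline X$, which must then be $\infty$, and finally uses the local expansion at $\infty$ to conclude $Q=S$, a contradiction. This keeps everything inside the fixed-point bookkeeping of Section~\ref{sec:fixe_point_thm} and avoids the Blaschke detour entirely.
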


\begin{proof} 
Let $l\geq1$ be the depth of $P$. Note that there is a unique puzzle piece of depth $l-1$, say $S$, containing $P$.
 The filled closure $\wh{P}_S$ of $P$ with respect to $S$ contains at most one fixed point, which can only be $\infty$ on its boundary.

	      
	    To prove the two implications,  we discuss the relation of $Q$ and $S$:
	    
	    \textit{Case 1: $Q=S$ or equivalently $P\subseteq Q$.}
	  Applying Corollary \ref{cor:counting_number} to the case  $(D,U)=(Q, P)$,
	  we have
	  $$1\geq\#\tu{Fix}(f|_{\wh{P}_Q})=\sum_{V\subseteq \wh{P}_Q, f(V)=Q}\tu{deg}(f|_{\partial V})\geq \tu{deg}(f|_{\partial P})\geq 1.$$
	   This  implies that $\infty\in \partial P\cap \partial Q$, 
		$\wh{P}_S=\ol{P}$, and $f: P\rightarrow Q$ is conformal. In this case,
		we also have the first implication.

%
%
%
%

\textit{Case 2: $Q\neq S$ or equivalently $Q\cap S=\emptyset$.} In this case, we only need prove 1.  
	 Assume that $Q$ is a Jordan disk.  If $P$ is not a Jordan disk, then 
	  $\wh{P}_S\setminus \ol{P}$ is non-empty, furthermore, it contains at least a component $V$ of  $f^{-1}(W)$ with $W:=\olC\setminus \overline{Q}$. Clearly $V\subseteq W$.
	  Applying Corollary \ref{cor:counting_number} to the case  $(D,U)=(W, V)$, we know that the filled closure $\wh{V}_W (\subseteq \wh{P}_S\subseteq S)$ contains fixed points, which must be $\infty$. 
	  Therefore we have $$\infty\in\partial V\cap \partial P\cap\partial S\cap\partial Q.$$
	 On the other hand,  the local behavior of $f$ near $\infty$ implies that in a neighborhood $N(\infty)$ of $\infty$, we have $P\cap N(\infty)\subseteq f(P\cap N(\infty))$. It follows that $Q=S$. This is a contradiction.
	 \end{proof}

\begin{lemma}\label{lem:puzzles}
The puzzle pieces satisfy the following properties: 

1. Each puzzle piece is a Jordan disk;

2. For any puzzle piece $P$ and any immediate root basin $B$,  the  intersection $\overline{P}\cap \partial{B}$  is connected (caution: if $B\in{\rm Comp}(B_f)$ is not an immediate root basin,
then $\overline{P}\cap \partial{B}$ might be disconnected);

3.  For any puzzle piece $P$, the  intersection $\overline{P}\cap J(f)$  is connected.

\end{lemma}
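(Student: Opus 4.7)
The plan is to prove the three properties simultaneously by induction on the depth $n$. The base case $n=0$ is a combinatorial analysis of the face structure of the embedded graph $G \cup \partial X$ in the sphere, while the inductive step is a formal pullback using Proposition \ref{map-puzzle}.

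For the base case, depth-zero pieces are the connected components of $X \setminus G$. To prove (1), I would verify that the graph $G \cup \partial X$, viewed as an embedded planar $1$-complex, has every vertex in $X$ a non-cut vertex of its complement. Part (1) of Proposition \ref{cor:non-cutpoints_graph} handles Julia points, and the centers of immediate basins have been excised in the definition of $X$. The only remaining candidates for cut points are interior points of the internal rays of $G$ inside some immediate basin $B$; but the excision of the disks $\overline{\Phi_B^{-1}(\mb{D}_{1/2})}$ cuts each such ray into an arc from $\partial X$ to $\partial B$, and part (3) of Proposition \ref{cor:non-cutpoints_graph} ensures that $G\cap \overline{B}$ is a connected union of such arcs glued at $c_B$. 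A standard planar-graph argument then gives that each face of $G \cup \partial X$ in $X$ is a Jordan disk. For (2) at depth $0$: by the same connectivity of $G\cap \overline{B}$, adjacent angular sectors of $B\cap X\setminus G$ are separated by rays of $G$ that extend into $J(f)$, so they cannot lie in a common piece $P$; hence $P\cap B$ is at most one sector and $\overline{P}\cap\partial B$ is a single closed arc. For (3) at depth $0$, one writes $\overline{P}\cap J(f)=\bigcup_{B\in\mathrm{Comp}(B_f)}\overline{P}\cap \partial B$ over the finitely many Fatou components meeting $\overline{P}$; the arcs (2) provides are then chained together across the Julia points of $\partial P\cap G$, producing a connected union.

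For the inductive step, assume the three properties at depth $n$ and let $P$ be a depth-$(n+1)$ piece with $Q=f(P)$ the corresponding depth-$n$ piece. By induction $Q$ is a Jordan disk, so by Proposition \ref{map-puzzle}(1) so is $P$, giving (1). For (2), an immediate basin $B$ is fixed by $f$ and $f|_B$ is conjugate via the B\"ottcher coordinate to $z\mapsto z^{d_B}$ on $\mb{D}$; hence the single sector $\overline{Q}\cap B$ pulls back to $d_B$ disjoint sectors in $B$, and the components of $f^{-1}(Q)\cap B$ are in bijection with the depth-$(n+1)$ pieces meeting $B$. Thus $P\cap B$ is exactly one such sector and $\overline{P}\cap \partial B$ is a single arc. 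Property (3) for $P$ is then obtained by pulling back the inductively-connected set $\overline{Q}\cap J(f)$ under the proper map $f|_{\overline{P}}:\overline{P}\to \overline{Q}$, chaining the arcs supplied by (2) through the Julia points on $\partial P$.

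The main obstacle is the base case: verifying that no problematic cut structure survives in $G\cup \partial X$ inside $X$. This is exactly what the six-step modification procedure used to build $G$ (the initial modification in Step 1, the invariance achieved in Steps 2--4, the final modification $\mathcal{M}$ in Step 5) is designed to guarantee, together with the excision defining $X$. Once (1) is established at depth $0$, (2) and (3) at depth $0$ follow from Proposition \ref{cor:non-cutpoints_graph} and a routine chaining argument, and the induction is then formal via Proposition \ref{map-puzzle}.
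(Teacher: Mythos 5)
Parts (1) and (2) of your plan essentially follow the paper: depth-zero pieces are Jordan disks by Proposition \ref{cor:non-cutpoints_graph}, induction then goes through Proposition \ref{map-puzzle}, and for (2) one shows $P\cap B$ is a single sector so that $\overline{P}\cap\partial B=\bigcap_{0<s<1}\overline{S_B(\theta,\theta';s)}$ is a nested intersection of connected compacta. Be aware, though, that your inductive step for (2) -- ``the components of $f^{-1}(Q)\cap B$ are in bijection with the depth-$(n+1)$ pieces meeting $B$'' -- is precisely the assertion that needs an argument: a priori two of the $d_B$ preimage sectors could be joined inside one component of $f^{-1}(Q)$ through points outside $\overline{B}$, and ruling this out uses that $f^{-(n+1)}(G)$ is connected and contains the separating rays together with $\infty$ (the same separation argument that underlies the paper's appeal to Proposition \ref{cor:non-cutpoints_graph}).

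The genuine gap is in (3). Your starting identity $\overline{P}\cap J(f)=\bigcup_{B\in{\rm Comp}(B_f)}\overline{P}\cap\partial B$ is false in general: $J(f)$ contains buried points lying on the boundary of no component of $B_f$ (for instance points of nontrivial periodic ends, i.e. filled Julia sets of renormalizations treated in Section \ref{pre-periodic}, and points of ends nested deep inside pieces), and when $f$ is renormalizable there can even be Fatou components outside $B_f$; moreover infinitely many components of $B_f$ may meet $\overline{P}$, so ``finitely many Fatou components meeting $\overline{P}$'' also fails. In addition, ``chaining'' the arcs from (2) is not a proof of connectedness -- one must actually exhibit, for two given Julia points of $\overline{P}$, a connected subset of $J(f)\cap\overline{P}$ containing both, and your scheme never connects an interior Julia point of $P$ to $\partial P$. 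The paper's argument does exactly this: given $z_1,z_2\in\overline{P}\cap J(f)$, take a Jordan arc $\gamma\subseteq P$ joining them, and replace each component $\gamma_i$ of $\gamma\cap F(f)$ by a connected subset $C_i$ of $J(f)\cap\overline{P}$ -- the boundary of the Fatou component $B_0$ when $\gamma_i\subseteq B_0\subseteq P$, or the connected set $J(f)\cap\overline{S_B(\theta,\theta';r)}=\bigcap_{0<s<1}\overline{S_B(\theta,\theta';s)}$ when $\gamma_i$ lies in a sector; then $(\gamma\cap J(f))\cup\bigcup_i C_i$ is connected and joins $z_1$ to $z_2$. Some device of this kind is needed; the face-structure plus chaining argument as written does not establish (3).
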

\begin{proof}
	1. By Proposition \ref{cor:non-cutpoints_graph}, each puzzle piece of depth $0$ is a Jordan disk.
	By Proposition \ref{map-puzzle} and induction, all puzzle pieces are Jordan disks.
	
	2. By Proposition \ref{cor:non-cutpoints_graph}, the set $B\cap P$ is some sector $S_B(\theta,\theta';r)$, and   $\overline{P}\cap \partial{B}=\bigcap_{0<s< 1}\ol{S_B(\theta,\theta'; s)}$, which is connected.

3. By Proposition \ref{cor:non-cutpoints_graph},  if $B\in{\rm Comp}(B_f)$ satisfies $B\cap P\neq \emptyset$ and
$B\nsubseteq P$, then $B\cap P$ is the union of finitely many sectors $S_B(\theta,\theta';r)$ (The reason is that the center $c_B$ of $B$ might be a cut point. In this case, $\overline{P}\cap \partial{B}$ is a union of finitely many connected set).
Note that $ J(f)\cap\ol{S_B(\theta,\theta';r)}$ is connected, because $ J(f)\cap\ol{S_B(\theta,\theta';r)}=\bigcap_{0<s< 1}\ol{S_B(\theta,\theta'; s)}$. We aim to the show that any two points $z_1,z_2\in\ol{P}\cap J(f)$ are contained in a connected subset $C\subseteq J(f)\cap \ol{P}$. Let $\gamma$ be a Jordan arc in ${P}$ connecting $z_1$ and $z_2$.
Then $\gamma\cap F(f)$ consists of countably many open segments $\{\gamma_i\}_{i\in\Lambda}$.
For each $\gamma_i$,  if there is $B_0\in{\rm Comp}(B_f)$ so that
 $\gamma_i\subseteq B_0\subseteq P$, we set $C_i=\partial B$; otherwise, $\gamma_i$ is contained in some sector $S_B(\theta,\theta';r)$, and we set $C_i= J(f)\cap \ol{S_B(\theta,\theta';r)}$. The set $C=(\gamma\cap  J(f))\cup (\cup_{i\in\Lambda}\,C_i)$ is a connected subset of $ J(f)\cap \ol{P}$
 connecting $z_1$ and $z_2$.	\end{proof}

It's worth observing that the number of unbounded puzzle pieces of depth $n$ is independent of $n$.
This number is $d_0=\sum_{B}(\tu{deg}(f|_B)-1)$, where the sum is taken over all immediate root basins $B$'s.
 Let 
 $\mc{P}_n^\infty=\big\{P_{n,1}^{\infty},\cdots,P_{n,d_0}^{\infty}\big\}$ be the set of all unbounded puzzle pieces of depth $n$, numbered in the way that  $P_{n+1,k}^\infty\subseteq P_{n,k}^\infty$, for any $n\geq 0$ and $1\leq k\leq d_0$.
Clearly, the sets 
$$Y_n(\infty)=\ol{P_{n,1}^{\infty}}\cup\cdots\cup \ol{P_{n,d_0}^{\infty}},\ n\geq0$$
 are closed neighborhoods of $\infty$. 
The {\it grand orbit} of $\infty$ is  denoted by
$$\Omega_f=\bigcup_{k\geq 0}f^{-k}\{\infty\}.$$
For any $z\in \Omega_f$, let's define
 $$\mc{P}_n^z=\big\{P\in \mc{P}_n; z\in \ol{P}\big\},  \ Y_n(z)=\bigcup_{P\in \mc{P}_n^z}\ol{P}. $$ 
 
For any point $z\in \olC-B_f\cup \Omega_f$, its orbit avoids the graph $G$, therefore the puzzle piece of depth $k\geq0$ containing $z$ is well-defined, and is denoted by $P_{k}(z)$. 
For $z\in \Omega_f$, let $P_k(z)$ be the interior of $Y_k(z)$. In this way, for all $z\in \olC-B_f$ and all $k\geq0$, the piece 
$P_k(z)$ is well-defined.

For any $z\in \olC-B_f$, the {\it end} of $z$, denoted by 
${\mathbf e}(z)$, is defined by 

$${\mathbf e}(z)=\bigcap_{k\geq 0}\ol{P_k(z)}.$$
%

\begin{prop}  \label{prop:local_connect_infty} 
For any $z\in \olC-B_f$ and any integer $k\geq 0$,  there is an integer $n_k=n_k(z)>0$ with the property:
 $$P_{k+n_k}(z) \Subset P_k(z).$$
 This implies, in particular, that  $\mathbf{e}(z)=\{z\}$ for any $z\in \Omega_f$.

\end{prop}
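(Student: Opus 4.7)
\emph{Plan.} The conclusion $P_{k+n_k}(z)\Subset P_k(z)$ is equivalent to $\mathbf{e}(z)\cap\partial P_k(z)=\emptyset$, by nestedness of the closures $\overline{P_m(z)}$ together with compactness of $\partial P_k(z)$. I will establish this in three cases, according to the position of $z$ with respect to $\Omega_f$.

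First, consider $z=\infty$. By Proposition \ref{map-puzzle}(2), the nested pieces $P_{n+1,j}^{\infty}\subseteq P_{n,j}^{\infty}$ satisfy that $f\colon P_{n+1,j}^{\infty}\to P_{n,j}^{\infty}$ is conformal. Inverting, there is a holomorphic branch $g_j$ of $f^{-1}$ fixing $\infty$ with $P_{n,j}^{\infty}=g_j^{\,n}(P_{0,j}^{\infty})$. Since $\infty$ is a repelling fixed point with multiplier $\lambda_\infty=d/(d-1)>1$, Koenigs linearization exhibits $g_j$ as a strict contraction near $\infty$. Truncating the pieces away from a small linearizing disk about $\infty$ and applying Lemma \ref{lem:shinking}, one obtains $\tu{diam}(P_{n,j}^{\infty})\to 0$ in the spherical metric. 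Consequently $Y_n(\infty)\to\{\infty\}$ in Hausdorff metric, which simultaneously yields $\mathbf{e}(\infty)=\{\infty\}$ and provides the required $n_k$ at depth $k$.

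Next, for $z\in\Omega_f\setminus\{\infty\}$, pick the minimal $k_0\geq 1$ with $f^{k_0}(z)=\infty$. Every piece $P\in\mc{P}_n^z$ is mapped by $f^{k_0}$ onto a puzzle piece of depth $n-k_0$ containing $\infty$ in its closure, hence $f^{k_0}(Y_n(z))\subseteq Y_{n-k_0}(\infty)$. Pulling the first step back through the finite-degree holomorphic map $f^{k_0}$ at $z$ then gives $Y_n(z)\to\{z\}$, which delivers both the compact inclusion and $\mathbf{e}(z)=\{z\}$. Finally, for the generic case $z\in(\olC\setminus B_f)\setminus\Omega_f$, I verify $\mathbf{e}(z)\cap\partial P_k(z)=\emptyset$ pointwise. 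Given $w\in\partial P_k(z)\subseteq f^{-k}(G\cup\partial X)$, two sub-cases arise: if $w\in J(f)$, then by the construction of $G$ (Theorem \ref{in-graph} and Remark \ref{rmk:fixed}) we have $w\in f^{-k}(G\cap J(f))\subseteq\Omega_f$, so the second step applied at $w$ gives $Y_m(w)\to\{w\}$, and since $z\neq w$, eventually $w\notin\overline{P_m(z)}$; if instead $w\in F(f)$, then $w$ sits in some $V\in\tu{Comp}(B_f)$, and the sectors $V\cap P_m(z)$ shrink as preimages of $G$ and of the equipotentials $\partial X$ subdivide $V$ more and more finely, so any accumulation at $w$ is already controlled by the Julia sub-case applied at a point of $\partial V\subseteq J(f)$. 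A compactness argument on $\partial P_k(z)$ then produces a uniform $n_k$.

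\textbf{Main obstacle.} The delicate point is the first step: Lemma \ref{lem:shinking} as stated requires the neighborhoods $U_n$ to be pairwise disjoint, yet the nested unbounded pieces $P_{n,j}^{\infty}$ all accumulate at $\infty$. I plan to circumvent this by splitting each piece into an outer part, handled by the Shrinking Lemma applied to a disjoint family obtained after one application of $g_j$, and an inner part, estimated directly in the linearized coordinate around $\infty$. A secondary technicality is the Fatou sub-case in Step~3: the key point is that the puzzle structure induced on $V$ is governed by the dynamics on the immediate basin eventually containing $f^N(V)$, where arbitrarily fine subdivision by rays and equipotentials is guaranteed.
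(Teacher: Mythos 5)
Your Steps 2 and 3 (reduction of a point of $\Omega_f\setminus\{\infty\}$ to the case of $\infty$ by applying $f^{k_0}$, and the pointwise argument on $\partial P_k(z)$ distinguishing Julia boundary points, which lie in $\Omega_f$, from Fatou boundary points, which are eventually swallowed by the equipotential truncation) are essentially the paper's own reduction and are fine modulo routine details. The genuine gap is in Step 1, which is exactly where the real work of the proposition lies. Local dynamics at $\infty$ cannot by itself give $\mathbf{e}(\infty)=\{\infty\}$: the danger is not the part of $P_{n,j}^{\infty}$ inside a linearizing disk, but the possibility of a point $p\neq\infty$ (an iterated pole) lying on $\partial Y_n(\infty)$ for \emph{every} $n$, so that the nested pieces never become compactly contained; an invariant continuum attached to a repelling fixed point (think of a ray landing at $\infty$) is perfectly compatible with Koenigs linearization, so a global ingredient is indispensable. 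The paper supplies it: by Proposition \ref{map-puzzle} the maps $f^{n}\colon P_n(\infty)\to P_0(\infty)$ are conformal, hence $f^{n}\colon\partial Y_n(\infty)\to\partial Y_0(\infty)$ is a homeomorphism; if $\partial Y_n(\infty)\cap\partial Y_0(\infty)\neq\emptyset$ for all $n$, nestedness produces $p\in\bigcap_n\partial Y_n(\infty)\subseteq\Omega_f$ with $p\neq\infty$ and $f^{n_0}(p)=\infty$, contradicting that homeomorphism since $\infty$ is interior to $Y_0(\infty)$; this yields $Y_N(\infty)\Subset Y_0(\infty)$, and only then does a Schwarz-lemma (contraction) argument finish. (The paper even warns, in the caption of its puzzle figure, that $\partial Y_1(\infty)\cap\partial Y_0(\infty)\neq\emptyset$ can occur, so the compact inclusion is not free.)

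Your proposed substitute does not close this. Lemma \ref{lem:shinking} requires \emph{exact} forward images $f(E_{n+1})=E_n$, $f(U_{n+1})=U_n$ and the disjointness $U_{n+1}\cap U_0=\emptyset$. The truncated pieces $\overline{P_{n,j}^{\infty}}\setminus D_\infty$ are nested, not disjoint, and applying $g_j$ once keeps them nested; truncation by a fixed disk also destroys exactness, since $f^{-1}(D_\infty)$ intersected with a piece is not $D_\infty$ intersected with that piece. If instead you mean the fundamental collars $\overline{P_{n,j}^{\infty}}\setminus P_{n+1,j}^{\infty}$, these do satisfy exactness, but their closures share boundary (and would all contain a persistent touching point $p$ if one existed), so disjoint neighborhoods $U_n$ exist only if the very statement you are trying to prove already holds -- the patch is circular. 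Likewise, the ``inner part estimated in the linearized coordinate'' only says that points of $P_{n,j}^{\infty}$ inside $D_\infty$ are inside $D_\infty$; it does not force the outer part to become empty, which is again the compact-inclusion statement. To repair Step 1 you should prove the inclusion $Y_N(\infty)\Subset Y_0(\infty)$ first, e.g.\ by the paper's nested-boundary-intersection and boundary-homeomorphism argument, and only afterwards invoke contraction (Schwarz or your linearization) to conclude $\bigcap_k Y_{Nk}(\infty)=\{\infty\}$.
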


%
%
%
\begin{proof} 
We first consider $z\in \Omega_f$. In this case, there is an integer $N\geq 0$ with $f^N(\mathbf{e}(z))=\mathbf{e}(\infty)$. To show the statement, it suffices to show $\mathbf{e}(\infty)=\{\infty\}$.

By Proposition \ref{map-puzzle}, for each $n\geq 1$, the map $f^n: P_n(\infty)\rightarrow P_0(\infty)$ is conformal, 
and the boundaries $\partial Y_{n}(\infty), \partial Y_{0}(\infty)$  are Jordan curves.
Therefore $f^n: \partial Y_{n}(\infty)\rightarrow \partial Y_{0}(\infty)$ is homeomorphism.
We claim that $Y_{N}(\infty)\Subset Y_0(\infty)$ for some large $N$. 
%
In fact, if $\partial Y_n(\infty)\cap \partial Y_0(\infty)\neq \emptyset$ for all $n\geq 1$, then 
the relation $Y_{n+1}(\infty)\subseteq Y_{n}(\infty)$ implies that 
$$ \partial Y_{n+1}(\infty)\cap \partial Y_0(\infty)\subseteq \partial Y_n(\infty)\cap \partial Y_0(\infty).$$
Therefore 
$$\bigcap \partial Y_n(\infty)\neq \emptyset \text{ and } \bigcap \partial Y_n(\infty)\subseteq \Omega_f \subseteq J(f).$$
Take $p\in \bigcap \partial Y_n(\infty)$ and suppose $f^{n_0}(p)=\infty$. Clearly $p\neq \infty$. 
This contradicts the fact that  $f^{n_0}: \partial Y_{n_0}(\infty)\rightarrow \partial Y_{0}(\infty)$  is a homeomorphism. 

%

By the claim and
applying the Schwarz Lemma to the inverse of $f^N: Y_{N}(\infty)\rightarrow Y_0(\infty)$, we have that
$\mathbf{e}(\infty)=\bigcap_k Y_{Nk}(\infty)=\{\infty\}$.

For those $z\in\olC-(B_f\cup\Omega_f)$, the idea of the proof is same as above. If there is an integer $k_0\geq 0$ such that $\partial P_{k_0}(z)\cap \partial P_{k_0+l}(z)\neq \es$ for all $l>0$,
then the nested property (i.e., $P_{k_0+l+1}(z)\subseteq P_{k_0+l}(z)$) gives that 
	$$\partial P_{k_0}(z)\cap \partial P_{k_0+l+1}(z)\subseteq \partial P_{k_0}(z)\cap \partial P_{k_0+l}(z).$$
	Therefore 
	$$\es\neq \bigcap_{l\geq 1}(\partial P_{k_0}(z)\cap \partial P_{k_0+l}(z))= \bigcap_{l\geq 0}\partial P_{k_0+l}(z)\subseteq\Omega_f\cap J(f).$$
It follows that the puzzle pieces  $\{P_{k_0+l}(z)\}_{l\geq 0}$ have a common boundary point $\xi$ with $f^{m}(\xi)=\infty$ for some $m\geq0$. Applying the 
  $f^{m}$-action on these puzzle pieces, we get
$$\infty\in\ol{P_{k_0-m+l}(f^{m}(z))}\subseteq {Y_{k_0-m+l}(\infty)}, \ \forall  \ l\geq m.$$
This gives that $\infty\in \mathbf{e}(f^{m}(z))\subseteq \mathbf{e}(\infty)$.
By the proven fact 
$\mathbf{e}(\infty)=\{\infty\}$, we have  $f^{m}(z)=\infty$. This contradicts the assumption
$z\in\olC-(B_f\cup\Omega_f)$. 
\end{proof}

We collect some facts about ends as follows:

\begin{itemize}
    \item  ${\mathbf e}(z)$ is either a singleton or a full continuum in $\olC$;
    \item $f(\mathbf  e(z))=\mathbf  e(f(z))$;
	\item For any $z'\in\olC-B_f$ with $z'\neq z$, based on the  proven fact $\ee(q)=\{q\}$ for any $q\in \Omega_f$ (see 
	Proposition \ref{prop:local_connect_infty}), we have that either 
	$${\mathbf e}(z')={\mathbf e}(z) \text{ or }\mathbf e (z')\cap\mathbf e(z)=\emptyset.$$
	\item By Lemma \ref{lem:puzzles}, $\mathbf e (z)=\{z\}$ implies the local connectivity of $J(f)$ at $z$.
	For any immediate root basin $B$ and any $z\in \partial B$,  the fact $\mathbf  e(z)\cap \partial B=\{z\}$ implies the  local connectivity of $\partial B$ at $z$.
%
\end{itemize}

 Let $\mc{E}=\{\ee(z); z\in \olC-B_f\}$ be the collection of all ends.  An end is \emph{trivial} if it is a singleton.
 An end $\ee$ is called \emph{critical} if it contains a critical point of $f$. 
 The {\it orbit} $\tu{orb}(\ee)$ of an end $\ee\in \mc{E}$ is $\tu{orb}(\ee)=\{f^k(\ee)\}_{k\geq 0}$. 

 An end $\ee$ is \emph{pre-periodic} if $f^{m+n}(\ee)=f^m(\ee)$ for some $m\geq 0,n\geq 1$. In particular, $\ee$ is called \emph{periodic} if $m=0$. If there is no such $m,n$, then $\ee$ is called \emph{wandering}. In this case, its {orbit} $\tu{orb}(\ee)$ has infinitely many elements. 
 
 For each end $\ee=\ee(z)$ with $z\in \olC-B_f$, let $P_n(\ee)=P_n(z)$. It follows from Proposition \ref{prop:local_connect_infty} that $P_n(\ee)$  is the puzzle piece of depth $n$ containing $\ee$. 
	
	Let $(\ee_k)_{k\in\mb N}$ be a sequence of wandering ends with distinct entries $\ee_k$'s, the \emph{combinatorial accumulation set}
	$\mc{A}((\ee_k)_{k\in\mb N})$ consists of
	 the ends $\ee'\in\mc{E}$, such that for any integer $n>0$, 
	 the index set $\{k\in \mathbb{N}; \ee_k\subseteq P_n(\ee')\}$ is infinite. 
	 
	  \begin{lemma}\label{comb-accum}
	$\mc{A}((\ee_k)_{k\in\mb N})\neq \emptyset$.
\end{lemma}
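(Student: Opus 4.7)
The plan is a pigeonhole-plus-diagonal argument built on two observations: at each depth $n\geq 0$ the collection $\mc{P}_n$ of puzzle pieces is finite (since the graph $G$ is finite and $f$ has finite degree), and every wandering end $\ee_k$ is contained in a single depth-$n$ puzzle piece $P_n(\ee_k)$. The second observation holds because a wandering end is disjoint from $\Omega_f$: if some $q\in\ee_k\cap\Omega_f$, Proposition \ref{prop:local_connect_infty} gives $\ee(q)=\{q\}$, and the dichotomy that distinct ends are disjoint forces $\ee_k=\{q\}$, contradicting wandering. Since $f^{-n}(G)\cap J(f)\subseteq\Omega_f$ and the Fatou part of $f^{-n}(G)$ lies in $B_f$, the continuum $\ee_k\subseteq\olC\setminus B_f$ avoids $f^{-n}(G)$ and so sits in a unique component of its complement.

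Starting from $I_0=\mathbb{N}$, I would pigeonhole inductively: at step $n$ the map $k\mapsto P_n(\ee_k)$ restricted to the infinite set $I_n$ takes finitely many values, so some puzzle piece $Q_n$ of depth $n$ satisfies that $I_{n+1}:=\{k\in I_n:\ee_k\subseteq Q_n\}$ is infinite. By construction $Q_0\supseteq Q_1\supseteq Q_2\supseteq\cdots$, and $\ee_k\subseteq Q_n$ for every $k\in I_{n+1}$. By compactness, $K:=\bigcap_{n\geq 0}\overline{Q_n}$ is nonempty; picking $k_n\in I_{n+1}$ and extracting a Hausdorff-convergent subsequence of the continua $(\ee_{k_n})$ produces a nonempty limit continuum $F\subseteq K$ with $F\subseteq\olC\setminus B_f$, since each $\ee_{k_n}$ is and $\olC\setminus B_f$ is closed.

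Pick any $z\in F$ and set $\ee':=\ee(z)$, which is defined because $z\in\olC\setminus B_f$. It suffices to verify $Q_n\subseteq P_n(\ee')$ for every $n\geq 0$, for then $I_{n+1}$ supplies infinitely many $\ee_k\subseteq Q_n\subseteq P_n(\ee')$ and $\ee'\in\mc{A}((\ee_k)_{k\in\mathbb{N}})$. If $z\notin\Omega_f$, then the inclusion $\partial Q_n\subseteq f^{-n}(G)\subseteq\Omega_f\cup B_f$ forces $z\in Q_n$, whence $P_n(\ee')=P_n(z)=Q_n$. If instead $z\in\Omega_f$, then $z\in\overline{Q_n}$ makes $Q_n$ one of the puzzle pieces in $\mc{P}_n^z$, so the open set $Q_n$ lies inside $\tu{Int}(Y_n(z))=P_n(z)=P_n(\ee')$. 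The only mildly delicate issue, handled by this case split, is that $K$ may a priori meet the countable set $\Omega_f$; aside from that, the proof is essentially bookkeeping.
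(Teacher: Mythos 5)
Your argument is essentially the paper's: a pigeonhole over the finite collections $\mc{P}_n$ producing nested pieces $Q_0\supseteq Q_1\supseteq\cdots$ with infinite index sets, followed by the same dichotomy according to whether the limiting point lies in $\Omega_f$ or not. The only real difference is at the end: the paper simply sets $\ee'=\bigcap_n\ol{Q_n}$ and identifies it as an end, whereas you first extract a limit point $z\in\bigcap_n\ol{Q_n}$ off $B_f$ and then verify $Q_n\subseteq P_n(\ee(z))$; both work, and your case analysis ($z\notin\Omega_f$ gives $z\in Q_n$ since $\partial Q_n\subseteq\Omega_f\cup B_f$, while $z\in\Omega_f$ gives $Q_n\subseteq\tu{Int}(Y_n(z))$) is a slightly more explicit version of what the paper does tersely. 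One step you assert without justification is that each wandering end satisfies $\ee_k\subseteq\olC\setminus B_f$ (used twice: to place each $\ee_k$ in a single component of the complement of $f^{-n}(G)$, and to get $F\subseteq\olC\setminus B_f$). This claim is true but not immediate, and it is also unnecessary: the containment of any end in a unique puzzle piece of each depth is already recorded in the paper as $\ee\subseteq P_n(\ee)$, a direct consequence of Proposition \ref{prop:local_connect_infty}; and to find the point $z$ you can dispense with Hausdorff limits of the continua altogether and instead take the base points $z_k\in\olC\setminus B_f$ with $\ee_k=\ee(z_k)$, choose $k_n\in I_{n+1}$, and pass to a convergent subsequence of $(z_{k_n})$, whose limit lies in $\bigcap_n\ol{Q_n}$ and in the closed set $\olC\setminus B_f$. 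With that small repair your proof is complete and matches the paper's.
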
	

 \begin{proof} For any $n\geq 0$, recall that the collection $\mc{P}_n$ of puzzle pieces of depth $n$ is a finite set.
	 We define the index set $I_n$ and the puzzle piece $P_n\in \mc{P}_n$ inductively as follows. 
	 First, there is a puzzle piece $P_0\in\mc{P}_0$ such that 
	 $$I_0=\{k\in\mb{N}; \ \ee_k\subseteq P_0\}$$
	 is an infinite set. Suppose that we have constructed the infinite index set $I_j$ and the puzzle piece $P_j\in \mc{P}_j$ for 
	 $0\leq j\leq \ell$, 
	 satisfying that
	 $$I_0\supseteq \cdots \supseteq I_\ell,\  P_0\supseteq \cdots \supseteq P_\ell.$$
	 Then one can find $P_{\ell+1}\in \mc{P}_{\ell+1}$  with $P_{\ell+1}\subseteq P_{\ell}$, such that the index set 
	 $$I_{\ell+1}=\{k\in I_{\ell}; \ \ee_k\subseteq P_{\ell+1} \}$$
	 is an infinite set. Now let's define $\ee'=\bigcap_n \ol{P_n}$. 
	 
	 To finish, we show  $\ee'\in\mc{E}$, which implies that
	 $\ee'\in \mc{A}((\ee_k)_{k\in\mb N})$. To  this end, we discuss two cases.
	 If $\ee'\cap \Omega_f\neq \emptyset$, we
	 take $z\in \ee'\cap \Omega_f\neq \emptyset$, then the fact 
	 $\{z\}\subseteq \ee' =\bigcap_n \ol{P_n} \subseteq \bigcap_n Y_n(z)=\{z\}$ (by Proposition \ref{prop:local_connect_infty}) implies that 
	 $\ee'=\{z\}=\ee(z)$.  If $\ee'\cap \Omega_f=\emptyset$, we take $z\in \ee'$, then
	 $ \ee' =\bigcap_n \ol{P_n} =\bigcap_n \ol{P_n(z)}=\ee(z)$. In either case, we have $\ee'\in \mc{E}$, completing the proof. 
\end{proof}
	
	The \emph{combinatorial limit set} $\omega(\ee)$ of  a wandering end $\ee\in \mc{E}$ is  defined by
	$$\omega(\ee)=\mathcal{A}((f^k(\ee))_{k\in\mathbb N}).$$
	
	 One may verify that $\omega(\ee)$ satisfies the following properties:
	 \begin{itemize}
		\item  $\omega(f(\ee))=\omega(\ee)$.
		
		\item $f(\omega(\ee))\subseteq \omega(\ee)$.
		
		\item For any wandering end $\ee'\in \omega(\ee)$, we have $\omega(\ee')\subseteq \omega(\ee)$.
\end{itemize}

The first two follow from the definition of $\omega(\ee)$. 
We only verify the third one.
Let $\ee'\in \omega(\ee)$ be a wandering end, and take $\ee''\in \omega(\ee')$. By definition, for any $n\geq0$, the
	  index set $J_n=\{k\in\mb{N}; f^{k}(\ee')\subseteq P_n(\ee'')\}$ is infinite. 
	  For $k\in J_n$, note that $f^k(\ee')\in\omega(\ee)$, 
	  this implies that the index set $\{t\in\mb{N}; f^{t}(\ee)\subseteq P_n(f^k(\ee'))=P_n(\ee'')\}$ is infinite.
	  Therefore $\ee''\in\omega(\ee)$.


\begin{prop}
\label{lem:near_infinity} Let $L>0$ be an integer with $Y_L(\infty)\Subset Y_0(\infty)$.
Let   $\ee$ be a  wandering end with
 $\ee\subseteq Y_{L}(\infty)$, then
 there is an (minimal) integer $s=s(\ee)\geq 0$ with the following property:   
 $$P_{L+1}(f^{s}(\ee))\Subset P_{0}(f^{s}(\ee))\in \mc P_{0}^\infty$$
 and $f^s: P_{L+s+1}(\ee)\to P_{L+1}(f^{s}(\ee))$ is conformal.
%
%
%
%
\end{prop}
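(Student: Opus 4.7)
The plan is to take $s$ to be the depth at which $\ee$ escapes the nested family of unbounded puzzle pieces around $\infty$, and then to exploit conformality of $f$ on these unbounded pieces. The key preliminary observation is that, because $\infty$ is fixed with real positive multiplier $\lambda_\infty = d/(d-1)$, the local linearisation of $f$ near $\infty$ in the coordinate $w = 1/z$ is multiplication by $\lambda_\infty$, which preserves every angular sector cut out by $G$ at $\infty$. Indexing the depth-$k$ unbounded pieces consistently so that $P_{k+1,j}^\infty \subseteq P_{k,j}^\infty$, this sector-preservation forces $f(P_{k+1,j}^\infty) = P_{k,j}^\infty$ globally. Proposition \ref{map-puzzle}(2) applied with $P = P_{k+1,j}^\infty$ and $Q = f(P) = P_{k,j}^\infty$ then yields conformality of $f\colon P_{k+1,j}^\infty \to P_{k,j}^\infty$ for every $k \geq 0$ and every $j \in \{1,\ldots,d_0\}$.

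With this in hand, I define $m = m(\ee) \geq 0$ to be the largest integer with $\ee \subseteq Y_{L+m}(\infty)$. Finiteness of $m$ follows from $\bigcap_n Y_n(\infty) = \ee(\infty) = \{\infty\}$ by Proposition \ref{prop:local_connect_infty}, since a wandering end cannot equal $\{\infty\}$. Writing $\ee \subseteq P_{L+m,j_0}^\infty$, the identity $m(f(\ee)) = m - 1$ follows: the inclusion $f(\ee) \subseteq f(P_{L+m,j_0}^\infty) = P_{L+m-1,j_0}^\infty$ gives $\geq$, and if $f(\ee) \subseteq Y_{L+m}(\infty)$ the only option is $f(\ee) \subseteq P_{L+m,j_0}^\infty$, whence pulling back through the conformal inverse branch of $f|_{P_{L+m,j_0}^\infty}$ forces $\ee \subseteq P_{L+m+1,j_0}^\infty$, contradicting maximality. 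Iterating, $m(f^k(\ee)) = m - k$ for $0 \leq k \leq m$.

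I set $s = m$. Then $f^s(\ee) \subseteq P_{L,j_0}^\infty \setminus Y_{L+1}(\infty)$, so (i) $P_0(f^s(\ee)) = P_{0,j_0}^\infty \in \mc P_0^\infty$, and (ii) $P_{L+1}(f^s(\ee))$ is bounded and sits inside $\ol{P_{L,j_0}^\infty}$, which meets $G$ only at $\infty$ by the hypothesis $Y_L(\infty) \Subset Y_0(\infty)$; boundedness rules out $\infty$, so $P_{L+1}(f^s(\ee)) \Subset P_0(f^s(\ee))$. For (iii), each intermediate piece $P_{L+m+1-k}(f^k(\ee))$ lies in the unbounded piece $P_{L+m-k,j_0}^\infty$, so $f$ acts on it as a restriction of the conformal map $f\colon P_{L+m-k,j_0}^\infty \to P_{L+m-k-1,j_0}^\infty$; composing across $k = 0,\ldots,m-1$ gives conformality of $f^m \colon P_{L+m+1}(\ee) \to P_{L+1}(f^m(\ee))$. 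Minimality of $s$ is immediate, since for $0 \leq k < m$ the piece $P_{L+1}(f^k(\ee)) = P_{L+1,j_0}^\infty$ is unbounded and so cannot be $\Subset$ any $P_{0,j_0}^\infty$. The main technical point I expect is establishing the global identity $f(P_{k+1,j}^\infty) = P_{k,j}^\infty$ rigorously from the sector-preserving local behaviour at $\infty$; once that is in place, Proposition \ref{map-puzzle}(2) delivers the conformality automatically.
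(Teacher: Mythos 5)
Your overall strategy coincides with the paper's: you peel off the nested unbounded shells $Y_{L+k}(\infty)$, define $s$ to be the escape depth, push $\ee$ forward $s$ times to land in $Y_L(\infty)\setminus Y_{L+1}(\infty)$, and use conformality of $f$ on the unbounded pieces to transport everything back. The paper gets the conformality of $f:P_{k+1,j}^\infty\to P_{k,j}^\infty$ directly from Propositions~\ref{map-puzzle} and~\ref{prop:local_connect_infty} (which already say $f^n:P_n(\infty)\to P_0(\infty)$ is conformal); your local-linearisation-plus-sector-preservation argument for the global identity $f(P_{k+1,j}^\infty)=P_{k,j}^\infty$ is a reasonable, if informal, way to justify the same thing, and your computation $m(f(\ee))=m(\ee)-1$ and your minimality check are fine.

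However, the step supporting $P_{L+1}(f^s(\ee))\Subset P_0(f^s(\ee))$ contains a factually wrong claim. You assert that $\ol{P_{L,j_0}^\infty}$ meets $G$ only at $\infty$ as a consequence of $Y_L(\infty)\Subset Y_0(\infty)$. That is not so: the hypothesis $Y_L(\infty)\Subset Y_0(\infty)$ only guarantees $\ol{P_{L,j_0}^\infty}\subseteq P_0(\infty)$, i.e.\ $\ol{P_{L,j_0}^\infty}$ avoids $\partial Y_0(\infty)$. It does not keep $\ol{P_{L,j_0}^\infty}$ away from the \emph{interior} portion of $G$, namely the arcs of the channel graph $\Delta_0\subseteq G$ emanating from $\infty$ that separate two adjacent unbounded pieces. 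In fact, any two adjacent $\ol{P_{L,i}^\infty}$, $\ol{P_{L,j}^\infty}$ share a nondegenerate arc of $\Delta_0$ landing at $\infty$ (since $f^{-L}(G)\cap N(\infty)=\Delta_0\cap N(\infty)$ near the repelling fixed point), so $\ol{P_{L,j_0}^\infty}\cap G$ is strictly larger than $\{\infty\}$. What is actually needed, and what the paper asserts, is the different statement that the \emph{bounded} piece $P_{L+1}(f^s(\ee))$ has closure disjoint from $\Delta_0\cup\partial Y_0(\infty)$: along the arcs of $\Delta_0$ that lie inside $Y_L(\infty)$, the two neighbouring depth-$(L+1)$ pieces are always the unbounded ones $P_{L+1,i}^\infty$, $P_{L+1,j}^\infty$, so a bounded depth-$(L+1)$ piece cannot abut $\Delta_0$; combined with $Y_L(\infty)\Subset Y_0(\infty)$ this gives the required $\Subset$. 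Your conclusion is correct, but as written the supporting claim is false and should be replaced by an argument of this kind.
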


\begin{proof}
Recall that $Y_k(\infty)=\bigcup_{j}\,\ol{P_{k,j}^{\infty}}$ and $P_k(\infty)$ is the interior of $Y_k(\infty)$. By Propositions \ref{map-puzzle}, \ref{prop:local_connect_infty}, 
for any $k\geq 0$, the map $f: P_{k+1}(\infty)\rightarrow P_{k}(\infty)$ is one-to-one.
The assumption 
$$\ee\subseteq Y_{L}(\infty)=\bigcup_{s\geq 0}(Y_{L+s}(\infty)\setminus Y_{L+s+1}(\infty))$$
 implies  that $\ee\subseteq Y_{L+s}(\infty)\setminus Y_{L+s+1}(\infty)$ for some integer $s\geq 0$.
 Then we can find an index $j$ with
  $\ee\subseteq P_{L+s, j}^{\infty}\setminus P_{L+s+1, j}^{\infty}$. Since for any $k\geq0$,
the map $f: P_{k, j}^{\infty}\setminus P_{k+1, j}^{\infty}\rightarrow P_{k-1, j}^{\infty}\setminus P_{k, j}^{\infty}$ is a homeomorphism,
we have that $f^{s}(\ee)\subseteq P_{L, j}^{\infty}\setminus P_{L+1, j}^{\infty}$.
Hence $P_{L+1}(f^{s}(\ee))$ is bounded.

  \begin{figure}[h] 
	 \begin{center}
\includegraphics[height=5cm]{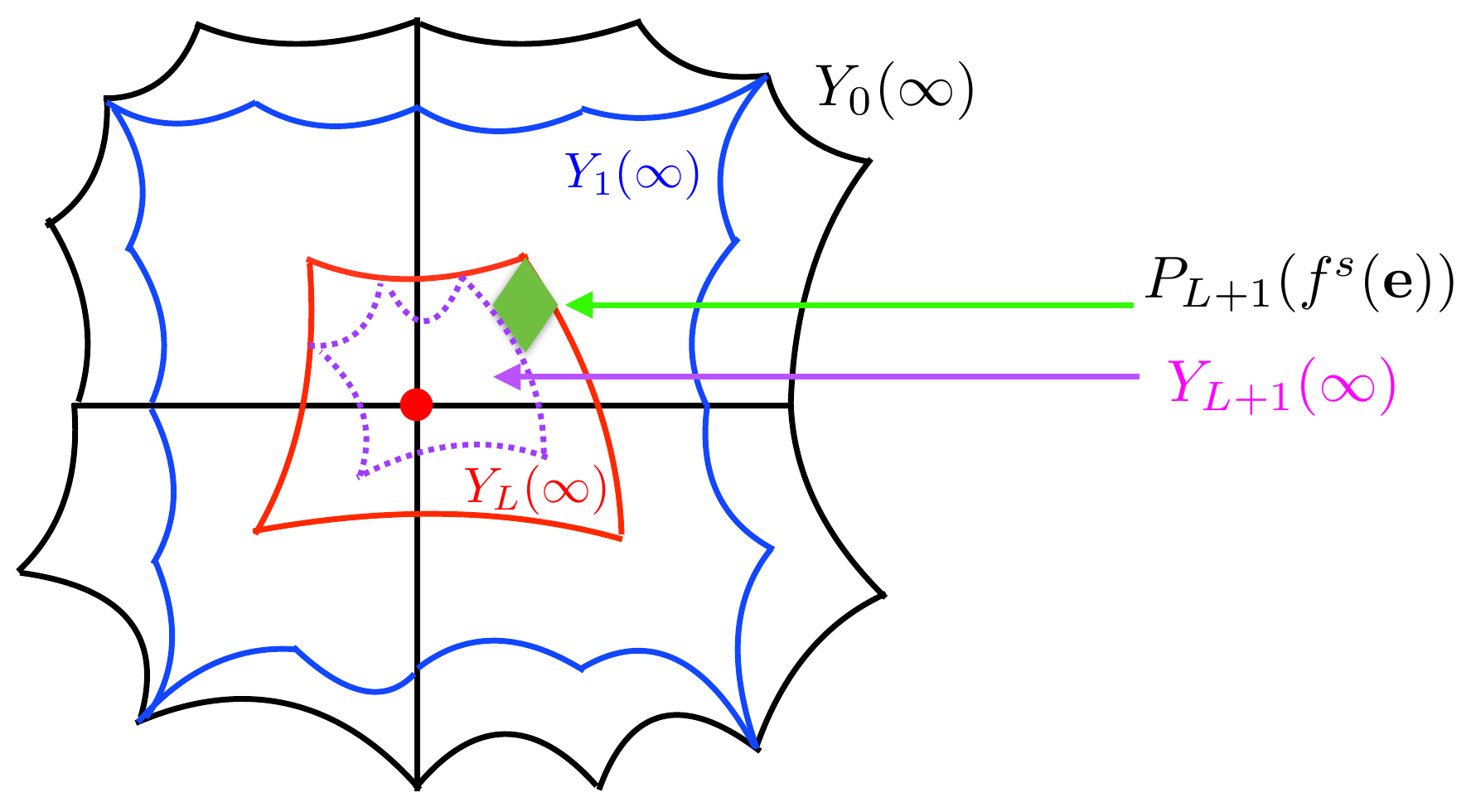} 
 \caption{ The puzzle pieces around $\infty$ (red dot). 
 It may happen that $\partial Y_1(\infty)\cap \partial Y_0(\infty)\neq\emptyset$. 
 Here $Y_L(\infty)\Subset Y_0(\infty)$ for some $L\geq1$.
 Assume $\ee\subseteq Y_L(\infty)$, then
 $P_{L+1}(f^s(\ee))\subseteq Y_L(\infty)\setminus Y_{L+1}(\infty)$ for some minimal integer $s\geq0$.
 Moreover $P_{L+1}(f^s(\ee))\Subset P_{0}(f^s(\ee))$.
 }
\label{puzzle}
\end{center}
\end{figure}

By the structure of unbounded puzzle pieces, we see that  $\overline{P_{L+1}(f^{s}(\ee))}$ is disjoint from 
$\Delta_0\cup \partial Y_{0}(\infty)$ (see Figure \ref{puzzle}). 
This implies that 
$$P_{L+1}(f^{s}(\ee))\Subset P_{0}(f^{s}(\ee))=P_{0, j}^{\infty}\in\mc P_{0}^\infty.$$
\end{proof}

\subsection{Strategy of the proof}

To prove our main Theorem \ref{main}, it suffices to show that for 
any immediate root basin $B$, we have 
$$\ee(z)\cap \partial B=\{z\}, \ \forall z\in \partial B.  \ \ \ \ (*)$$

To this end, we need first classify all ends in $\mc{E}$ into two types: wandering ones and pre-periodic ones, which are denoted by
$\mc{E}_{\rm{w}}$  and $\mc{E}_{\rm{pp}}$, respectively.

%

The  set $\mc{E}_{\rm{w}}$ of
 wandering ends  has a further decomposition:   
$$\mc{E}_{\rm{w}}=
 \mc{E}_{\rm{w}}^{\rm pp}\sqcup \mc{E}_{\rm{w}}^{\rm nr}\sqcup \mc{E}_{\rm{w}}^{\rm r},$$
where
%
%
%

%
%
%

\begin{enumerate}
	\item[] $\mc{E}_{\rm w}^{\rm pp}=\{\ee \in\mc{E}_{\rm{w}};  \mc{E}_{\rm pp} \cap \omega(\ee)\neq \emptyset\}$;
	 
	\item[]  $\mc{E}_{\rm w}^{\rm nr}=\{\ee \in\mc{E}_{\rm{w}}; \mc{E}_{\rm pp} \cap \omega(\ee)=\emptyset
	\text{ and } \omega(\ee)\neq\omega(\ee') \text{ for some  }\ee'\in\omega(\ee)\}$;
	
	\item[]  $\mc{E}_{\rm w}^{\rm r}=\{\ee \in\mc{E}_{\rm{w}}; \mc{E}_{\rm pp} \cap \omega(\ee)=\emptyset
	\text{ and } \omega(\ee)=\omega(\ee') \text{ for all  }\ee'\in\omega(\ee)\}$.
\end{enumerate}

The proof of the statement $(*)$
 will be carried out in the following two sections. 
 In Section \ref{wandering}, we prove a stronger fact that any wandering end is a singleton. 
 In Section \ref{pre-periodic}, we prove that for any pre-periodic end $\ee$,
  the intersection $\ee\cap \partial B$ is either empty or a singleton.
These two cases cover all situations.

In the rest of the paper, let $\mathcal{E}_{\rm crit}\subseteq \mc{E}$ be the collection of all critical ends. Set $\kappa=\# \mathcal{E}_{\rm crit}$. Recall that $d$ is the degree of the Newton map $f$.

\section{Wandering ends are trivial}\label{wandering}

In this section, we show that any wandering end is a singleton. 
The proof is based on the following dichotomy: for any wandering end $\ee$, either

\begin{itemize}
		\item $\ee$ satisfies the {\it bounded degree} property, or 
		\item  $\omega(\ee)$ contains a {\it persistently recurrent} critical end.
	\end{itemize}
 
 The treatments of these two situations are different. 



\subsection{Bounded degree property implies triviality of ends}

\begin{definition}\label{def:star_property}
	An end $\ee$ is said to have bounded degree (BD for short) property, if there exist puzzle pieces $\{P_{n_k}(\ee)\}$, with $n_k\to\infty$ as $k\to\infty$, and an integer $D$, such that
	$$
	\tu{deg}(f^{n_k}:P_{n_k}(\ee)\to P_0(f^{n_k}(\ee)) )\leq D, \ \forall \ k\geq1.   \qquad (\star)$$
\end{definition}
\begin{prop} 
\label{lem:star_property}
	A  wandering end $\ee$ 
	with BD property  is trivial.
\end{prop}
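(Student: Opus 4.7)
The plan is a classical modulus/Shrinking-Lemma argument in the Branner--Hubbard--Yoccoz style. Suppose for contradiction that $\ee$ is non-trivial, so $\ee$ is a full continuum of positive spherical diameter. The goal is to exhibit infinitely many disjoint essential annuli nesting around $\ee$ in $\olC-\ee$ with moduli uniformly bounded below by some $\eta>0$. Since the supremum of moduli of annuli surrounding a non-degenerate continuum in $\olC$ is finite, summing these moduli gives a contradiction.

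First I would pass to a subsequence of $(n_k)$, still denoted $(n_k)$, so that the images $\ee_k := f^{n_k}(\ee)$ combinatorially converge to some $\ee'\in\omega(\ee)$ (which is non-empty by Lemma \ref{comb-accum}). Next, I arrange uniform compact containment of target pieces inside depth-$0$ pieces. Fix $L_0$ with $Y_{L_0}(\infty)\Subset Y_0(\infty)$ (whose existence is used in Proposition \ref{lem:near_infinity}). For each $k$, if $\ee_k\subseteq Y_{L_0}(\infty)$, apply Proposition \ref{lem:near_infinity} to $\ee_k$ to get an integer $s_k\geq 0$ such that $P_{L_0+1}(f^{s_k}(\ee_k))\Subset P_0(f^{s_k}(\ee_k))$ and $f^{s_k}:P_{L_0+s_k+1}(\ee_k)\to P_{L_0+1}(f^{s_k}(\ee_k))$ is conformal; otherwise (if $\ee_k\not\subseteq Y_{L_0}(\infty)$), set $s_k=0$ and use Proposition \ref{prop:local_connect_infty} to get compact containment at some depth. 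Put $m_k:=n_k+s_k$ and $\tilde\ee_k:=f^{m_k}(\ee)$. After refining the subsequence once more by pigeonhole (using that the number of puzzle pieces of each depth is finite), I may assume that a single depth $L$, a single depth-$0$ piece $P_0^*:=P_0(\tilde\ee_k)$, and a single depth-$L$ piece $Q^*:=P_L(\tilde\ee_k)$ serve for all $k$, with $Q^*\Subset P_0^*$. Set $\mu:=\mathrm{mod}(P_0^*\setminus\overline{Q^*})>0$.

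The key degree bookkeeping is the next step: the map $f^{m_k}:P_{m_k+L}(\ee)\to Q^*$ is a proper map of degree at most $D$. Indeed, writing $f^{m_k}=f^{s_k}\circ f^{n_k}$, the first factor restricted to $P_{m_k+L}(\ee)\subseteq P_{n_k}(\ee)$ has degree $\leq D$ by the BD hypothesis $(\star)$, while the second factor, restricted to the image $f^{n_k}(P_{m_k+L}(\ee))=P_{s_k+L}(\ee_k)$, is conformal either trivially (when $s_k=0$) or by the conformality assertion of Proposition \ref{lem:near_infinity}. Let $A_k$ be the component of $(f^{m_k})^{-1}(P_0^*\setminus\overline{Q^*})$ inside $P_{m_k}(\ee)$ that separates $P_{m_k+L}(\ee)$ from $\partial P_{m_k}(\ee)$. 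By Lemma \ref{lem:puzzles}, both $P_{m_k}(\ee)$ and $P_{m_k+L}(\ee)$ are Jordan disks, so $A_k$ is a genuine topological annulus, and the induced proper holomorphic map $A_k\to P_0^*\setminus\overline{Q^*}$ has degree at most $D$. The standard modulus inequality for proper coverings of annuli then gives $\mathrm{mod}(A_k)\geq \mu/D$.

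Finally, refining the subsequence to satisfy $m_{k+1}>m_k+L$ for all $k$ makes the annuli $\{A_k\}$ pairwise disjoint and strictly nested around $\ee$, and yields $\sum_k\mathrm{mod}(A_k)\geq\sum_k\mu/D=\infty$, contradicting the non-triviality of $\ee$.

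The main obstacle I anticipate is the precise identification and estimate in the fourth paragraph: the full preimage $(f^{m_k})^{-1}(P_0^*\setminus\overline{Q^*})$ may have many components and the piece $P_{m_k}(\ee)$ can contain several depth-$(m_k+L)$ pieces mapping to $Q^*$, so one must isolate the particular annular component around $P_{m_k+L}(\ee)$ and verify that the restricted map to $A^*$ is proper of degree at most $D$. The other delicate point is that the push-forward integer $s_k$ may be unbounded, so controlling the total degree of $f^{m_k}$ relies essentially on the conformality statement in Proposition \ref{lem:near_infinity}: the trip through the neighborhood of $\infty$ adds no branching on the relevant nested piece.
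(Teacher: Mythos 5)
Your route is the same as the paper's: extract a combinatorial accumulation end, split according to whether the images accumulate at the puzzle neighborhood of $\infty$, use Proposition \ref{lem:near_infinity} to push forward by $s_k$ until the image piece is compactly contained in its depth-$0$ piece, bound the degree by factoring $f^{n_k+s_k}=f^{s_k}\circ f^{n_k}$ (BD bound for the first factor, conformality for the second), and pull back the resulting annulus to get annuli around $\ee$ with moduli bounded below; the Gr\"otzsch summation at the end is just an explicit form of the paper's concluding step. So structurally the proposal is sound. However, there is one step whose justification, as written, does not suffice: the claim that the annular component $A_k$ maps onto $P_0^*\setminus\overline{Q^*}$ with degree at most $D$. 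You only bound the degree of $f^{m_k}$ on the \emph{inner} piece $P_{m_k+L}(\ee)$. The covering degree of $A_k$ over the annulus (equivalently, the constant needed in the modulus inequality $\tu{mod}(P_{m_k}(\ee)\setminus\overline{P_{m_k+L}(\ee)})\geq \tu{mod}(P_0^*\setminus\overline{Q^*})/D$) is controlled by the degree of $f^{m_k}$ on the \emph{whole} piece $P_{m_k}(\ee)$, not by the degree on the inner disk: the disk bounded by the inner boundary of $A_k$ may contain further preimage components of $Q^*$, and in general a bound on the inner-disk degree alone does not yield the modulus inequality (already for $z\mapsto z^2$ on $\mb D$ with an off-center disk $W$, the component $W'$ over $W$ with local degree $1$ satisfies $\tu{mod}(\mb D\setminus\overline{W'})<\tu{mod}(\mb D\setminus\overline{W})$).

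The missing bound is true, and it is exactly what the paper's proof supplies: since $\ee_k=f^{n_k}(\ee)\subseteq Y_{L+s_k}(\infty)\subseteq Y_{s_k}(\infty)$, the piece $P_{s_k}(\ee_k)$ is the unbounded piece $P^{\infty}_{s_k,m}$, and $f^{s_k}\colon P^{\infty}_{s_k,m}\to P^{\infty}_{0,m}$ is conformal (Propositions \ref{map-puzzle}, \ref{prop:local_connect_infty}); combined with $\deg\big(f^{n_k}\colon P_{m_k}(\ee)\to P_{s_k}(\ee_k)\big)\leq D$ this gives $\deg\big(f^{m_k}|_{P_{m_k}(\ee)}\big)\leq D$, which is the bound your annulus estimate actually needs. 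You should state conformality of $f^{s_k}$ on this full unbounded piece, not merely on $P_{s_k+L}(\ee_k)$. A secondary, easily repaired imprecision: in the branch where the images do not enter $Y_{L_0}(\infty)$, the compact containment $Q^*\Subset P_0^*$ should come from applying Proposition \ref{prop:local_connect_infty} to the fixed accumulation end $\ee'$ (yielding one depth $L$ with $P_L(\ee')\Subset P_0(\ee')$ and infinitely many images inside $P_L(\ee')$), rather than from a pigeonhole at an arbitrarily fixed depth, since the containment depth furnished by that proposition depends on the end and need not hold for the pigeonholed piece.
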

\begin{proof}
	By assumption, there is a sequence of puzzle pieces $\{P_{n_k}(\ee)\}$ satisfying $(\star)$. 
	The combinatorial accumulation set $\mathcal{A}((f^{n_k}(\ee))_{k\in\mathbb N})$ 
	 of the sequence $(f^{n_k}(\ee))_{k\in\mathbb N}$ satisfies 
	$$\emptyset\neq \mathcal{A}((f^{n_k}(\ee))_{k\in\mathbb N})\subseteq \omega(\ee).$$
	Take $\ee_0\in \mathcal{A}((f^{n_k}(\ee))_{k\in\mathbb N})$, 
	 note that for any $n\geq0$, the index set $\{k\in \mathbb{N}; f^{n_k}(\ee)\subseteq P_n(\ee_0)\}$ is infinite.
	 
	 To prove the proposition, we need discuss two cases:
	 
	
\vspace{4 pt}

\textbf{Case 1: $\ee_0\notin  \{\ee(z); z\in \Omega_f\}$}. 

\vspace{4 pt}
	
	
In this case,	by  Proposition \ref{prop:local_connect_infty},
	there is an integer $L_0>0$ such that $P_{L_0}(\ee_0)\Subset P_0(\ee_0)$. 
	By passing to a subsequence, we may assume 
	 $f^{n_k}(\ee)\subseteq P_{L_0}(\ee_0)$ for all $k\geq 1$. By pulling back the triple $\left(f^{n_k}(\ee),P_{L_0}(\ee_0), P_0(\ee_0)\right)$ along the orbit $\ee\mapsto f(\ee)\mapsto\cdots\mapsto f^{n_k}(\ee)$,  we get the non-degenerate annuli $P_{n_k}(\ee)\setminus \overline{P_{L_0+n_k}(\ee)}$'s,
	 whose moduli satisfy
	$${\rm mod}(P_{n_k}(\ee)\setminus \overline{P_{L_0+n_k}(\ee)})\geq \frac{1}{D}{\rm mod }(P_0(\ee_0)\setminus \overline{P_{L_0}(\ee_0)}),
	\forall \ k\geq1.$$
	This implies that $\ee=\bigcap \overline{P_k(\ee)}$ is a singleton.
	
	\vspace{4 pt}

\textbf{Case 2: $\ee_0\in  \{\ee(z); z\in \Omega_f\}$}. 

\vspace{4 pt}
	
In this case,	replacing 
	$(f^{n_k}(\ee))_{k\in\mathbb N}$ by the new sequence $(f^{n_k+l}(\ee))_{k\in\mathbb N}$ (here $l\geq 0$ is some integer) if necessary, we may assume 
	$\ee_0=\ee(\infty)$. 
	Recall that $Y_n(\infty)=\bigcup_{k}\,\ol{P_{n,k}^{\infty}}$ and $P_n(\infty)$ is the interior of $Y_n(\infty)$.
	Let $L>0$ be an integer with $Y_{L}(\infty)\Subset Y_{0}(\infty)$.
	

By choosing subsequence of $\{n_k\}_k$, we may assume  that 
$$f^{n_k}(\ee)\subseteq Y_{L}(\infty) \text{ and  } \tu{deg}(f^{n_k}:P_{n_k}(\ee)\to P_{0,m}^\infty)\leq D$$
with $P_{0,m}^\infty\in \mc P_0^\infty$ and $P_{0,m}^\infty=P_0(f^{n_k}(\ee))$, for all $k\in\mb N$.
%

 For each $k$, the assumption $f^{n_k}(\ee)\subseteq Y_{L}(\infty)=\bigcup_{s\geq 0}(Y_{L+s}(\infty)\setminus Y_{L+s+1}(\infty))$ implies that 
 there is a unique integer $s_k\geq0$ such that  
 $f^{n_k}(\ee)\subseteq Y_{L+s_k}(\infty)\setminus Y_{L+s_k+1}(\infty)$.
 The behavior of $f$ near $\infty$ gives that $P_0(f^{n_k+j}(\ee))\equiv P_{0,m}^\infty$ for all $0\leq j\leq s_k$.
By Proposition \ref{lem:near_infinity}, we have 
$$P_{L+1}(f^{n_k+s_k}(\ee))\Subset P_{0}(f^{n_k+s_k}(\ee))\in \mc P_{0}^\infty.$$
 
%
%
%
 We factor the map $f^{n_k+s_k}: P_{s_k+n_k}(\ee)\rightarrow P_{0}(f^{s_k+n_k}(\ee))$ as 
$$P_{n_k+s_k}(\ee) \xrightarrow{f^{n_k}} P_{s_k}(f^{n_k}(\ee)) \xrightarrow{f^{s_k}}  P_{0}(f^{s_k+n_k}(\ee)) (=P_{0,m}^\infty).$$
The first factor has degree at most $D$. For the second factor,
note that $f^{n_k}(\ee)\subseteq Y_{L+s_k}(\infty)\subseteq Y_{s_k}(\infty)$, this implies that 
$ P_{s_k}(f^{n_k}(\ee))=P_{s_k, m}^\infty$.
Hence the map $f^{s_k}: P_{s_k}(f^{n_k}(\ee))\rightarrow   P_{0}(f^{s_k+n_k}(\ee))$ is conformal. 
So the degree of $f^{n_k+s_k}: P_{s_k+n_k}(\ee)\rightarrow P_{0}(f^{s_k+n_k}(\ee))$ is bounded above by $D$.


By pulling back the pair $\big(P_{L+1}(f^{n_k+s_k}(\ee)), P_{0}(f^{n_k+s_k}(\ee)\big)$ along the orbit $\ee\mapsto f(\ee)\mapsto\cdots\mapsto f^{n_k+s_k}(\ee)$ by $f^{n_k+s_k}$,  we get the annuli $A_k=P_{s_k+n_k}(\ee)\setminus \overline{P_{L+1+n_k+s_k}(\ee)}$'s,
whose moduli have a uniform lower bound 
	\bess
	{\rm mod}(A_k) &\geq& \frac{1}{D}{\rm mod }\big(P_{0}(f^{n_k+s_k}(\ee))\setminus \overline{P_{L+1}(f^{n_k+s_k}(\ee))}\big)\\
	&\geq& \frac{1}{D}\min\big\{{\rm mod }(P_{0,m}^\infty\setminus \overline{Q}); 
	Q\in \mc P_{L+1},  Q\Subset P_{0,m}^\infty\big\}.
	\eess
	This implies that $\ee=\bigcap \overline{P_k(\ee)}$ is a singleton.
%
%
%
%
%
%
\end{proof}

Let $\ee$ be a wandering end and $P$ be a puzzle piece. 
	The \emph{first entry time} of $\ee$ into $P$, denoted by $r_\ee(P)$, is the minimal integer $k\geq 1$ such that $f^k(\ee)\subseteq P$. 
	If no such integer exists, we set $r_\ee(P)=\infty$. If $r_\ee(P)\neq\infty$, we denote by $L_\ee(P)$ the unique puzzle piece containing $\ee$ such that $f^{r_\ee(P)}(L_\ee(P))=P$. 
	Clearly, if $P\in\mc P_k$ for some $k$, then $L_\ee(P)\in \mc P_{k+r_\ee(P)}$.
	
	
\begin{lemma}\label{lem:first_entry} Let $\ee$ be a wandering end and $P$ be a puzzle piece. 
Suppose that the first entry time $r=r_\ee(P)$ is finite, then 
	
1.  the $r$ puzzle pieces $L_\ee(P), \cdots,f^{r-1}(L_\ee(P))$ are pair-wisely disjoint;

2.  the degree of $f^r:L_\ee(P)\to f^r(L_\ee(P))=P$ is at most $d^\kappa$;

3.  any puzzle piece $Q$ containing $\ee$ such that $f^{s}(Q)=P$ for some $s\geq1$ is contained in $L_\ee(P)$;

\end{lemma}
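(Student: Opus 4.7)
The plan is to prove the three claims in order, each relying on the minimality of the first entry time $r$ together with the standard dichotomy that two puzzle pieces are either disjoint or nested (and, if nested, comparable by depth).

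For (1), I would argue by contradiction. Suppose $f^i(L_\ee(P)) \cap f^j(L_\ee(P)) \neq \emptyset$ for some $0 \leq i < j \leq r-1$. Writing $P \in \mc{P}_k$, one has $L_\ee(P) \in \mc{P}_{k+r}$, so $f^i(L_\ee(P)) \in \mc{P}_{k+r-i}$ has depth strictly larger than $f^j(L_\ee(P)) \in \mc{P}_{k+r-j}$. By the nesting/disjointness dichotomy the piece of higher depth sits inside the other, i.e.\ $f^i(L_\ee(P)) \subseteq f^j(L_\ee(P))$. Applying $f^{r-j}$ to this inclusion and using $f^r(L_\ee(P)) = P$, we obtain $f^{r-j+i}(L_\ee(P)) \subseteq P$, and in particular $f^{r-j+i}(\ee) \subseteq P$ with $1 \leq r-j+i < r$, contradicting the minimality of $r = r_\ee(P)$.

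For (2), the chain rule gives
$$\tu{deg}(f^r : L_\ee(P) \to P) = \prod_{i=0}^{r-1} \tu{deg}\bigl(f : f^i(L_\ee(P)) \to f^{i+1}(L_\ee(P))\bigr),$$
each factor bounded above by $d$ and equal to $1$ unless $f^i(L_\ee(P))$ contains a critical point of $f$, equivalently meets a critical end. The disjointness established in (1) ensures that each critical end lies in at most one of the $r$ pieces $f^i(L_\ee(P))$. Hence at most $\kappa$ of the factors exceed $1$, and the total degree is bounded by $d^\kappa$.

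For (3), both $Q$ and $L_\ee(P)$ contain $\ee$, so by the dichotomy they must be nested rather than disjoint. Writing $P \in \mc{P}_k$, $Q \in \mc{P}_{k+s}$ and $L_\ee(P) \in \mc{P}_{k+r}$; applying $f^s$ to $\ee \subseteq Q$ gives $f^s(\ee) \subseteq P$, forcing $s \geq r$ by the minimality of the first entry time. Therefore $Q$ has depth at least that of $L_\ee(P)$, and nesting forces $Q \subseteq L_\ee(P)$. The main obstacle I anticipate is in step (2), where one must be confident that all branching of the return map is detected by critical ends and that the pigeonhole over the $\kappa$ critical ends, based on the disjointness from (1), is airtight; the other two parts reduce essentially to bookkeeping with depths together with the minimality of $r$.
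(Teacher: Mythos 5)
Your proposal is correct and follows essentially the same argument as the paper: all three parts rest on the minimality of the first entry time together with the disjoint-or-nested dichotomy for puzzle pieces (with nesting governed by depth), and part (2) is the same pigeonhole over critical ends using the disjointness from part (1). The only cosmetic difference is in part (1), where you push the inclusion $f^i(L_\ee(P))\subseteq f^j(L_\ee(P))$ forward by $f^{r-j}$, whereas the paper pulls the nested pair back to $(Q_0,Q_{j-i})$ before applying $f^{r-(j-i)}$ — the two manipulations yield the same contradiction.
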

\begin{proof}
 Write $Q_k=f^{k}(L_\ee(P))$ for $0\leq k\leq r-1$. 
 
 1. If  $Q_{k_1}\cap Q_{k_2}\neq \es$ for some $k_1<k_2$, then $Q_{k_1}\subseteq Q_{k_2}$. By pulling back $(Q_{k_1},Q_{k_2})$ along the orbit $Q_0\mapsto\cdots\mapsto Q_{r-1}$, we get the pairs $(Q_{k_1-1},Q_{k_2-1})$,
 $\cdots$, $(Q_0,Q_{k_2-k_1})$. It follows that $\ee\subseteq Q_0\subseteq Q_{k_2-k_1}$ and $f^{r-(k_2-k_1)}(Q_{k_2-k_1})=Q$. This obviously contradicts the definition of first entry time.
	
2. It is a direct consequence of 1, since each critical end appears in the orbit
$Q_0\mapsto\cdots\mapsto Q_{r-1}$ 
 at most once.
	
3. If it is not true, we have $s<r$ and $f^{s}(\ee)\subseteq P$. This contradicts the definition of the first entry time $r$.	
\end{proof}

\begin{proposition}\label{prop:wp}
	Any end $\ee\in \mc{E}_{\rm w}^{\rm pp}$ satisfies the BD property.
\end{proposition}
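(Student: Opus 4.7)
The plan is to use a periodic end in the combinatorial limit set of $\ee$ as an anchor, and apply the first-entry bound of Lemma \ref{lem:first_entry} along a sequence of ever-deeper pieces around that anchor. Pick $\ee_0 \in \mc{E}_{\rm pp} \cap \omega(\ee)$; by the forward-invariance $f(\omega(\ee)) \subseteq \omega(\ee)$, I would replace $\ee_0$ by a sufficiently high iterate so that $\ee_0$ is periodic of some period $p \geq 1$. By Proposition \ref{prop:local_connect_infty}, fix $n_0$ with $P_{n_0}(\ee_0) \Subset P_0(\ee_0)$.

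For each $k \geq 1$, the assumption $\ee_0 \in \omega(\ee)$ makes the first-entry time $r_k := r_\ee(P_{n_0+kp}(\ee_0))$ finite, and Lemma \ref{lem:first_entry}(2) yields
\[
\deg\bigl(f^{r_k} : P_{n_0+kp+r_k}(\ee) \to P_{n_0+kp}(\ee_0)\bigr) \leq d^{\kappa},
\]
with source $L_\ee(P_{n_0+kp}(\ee_0)) = P_{n_0+kp+r_k}(\ee)$. Set $n_k := n_0 + kp + r_k$, so that $n_k \to \infty$. Using the periodicity $f^{kp}(\ee_0) = \ee_0$, the image satisfies $f^{n_k}(\ee) \in P_0(f^{n_0}(\ee_0))$, a fixed depth-$0$ piece \emph{independent of $k$}; consequently $P_0(f^{n_k}(\ee)) = P_0(f^{n_0}(\ee_0))$. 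Factoring $f^{n_k} = f^{n_0+kp} \circ f^{r_k}$ then gives
\[
\deg\bigl(f^{n_k} : P_{n_k}(\ee) \to P_0(f^{n_k}(\ee))\bigr) \leq d^\kappa \cdot \deg\bigl(f^{n_0+kp} : P_{n_0+kp}(\ee_0) \to P_0(f^{n_0}(\ee_0))\bigr).
\]

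The main obstacle, and the crux of the argument, is to bound the right-hand factor uniformly in $k$. This reduces to establishing $\deg(f^p : P_{n_0+(j+1)p}(\ee_0) \to P_{n_0+jp}(\ee_0)) = 1$ for all sufficiently large $j$, i.e.\ that $f^p$ is eventually unramified on the nest around $\ee_0$. Since for large $j$ the only critical points of $f^p$ that can sit inside $P_{n_0+(j+1)p}(\ee_0)$ are those critical points of $f$ whose orbit meets the cycle $\{\ee_0, f(\ee_0), \ldots, f^{p-1}(\ee_0)\}$, this amounts to excluding critical ends from that cycle. The case $\ee_0 = \ee(\infty)$ is immediate because $\infty$ is a repelling fixed point (and Proposition \ref{lem:near_infinity} even supplies a direct conformal factorization). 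In the generic case, a critical end in the periodic cycle would make $f^p$ behave as a super-attracting return map on the nest, forcing the wandering orbit of $\ee$ either to converge combinatorially to $\ee_0$ (contradicting wandering) or to miss the deeper pieces entirely (contradicting $\ee_0 \in \omega(\ee)$). Granting this, the degrees $\deg(f^{n_0+kp} : P_{n_0+kp}(\ee_0) \to P_0(f^{n_0}(\ee_0)))$ stabilize as $k \to \infty$ to a constant depending only on $\ee_0$ and $n_0$, and taking $D$ to be $d^\kappa$ times this constant completes the proof.
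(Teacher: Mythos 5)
Your reduction is set up correctly up to the displayed factorization, but the step you yourself identify as the crux is exactly where the argument breaks down, and the heuristic offered to close it is not valid. The uniform bound on $\deg\bigl(f^{n_0+kp}:P_{n_0+kp}(\ee_0)\to P_0(f^{n_0}(\ee_0))\bigr)$ requires that the cycle $\ee_0,\dots,f^{p-1}(\ee_0)$ contain no critical end; if some end of the cycle is critical (the renormalizable case of Lemma \ref{s-r}, where $\ee_0$ is the filled Julia set of a renormalization), this degree grows geometrically in $k$ and does not stabilize. Your proposed exclusion argument --- that a critical end in the cycle would force the orbit of $\ee$ either to ``converge combinatorially to $\ee_0$'' or to miss the deep pieces $P_n(\ee_0)$ --- does not hold: the definition of $\omega(\ee)$ only asks that the orbit of $\ee$ enter each $P_n(\ee_0)$ infinitely often, and nothing prevents a wandering end from accumulating combinatorially in this sense on a non-trivial (critical, renormalizable) periodic end; no contradiction with wandering or with $\ee_0\in\omega(\ee)$ follows. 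Since $\mc{E}_{\rm w}^{\rm pp}$ is defined with no restriction on the nature of the periodic end in $\omega(\ee)$, this case must be handled, and your proof collapses precisely there.

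The paper's proof avoids routing the orbit through the pieces $P_n(\ee_0)$ that contain the possibly critical end. Choosing $N$ so that $P_N(f^j(\ee_0))\setminus f^j(\ee_0)$ contains no critical point for $0\le j<p$, it uses that $\ee$ is wandering to conclude $f^{r_n}(\ee)\neq\ee_0$, hence $f^{r_n}(\ee)$ lies in some annulus $A_{n+s_n}(\ee_0)=P_{n+s_n}(\ee_0)\setminus\overline{P_{n+s_n+1}(\ee_0)}$; the piece $P_{n+s_n+1}(f^{r_n}(\ee))$ then travels forward \emph{conformally} (all critical points of the cycle being inside the ends themselves) until it reaches one of the finitely many annuli $A_N(\ee_0),\dots,A_{N+p-1}(\ee_0)$. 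Composing the first-entry map of degree at most $d^{\kappa}$ with this conformal map yields bounded-degree maps onto pieces of bounded depth, giving BD with no hypothesis on criticality of the cycle. (A secondary point: when $\ee_0=\ee(\infty)$, $P_n(\infty)$ is the interior of $Y_n(\infty)$ and is not a single puzzle piece, so Lemma \ref{lem:first_entry} cannot be invoked for it directly; the paper first fixes an unbounded piece $P^{\infty}_{n,j}$ entered infinitely often and then post-composes with the conformal map $f^n:P^{\infty}_{n,j}\to P^{\infty}_{0,j}$, which is the precise form of the ``conformal factorization'' you allude to.) To repair your argument you would need to split off the case where the cycle of $\ee_0$ contains a critical end and treat it by an argument of the annulus type above; as written, the proof is incomplete.
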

\begin{proof} Let $\ee\in \mc{E}_{\rm w}^{\rm pp}$.  The fact $f(\omega(\ee))\subseteq \omega(\ee)$ implies that
$\omega(\ee)$ contains at least a periodic end, say $\ee_0$. Let $p$ be the period of $\ee_0$.
Observe that $p=1$ if and only if $\ee_0=\ee(\infty)$.

\vspace{4 pt}

\textbf{Case 1: $\ee_0\neq \ee(\infty)$}. 

  Let $N$ be a large integer so that $P_N(f^k(\ee_0))\setminus f^k(\ee_0)$ contains no critical points of $f$, for all $0\leq k<p$. Let $A_n(\ee_0)=P_n(\ee_0)\setminus\overline{P_{n+1}(\ee_0)}$ for all $n\geq 0$.
 By the choice of $N$, for any $n\geq N$,  any puzzle piece  $Q$ in $A_n(\ee_0)$  will be mapped, by some $f^k$, into 
 a puzzle piece in $A_N(\ee_0)\cup\cdots\cup A_{N+p-1}(\ee_0)$ conformally (because the choice of $N$ guarantees that there is no critical points along the orbit of $Q$). 
%
%
	  
	  For each $n>N$, let $r_n$ be the first entry time of $\ee$ into $P_n(\ee_0)$. Clearly  $r_n\to\infty$ as $n\to \infty$, and the degree of $f^{r_n}:L_\ee(P_n(\ee_0))\to P_n(\ee_0)$ is at most $d^\kappa$ (by Lemma \ref{lem:first_entry}). Note that $f^{r_n}(\ee)\subseteq P_n(\ee_0)$ and $f^{r_n}(\ee)\neq\ee_0$, there is a unique integer $s_n\geq 0$ so that 
	  $f^{r_n}(\ee)\subseteq A_{n+s_n}(\ee_0)$.
	  It follows that $P_{n+s_n+1}(f^{r_n}(\ee))\subseteq  A_{n+s_n}(\ee_0)$. So there is a minimal integer $t_n\geq 0$ satisfying that
	  $f^{t_n}(P_{n+s_n+1}(f^{r_n}(\ee)))=
	  P_{n+s_n-t_n+1}(f^{r_n+t_n}(\ee))\subseteq  A_{n+s_n-t_n}(\ee_0)\in\{A_N(\ee_0), \cdots, A_{N+p-1}(\ee_0)\}$, here
	  $$N< n+s_n-t_n+1\leq N+p, \ \forall n\geq N.$$

	We factor the map $f^{r_n+t_n}: P_{n+s_n+r_n+1}(\ee)\rightarrow 
	  P_{n+s_n-t_n+1}(f^{r_n+t_n}(\ee))$ as 
	  $$ P_{n+s_n+r_n+1}(\ee) \xrightarrow{f^{r_n}} P_{n+s_n+1}(f^{r_n}(\ee)) \xrightarrow{f^{t_n}}  P_{n+s_n-t_n+1}(f^{r_n+t_n}(\ee)).$$
	  The former has degree at most $d^\kappa$, while the latter is conformal. Therefore, by choosing a subsequence of $n$'s so that 
	  $n+s_n-t_n+1$ equals constant, we see that $\ee$ satisfies the BD property.	
	
	\vspace{4 pt}  
	  
	  \textbf{Case 2: $\ee_0= \ee(\infty)$}.
	  
	   In this case, for any $n\geq0$, 
	  the index set $\{k\in \mathbb{N}; f^k(\ee)\subseteq P_n(\infty)\}$ is infinite,
	  implying that for some $j$ independent of $n$,
	  the index set $\{k\in \mathbb{N}; f^k(\ee)\subseteq P_{n,j}^\infty\}$ is infinite.
	  For each $n\geq 1$, let $r_n$ be the first entry time of $\ee$ into $P_{n,j}^\infty$.
	  Then the degree of $f^{r_n}: L_\ee(P_{n,j}^\infty)\rightarrow P_{n,j}^\infty$ has upper bound 
	  $d^\kappa$. By post-composing the conformal map $f^n: P_{n,j}^\infty\rightarrow P_{0,j}^\infty$,
	  we see that the degree of $f^{n+r_n}: L_\ee(P_{n,j}^\infty)\rightarrow P_{0,j}^\infty$ is uniformly bounded by $d^\kappa$. Therefore $\ee$ also satisfies the BD property in this case.  
%
\end{proof}

\begin{proposition}\label{prop:wnr}
	Any end $\ee\in\mc{E}_{\rm w}^{\rm nr}$ satisfies the BD property.
\end{proposition}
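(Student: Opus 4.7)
The aim is to exhibit an increasing sequence $n_k \to \infty$ with $\deg(f^{n_k}: P_{n_k}(\ee) \to P_0(f^{n_k}(\ee))) \leq D$ for some uniform constant $D$. The hypothesis $\ee \in \mc{E}_{\rm w}^{\rm nr}$ supplies an end $\ee' \in \omega(\ee)$ with $\omega(\ee') \subsetneq \omega(\ee)$, and $\mc{E}_{\rm pp} \cap \omega(\ee) = \emptyset$ guarantees every end appearing in $\omega(\ee)$ is wandering. The strategy parallels the first-entry argument of Proposition~\ref{prop:wp}, with the auxiliary end $\ee'$ replacing the periodic accumulation end; the strict inclusion $\omega(\ee') \subsetneq \omega(\ee)$ supplies the combinatorial room required to keep the total iterated degree under control.

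First I will pick $\ee^* \in \omega(\ee) \setminus \omega(\ee')$. By the definition of $\omega(\ee')$, there exists a minimal integer $N_0 \geq 1$ such that the forward orbit of $\ee'$ eventually avoids $P_{N_0}(\ee^*)$. For each $m \geq N_0$ the first-entry time $r_m := r_\ee(P_m(\ee^*))$ is finite (since $\ee^* \in \omega(\ee)$), and $r_m \to \infty$ (since $\ee$ is wandering). Lemma~\ref{lem:first_entry} then gives $\deg(f^{r_m}: P_{r_m + m}(\ee) \to P_m(\ee^*)) \leq d^\kappa$. Setting $n_m := r_m + m$ and factoring $f^{n_m} = f^m \circ f^{r_m}$ reduces the proof to uniformly bounding $\deg(f^m: P_m(\ee^*) \to P_0(f^m(\ee^*)))$ along a suitable subsequence $m \to \infty$.

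For the residual bound I will arrange the choice of $\ee^*$ so that its combinatorial limit set has strictly smaller critical content, $|\omega(\ee^*) \cap \mc{E}_{\rm crit}| < |\omega(\ee) \cap \mc{E}_{\rm crit}|$. When a critical end $\ee_c \in \omega(\ee) \setminus \omega(\ee')$ exists, I take $\ee^* = \ee_c$ and obtain the strict decrease from the forward-invariance of $\omega$. When no critical end sits in the difference $\omega(\ee) \setminus \omega(\ee')$, I iterate the selection inside a strictly smaller combinatorial limit set; termination is guaranteed by the finiteness $\kappa = \#\mc{E}_{\rm crit} < \infty$. With $\ee^*$ so chosen, an induction on $j = |\omega(\xi) \cap \mc{E}_{\rm crit}|$ for wandering ends $\xi$ closes the argument: the base case $j = 0$ yields a uniform degree bound directly because the orbit of $\xi$ eventually avoids each fixed-depth puzzle piece around each critical end; the inductive step (with $j' < j$) applied to $\ee^*$ furnishes the required bound on $\deg(f^m: P_m(\ee^*) \to P_0(f^m(\ee^*)))$, which transfers to $\ee$ via the factorization above.

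The main obstacle is the combinatorial selection in the third step: producing $\ee^*$ with strictly smaller critical content requires a careful exploitation of $\omega(\ee') \subsetneq \omega(\ee)$ together with the finiteness of $\mc{E}_{\rm crit}$, and the induction must be arranged so that it does not loop back onto the persistently recurrent class $\mc{E}_{\rm w}^{\rm r}$ (whose triviality is obtained in the sequel through principle-nest techniques from \cite{KL1,KL09,KSS07}).
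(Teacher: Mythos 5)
Your first step (first entries of $\ee$ into $P_m(\ee^*)$, with degree $\le d^\kappa$ by Lemma \ref{lem:first_entry}) is fine, but the reduction it produces --- a uniform bound on $\deg(f^m\colon P_m(\ee^*)\to P_0(f^m(\ee^*)))$ along a subsequence, i.e.\ the BD property for the auxiliary end $\ee^*$ --- is exactly what cannot be obtained in general, and your combinatorial selection does not rescue it. The critical end $\ee_c\in\omega(\ee)\setminus\omega(\ee')$ you propose may perfectly well be combinatorially recurrent with $\omega(\ee_c)=\omega(\ee)$; forward invariance of $\omega$ gives no strict decrease of $\#\bigl(\omega(\cdot)\cap\mc{E}_{\rm crit}\bigr)$. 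And when no critical end lies in $\omega(\ee)\setminus\omega(\ee')$, the step ``iterate the selection inside a strictly smaller limit set'' stalls: nothing forces $\ee'$ (or any end of $\omega(\ee)$) to lie in $\mc{E}_{\rm w}^{\rm nr}$, so a further proper shrinking need not exist. Worse, an induction of the form ``every wandering end with critical content $j$ has BD'' must cover ends whose limit set contains a persistently recurrent critical end, and for those BD is not available --- this is precisely why the paper handles them separately via the principal nest of Theorem \ref{p-nest}. So the caveat you mention at the end (``the induction must not loop back onto $\mc{E}_{\rm w}^{\rm r}$'') is not a technicality; it is the gap, and nothing in the proposal closes it.

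The paper's proof never needs BD for an auxiliary end. One fixes $\ee_0\in\omega(\ee)\setminus\omega(\ee')$ and $N$ with ${\rm orb}(\ee')\cap P_N(\ee_0)=\emptyset$, takes the first entry time $r_n$ of $\ee$ into the deep pieces $P_n(\ee')$ (degree $\le d^\kappa$, $r_n\to\infty$), and then the first entry time $s_n$ of $f^{r_n}(\ee)$ into the \emph{fixed-depth} piece $P_N(\ee_0)$ (degree $\le d^\kappa$). The key observation is that $L_{f^{r_n}(\ee)}(P_N(\ee_0))=P_{N+s_n}(f^{r_n}(\ee))$ is properly contained in $P_n(\ee')$ --- otherwise ${\rm orb}(\ee')$ would meet $P_N(\ee_0)$ --- so its pullback by $f^{r_n}$ sits inside $L_\ee(P_n(\ee'))$ and the composition $f^{r_n+s_n}\colon P_{N+r_n+s_n}(\ee)\to P_N(\ee_0)$ has degree at most $d^{2\kappa}$; reaching a depth-$0$ piece costs only the bounded extra factor coming from $f^N$. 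Because the target piece has fixed depth, no control of $f^m$ on $P_m(\ee^*)$ is ever required: the depth of the pieces around $\ee$ tends to infinity through $r_n$ alone. To repair your argument you would have to swap the roles of your two auxiliary ends --- enter deep pieces around $\ee'$ first, then make a single entry at bounded depth around $\ee_0$ --- rather than trying to propagate the BD property from $\ee^*$ to $\ee$.
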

\begin{proof}
	By definition, there exists $\ee'\in\omega(\ee)$ with $\omega(\ee')\neq \omega(\ee)$. 
	As is pointed out before, $\omega(\ee')\subseteq \omega(\ee)$, hence there is an end $\ee_0\in \omega(\ee)\setminus \omega(\ee')$. For sufficiently large $N$, we have $\tu{orb}(\ee')\cap P_{N}(\ee_0)=\emptyset$.
	
	For any $n$, let $r_n$ be the first entry time of $\ee$ into $P_{n}(\ee')$. Then
	$L_{\ee}(P_n(\ee'))=P_{n+r_n}(\ee)$ and the degree of $f^{r_n}:L_{\ee}(P_n(\ee'))\to P_n(\ee')$  is bounded above by $d^\kappa$ (by Lemma \ref{lem:first_entry}). Clearly $r_n\to\infty$ as $n\to\infty$. 
	
	Since $\ee_0\in\omega(\ee)$, the orbit of $f^{r_n}(\ee)$ will meet  $P_N(\ee_0)$.
	Let $s_n$ be the first entry time $s_n$ of $f^{r_n}(\ee)$ into $P_N(\ee_0)$. Then
	$L_{f^{r_n}(\ee)}(P_N(\ee_0))=P_{N+s_n}(f^{r_n}(\ee))$ and
	 the map $f^{s_n}:L_{f^{r_n}(\ee)}(P_N(\ee_0))\to P_N(\ee_0)$ has degree at most $d^\kappa$. 
	 
	 Note that both $L_{f^{r_n}(\ee)}(P_N(\ee_0))$ and $P_n(\ee')$ contain $f^{r_n}(\ee)$.
	 We claim that $L_{f^{r_n}(\ee)}(P_N(\ee_0))$ is a proper subset of $P_n(\ee')$. Because, otherwise, one has
	  $ P_n(\ee')\subseteq L_{f^{r_n}(\ee)}(P_N(\ee_0))$. This would imply $\tu{orb}(\ee')\cap P_{N}(\ee_0)\neq\emptyset$,  which contradicts our assumption on $P_N(\ee_0)$.
	  
	
	Then we pull back $L_{f^{r_n}(\ee)}(P_N(\ee_0))=P_{N+s_n}(f^{r_n}(\ee))$ along the orbit $\ee\mapsto\cdots\mapsto f^{r_n}(\ee)$ by $f^{r_n}$, and get the puzzle piece $P_{N+r_n+s_n}(\ee)$
	containing $\ee$. Further, the degree of the map
	$$f^{r_n+s_n}: P_{N+r_n+s_n}(\ee)\to P_N(\ee_0)$$
	is at most $d^{2\kappa}$. This implies that $\ee$ has BD property.
\end{proof}

\subsection{The case $\ee\in \mc{E}_{\rm w}^{\rm r}$}\label{sec:wr}

In this part, we will show that any  $\ee\in \mc{E}_{\rm w}^{\rm r}$ is trivial.
By definition of $\mc{E}_{\rm w}^{\rm r}$,  each end $\ee'\in \omega(\ee)$ is  wandering, satisfying that  
$$\omega(\ee')=\omega(\ee)  \text{ and } \ee'\in \omega(\ee').$$

A wandering end $\ee'$ with the property $\ee'\in \omega(\ee')$ is called {\it combinatorially recurrent}.
Clearly, all  ends in $\omega(\ee)$ are combinatorially recurrent.

We first discuss an easy case (Lemma \ref{lem:exceptions1}). 
 Recall  that  $\mathcal{E}_{\rm crit}$ is the set of all critical ends.
Let $\c\in \mathcal{E}_{\rm crit}\cap \mathcal{E}_{\rm w}$ be a critical wandering end. 
A puzzle piece $P_{n+k}(\c)$ with $k\geq 1$ is called a {\it successor} of $P_n(\c)$ if
	\begin{itemize}
		\item $f^k(P_{n+k}(\c))=P_n(\c)$, and
		\item  Each critical end appears at most once along the orbit
		$$P_{n+k}(\c)\mapsto P_{n+k-1}(f(\c))\mapsto\cdots \mapsto P_{n+1}(f^{k-1}(\c)).$$
	\end{itemize}
	

\vspace{4 pt}	
	By definition, if  $P_{n+k}(\c)$ is a successor of $P_{n}(\c)$, 
	then 
	$$\tu{deg}(f^k:P_{n+k}(\c)\to P_n(\c))\leq d^\kappa,$$
	here, recall that $\kappa=\# \mathcal{E}_{\rm crit}$.

\begin{lemma}\label{lem:exceptions1}
	An end $\ee\in \mc{E}_{\rm w}^{\rm r}$ is trivial, if it satisfies one of the following
	
1. $\omega(\ee)\cap \mathcal{E}_{\rm crit}=\emptyset$. 
		
2. Some piece $P_{n_0}(\c)$ of  $\c\in\omega(\ee)\cap \mathcal{E}_{\rm crit}$ has infinitely many successors.
\end{lemma}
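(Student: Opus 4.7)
The plan is to reduce each case to the bounded-degree (BD) property of Definition~\ref{def:star_property}, so that Proposition~\ref{lem:star_property} immediately yields triviality of $\ee$.

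For Case~1, I will exploit that $\omega(\ee)$ contains none of the finitely many critical ends $\c_1,\ldots,\c_\kappa$. The definition of $\omega(\ee)$ supplies, for each $i$, an integer $N_i$ such that $\{k\geq0:f^k(\ee)\subseteq P_{N_i}(\c_i)\}$ is finite; taking $N:=\max_i N_i$ and choosing $K$ large enough that $f^k(\ee)\not\subseteq P_{N_i}(\c_i)$ for every $i$ and every $k>K$, I will deduce that for all $k>K$ the piece $P_N(f^k(\ee))$ contains no critical end, hence no critical point of $f$ (every critical point lies in its own critical end, and containment of a critical end in $P_N(f^k(\ee))$ is equivalent to $f^k(\ee)\subseteq P_N(\c_i)$). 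Starting from $P_n(f^{K+1}(\ee))$ with $n\geq N$ and iterating $f$ down to depth $N$, each intermediate piece $P_j(f^{K+1+n-j}(\ee))$ with $N<j\leq n$ sits inside the critical-point-free piece $P_N(f^{K+1+n-j}(\ee))$, so $f^{n-N}:P_n(f^{K+1}(\ee))\to P_N(f^{K+1+n-N}(\ee))$ is conformal. Precomposing with the restriction of the fixed proper map $f^{K+1}:P_{K+1}(\ee)\to P_0(f^{K+1}(\ee))$ (degree bounded by the constant $D^\star$) and postcomposing with $f^N:P_N(f^{K+1+n-N}(\ee))\to P_0(f^{K+1+n}(\ee))$ (degree $\leq d^N$), I obtain a uniform bound $D^\star d^N$ on $\tu{deg}(f^{K+1+n}:P_{K+1+n}(\ee)\to P_0(f^{K+1+n}(\ee)))$ for every $n\geq N$, which is exactly BD for the sequence $n_k:=K+1+n$.

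For Case~2, fix $\c\in\omega(\ee)\cap\mathcal{E}_{\rm crit}$ with $P_{n_0}(\c)$ admitting infinitely many successors $P_{n_0+k}(\c)$. By the successor definition, $\tu{deg}(f^k:P_{n_0+k}(\c)\to P_{n_0}(\c))\leq d^\kappa$; since $\c\in\omega(\ee)$, the first entry time $r_k$ of $\ee$ into $P_{n_0+k}(\c)$ is finite, and Lemma~\ref{lem:first_entry} gives $\tu{deg}(f^{r_k}:P_{n_0+k+r_k}(\ee)\to P_{n_0+k}(\c))\leq d^\kappa$. Concatenating these two with $f^{n_0}:P_{n_0}(\c)\to P_0(f^{n_0}(\c))$ (of degree $\leq d^{n_0}$) yields $\tu{deg}(f^{n_0+k+r_k}:P_{n_0+k+r_k}(\ee)\to P_0(f^{n_0}(\c)))\leq d^{2\kappa+n_0}$, and since $P_0(f^{n_0+k+r_k}(\ee))=P_0(f^{n_0}(\c))$ for every $k$, letting $k\to\infty$ over successors gives $n_0+k+r_k\to\infty$, so BD follows.

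The main obstacle is bookkeeping: matching the BD requirement that the number of iterations coincide with the depth of the source piece, which in Case~1 is arranged by padding with the fixed prefix $f^{K+1}$ and in Case~2 is automatic because the target is already of depth~$0$. The substantive input underlying Case~1 is the observation that the combinatorial statement $\omega(\ee)\cap\mathcal{E}_{\rm crit}=\emptyset$ forces the forward orbit of $\ee$ eventually to avoid shallow puzzle pieces around each critical end, which is what permits the long conformal pullback.
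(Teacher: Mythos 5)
Your proposal is correct and follows essentially the same route as the paper: both cases are reduced to the BD property of Definition~\ref{def:star_property} and then Proposition~\ref{lem:star_property}, with Case~2 combining the successor degree bound $d^\kappa$ with the first-entry bound of Lemma~\ref{lem:first_entry} exactly as in the paper (you merely make explicit the final composition with $f^{n_0}$ down to a depth-$0$ piece, which the paper leaves implicit). In Case~1 your bookkeeping differs only cosmetically — you discard a finite initial segment of the orbit and absorb it into a fixed-degree prefix $f^{K+1}$, whereas the paper bounds the degree by $d^m$ with $m$ the finite number of passages through the critical pieces $P_N(\c)$ — and both versions share the same harmless gloss about critical points of $f$ lying in $B_f$ (which sit only in pieces of bounded depth, so enlarging $N$ disposes of them).
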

\begin{proof} We will show that $\ee$ satisfies the BD property, then the triviality of $\ee$
follows from Proposition \ref{lem:star_property}.

1.  The assumption  $\omega(\ee)\cap \mathcal{E}_{\rm crit}=\emptyset$  implies that 
there is an
integer $N>0$ such that the 
index set
$$\Big\{k\geq 0; f^{k}(\ee)\subseteq\bigcup_{\c\in \mathcal{E}_{\rm crit}} P_N(\c)\Big\}$$
is finite. 
 Let $m$ be the cardinality of this index set.  
 One sees that for each $k\geq 1$, the degree of $f^k: P_{N+k}(\ee)\rightarrow P_{N}(f^k(\ee))$ is bounded above by $d^m$.

2. Let $\{P_{n_k}(\c)\}_{k\geq 1}$ be all successors of $P_{n_0}(\c)$ with $n_1<n_2<\cdots \rightarrow \infty$. By the assumption that $\c\in \omega(\ee)$, for each $k\geq 1$, 
there is a well-defined  first entry time $r_k$ of $\ee$ into  $P_{n_k}(\c)$. Then $L_{\ee}(P_{n_k}(\c))=P_{n_k+r_k}(\ee)$ and the degree of $f^{n_k+r_k-n_0}: P_{n_k+r_k}(\ee)\rightarrow P_{n_0}(\c)$ is 
bounded above by
$$\tu{deg}(f^{r_k}: P_{n_k+r_k}(\ee)\rightarrow P_{n_k}(\c))\cdot
\tu{deg}(f^{n_k-n_0}: P_{n_k}(\c)\rightarrow P_{n_0}(\c)) \leq  d^{2\kappa}.$$
%
%
%
%

We see that $\ee$ satisfies the BD property in both cases.
\end{proof}

By Lemma \ref{lem:exceptions1}, we only need  discuss the ends $\ee\in \mc{E}_{\rm w}^{\rm r}$ satisfying that 
	 $\omega(\ee)\cap \mathcal{E}_{\rm crit}\neq \emptyset$ 
	 and that for any  $\c\in\omega(\ee)\cap \mathcal{E}_{\rm crit}$, and any $n\geq0$, the puzzle piece
 $P_{n}(\c)$ 
 has finitely many successors.  To show the triviality of ends, we first discuss the critical case.
		
A
 critical end $\c\in \mathcal{E}_{\rm crit}\cap \mc{E}_{\rm w}^{\rm r}$ is
 called {\it persistently recurrent} in the combinatorial sense, if it satisfies 
	\begin{itemize}
		\item $\c\in \omega(\c)$, and 
		\item For any $\c'\in \omega(\c)\cap  \mathcal{E}_{\rm crit}$ and any $k\geq1$, the puzzle piece $P_k(\c')$
		  has only finitely many successors.
	\end{itemize}

\vspace{4pt}

We first choose an large integer $L_0>0$ so that 

\begin{itemize}
		\item  For any different $\c_1,\c_2\in \mathcal{E}_{\rm crit}\cap \omega(\c)$, one has 
		$P_{L_0}(\c_1)\cap P_{L_0}(\c_2)\neq\emptyset$.
		\item For any $\c_1,\c_2\in \mc E_{\rm crit}$,  we have the implication
		$$\c_1\notin \omega(\c_2)
		\Longrightarrow \c_1\cap \bigcup_{k\geq 1} P_{L_0}(f^k(\c_2)) =\emptyset.$$
	\end{itemize}

Let $[\c]=\omega(\c)\cap  \mathcal{E}_{\rm crit}$ and ${\rm orb}([\c])=\bigcup_{\c'\in[\c]}\bigcup_{k\geq 0}f^{k}(\c')$.
The persistent recurrence of $\c$ 
allows one to construct the {\it principal nest}, whose significant properties are summarized  as follows
%
%
%
%

%
%
%

\begin{theorem} \label{p-nest} Assume $\c$ is persistently recurrent and $L_0>0$ is
chosen as above. 
  Then there is a nest of $\c$-puzzle pieces 
$$Q_{0}(\c)\supset Q_{1}(\c)\supset Q'_{1}(\c)\supset Q_{2}(\c)\supset Q'_{2}(\c)\supset\cdots,$$
each puzzle piece is a suitable pull back of $Q_{0}(\c)=P_{L_0}(\c)$ by some iterate of $f$,
satisfying the following properties:

(1). There exist integers $D_0>0$, $n_j>m_j\geq 1$ for all $j\geq 1$, so that
$$f^{m_j}: Q'_{j}(\c)\rightarrow Q_{j}(\c),  \ f^{n_j}: Q_{j+1}(\c)\rightarrow Q_{j}(\c)$$
are proper maps of degree $\leq D_0$, and $f^{n_j}(Q'_{j+1}(\c))\subseteq Q'_{j}(\c)$.

(2). The gap $d_j$ of the depths between $Q_j$ and $Q'_j$ satisfies 
$$d_j\rightarrow+\infty \text{ as } j\rightarrow+\infty.$$
%

%


(3). For  all $j\geq 1$,
$$(Q_{j}(\c)- \overline{Q'_{j}}(\c))\cap {\rm orb}([\c])=\emptyset.$$

(4).  We have the following asymptotic lower bound of moduli,
 $$\liminf_{j\rightarrow+\infty}{\rm mod}(Q_{j}(\c)- \overline{Q'_{j}}(\c))>0.$$
%
%

\end{theorem}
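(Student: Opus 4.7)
My plan is to carry out the principal nest construction originating in Kahn--Lyubich \cite{KL1, KL09} and extended to the multi-critical setting by Kozlovski--Shen--van Strien \cite{KSS07}, adapted to the Newton-map puzzle built in Section~\ref{sec_graphs}. Set $Q_0(\c) = P_{L_0}(\c)$. Inductively, having defined $Q_j(\c)$, let $m_j \geq 1$ be the first-return time of $\c$ to $Q_j(\c)$, which exists because $\c \in \omega(\c)$, and let $Q_j'(\c)$ be the component of $f^{-m_j}(Q_j(\c))$ containing $\c$. To produce $Q_{j+1}(\c)$, one selects a ``favorite'' critical end $\c_j \in [\c]$ that is closest to $\c$ in the combinatorial order and pulls $Q_j(\c)$ back along the orbit of $\c$ through the minimal window reaching $\c_j$, following the standard recipe for the principal nest; this yields a pull-back of some integer depth gap $n_j > m_j$.

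For property~(1), the key point is that by the choice of $L_0$ (specifically, the implication $\c_1 \notin \omega(\c_2) \Longrightarrow \c_1 \cap \bigcup_{k \geq 1} P_{L_0}(f^k(\c_2)) = \emptyset$), any branch appearing in the pull-back $Q_{j+1} \to Q_j$ or $Q_j' \to Q_j$ can meet only critical ends belonging to the finite set $[\c]$. Persistent recurrence forces each such end to be visited a bounded number of times in a single first-return window, so the degree is bounded by a universal constant $D_0$ depending only on the local degrees at ends of $[\c]$ and their combinatorial multiplicities. The containment $f^{n_j}(Q'_{j+1}(\c)) \subseteq Q'_j(\c)$ is built into the construction since the pull-back preserves the first-return structure.

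Properties~(2) and~(3) are combinatorial. For~(3), the annulus $Q_j(\c) \setminus \overline{Q'_j(\c)}$ is disjoint from $\tu{orb}([\c])$ by the very definition of $Q_j'$: any critical-end visit to this annulus would yield an earlier landing of $\c$ in $Q_j(\c)$, contradicting minimality of $m_j$. For~(2), the divergence $d_j \to \infty$ uses the standing hypothesis that each $P_k(\c')$ with $\c' \in [\c]$ has only finitely many successors---were $d_j$ bounded, then infinitely many of the pieces $Q'_j$ would arise as successors at a fixed depth of some $P_{k}(\c')$, contradicting finiteness.

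Property~(4) is the main obstacle, and is where the multi-critical machinery is indispensable. The strategy is to apply the Kahn--Lyubich covering lemma (\cite{KL09}, refined in \cite{KSS07}) to the double pull-back $Q_{j+1} \hookrightarrow Q_j' \hookrightarrow Q_j$: combined with the uniform degree control from~(1) and the growing combinatorial buffer from~(2), the covering lemma converts each step into a quasi-invariance (in fact a slight improvement) of the annulus modulus. Iterating, $\liminf_j \tu{mod}(Q_j \setminus \overline{Q'_j})$ is bounded below by a constant depending only on $D_0$, $L_0$, and the multiplicities of the ends in $[\c]$. The delicate point is verifying the hypotheses of the covering lemma in the Newton-map puzzle setting---that the relevant pieces are Jordan disks (which we have from Lemma~\ref{lem:puzzles}), that the pull-back degree is uniformly controlled, and that the ``favorite'' pull-back provides the required separation---after which the proof is essentially a transcription of the arguments of \cite{KL09} and \cite[Section~8]{KSS07}.
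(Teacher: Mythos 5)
The paper does not actually prove Theorem~\ref{p-nest}: it imports the principal (enhanced) nest construction and complex bounds as a black box from Kahn--Lyubich and Kozlovski--Shen--van~Strien (and, for the multicritical complex bounds, Kozlovski--van~Strien and Qiu--Yin), adding only a remark that the eventual non-degeneracy of the annuli follows from the divergence $d_j\to\infty$, hence the word ``asymptotic'' in~(4). You cite the same sources and single out the Kahn--Lyubich covering lemma as the engine behind~(4), so at that level your route agrees with the paper's.

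Nevertheless, the explicit recipe you sketch would not produce pieces satisfying~(3), and your justification of~(3) is incorrect. You take $m_j$ to be the first return time of $\c$ to $Q_j(\c)$ and $Q'_j(\c)$ to be the corresponding pullback, then argue that a critical-end visit to $Q_j(\c)\setminus\overline{Q'_j(\c)}$ would contradict minimality of $m_j$. But ${\rm orb}([\c])$ contains the full forward orbit of \emph{every} critical end in $[\c]$, not only early iterates of $\c$, and even for $\c$ itself it contains all times $t>m_j$; none of these are constrained by the first-return property of $\c$, so there is no contradiction to be had. The reason~(3) holds in the KSS construction is that $Q'_j$ is built from iterated pullbacks through successors and favorite returns, a recipe engineered precisely so that every point of ${\rm orb}([\c])$ meeting $Q_j$ is trapped inside $Q'_j$. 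Likewise, for~(1), the claim that ``persistent recurrence forces each such end to be visited a bounded number of times in a single first-return window'' is not a valid deduction; the degree bound comes from a pairwise-disjointness argument of the kind in Lemma~\ref{lem:first_entry} applied to the specific pullback times chosen in the enhanced nest, not from a naive first-return window. These are genuine gaps in the sketch; they are repaired by following the cited references precisely, which is what the paper itself does by omitting the proof.
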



The construction of the principal nest is attributed to Kahn-Lyubich \cite{KL1} 
in the unicritical case, Kozlovski-Shen-van Strien \cite{KSS07} in the multicritical case.  The complex bounds are proven by Kahn-Lyubich \cite{KL1, KL09} (unicritical case), Kozlovski-van Strien \cite{KS} and Qiu-Yin \cite{QY} independently (multicritical case). 
The interested readers may see these references for  a detail construction of the nest and the proof of its properties.
We remark that in our setting, the annuli $Q_{j}(\c)- \overline{Q'_{j}}(\c)$ might be degenerate for the first few indices $j$'s. But because of the growth of the gaps $d_j$, the annuli $Q_{j}(\c)- \overline{Q'_{j}}(\c)$ will be non-degenerate when $j$ is large enough. That's the reason why we use the term `asymptotic lower bound' instead of 
`uniform lower bound' in Theorem \ref{p-nest}(4).

\begin{prop}
The end $\ee\in \mc{E}_{\rm w}^{\rm r}$ is trivial, if
 $\omega(\ee)\cap \mathcal{E}_{\rm crit}$ contains a persistently recurrent end $\c$.
%
			\end{prop}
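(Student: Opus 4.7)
The strategy is to transfer the uniform moduli estimates from the principal nest around $\c$ to a sequence of shrinking annuli around $\ee$. Since $\c\in\omega(\ee)$, for every $j\geq 1$ the orbit of $\ee$ eventually enters $Q'_j(\c)$; let $t_j\geq 1$ denote the corresponding first entry time, and set $W_j=L_\ee(Q'_j(\c))$, so that $f^{t_j}:W_j\to Q'_j(\c)$ is proper of degree at most $d^{\kappa}$ by Lemma~\ref{lem:first_entry}(2). Let $V_j$ be the connected component of $f^{-t_j}(Q_j(\c))$ containing $\ee$. Then $W_j\subseteq V_j$, the restriction $f^{t_j}:V_j\to Q_j(\c)$ is a proper holomorphic map, and the annulus $V_j\setminus\overline{W_j}$ is a pullback of the principal nest annulus $A_j:=Q_j(\c)\setminus\overline{Q'_j(\c)}$.

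The heart of the proof is the uniform bound
$$\deg\left(f^{t_j}:V_j\to Q_j(\c)\right)\leq D^{*},\quad \forall\,j\geq 1,$$
where $D^{*}$ depends only on $d$, $\kappa$, and the constant $D_0$ of Theorem~\ref{p-nest}(1). This is precisely where persistent recurrence of $\c$ is essential. The idea is to decompose the orbit of $\ee$ from time $0$ to $t_j$ as a sequence of first entries into the nested members $Q_0(\c)\supset Q_1(\c)\supset Q'_1(\c)\supset\cdots\supset Q'_j(\c)$: at each such entry, Lemma~\ref{lem:first_entry} provides a local degree bound of $d^\kappa$, while the nest return maps $f^{n_i}:Q_{i+1}(\c)\to Q_i(\c)$ have degree at most $D_0$ by Theorem~\ref{p-nest}(1). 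Property (3) of the nest, $(Q_i(\c)\setminus\overline{Q'_i(\c)})\cap{\rm orb}([\c])=\emptyset$, ensures that no additional critical ends accumulate in the annular regions between nest levels, so the degree contributions telescope into a bound independent of $j$.

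With the uniform degree bound in hand, the Gr\"otzsch modulus inequality gives
$${\rm mod}\left(V_j\setminus\overline{W_j}\right)\geq \frac{1}{D^{*}}{\rm mod}(A_j)\geq c>0$$
for all sufficiently large $j$, using Theorem~\ref{p-nest}(4). Since the depths of $V_j$ tend to infinity with $j$ (by the growth $d_j\to\infty$ of the depth gaps from Theorem~\ref{p-nest}(2)), passing to a subsequence yields an infinite family of pairwise disjoint nested annuli of moduli $\geq c$ around $\ee$ inside a common ambient puzzle piece; summing their moduli via Gr\"otzsch forces the modulus of the complement of $\ee$ to be infinite, whence $\ee$ is a singleton. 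The main obstacle is the uniform degree bound of the second paragraph, which is precisely where the persistent-recurrence hypothesis and the fine combinatorics of the principal nest (following \cite{KL1, KL09, KSS07}) become indispensable, marking the essential departure from the unicritical/cubic treatment of \cite{Ro08}.
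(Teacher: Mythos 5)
Your setup coincides with the paper's: the objects $W_j=L_\ee(Q'_j(\c))$ and $V_j$ (the component of $f^{-t_j}(Q_j(\c))$ containing $\ee$) are exactly the pieces $T'_j(\ee)$, $T_j(\ee)$ in the paper, and the endgame (transfer the asymptotic modulus bound of Theorem \ref{p-nest}(4) through a degree-bounded pullback, then conclude triviality) is the same. The problem is the step you yourself call the heart of the proof: the uniform bound on $\tu{deg}(f^{t_j}:V_j\to Q_j(\c))$. Note that $W_j$ is a first-entry pullback, so Lemma \ref{lem:first_entry}(2) gives $\tu{deg}(f^{t_j}|_{W_j})\leq d^\kappa$; but $V_j$ is \emph{not} a first-entry pullback of $Q_j(\c)$ (the orbit of $\ee$ may enter $Q_j(\c)$ many times before time $t_j$), so no first-entry argument applies to it directly. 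Your proposed mechanism --- decomposing the orbit up to time $t_j$ into successive first entries into $Q_0(\c)\supset Q_1(\c)\supset Q'_1(\c)\supset\cdots\supset Q'_j(\c)$, each contributing $d^\kappa$, plus return maps of degree $\leq D_0$ --- multiplies a number of factors that grows linearly in $j$, so as stated it yields a bound of the form $(d^\kappa D_0)^{O(j)}$, not a constant $D^*$. The phrase ``the degree contributions telescope into a bound independent of $j$'' is precisely the assertion that needs proof, and Theorem \ref{p-nest}(3) does not make a product of per-level bounds collapse.

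What Theorem \ref{p-nest}(3) actually gives, and what the paper uses, is much more direct: any critical end contributing to the degree of $f^{t_j}$ on $V_j$ but not on $W_j$ would lie in some $f^k(V_j)\setminus \ol{f^k(W_j)}$ with $0\leq k<t_j$, hence would be mapped by $f^{t_j-k}$ into the annulus $Q_j(\c)\setminus\ol{Q'_j(\c)}$; since (by the choice of $L_0$ and $\omega(\ee)=\omega(\c)$) all such critical ends belong to ${\rm orb}([\c])$, this contradicts $(Q_j(\c)\setminus\ol{Q'_j(\c)})\cap{\rm orb}([\c])=\emptyset$. Therefore $\tu{deg}(f^{t_j}|_{V_j})=\tu{deg}(f^{t_j}|_{W_j})\leq d^\kappa$, with no $D_0$ and no induction over nest levels, and the modulus estimate $\tu{mod}(V_j\setminus\ol{W_j})\geq \tu{mod}(Q_j(\c)\setminus\ol{Q'_j(\c)})/d^\kappa\geq\mu/d^\kappa$ follows for large $j$. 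Your final Gr\"otzsch summation is fine once this is in place, but without the equality-of-degrees argument the proof has a genuine gap at its central claim.
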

		
		\begin{proof} 
		Let $(Q_{j}(\c),Q'_{j}(\c))$'s be the puzzle pieces of principal nest given by Theorem \ref{p-nest}. For each $j\geq 1$, let $r_j$ be the first entry time of $\ee$ into $Q'_{j}(\c)$.
		Let $T'_j(\ee)=L_{\ee}(Q'_{j}(\c))$, and  $T_j(\ee)$ be the component of $f^{-r_j}(Q_{j}(\c))$ containing $\ee$. Then Theorem \ref{p-nest}(3) implies that 
		$$\tu{deg}(f^{r_j}|_{T'_j(\ee)})=\tu{deg}(f^{r_j}|_{T_j(\ee)})\leq d^\kappa.$$
		Hence, by Theorem \ref{p-nest}(4) and let $\mu$ be the asymptotic lower bound of moduli, for all large $j$, we have
		$$\tu{mod}(T_j(\ee)\setminus\ol{T'_j(\ee)})\geq \tu{mod}(Q_k(\c)\setminus\ol{Q'_j(\c)})/d^\kappa\geq \mu/d^\kappa.$$
		It follows that $\ee$ is trivial.
		\end{proof}

\section{The renormalizable case}\label{pre-periodic}

As we have seen in the previous section,  wandering ends  are always trivial. 
However,  pre-periodic ends can be non-trivial (see Lemma \ref{s-r}).  Even though, the intersection 
of such end and the boundary of an immediate root basin, is trivial. The aim of this section is 
to prove this statement.

We first introduce  the {\it renormalization} of Newton maps. 
We say that the Newton map $f$ is {\it renormalizable} if there exist an integer $p\geq 1$, two multi-connected domains $U, V$ with $U\Subset V\subseteq \mathbb C$ such that $f^p:U\to V$ is a proper mapping with a 
connected {\it filled Julia set} $K(f^p|_U)=\bigcap_{k\geq 0} f^{-kp}(U)$.
The triple $(f^p,U,V)$ is called a {\it renormalization} of $f$.  Note that in the definition, we assume $\infty\notin V$ to exclude the existence of $f$-fixed point in $K(f^p|_U)$. 

In the definition, 
one may further require that $U, V$ are topological disks, and this kind of   renormalization is called  {\it P-renormalization} (here `P' refers to `polynomial-like').
For Newton maps, we have 
$$f\text{ is renormalizable }\Longleftrightarrow f \text{ is P-renormalizable}.$$

To see this, we only need show the `$\Longrightarrow$' part.   Suppose that 
$(f^p,U,V)$ is a renormalization of $f$, with $U, V$ multi-connected and $K(f^p|_U)$ connected.
The assumption $\infty \notin K(f^p|_U)$ implies that 
$K(f^p|_U)$ is disjoint from the boundary of puzzle pieces, hence contained in a periodic end $\ee\in\mc E$, which satisfies $f^p(\ee)=\ee$. 
Consider the map $f^{\ell p}: P_{\ell p}(\ee)\rightarrow P_{0}(\ee)$, choose an integer 
$\ell>0$ so that $P_{\ell p}(\ee)\Subset P_{0}(\ee)$, we see that
$(f^{\ell p}, P_{\ell p}(\ee), P_{0}(\ee))$ is a P-renormalization of $f$.

Because of this equivalence, when we are discussing the renormalizations of Newton maps, we always require that $U,V$ are topological disks.

Periodic ends are closely related to renormalizations: 

\begin{lemma} \label{s-r} Let $\ee$ be a periodic end, with period $p\geq 1$.

1. If none of $\ee, \cdots, f^{p-1}(\ee)$ is critical, then $\ee$ is a singleton.

2. If some end of $\ee, \cdots, f^{p-1}(\ee)$  is critical, then $f$ is renormalizable. In this case, $\ee$ is the filled Julia set of 
a renormalization.
\end{lemma}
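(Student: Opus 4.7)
The plan is to analyze the first-return map $f^p$ on the nested family $\{P_n(\ee)\}_{n\ge 0}$ together with the corresponding pieces around $f(\ee),\dots,f^{p-1}(\ee)$. By Lemma \ref{lem:puzzles} every piece is a Jordan disk, and by Proposition \ref{prop:local_connect_infty} one can always find a depth-shift $m$ with $P_{N+m}(\ee)\Subset P_N(\ee)$. Passing to any $jp\ge m$ then produces the key compact containment $P_{N+jp}(\ee)\Subset P_N(\ee)$, which will drive a Schwarz-type contraction in case (1) and a polynomial-like renormalization in case (2).

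For part 1, I will first select $N$ so large that each closure $\overline{P_N(f^k(\ee))}$ with $0\le k<p$ avoids the (finite) critical set of $f$; this is possible since $f^k(\ee)=\bigcap_n \overline{P_n(f^k(\ee))}$ contains no critical point by hypothesis. Each one-step map $f\colon P_{N+p-k}(f^k(\ee))\to P_{N+p-k-1}(f^{k+1}(\ee))$ is then proper between Jordan disks with no critical points, hence a conformal isomorphism, and composing shows $f^{jp}\colon P_{N+jp}(\ee)\to P_N(\ee)$ is conformal for every $j\ge 1$. Choosing $j$ large so that $P_{N+jp}(\ee)\Subset P_N(\ee)$, the inverse branch $g=(f^{jp})^{-1}$ becomes a holomorphic self-map of the Jordan disk $P_N(\ee)$ with image compactly contained in itself. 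The Schwarz--Pick principle then contracts $g$ onto a unique attracting fixed point, and since $g^n(P_N(\ee))=P_{N+njp}(\ee)$, the intersection $\ee=\bigcap_n \overline{P_{N+njp}(\ee)}$ collapses to that point.

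For part 2, after cyclically shifting I may assume $\ee$ itself contains a critical point; then $\ee\ne\ee(\infty)=\{\infty\}$, so $\infty\notin\overline{P_N(\ee)}$ for $N$ large. Proposition \ref{prop:local_connect_infty} again supplies $j$ with $P_{N+jp}(\ee)\Subset P_N(\ee)$, and $g:=f^{jp}\colon P_{N+jp}(\ee)\to P_N(\ee)$ becomes a proper holomorphic map between Jordan disks in $\mathbb C$ with source compactly contained in target --- that is, a polynomial-like triple witnessing the renormalizability of $f$. To identify $\ee$ with the filled Julia set $K_g=\bigcap_k g^{-k}(P_{N+jp}(\ee))$, note $\ee\subseteq K_g$ follows immediately from $g(\ee)=\ee$. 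For the reverse inclusion on the $\ee$-component, I will track components of iterated preimages: the connected component of $g^{-k}(P_{N+jp}(\ee))$ meeting $\ee$ is precisely $P_{N+(k+1)jp}(\ee)$, so any connected subset of $K_g$ containing $\ee$ must lie in every such component, hence in $\bigcap_k \overline{P_{N+(k+1)jp}(\ee)}=\ee$. Thus $\ee$ is the connected component of $K_g$ containing the critical point, and a standard refinement step (restricting the polynomial-like triple to a small thickening of $\ee$) produces a renormalization whose filled Julia set is exactly $\ee$.

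The main obstacle I foresee is the final identification in part 2: ensuring that one genuinely recovers $\ee$ as a filled Julia set rather than merely a connected component of a possibly larger $K_g$. This is resolved by the component-tracking argument --- which exploits that the puzzle piece of a given depth containing $\ee$ is uniquely determined by its orbit --- followed by the standard refinement of polynomial-like maps with disconnected filled Julia set to isolate the $\ee$-component.
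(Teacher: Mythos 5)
Your proof of Part 1 is essentially identical to the paper's: choose $N$ deep enough that no critical point sits in the pieces $P_N(f^k(\ee))$, obtain $P_{N+\ell p}(\ee)\Subset P_N(\ee)$ from Proposition~\ref{prop:local_connect_infty}, observe that $f^{\ell p}\colon P_{N+\ell p}(\ee)\to P_N(\ee)$ is conformal, and apply Schwarz to the inverse.

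In Part 2 you follow the same skeleton, but you weaken the choice of $N$ in a way that creates the very problem you then have to patch. The paper chooses $N$ so that
$(P_N(\ee)\cup\cdots\cup P_N(f^{p-1}(\ee)))\setminus(\ee\cup\cdots\cup f^{p-1}(\ee))$
contains no critical point of $f$; you only ask that $\infty\notin\overline{P_N(\ee)}$. The paper's stronger condition is what makes the identification $K(g)=\ee$ immediate: any critical point $c$ of $g=f^{\ell p}|_{P_{N+\ell p}(\ee)}$ has some $f^{j}(c)\in\Crit(f)\cap P_N(f^{j\bmod p}(\ee))$, which then must lie in $f^{j}(\ee)$, so $g(c)=f^{\ell p-j}(f^j(c))\in\ee$. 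Thus every critical \emph{value} of $g$ lies in $\ee\subseteq P_{N+\ell p}(\ee)$; a Riemann--Hurwitz count on the annulus $P_N(\ee)\setminus\overline{P_{N+\ell p}(\ee)}$ then forces $g^{-1}(P_{N+\ell p}(\ee))$ to be a single Jordan disk, namely $P_{N+2\ell p}(\ee)$, and inductively $g^{-k}(P_{N+\ell p}(\ee))=P_{N+(k+1)\ell p}(\ee)$, so $K(g)=\bigcap_k P_{N+k\ell p}(\ee)=\ee$ with no leftover components. With your weaker $N$, the concern you raise --- that $K_g$ might strictly contain $\ee$ --- is legitimate, and the component-tracking argument only controls the $\ee$-component, not $K_g$ itself; the proposed ``standard refinement step'' is left entirely unspecified and is not obviously available (you cannot shrink the target without control on where the critical values land). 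The clean fix is simply to make the paper's stronger choice of $N$ at the outset, after which the ``larger $K_g$'' possibility disappears and the refinement step is unnecessary.
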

\begin{proof} Choose a large integer $N>0$ so that 
$$(P_{N}(\ee)\cup \cdots\cup P_N(f^{p-1}(\ee)))\setminus (\ee\cup\cdots\cup f^{p-1}(\ee))$$
contains no critical point of $f$. By Proposition \ref{prop:local_connect_infty}, there is an integer $\ell>0$ so that
$P_{N+\ell p}(\ee)\Subset P_{N}(\ee)$. If none of $\ee, \cdots, f^{p-1}(\ee)$ is  critical, then $f^{\ell p}: P_{N+\ell p}(\ee)\to P_{N}(\ee)$ is conformal. Applying the Schwarz Lemma to its inverse, we see that $\ee$ is singleton.
If some end of $\ee, \cdots, f^{p-1}(\ee)$  is critical, then $(f^{\ell p}, P_{N+\ell p}(\ee), P_{N}(\ee))$ is a renormalization of $f$.
In this case, the filled Julia set $K(f^{\ell p}|_{P_{N+\ell p}(\ee)})=\bigcap_{k\geq 1} P_{N+k\ell p}(\ee)=\ee$.
\end{proof}

The main result of this section is the following

\begin{proposition}\label{prop:per} For any pre-periodic end $\ee\in  \mc{E}_{\rm pp}$  and any immediate root basin $B\in {\rm Comp}(B_f) $, 
	the intersection $\ee\cap\overline{B}$ is either empty or a singleton. 
\end{proposition}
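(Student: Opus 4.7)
The proof naturally splits into three steps: reducing to the periodic case, handling the trivial periodic case via Lemma \ref{s-r}(1), and constructing an invariant curve in the renormalizable periodic case.

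\textbf{Reduction.} Assume first that $\ee$ is periodic of period $p$. If no critical end appears in the forward orbit of $\ee$, Lemma \ref{s-r}(1) gives that $\ee$ is a singleton and $|\ee\cap\overline B|\leq 1$ trivially. Otherwise, Lemma \ref{s-r}(2) shows that $(f^p:P_{N+\ell p}(\ee)\to P_N(\ee))$ is a polynomial-like renormalization with filled Julia set $\ee$. If $\ee$ is strictly pre-periodic, let $m\geq1$ be minimal with $f^m(\ee)=:\ee_0$ periodic; since $f(B)=B$, we have $f^m(\ee\cap\overline B)\subseteq \ee_0\cap\overline B$, and the separating curve to be constructed for $\ee_0$ (below) pulls back via $f^m$ to one adapted to $\ee$. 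Thus we concentrate on periodic, renormalizable $\ee$.

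\textbf{Identifying a unique landing angle.} Working in the B\"ottcher coordinate $\Phi_B:B\to\mathbb D$, $f^p|_B$ is conjugate to $w\mapsto w^{d_B^p}$. For each $k\geq 0$, the set $\Phi_B(P_{kp}(\ee)\cap B)$ is a finite union of sectors of $\mathbb D$, and the properness of $f^p:P_{(k+1)p}(\ee)\to P_{kp}(\ee)$ implies that each sector of $P_{(k+1)p}(\ee)\cap B$ is one sheet of a $d_B^p$-th root of a sector of $P_{kp}(\ee)\cap B$. Nesting the sectors that accumulate on a hypothetical point $z\in\ee\cap\partial B$, the angular width is divided by $d_B^p$ at each step while the inner radius tends to $1$. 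In the limit, the angular supports converge to a single rational angle $\theta_*$ with $d_B^p\theta_*\equiv\theta_*\pmod 1$. By the standard landing theorem (\cite[Theorem 18.10]{Mi06}), the internal ray $R_B(\theta_*)$ lands at a unique point $z_*\in\partial B$. This identifies $z_*$ as the only candidate for a point of $\ee\cap\partial B$.

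\textbf{Construction of the invariant curve and conclusion.} The heart of the argument is to build a Jordan curve $\gamma$ with $f^p(\gamma)\subseteq\gamma$ (or a pullback-stable Jordan arc) passing through $z_*$ that separates $\ee\setminus\{z_*\}$ from $\overline B\setminus\{z_*\}$ inside $\overline{P_0(\ee)}$. The plan is to assemble $\gamma$ from (i) the closure of a terminal portion of the internal ray $R_B(\theta_*)$, from an equipotential of $B$ to the landing point $z_*$, and (ii) a cross-cut in $P_0(\ee)\setminus\overline B$ connecting the equipotential endpoint back toward $z_*$, chosen so that its iterated $f^p$-pullback (taken as the component meeting $\overline{P_{\ell p}(\ee)}$) closes up, by nested pullback, into a Jordan curve with the required invariance. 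Once $\gamma$ is available, any hypothetical $z\in\ee\cap\partial B$ with $z\neq z_*$ lies on the ``end side'' of $\gamma$ and hence cannot be an accumulation point of $B$, contradicting $z\in\partial B$. Therefore $\ee\cap\overline B\subseteq\{z_*\}$, giving the statement. The pre-periodic case then follows by pulling $\gamma$ back via $f^m$ and taking the component meeting $\ee$.

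\textbf{Main obstacle.} The delicate part is the construction of $\gamma$: the cross-cut in $P_0(\ee)\setminus\overline B$ must be selected compatibly with the $f^p$-dynamics so that the union of its iterated pullbacks, together with $R_B(\theta_*)$, genuinely closes into a Jordan curve. This is subtle because $\partial B$ is not yet known to be locally connected at $z_*$, so one cannot directly conclude that the sectors of $P_{kp}(\ee)\cap B$ have spherical diameter tending to zero; the invariant curve $\gamma$ substitutes for the missing local connectivity by providing a purely topological separation. The Shrinking Lemma (Lemma \ref{lem:shinking}) applied to the pullbacks of the cross-cut, combined with the uniform degree bound coming from the renormalization $(f^p:P_{N+\ell p}(\ee)\to P_N(\ee))$, is what makes the construction close up as desired.
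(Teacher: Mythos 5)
Your skeleton is the same as the paper's: reduce to the periodic case (the non-critical cycle being trivial by Lemma \ref{s-r}), use the nesting of the sectors $\overline{Q_k}\cap\overline{B}$ (from Proposition \ref{cor:non-cutpoints_graph}) to produce a single limit angle whose internal ray is invariant under the return map $g=f^{n_0p}$ and lands at a $g$-fixed point $q\in\ee\cap\partial B$, and then build a Jordan curve through $q$ separating $\ee\setminus\{q\}$ from $\overline{B}\setminus\{q\}$. You also correctly identify that the landing point is only a \emph{candidate} and that the separation curve must substitute for the unknown local connectivity. The gap is in the mechanism you propose for making the pulled-back cross-cut converge to $q$. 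The ``uniform degree bound coming from the renormalization'' does not exist for the iterated pullbacks: the relevant maps are $g^k$ on pieces containing the critical points of the renormalization, of degree $d_\ee^k\to\infty$. More importantly, the Shrinking Lemma (Lemma \ref{lem:shinking}) is not applicable and would not suffice even if it were: its hypotheses require a forward orbit of neighborhoods with $U_{n+1}\cap U_0=\emptyset$, which you cannot arrange for arcs whose pullbacks all cluster at $q$ and along $\ee$; and even granting that the diameters of the individual pullback arcs tend to zero, a concatenation of arcs with shrinking diameters need not converge to a single point --- it could drift along $\partial B$ or along $\partial\ee$, which is exactly the scenario you are trying to exclude.

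The paper's proof supplies the missing mechanism by changing coordinates: it uniformizes $\olC\setminus\ee$ by a Riemann map $\phi$, so that $g$ induces a degree-$d_\ee$ covering $\wh g$ between annuli which extends holomorphically across $\partial\mb D$ by Schwarz reflection. A Schwarz-lemma argument (Claim 1) shows the boundary point $\wh q$ where the invariant ray lands in this coordinate is the \emph{unique} (repelling) fixed point in the relevant reflected domain, so the inverse branch is a strict contraction there; the curves are then defined as pullbacks $\wh g_\varepsilon^{-k}$ of an initial arc on \emph{both} sides of the ray, and their convergence to $q$ back in the dynamical plane is obtained from the Denjoy--Wolff theorem, anchored by the already-established landing of $R_B(\theta)$ at $q$ (Claim 3); Proposition \ref{cor:non-cutpoints_graph} keeps these curves off $\overline B$. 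Note also that the separating curve must consist of two arcs $\gamma_+,\gamma_-$ approaching $q$ from the two sides of the end (plus a piece of $\partial Q_1$), not a single cross-cut closed up by pullback, since otherwise it cannot pass through $q$ and enclose $\ee\setminus\{q\}$ away from $\overline B\setminus\{q\}$. Without the uniformization/reflection step and the Denjoy--Wolff (or an equivalent hyperbolic-contraction) argument, your construction does not close up at $z_*$, so as written the key step fails.
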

\begin{proof}
It suffices to treat the periodic case. We may assume $\ee$ is non-trivial,  of period $p>1$ (note that
$p=1$ iff $\ee=\ee(\infty)=\{\infty\}$), and  
$\ee\cap \overline{B}\neq \emptyset$ for some immediate root basin $B$.
The idea of the proof is to construct a Jordan curve 
separating $\ee$ from $B$.

%
	
	By  Proposition \ref{prop:local_connect_infty}, one can find two  puzzle pieces $Q_1$ and $Q_0=f^{n_0p}(Q_1)$, such that	
	$\ee\Subset Q_1\Subset Q_0$. Assume the depths of $Q_1,Q_0$ are large enough so that all critical points of  $g:=f^{n_0p}:Q_1\to Q_0$ are contained in $\ee$. Let $d_\ee={\rm deg}(g|_{Q_1})$, then  $d_\ee\geq 2$, otherwise, $g$ is conformal and the Schwarz Lemma
	 would imply that $\ee$ is trivial. 
	
%
	
	Write $Q_k=g^{-k}(Q_0)$ for $k\geq 1$. By Proposition \ref{cor:non-cutpoints_graph}, 
	for all $k\geq 0$, there exist $\alpha_k,\beta_k\in \mb R/\mb Z$ with $\alpha_k<\beta_k$, and $r_k\in(0,1)$ such that 
	$$\ol{Q}_k\cap \ol{B}=\ol{Q_k\cap B}=\ol{S_B(\alpha_k,\beta_k;r_k)}.$$ 
	Since $f|_B$ is conjugate to $z\mapsto z^{d_B}$ on $\mb{D}$, we have
	$$\alpha_k\leq \alpha_{k+1}< \cdots<\beta_{k+1}\leq \beta_k,\  |\beta_{k+1}-\alpha_{k+1}|=|\beta_k-\alpha_k|/{d_B^{n_0p}}.$$
	Therefore the sequences $\{\alpha_k\}$ and 
	$\{\beta_k\}$ have a common limit $\theta=\tu{lim }\alpha_k=\tu{lim }\beta_{k}$. The internal ray $R_B(\theta)$ of $B$ is invariant under $g$, hence land at a $g$-fixed point $q\in \ee\cap \partial B$. 
		
	 In the following, we show $\ee\cap \partial B=\{q\}$. To this end, let $\eta_\varepsilon={R_B(\theta)\cap  Q_\varepsilon}$ with $\varepsilon\in\{0,1\}$.  
Let $\phi:\olC\setminus \ee\to \olC\setminus\olD$ be a Riemann mapping, and denote
$$(\wh{\eta}_\varepsilon,\wh{B},\wh{Q}_\varepsilon)=(\phi(\eta_\varepsilon),\phi(B),\phi(Q_\varepsilon\setminus \ee)).$$
  Then 
   $\wh{g}=\phi\circ g\circ \phi^{-1}:\wh{Q}_1\to\wh{Q}_0$ is  a covering map between annuli, of  degree $d_\ee$. By Schwarz reflection principle, we may assume that $\wh{g}$ is holomorphic in a neighborhood of $\partial\mb{D}$. The arc $\wh{\eta}_\varepsilon$ is $\wh{g}$-invariant, hence lands at a $\wh{g}$-fixed point, say  $\wh q$, on $\partial\mb{D}$.

	Let $\Omega_+, \Omega_-$ be the two components of $\wh{g}^{-1}(\wh{Q}_0\setminus\wh{\eta}_0)$
	such that
	 $\wh{\eta}_1\subseteq \partial \Omega_+\cap \partial \Omega_-$. 
	 Clearly,  $\Omega_+,\Omega_-$  are Jordan disks.

  \begin{figure}[h] 
	 \begin{center}
\includegraphics[height=5.6cm]{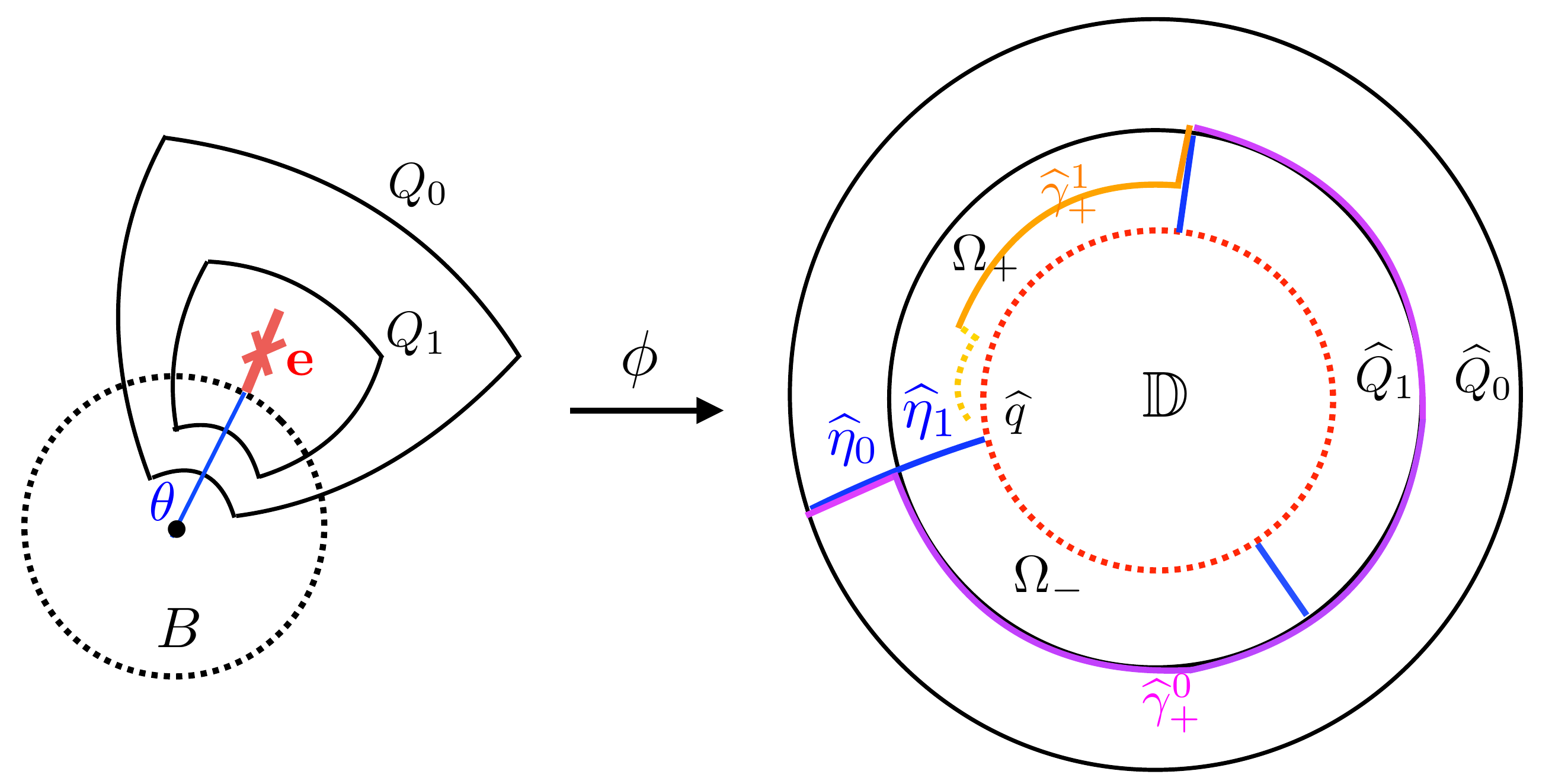} 
 \caption{Some domains and construction of curves converging to $\wh{q}$.}
\label{renormalizable}
\end{center}
\end{figure}

\vspace{3pt}
	
	\tb{Claim 1:} {\it The map $\wh{g}$ has exactly one fixed point on $\ol{\Omega}_+$ (or $\ol{\Omega}_-$). This fixed point is $\wh q \in \partial {\Omega}_+\cap  \partial {\Omega}_-$.}
	
	\begin{proof}
Let $\Omega_+^*, \Omega_-^*, \wh{\eta}_1^*$ be the reflection part of $\Omega_+,\Omega_-,\wh{\eta}_1$ with respect to the circle $\partial \mb D$.	
Let $$Y=\Omega_+^*\cup\Omega_-^*\cup\wh{\eta}_1^*\cup \Omega_+\cup\Omega_-\cup\wh{\eta}_1\cup \{\wh{q}\}.$$
Clearly, $Y$ is an open topological disk.
 The Schwarz reflection principle guarantees  that 
$\wh{g}$ can be defined in $Y$, and $Y\Subset \wh{g}(Y)$. 	Let $X$ be the component of $\wh{g}^{-1}(Y)$ containing $\wh{q}$.
One may verify that $X\Subset Y$ and $\wh{g}:X\rightarrow Y$ is conformal. Applying Schwarz Lemma to $\wh{g}|_X^{-1}: Y\rightarrow X$,
 we conclude that $\wh{g}$ has exactly one repelling fixed point on $X$.
 This fixed point is $\wh{q}\in \partial\Omega_+\cap\partial\Omega_-$.
 \end{proof}

%
%
%
%

	Let $\Omega_\varepsilon^0=\ol{\Omega}_\varepsilon\setminus  \overline{\wh{\eta}_1}$ for $\varepsilon\in\{\pm\}$. 
We consider the bijections 
	$$\wh{g}_\varepsilon=\wh{g}|_{\Omega_\varepsilon^0}: \Omega_\varepsilon^0\to \ol{\wh{Q}}_0, \ \ 
	\varepsilon\in\{\pm\}.$$

One may verify that $\wh{g}_\varepsilon$ is  conformal in the interior of $\Omega_\varepsilon^0$.
	
	\vspace{5pt}
	
	\tb{Claim 2:} {\it For each $\varepsilon\in\{\pm\}$,  let's define a sequence of closed Jordan arcs
	$$\wh{\gamma}^0_\varepsilon=\overline{(\phi(\partial Q_1)\setminus \Omega_\varepsilon^0)\cup (\wh{\eta}_0\setminus\wh{\eta}_1)}, \text{ and } \ \wh{\gamma}_\varepsilon^k=\wh{g}_\varepsilon^{-k}(\wh{\gamma}^0_\varepsilon), \ k\geq 1.$$
	Then $\wh{\gamma}_\varepsilon=\bigcup_{k\geq 1}\wh{\gamma}_\varepsilon^k$ is a Jordan arc in $\Omega_\varepsilon^0$, satisfying that
	
	1. $\wh{\gamma}_\varepsilon$ is disjoint from $\olD$;

    2. $\wh{\gamma}_\varepsilon$ is disjoint from the closure of $\wh{B}$;

    3. $\wh{\gamma}_\varepsilon$ converges to the $\wh{g}$-fixed point $\wh{q}$.}
		
%
	\begin{proof} We only prove the case $\varepsilon=+$, the other is similar.
	
	1. It suffices to note that $\ee$ has no intersection with 
	$\partial Q_1\cup\overline{(\eta_0\setminus\eta_1)}$.
	
	2.  Note that
	$$\wh{\gamma}_+\cap \ol{\wh{B}}=\emptyset \Longleftrightarrow \wh{\gamma}_+^1\cap \ol{\wh{B}}=\emptyset
	\Longleftrightarrow \phi^{-1}(\wh{\gamma}_+^1)\cap \ol{B}=\emptyset.$$
By	Proposition \ref{cor:non-cutpoints_graph},
$$\phi^{-1}(\wh{\gamma}_+^1) \subseteq \ol{Q}_1\setminus \ol{B}=\ol{Q}_1\setminus \ol{S_B(\alpha_1,\beta_1;r_1)}
\Longrightarrow \phi^{-1}(\wh{\gamma}_+^1)\cap \ol{B}=\emptyset.$$	
%
%
%
		

	3. Note that $\wh{\gamma}_+^2\Subset Y$ and $\wh{g}^{-1}: Y\rightarrow X$ is strictly contracting,
	we conclude that	$\wh{\gamma}_+$ converges to the $\wh{g}$-fixed point $\wh{q}$.
	\end{proof}	
		
		\vspace{5pt}
	
	 \tb{Claim 3:} {\it  For each $\varepsilon\in\{\pm\}$, the curve 	
		 $\gamma_\varepsilon=\phi^{-1}({\wh\gamma}_\varepsilon)$ satisfies that $\gamma_\varepsilon\cap
		(\ee\cup \ol{B})=\emptyset$ and  converges to  the  $g$-fixed point  ${q}$.}
		\vspace{4pt}
\begin{proof} 
By Claim 2, we see that $\gamma_\varepsilon$ is disjoint from $\ee\cup \ol{B}$.
Let $$V=\phi^{-1}\big(Y\setminus \ol{\mb D}), \ U=\phi^{-1}\big(X\setminus \ol{\mb D}\big).$$
Clearly, $V$ is a topological disk, $V\subseteq Q_1$ and $q\in \partial V$, and $g: U\rightarrow V$ is conformal.
Let $h=g|_{U}^{-1}: V\rightarrow U$. 
Since the ray $R_B(\theta)$ converges to $q$,  the family of maps $\{h^k\}_{k\in \mb N}$ converge uniformly on 
$\ol{R_B(\theta)\cap(Q_0\setminus Q_1)}$ to the boundary point $q$.
By Denjoy-Wolff's Theorem (see \cite{D},\cite{W}), the maps $\{h^k\}_{k\in \mb N}$ converge uniformly on any compact subset of $V$, in particular on $\gamma_+^2=\phi^{-1}(\wh{\gamma}_+^2)\Subset V$, 
to the boundary point $q$. 
Hence $\gamma_+$ converges to ${q}$.

Similar argument works for $\gamma_-$.
\end{proof}

Now we define the Jordan curve by
	\begin{equation*}
{\gamma}=
\begin{cases}
{\gamma}_+\cup {\gamma}_-\cup\{q\}\cup \left(\partial Q_1\setminus \partial V\right),\ \  & \text{ if } d_\ee\geq 3,\\
({\gamma}_+\cup{\gamma}_-\cup\{q\})\setminus g^{-1}(\eta_0\setminus \ol{\eta_1}),\ \ &\text{ if } d_\ee=2.
\end{cases}
\end{equation*}

%
%

Then the  sets $\ol{{B}}\setminus\{q\}$ and $\ee\setminus\{q\}$ are in different components of $\olC-{\gamma}$. It follows that  $\ee\cap \ol{B}=\{q\}$, completing the proof.
\end{proof}


%
%

\section{Proof of the main theorem}\label{proof-main}

In this section, we will complete the proof of  Theorem \ref{main}.
At the end, we give some concluding remarks.

\subsection{Proof of Theorem \ref{main}}

To prove the local connectivity of $\partial B$, it is equivalent to show that for any immediate root basin $B\in {\rm Comp}(B_f)$, and any $z\in \partial B$, the intersection $\ee(z)\cap \partial B$ is a singleton. 

This actually follows from the decomposition 
$$\mc{E}=\mc{E}_{\rm pp}\sqcup \mc{E}_{\rm w}^{\rm pp}\sqcup \mc{E}_{\rm w}^{\rm nr}\sqcup \mc{E}_{\rm w}^{\rm r}$$
and
Sections \ref{wandering} and \ref{pre-periodic}.


It remains to show that $\partial B$ is a Jordan curve iff $d_B={\rm deg}(f|_B)=2$. In fact, if $d_B\geq 3$, then there are $d_B-1\geq 2$ internal rays in $B$, landing at $\infty$, so $\partial B$ is not  
a Jordan curve.  If $d_B=2$, it follows from 
Lemma \ref{num-lands} and Corollaries \ref{num-land}, \ref{Jordan-2} (see below) that
$\partial B$ is a Jordan curve.

\begin{lemma}\label{num-lands}
Let $B\in {\rm Comp}(B_f)$. 
 If two different internal rays
$R_B(\theta_1), R_B(\theta_2)$  land at the same point, then
$$f(R_B(\theta_1))\neq f(R_B(\theta_2)).$$
\end{lemma}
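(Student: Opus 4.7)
Suppose for contradiction that $f(R_B(\theta_1)) = f(R_B(\theta_2))$.  The B\"ottcher conjugacy $\Phi_{f(B)} \circ f \circ \Phi_B^{-1}(z) = z^{d_B}$ forces $d_B\theta_1 \equiv d_B\theta_2 \pmod 1$, so $\theta_2 = \theta_1 + k/d_B$ for some $k \in \{1,\ldots,d_B-1\}$.  Since both rays share the landing point $p$, the union $\gamma := \overline{R_B(\theta_1)} \cup \overline{R_B(\theta_2)}$ is a Jordan curve in $\olC$ with two nodes $c_B$ and $p$, while its image $\alpha := f(\gamma) = \overline{R_{f(B)}(d_B\theta_1)}$ is only a Jordan arc; thus $f$ folds $\gamma$ two-to-one onto $\alpha$.

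The key observation is that the image loop $f \circ \gamma$ traces the arc $\alpha$ twice in opposite orientations, so its winding number around any $z_0 \in \olC \setminus \alpha$ vanishes (any loop in an arc is null-homotopic).  Let $W$ be the component of $\olC \setminus \gamma$ disjoint from $\infty$; the argument principle applied to $f(z) - z_0$ on $W$ gives
\[
  \#\{ z \in W : f(z) = z_0 \} \;=\; \#\{\text{finite poles of } f \text{ in } W\}
\]
for every $z_0 \in D := \olC \setminus \alpha$, with both sides counted with multiplicity (we may assume $\gamma$ avoids the finitely many poles, since poles of the Newton map lie in $\mb{C}$ and the exceptional case $p$ or $f(p) = \infty$ will be handled separately).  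Denote this common value by $N \geq 0$; it depends only on $W$.

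The contradiction comes from specializing $z_0$ to the $f$-fixed points.  For each root $a_j$ of $p$ with $a_j \neq c_B$ we have $a_j \in D$, and whenever $a_j \in W$ the point $a_j$ contributes multiplicity $d_{B_j} \geq 2$ to the left-hand side (it is a critical value of $f$), while if $a_j \notin W$ it contributes $0$.  Combining these equalities across different roots---together with the $z_0 = \infty$ equality which forces $N$ to coincide with the number of preimages of $\infty$ in $W$ outside of poles, namely the intersection $W \cap f^{-1}(\infty) \setminus \{\text{poles}\}$ that includes $\infty$ itself iff $\infty \in W$---yields that the single integer $N$ must simultaneously equal several incompatible values, a contradiction.

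The main obstacle will be carrying out this combinatorial bookkeeping in all positions of $k$ and of $W$.  The cleanest path I see uses the non-cut-point property of $\infty$ in the invariant graph $G$ of Theorem \ref{in-graph} to exclude the scenario in which an immediate basin $B_j \neq B$ is trapped inside $W$: its $d_{B_j} - 1 \geq 1$ fixed internal rays would have to land at $\infty \notin W$, hence cross $\gamma$, which the channel graph structure of Section \ref{sec_graphs} forbids.  A subsidiary obstacle is the case $p = \infty$, handled separately: fixed internal rays $R_B(j/(d_B - 1))$ are themselves $f$-invariant, so $f(R_B(\theta_1)) = R_B(\theta_1) \neq R_B(\theta_2) = f(R_B(\theta_2))$ by inspection whenever both $\theta_1, \theta_2$ are fixed angles, and any non-fixed ray landing at $\infty$ reduces to the generic case by replacing $f$ with a suitable iterate $f^n$.
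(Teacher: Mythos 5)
Your argument-principle framework (winding number of $f\circ\gamma$ is zero since the image traces an arc out-and-back, hence preimage count equals pole count in $W$) is a reasonable first step and is genuinely different from the paper's route, which splits into the cases $f(B)=B$ and $f(B)\neq B$ and invokes the fixed-point counting Corollary \ref{cor:counting_number} in the second case and a careful analysis of the angle-tupling map in the first. However, your proposal does not actually finish: the passage ``yields that the single integer $N$ must simultaneously equal several incompatible values, a contradiction'' is a description of a desired contradiction, and you yourself flag that ``the main obstacle will be carrying out this combinatorial bookkeeping.'' A lemma proof cannot stop at the point where the contradiction is announced but not derived.

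Beyond the incompleteness, two specific claims are false for Newton maps of polynomials with multiple roots. First, you assert that a root $a_j\in W$ ``contributes multiplicity $d_{B_j}\geq 2$,'' but $a_j$ is a critical point of $f$ only when $n_j=1$; for a multiple root ($n_j\geq 2$) the multiplier $f'(a_j)=1-1/n_j$ is nonzero, $d_{B_j}=1$, and the contribution is exactly $1$. Second, your mechanism for excluding an immediate basin $B_j$ from $W$ is that ``its $d_{B_j}-1\geq 1$ fixed internal rays would have to cross $\gamma$,'' but when $d_{B_j}=1$ there are no fixed internal rays at all, the center $a_j$ is not even a vertex of the channel graph $\Delta_0$, and nothing in your argument prevents such a basin (and its fixed point) from lying inside $W$. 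Finally, the disposal of the case where the common landing point is $\infty$ (``reduce to the generic case by replacing $f$ with a suitable iterate $f^n$'') does not work as stated: images of rays landing at $\infty$ still land at the fixed point $\infty$ under every iterate, so nothing becomes generic. The paper handles this instead by observing that the only rays of $B$ landing at $\infty$ are the $d_B-1$ fixed ones (cf. Corollary \ref{num-land}), so if $\theta_1,\theta_2$ are not both fixed angles the landing point cannot be $\infty$.
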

\begin{proof}
   We need discuss two cases: $f(B)=B$ and $f(B)\neq B$.

\vspace{4 pt}

\textbf{Case 1: $f(B)=B$}. 

\vspace{4 pt}

 In this case, $f|_B$ is conjugate to the map $z^{d_B}|_{\mathbb D}$. To discuss the relative position of the internal rays, we need consider the  angle tupling map on the circle.
    Let $m_{d_B}: t\mapsto d_Bt \ ({\rm mod }\  \mathbb Z)$ be the angle tupling map on $\mb{R}/\mb{Z}$.
    Note that  $S_0:=\big\{\frac{0}{d_B-1},\cdots,\frac{d_B-2}{d_B-1}\big\}$ is the set of fixed points of $m_{d_B}$. The components of $\mb{R}/\mb{Z}\setminus S_0$ are denoted by $I_k=\big(\frac{k}{d_B-1}, \frac{k+1}{d_B-1}\big), 0\leq k\leq d_B-2$. 
    
   First, note that the statement is true when one of $\theta_1,\theta_2$ is in $S_0$. 
    In the following, we assume $\theta_1,\theta_2 \notin S_0$.
      We will prove by contradiction. 
    If $f(R_B(\theta_1))=f(R_B(\theta_2))$, then fact that 
    $\bigcup_{\theta\in S_0}\overline{R_B(\theta)}$ divides $\ol{B}$ into $d_B-1$ parts, implies that one of them  
    contains  $R_B(\theta_1),R_B(\theta_2)$, together with their common landing point $z$.
    Without loss of generality, 
   we assume 
   $$0<\theta_1<\theta_2<{1}/(d_B-1).$$

%
%
	
The assumption implies that $\theta_1,\theta_2\in I_0$.
	Consider the action of $m_{d_B}$ on the open arc $I_0$. Let $S_1=f^{-1}(S_0)\cap I_0$. Then $S_1=\Big\{\frac{1}{d_B(d_B-1)},\cdots,\frac{d_B-1}{d_B(d_B-1)}\Big\}$. 
	Since $m_{d_B}$ is injective on $S_1$, the assumption $f(R_B(\theta_1))=f(R_B(\theta_2))$ implies that  $\theta_1,\theta_2
	\notin S_1.$
%
	
	The set $I_0\setminus S_1$ consists of $d_B$ components: 
	$$J_k=\Bigg(\frac{k-1}{d_B(d_B-1)},\frac{k}{d_B(d_B-1)}\Bigg), \ 1\leq k\leq d_B.$$
	 
	  Note that on each $J_k$, the map $m_{d_B}$ is one-to-one. Thus $\theta_1,\theta_2$ belong to distinct $J_k$'s. Since $m_{d_B}(J_1)=m_{d_B}(J_{d_B})=I_0$,
	  we conclude that 
	  $\theta_1\in J_1, \theta_2=\theta_1+\frac{1}{d_B}\in J_{d_B}$. 
	 For $k\in\{1, d_B\}$, we denote by $\theta_{1,k},\theta_{2,k}\in J_k$ such  that $m_{d_B}(\theta_{1,k})=\theta_1,m_{d_B}(\theta_{2,k})=\theta_2$, then we have 
	   $$\theta_{1,1}=\frac{\theta_1}{d_B}, \theta_{2,1}=\frac{1}{d_B}\Big(\theta_1+\frac{1}{d_B}\Big), \theta_{1,d_B}=\theta_{1,1}+\frac{1}{d_B}, \theta_{2, d_B}=\theta_{2,1}+\frac{1}{d_B}.$$ 
	   It's easy to see that $\theta_{1,1}<\theta_1<\theta_{2,1}<\theta_{1,d_B}<\theta_{2}<\theta_{2, d_B}$.
%
	 It follows that $R_B(\theta_{2,1})\cup R_B(\theta_{1,d_B})\subseteq S_B(\theta_1,\theta_2;0)$. 
	
	Let $W$ be the component of  $\olC-\overline{R_B(\theta_1)\cup R_B(\theta_2)}$ such that $\infty\notin W$.
	Clearly,   $W$ contains no fixed point,  because $W$ is disjoint from the channel graph $\Delta_0$ which contains all fixed points of $f$.
	 By above discussion, there is a component $V$ of $f^{-1}(W)$, 
	such that $V$ contains $S_B(\theta_{1,1},\theta_{2,1};0)$ (or $S_B(\theta_{1,d_B},\theta_{2,d_B};0)$).
	The facts 
	$$R_B(\theta_1)\subseteq S_B(\theta_{1,1},\theta_{2,1};0) \text{ and } \partial V\cap J(f)\subseteq f^{-1}(q)$$
 imply that 
	  $\partial V$ contains the common landing point $q$ of $R_B(\theta_1),R_B(\theta_2)$.
	Since $f(\partial V\cap J(f))\subseteq \partial W\cap J(f)=\{q\}$, we see that $q$ is a fixed point of $f$, which is necessarily $\infty$. This contradicts the assumption $\theta_1,\theta_2 \notin S_0$.

	\vspace{4 pt}

\textbf{Case 2: $f(B)\neq B$}. 

\vspace{4 pt}

 
    Assume $f(R_B(\theta_1))=f(R_B(\theta_2))$. Let $U\subseteq f(B)$ be a Jordan disk, whose boundary passes through two endpoints of $f(R_B(\theta_1))$. Let $D=\olC\setminus \overline{U}$.
    
    Let $W$ be the component of  $\olC-\overline{R_B(\theta_1)\cup R_B(\theta_2)}$ such that $\overline{W}\cap \Delta_0=\emptyset$. Then $\overline{W}$  contains no fixed points of $f$, because all fixed points of $f$ are contained in the channel graph $\Delta_0$. 
    Clearly $\olC\setminus f(R_B(\theta_1))\subseteq f(W)$ 
     and $W\subseteq D$. There is a component $V$ of $f^{-1}(D)$ contained in $W$.
    In particular, $\overline{V}$ contains no fixed point of $f$.
    By Corollary \ref{cor:counting_number}, there is at least one fixed point in $\overline{V}$.
     This is a contradiction.
%
%
  \end{proof}

%


\begin{corollary}\label{num-land} 
For any $B\in \tu{Comp}(B_f)$ and any $z\in \partial B$, let $\mu_B(z)$ be the number of internal rays in $B$ landing at $z$. Then we have 
$$\mu_B(z)\leq \mu_{f(B)}(f(z)), \ \forall z\in \partial B.$$
In particular,  
\begin{equation*}
\mu_B(z)
\begin{cases}
=1,\ \  & \text{ if }z\in \partial B\setminus \Omega_f,\\
\leq d_{f^\ell(B)}-1,\ \ &\text{ if }z\in \partial B\cap \Omega_f,
\end{cases}
\end{equation*}
where $\ell\in\mathbb{N}$ is chosen so that ${f^\ell(B)}$ is  fixed.
%
\end{corollary}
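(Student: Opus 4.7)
The plan is to first establish the monotonicity inequality $\mu_B(z) \leq \mu_{f(B)}(f(z))$ as a direct corollary of Lemma \ref{num-lands}, and then to use forward iteration together with the combinatorial structure of the invariant graph $G$ constructed in Theorem \ref{in-graph}.

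If $R_B(\theta_1), \ldots, R_B(\theta_{\mu_B(z)})$ are the internal rays of $B$ landing at $z$, their images $f(R_B(\theta_i)) = R_{f(B)}(d_B \theta_i \bmod 1)$ all land at $f(z)$, and Lemma \ref{num-lands} asserts they are pairwise distinct, yielding the inequality. Every immediate basin of a Newton map contains a fixed root and is therefore $f$-invariant, so $f^\ell(B)$ is a fixed immediate basin for a suitable $\ell$; iterating monotonicity reduces the two quantitative bounds to the case when $B$ is itself fixed, with $d := d_B$. For $z \in \partial B \cap \Omega_f$, taking the least $k$ with $f^k(z) = \infty$ and applying monotonicity $k$ times gives $\mu_B(z) \leq \mu_B(\infty) = d - 1$, since the rays of $B$ landing at $\infty$ are precisely the $d - 1$ fixed internal rays $R_B(j/(d - 1))$, $0 \leq j \leq d - 2$.

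The main obstacle is the case $z \in \partial B \setminus \Omega_f$, where one must rule out any cut point of $\partial B$ outside the grand orbit of $\infty$. Suppose for contradiction that $\mu_B(z) = m \geq 2$ with two distinct landing angles $\theta_1, \theta_2$. The strategy is to show that $z \in f^{-n}(G)$ for some $n$, and then invoke the fact established in the proof of Theorem \ref{in-graph} (Step 5) that $G \cap J(f) \subseteq \Omega_f$; since $J(f)$ is forward invariant, $f^{-n}(G) \cap J(f) \subseteq \Omega_f$, giving $z \in \Omega_f$, a contradiction.

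To produce such an $n$, let $\Theta_B \subseteq \mb{S}$ be the set of angles whose internal rays lie in $G \cap B$ (non-empty, containing the $d - 1$ fixed angles). By topological mixing of the angle-multiplication map $m_{d_B}: t \mapsto d_B t$ on $\mb{S}$, the backward orbit $\bigcup_n m_{d_B}^{-n}(\Theta_B)$ is dense in $\mb{S}$, so for sufficiently large $n$ some $\theta \in m_{d_B}^{-n}(\Theta_B)$ lies in the open arc $(\theta_1, \theta_2)$. The ray $R_B(\theta)$ then lies in $f^{-n}(G) \cap B$, its landing point $z'$ lies in $f^{-n}(G) \cap \partial B$, and the arc $R_B(\theta) \cup \{c_B, z'\}$ separates $B$ into two sectors containing $R_B(\theta_1)$ and $R_B(\theta_2)$ respectively, which therefore lie in distinct puzzle pieces of depth $n$. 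Since both $R_B(\theta_1)$ and $R_B(\theta_2)$ land at $z$, the point $z$ lies in the closures of both puzzle pieces, hence in their common boundary, which is contained in $f^{-n}(G)$, completing the contradiction.
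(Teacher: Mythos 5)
Your route for the monotonicity inequality and the $\Omega_f$ case is correct and matches the paper. For the main case $z\in\partial B\setminus\Omega_f$, your strategy (find a pre-image of a $G$-ray strictly between $\theta_1$ and $\theta_2$, and deduce $z\in f^{-n}(G)\subseteq\Omega_f$) is genuinely different from the paper's argument, which instead drives the angle-difference $d_B^k(t_2-t_1)$ past $1/(d_B-1)$ under the tupling map and compares with the channel graph $\Gamma_B$. Your approach is appealing because it uses only the expanding dynamics on the circle and the puzzle structure, but as written it has a real gap.

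The gap is the claim that ``the arc $R_B(\theta)\cup\{c_B,z'\}$ separates $B$ into two sectors.'' This is topologically false: an arc running from an interior point $c_B$ to a single boundary point $z'$ does not disconnect a Jordan domain --- $B\setminus\bigl(R_B(\theta)\cup\{c_B\}\bigr)$ is a slit disk, which is connected. Consequently the assertion that $R_B(\theta_1)$ and $R_B(\theta_2)$ lie in distinct puzzle pieces of depth $n$ does not follow; nothing in your argument rules out that they sit in one puzzle piece $P$ whose intersection with $B$ is a union of two sectors that ``wrap around'' the slit in $\partial B$. What is actually needed is Lemma~\ref{lem:puzzles}(2) (equivalently, Proposition~\ref{cor:non-cutpoints_graph}): for an immediate root basin $B$, every puzzle piece $P$ meets $B$ in a \emph{single} sector $S_B(\alpha,\beta;r)$. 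Granting this, the correct way to finish is: if $R_B(\theta_1)$ and $R_B(\theta_2)$ both lie in $P_n(z)$, then $\theta_1,\theta_2\in(\alpha_n,\beta_n)$ with $P_n(z)\cap B=S_B(\alpha_n,\beta_n;r_n)$, so one of the two circular arcs joining $\theta_1$ to $\theta_2$ is contained in $(\alpha_n,\beta_n)$; once $n$ is large enough that $m_{d_B}^{-n}(\Theta_B)$ meets \emph{both} arcs between $\theta_1$ and $\theta_2$ (by density), a $\theta\in m_{d_B}^{-n}(\Theta_B)$ inside $(\alpha_n,\beta_n)$ forces $R_B(\theta)\cap X\subseteq P_n(z)$, contradicting $R_B(\theta)\subseteq f^{-n}(G)$. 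So the idea is salvageable, but you must replace the false separation claim by the single-sector property of puzzle pieces and pay attention to which of the two arcs $(\theta_1,\theta_2)$ versus $(\theta_2,\theta_1)$ sits inside the sector at level $n$.

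Two small additional remarks. First, you quote ``$d-1$'' for $\#\{$fixed angles in $\Theta_B\}$; this should be $d_B-1$ (in your reduced setting $d:=d_B$, so it coincides, but the notation in the statement uses $d$ for $\deg f$). Second, the step ``$G\cap J(f)\subseteq\Omega_f$'' is correct and is cleanest to justify from $f^N(G)=\Delta_0$ and $\Delta_0\cap J(f)=\{\infty\}$, rather than via forward invariance of $J(f)$ as you phrase it.
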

\begin{proof} By Lemma \ref{num-lands}, one has 
$$\mu_B(z)\leq \mu_{f(B)}(f(z)), \ \forall B\in \tu{Comp}(B_f), \forall z\in \partial B.$$

For $z\in \partial B\cap \Omega_f$, 
let $\ell\in \mb N$ be chosen so that $f^\ell(z)=\infty$ and $f^\ell(B)$  fixed, then 
 $$\mu_B(z)\leq \mu_{f^\ell(B)}(\infty)=d_{f^\ell(B)}-1.$$

To prove $\mu_B(z)=1$ for $z\in \partial B\setminus \Omega_f$, it suffices to consider the fixed case: $f(B)=B$. 
In this case, for any $ z\in \partial B\setminus \Omega_f$, if $\mu_B(z)\geq 2$, then there are two 
 internal rays $R_B(t_1), R_B(t_2)$, with $t_1<t_2$, landing at $z$.
 It follows that $R_B(t_1), R_B(t_2)$ are contained in the same component of $\olC-\Gamma_B$, where 
 $\Gamma_B=\bigcup_{0\leq k\leq d_B-2} \overline{R_B(k/(d_B-1))}$. 
 This implies that 
 $$0<t_2-t_1<{1}/{(d_B-1)}.$$

  It follows that for all $k\geq 0$,  
the two rays $R_B(d_B^kt_1), R_B(d_B^kt_2)$ land at the common point $f^k(z)$. On the other hand,  the assumption $ z\in \partial B\setminus \Omega_f$ implies for  $k_0\geq 1$,
satisfying that
$$d_B^{k_0}(t_2-t_1)> {1}/{(d_B-1)}\geq d_B^{k_0-1}(t_2-t_1),$$ 
 the rays
$R_B(d_B^{k_0}t_1), R_B(d_B^{k_0}t_2)$ are contained in different components of 
$\olC-\Gamma_B$,
hence can not land at the same point.   This is a contradiction.
\end{proof}

As consequence of Corollary \ref{num-land}, if $d_{f^\ell(B)}=2$, we have $\mu_B(z)=1$ for all $z\in \partial B$. This fact can be stated in the following form:
\begin{corollary}\label{Jordan-2} 
For any ${B}\in\tu{Comp}(B_f)$ 
which is eventually iterated to
 an immediate root basin $B_0$ with $d_{B_0}=2$, the boundary $\partial {B}$
is a Jordan curve. 
\end{corollary}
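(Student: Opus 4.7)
The approach is to combine local connectivity of $\partial B$ with the landing count from Corollary \ref{num-land}: given both, the Carath\'eodory extension $\overline{\Phi_B^{-1}}$ becomes a continuous bijection $\overline{\mb{D}}\to\overline{B}$ whose restriction $\mb{S}\to\partial B$ is a homeomorphism, which is the Jordan-curve condition. First I would establish local connectivity of $\partial B$. Let $\ell\geq 0$ be minimal with $f^\ell(B) = B_0$; when $\ell = 0$ this is the local-connectivity statement of Theorem \ref{main}. When $\ell\geq 1$, iterating the functional equation of Lemma \ref{lem:coordinates} along the orbit gives $\Phi_{B_0}\circ f^\ell\circ\Phi_B^{-1} = p_k$ with $p_k(z) = z^k$ and $k = \tu{deg}(f^\ell|_B)$. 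Since $\Phi_{B_0}^{-1}$ extends continuously to $\overline{\mb{D}}$ by Carath\'eodory, so does $\Phi_{B_0}^{-1}\circ p_k = f^\ell\circ\Phi_B^{-1}$, and a standard lift through the proper covering $f^\ell|_B$ --- unbranched in a neighborhood of $\partial B$ since by Assumption \ref{assum} every critical orbit meeting $B_f$ converges to an interior superattracting center --- yields a continuous extension $\overline{\Phi_B^{-1}}:\overline{\mb{D}}\to\overline{B}$, i.e.\ local connectivity of $\partial B$.

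Next, I would apply Corollary \ref{num-land} iteratively: for any $z\in\partial B$, monotonicity gives $\mu_B(z)\leq\mu_{f(B)}(f(z))\leq\cdots\leq\mu_{B_0}(f^\ell(z))$, and since inside $B_0$ every point of $\Omega_f$ has $\mu_{B_0}\leq d_{B_0}-1 = 1$ while every point off $\Omega_f$ has $\mu_{B_0} = 1$, I conclude $\mu_B(z) = 1$: exactly one internal ray of $B$ lands at each point of $\partial B$. The continuous surjection $\overline{\Phi_B^{-1}}|_{\mb{S}}:\mb{S}\to\partial B$ is therefore injective --- two distinct angles $\theta_1 \neq \theta_2$ landing at a common $z$ would give $\mu_B(z)\geq 2$ --- and being a continuous bijection between compact Hausdorff spaces, is a homeomorphism. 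Hence $\partial B$ is a Jordan curve. The delicate point is the local-connectivity pullback: a proper holomorphic map need not in general transport boundary local connectivity, and here the argument exploits the explicit B\"ottcher conjugation of $f^\ell|_B$ to the power map $p_k$ on $\mb{D}$ to reduce the problem to a functional-equation statement already controlled by the Carath\'eodory extension of $\Phi_{B_0}^{-1}$.
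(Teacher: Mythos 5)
Your proof is correct and follows essentially the same route as the paper: the paper's own proof of Corollary \ref{Jordan-2} is the one-line remark immediately preceding it (``As a consequence of Corollary \ref{num-land}, if $d_{f^\ell(B)}=2$, we have $\mu_B(z)=1$ for all $z\in\partial B$''), which implicitly invokes local connectivity of $\partial B$ so that the B\"ottcher parameterization extends to $\mb{S}$ as a continuous surjection, whose injectivity then follows from $\mu_B\equiv 1$. You spell out explicitly both steps that the paper leaves tacit: (i) that local connectivity of $\partial B_0$ (which for $\ell\geq 1$ must be imported from Theorem \ref{main}) transfers to $\partial B$ through the functional equation $\Phi_{B_0}\circ f^\ell\circ\Phi_B^{-1}=z^k$, and (ii) the chain $\mu_B(z)\leq\cdots\leq\mu_{B_0}(f^\ell(z))\leq 1$. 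One small wrinkle: the phrase ``standard lift through the proper covering'' slightly overstates the simplicity of step (i) --- this is not a covering-space lift of a path but a boundary-extension transfer. The clean version is a cluster-set argument: if $\psi=f^\ell\circ\Phi_B^{-1}$ extends continuously to $\mb{S}$, then any cluster set $C(\Phi_B^{-1},e^{i\theta_0})$ is a continuum mapped by the rational map $f^\ell$ into the singleton $\{\psi(e^{i\theta_0})\}$, hence is itself a singleton because $f^\ell$ is finite-to-one. With this substitution your argument is airtight, and indeed gives a self-contained account of a step the paper compresses into a single sentence.
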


We remark that for Corollary \ref{num-land}, 
when $f(B)=B$ and  $d_B\geq 3$, it can happen that for some $z\in \partial B\cap \Omega_f$, the strict 
inequality $$\mu_B(z)<d_B-1$$
holds.
Figure \ref{fig:newton2} provides such an  example.
In fact, we have an even more interesting example.
\begin{example} It can also happen that for some ${B}\in\tu{Comp}(B_f)$ 
which is eventually iterated to
 an immediate root basin $B_0$ with $d_{B_0}>2$, and such that $B\neq B_0$, the boundary $\partial {B}$
is a Jordan curve.

Figure \ref{New1} gives an example of degree five Newton map $f$, with an immediate root basin $B_0$ such that $d_{B_0}=3$.  For this example,  the boundary of any ${B}\in\tu{Comp}(B_f) \setminus\{B_0\}$  is a Jordan curve. 
\end{example}

%

%
%
%
%
%

\begin{figure}
	\center
	\begin{tikzpicture}
	\node at (0,0) {\includegraphics[width=12cm]{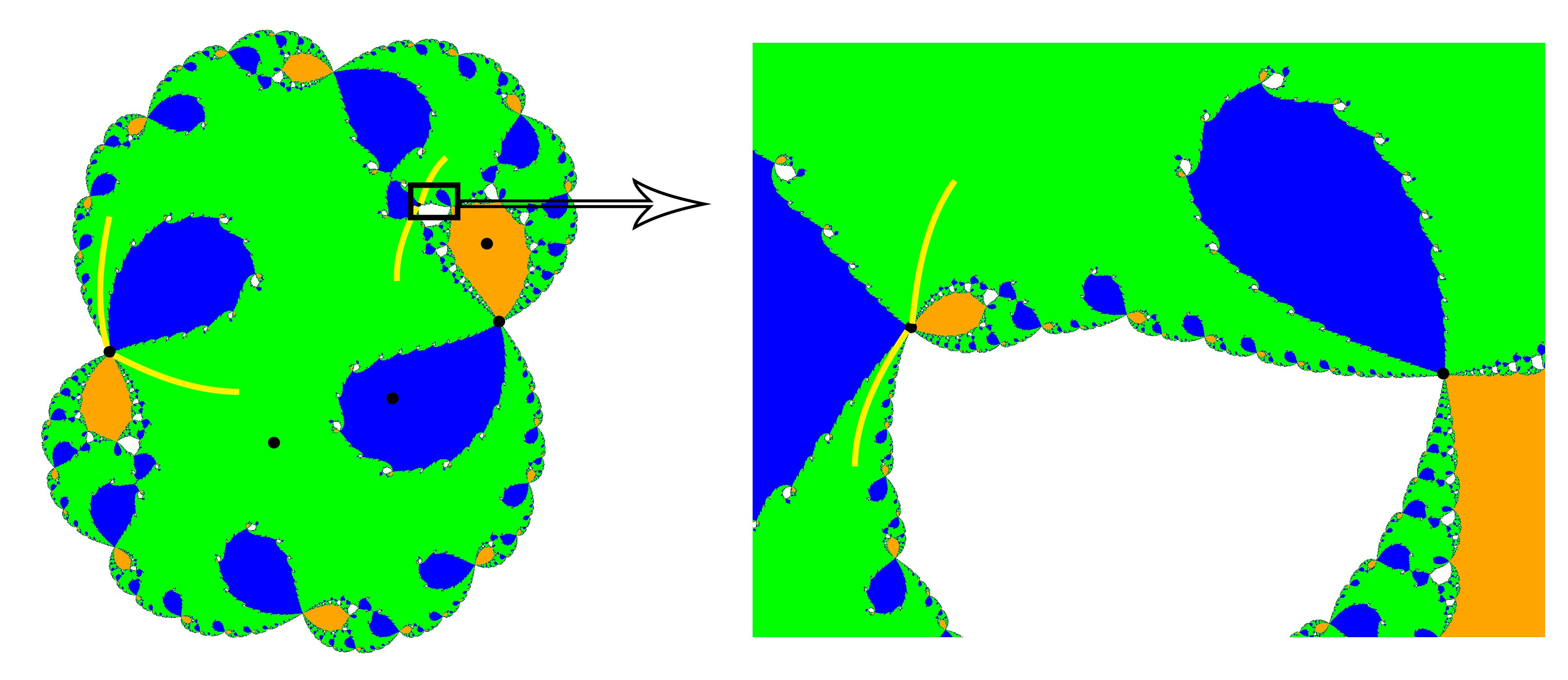}};
	\node at (-1.8,0){$z_0$};
	\node at (-5.5,-0.2){$z_1$};
	\node at (1.2,-0.25){$z_2$};
	\node at (-4,0){$B$};
	\node at (2.5,1.7){$B'$};
	\node at (-4.7,-0.55){$\gamma_1$};
	\node at (-5,1.2){$\gamma_2$};
	\node at (1.5,1.2){$\gamma_1'$};
	\node at (0.5,-1.3){$\gamma_2'$};
	\end{tikzpicture}
		\caption{This degree four Newton map $f$ sends points $z_2\mapsto z_1\mapsto z_0=\infty$ and Fatou components $B'\mapsto B\mapsto B$. As shown above, $z_1$ has two non-homotopic accesses $\gamma_1,\gamma_2$ from $B$, while $z_2$ has one access $\gamma_2'$ from $B$ and another access $\gamma_1'$ from $B'$, here $f(\gamma_k')=\gamma_k, k\in\{1,2\}$.
	}
	\label{fig:newton2}
\end{figure}

  \begin{figure}[h] 
	 \begin{center}
\includegraphics[height=5.6cm]{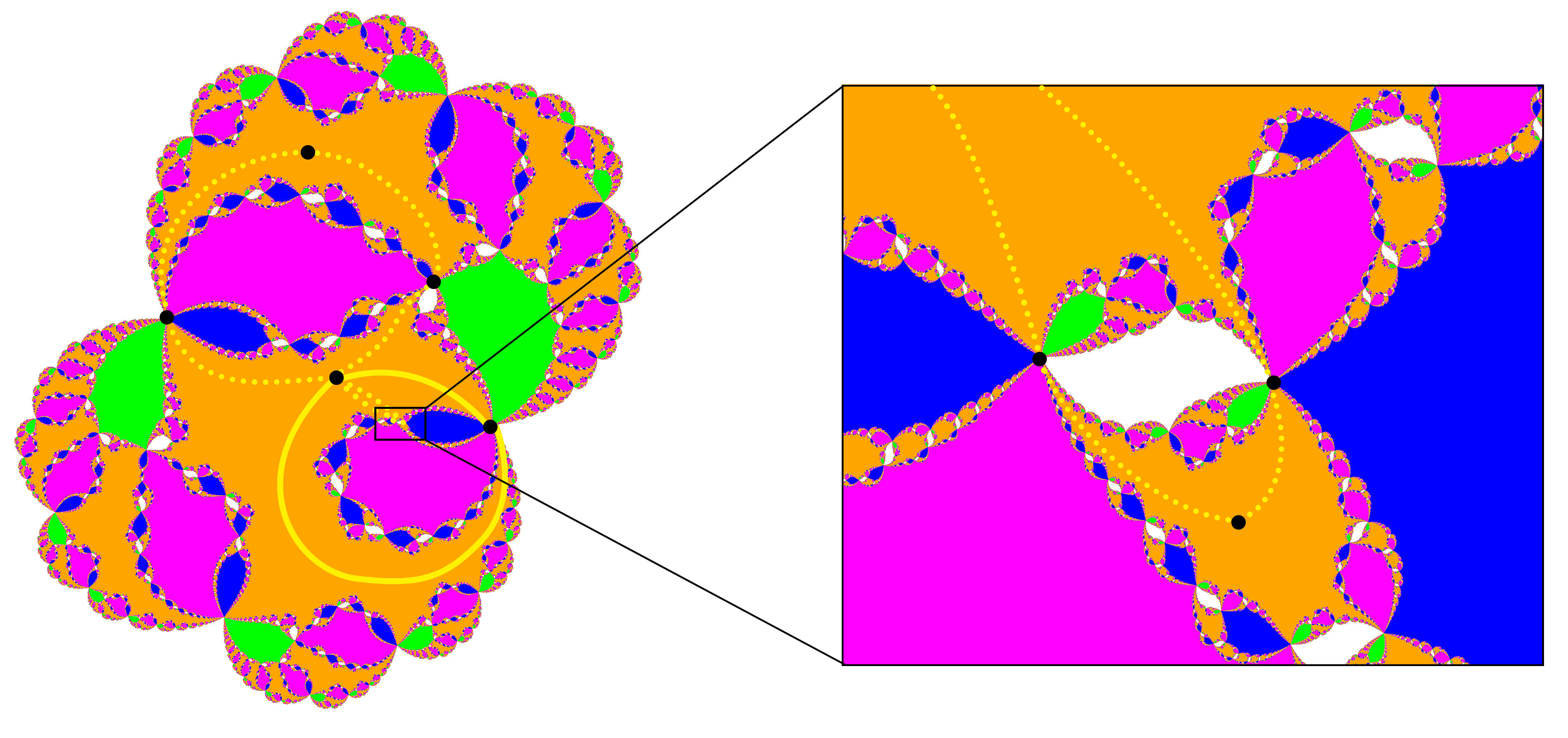} 
 \put(-295,65){$B_0$}  \put(-230,63){$\infty$}  \put(-280,33){$\gamma$} 
 \caption{There are two internal rays in $B_0$ converging to $\infty$. The union of their closures gives a Jordan curve $\gamma$. Its preimage $f^{-1}(\gamma)$ consists of three Jordan curves: one is $\gamma$; the other two are mapped onto $\gamma$ by degree two, hence each encloses a critical point.
 }
\label{New1}
\end{center}
\end{figure}

\subsection{Concluding remarks}


There are two by-products of our whole proof:

%
%
%
%
%

\begin{enumerate}
		\item {\it The Julia set $J(f)$ of a non-renormalizable Newton map $f$ is locally connected.}
		\item  {\it A wandering continuum\footnote{ A continuum  (compact set, which is connected and non-singleton) $E$ is called {\it wandering} under $f$, if $f^m(E)\cap f^n(E)=\es$ for all $0\leq m<n$.} $E\subseteq J(f)$ of the Newton map $f$ will eventually be iterated into the filled Julia set of a renormalization.}	\end{enumerate} 
		
		To see (1), it suffices to observe that for a  non-renormalizable Newton map $f$, each periodic end is a singleton (by Lemma \ref{s-r}). Combining Section \ref{wandering}, we see that all possible type of ends are trivial.

		To see (2), note that $\infty\notin E$, which implies that $E$ is contained in some end $\ee$.
		If $\ee$ is  wandering, then it is trivial by  Section \ref{wandering}. This is impossible because $E$ is a continuum. So $\ee$ is pre-periodic. By Lemma \ref{s-r}, for some $k\geq0$, the end $f^k(\ee)$ is periodic and equal to a filled Julia set of a renormalization.

\end{document}